\DeclareFontFamily{OMX}{MnSymbolE}{}
\DeclareSymbolFont{MnLargeSymbols}{OMX}{MnSymbolE}{m}{n}
\DeclareFontShape{OMX}{MnSymbolE}{m}{n}{
    <-6>  MnSymbolE5
   <6-7>  MnSymbolE6
   <7-8>  MnSymbolE7
   <8-9>  MnSymbolE8
   <9-10> MnSymbolE9
  <10-12> MnSymbolE10
  <12->   MnSymbolE12
}{}
\let\llangle\@undefined
\let\rrangle\@undefined
\DeclareMathDelimiter{\llangle}{\mathopen}%
                     {MnLargeSymbols}{'164}{MnLargeSymbols}{'164}
\DeclareMathDelimiter{\rrangle}{\mathclose}%
                     {MnLargeSymbols}{'171}{MnLargeSymbols}{'171}
\definecolor{bwmagenta}{rgb}{0.0,0.45,0.6}
\definecolor{bwblue}{rgb}{0.4,0.1,0.2}
\def\@splitop#1#2\@nil{$\mathscr{#1}\!\!$\calligra#2\,\,}
\newcommand*\DeclareCursiveOperator[2]{%
  \newcommand#1{\mathop{\mbox{\@splitop#2\@nil}}\nolimits}}
\DeclareCursiveOperator{\TAY}{Hess}
\DeclareCursiveOperator{\HOM}{Hom}
\DeclareCursiveOperator{\Det}{D{}et}
\newtheorem{definition}{Definition}[]
\newtheorem{theo}{Theorem}[section]
\newtheorem{cor}[theo]{Corollary}
\newtheorem{lemma}[theo]{Lemma}
\newtheorem{remark}[theo]{Remark}
\newtheorem{proposition}[theo]{Proposition}
\newtheorem{assumption}[theo]{Assumption}
\newtheorem{theorem}{Theorem}[]
\newcommand{\quo}[1]{ \mathbf{Z}/p^{n}\mathbf{Z}  }
\DeclareSymbolFont{cyrletters}{OT2}{wncyr}{m}{n}
\DeclareMathSymbol{\sha}{\mathalpha}{cyrletters}{"58}
\newcommand{\lfre}[1]{\stackrel{#1}{\longrightarrow}}
\newcommand{\N}{\mathbf{N}}
\newcommand{\ges}{\geqslant}
\newcommand{\hlfre}[1]{\stackrel{#1}{\lhook\joinrel\relbar\joinrel\rightarrow}}
\author{F. Andreatta, M. Bertolini, M. Seveso, R. Venerucci}
\title{Syntomic formalism with coefficients}
\begin{document}
\maketitle

\begin{abstract} This paper provides the technical tools needed in ongoing work of the authors to compute $p$-adic \'etale Abel-Jacobi maps in order to obtain explicit reciprocity laws for $\mathrm{GSp}_4$. In particular, we define and study syntomic polynomial cohomology for filtered Frobenius log-isocrystals over proper and semistable schemes over the ring of integers of a local field, with smooth generic fiber, endowed with horizontal divisors. We introduce syntomic polynomial cohomology with support, we define Kunneth morphisms, trace maps and cup products,  Gysin maps with respect to divisors and we study some of their properties. We establish the relation with Hyodo-Kato cohomology of the special fiber and de Rham cohomology of the generic fiber. We also introduce overconvergent variants with and without support by restricting to open smooth formal subschemes. Most of all, in case that the filtered log-isocrystal is associated to a $p$-adic local system on the generic fiber, we establish comparison morphisms between \'etale and syntomic cohomology and compatibilities with Hochschild-Serre morphisms and between Gysin morphisms.

\end{abstract}

\tableofcontents

\section{The geometric setup}

\subsection{Notation}\label{sub:not}

Let $K$ be a finite extension of $\mathbb{Q}_p$ with $p$ odd, with ring of integers $\mathcal{O}_K$  and with  residue field $k$. We fix an algebraic closure $K\subset \overline{K}$ and we denote by $K_0\subset K$ the maximal absolutely unramified subfield. Let $\pi$ be a uniformizer of $\mathcal{O}_K$. Consider  the $\mathcal{O}_{K_0}$-algebra homomorphism $\gamma_\pi\colon \mathcal{O}_{K_0}\{Z\}\to \mathcal{O}_{K}$ sending $Z$ to $\pi$. Write
$P_\pi(Z)\in \mathcal{O}_{K_0}[Z]$ for the monic irreducible polynomial of $\pi$. It is a generator of the kernel of $\gamma_\pi$. We denote by $\mathcal{S}_k:=\mathrm{Spec}(k)$, $\mathcal{S}_0:=\mathrm{Spf}(\mathcal{O}_{K_0})$, 
$\mathcal{S}:=\mathrm{Spf}(\mathcal{O}_{K})$, $\overline{\mathcal{S}}:=\mathrm{Spf}(\mathcal{O}_{K}/p \mathcal{O}_K)$ and $\mathfrak{S}:=\mathrm{Spf}(\mathcal{O}_{K_0}[\![Z]\!])$ with log-structures $\mathcal{M}_k$, $\mathcal{M}_0$, $\mathcal{M}$, $\overline{\mathcal{M}}$ and $\mathfrak{M}$ on $\mathcal{S}_k$, $\mathcal{S}_0$, $\mathcal{S}$, $\overline{\mathcal{S}}$ and $\mathfrak{S}$ respectively associated to the prelog-structures $\N\longrightarrow  k$, $\N\longrightarrow \mathcal{O}_{K_0} $, $\N\longrightarrow \mathcal{O}_K $, $\N\longrightarrow \mathcal{O}_K/p \mathcal{O}_K $ and  $\N\longrightarrow \mathcal{O}_{K_0}[\![Z]\!]$ sending $0\neq n\in \N$ to $0$ (in $k$, resp.~in $\mathcal{O}_{K_0}$), $\pi^n$, $\pi^n$ (modulo $p$) and  $Z^n$ respectively. We have a strict closed immersion $j\colon \bigl(\mathcal{S},\mathcal{M}\bigr)\to (\mathfrak{S},\mathfrak{M})$. We write  $(\mathfrak{S}^{\rm DP},\mathfrak{M}^{\mathrm{DP}})$ for the $p$-adic formal log scheme defined by taking the $p$-adic completion of the DP envelope of $j$. We summarize the relation among these objects:

$$
\begin{array}{cccccccccccc}
(\mathcal{S}_k,\mathcal{M}_k) & \longrightarrow  &(\overline{\mathcal{S}},\overline{\mathcal{M}}) & \longrightarrow &   (\mathcal{S},\mathcal{M}) & \longrightarrow &  (\mathfrak{S}^{\rm DP},\mathfrak{M}^{\mathrm{DP}})\\
\downarrow & & \downarrow&& \downarrow & \swarrow \\
\mathcal{S}_k & = & \mathrm{Spec}(k) &\longrightarrow& \mathcal{S}_0 .
\end{array}
$$For simplicity we write  $\mathcal{S}_0^0$  for $(\mathcal{S}_0,\mathcal{M}_0)$.

\subsection{Setting}\label{sec:setting}

We recall the conventions and notations of \cite[\S 2]{LLZ}.  Let $X$ be a smooth rigid analytic variety over $\mathrm{Spa}(K,\mathcal{O}_K)$ and let $D\subset X$ be a strict normal crossing divisor with finitely many irreducible components $\{D_j\}_{j\in I}$ such that for every $J\subset I$ $$X_J:=\cap_{j\in J} D_j$$ is smooth. We let $\alpha_X\colon \mathcal{M}_X\to \mathcal{O}_{X}$ be the log-structure on $X$ defined by $D$. For every $J\subset I$ we let $X_J^\partial$ (resp.~$(X_J,\alpha_{X_J})$) be the analytic variety $X_J$ endowed with the log-structure obtained from $\mathcal{M}_X$ by pull-back via the closed immersion $\iota_J\colon X_J\to X$ (resp.~the log-structure defined by $X_J\cap \bigl(\cup_{i\in I\backslash J} D_i\bigr)$).  For every $a\in \mathbb{N}$ we let $X_{(a)}^\partial:=\amalg_{J\subset I, \vert J\vert=a} X_J^\partial$ and $\iota_{(a)}:=\amalg_{J\subset I, \vert J\vert=a} \iota_J$. We define similarly $(X_{(a)},\alpha_{X_{(a)}})$.

We denote by $X_{\mathrm{et}}$, $X_{\mathrm{ket}}$, $X_{\mathrm{pket}}$ the \'etale, Kummer-\'etale, pro-Kummer-\'etale sites associated to $X$ and $\alpha_X$. Let $\nu\colon X_{\overline{K},\mathrm{ket}} \to X_{\mathrm{ket}}$ be the profinite \'etale cover obtained from the inclusion $K\subset \overline{K}$. Let  $X_{\overline{K},\mathrm{pket}}$ be the localization of $X_{\mathrm{pket}}$ over $X_{\overline{K},\mathrm{pket}}$. Define $\nu_{\mathrm{geo}}\colon X_{\overline{K},\mathrm{pket}} \to X_{\overline{K},\mathrm{ket}}$ to be the corresponding morphism of sites. We also have the morphism of sites $\nu_{\mathrm{ar}}\colon X_{\overline{K},\mathrm{pket}}\to X_{\mathrm{ket}}$ given by $\nu_{\mathrm{ar}}=\nu\circ \nu_{\mathrm{geo}}$

In order to have a good crystalline theory we  strengthen the assumptions of \cite{LLZ} and we require the following:

\begin{assumption}\label{ass:formalcase} The rigid analytic spaces $X$ and $D_j$ for every $j\in I$ are the rigid analytic generic fibers of a formal scheme $\mathcal{X}$ and of Cartier divisors  $\mathcal{D}_j\subset \mathcal{X}$ relative to $\mathcal{O}_K$ such that, setting $\mathcal{D}:=\cup_{j\in I} \mathcal{D}_j$, then  $\bigl(\mathcal{X},\mathcal{D}\bigr)$ is a strict semistable pair, i.e., $\mathcal{X}$ is strict semistable over $\mathcal{O}_K$, $\mathcal{D}\subset \mathcal{X}$ is a strict normal crossing divisor and  for every $J\subset I$ we have that $\mathcal{X}_J:=\cap_{j\in J} \mathcal{D}_j$ is strict semistable over $\mathcal{O}_K$.  We let $\alpha_{\mathcal{X}}\colon \mathcal{M}_\mathcal{X}\to \mathcal{O}_{\mathcal{X}}$ be the fs log-structure on $\mathcal{X}$ defined by the base change $\mathcal{X}_k$ of $\mathcal{X}$ to $k$ and by $\mathcal{D}$. 

\end{assumption}

We have a formally log-smooth morphism of $p$-adic formal log-schemes $$f\colon (\mathcal{X},\alpha_\mathcal{X}) \longrightarrow (\mathcal{S},\mathcal{M}).$$Let $\overline{f}\colon (\overline{\mathcal{X}},\alpha_{\overline{\mathcal{X}}}) \longrightarrow (\overline{\mathcal{S}},\overline{\mathcal{M}}) $ be the mod $p$ reduction of $f$ and $f_k\colon (\mathcal{X}_k,\alpha_{\mathcal{X}_k}) \longrightarrow (\mathcal{S}_k,\mathcal{M}_k)$ be the reduction of $f$ modulo the maximal ideal of $\mathcal{O}_K$. For simplicity we might write $\alpha_k:=\alpha_{\mathcal{X}_k}$.

For every $J\subset I$ we let $\mathcal{X}_J^\partial$ (resp.~$(\mathcal{X}_J,\alpha_{\mathcal{X}_J})$) be the formal scheme $\mathcal{X}_J:=\cap_{j\in J} \mathcal{D}_j$ endowed with the log-structure $\mathcal{M}_{\mathcal{X}_J^\partial} $ obtained from $\mathcal{M}_{\mathcal{X}}$ by pull-back via the closed immersion $\iota_J\colon \mathcal{X}_J\to \mathcal{X}$ (resp.~the log-structure defined by $\mathcal{X}_{J,k}$ and the divisor $\mathcal{X}_J\cap \bigl(\cup_{i\in I\backslash J} \mathcal{D}_i\bigr)$). For every $a\in \mathbb{N}$ we let $\mathcal{X}_{(a)}^\partial:=\amalg_{J\subset I, \vert J\vert=a} \mathcal{X}_J^\partial$  and we define also the morphism $\iota_{(a)}:=\amalg_{J\subset I, \vert J\vert=a} \iota_J\colon \mathcal{X}_{(a)}^\partial\to \mathcal{X}$. We have an analogous notation  $(\mathcal{X}_{(a)},\alpha_{\mathcal{X}_{(a)}})$.

\section{Frobenius log-isocrystals}\label{sec:FrobIso}

We define a non-degenerate filtered Frobenius log-crystal relative to $(X,\alpha_X,\overline{\mathcal{X}},\alpha_{\overline{\mathcal{X}}})$ to be aquintuple  $ \bigl(\mathcal{E},\mathrm{Fil}^\bullet \mathcal{E}, \nabla,\mathfrak{E},\Phi_{\mathfrak{E}}\bigr)$ where:

\medskip
\begin{itemize}

\item[i.] $\mathcal{E}$ is a locally free $\mathcal{O}_X$-module, endowed with an integrable logarithmic connection $\nabla$ and a descending filtration $\mathrm{Fil}^\bullet \mathcal{E}$ by locally free $\mathcal{O}_X$-modules such that Griffiths' transversality holds and the graded pieces are locally free $\mathcal{O}_X$-modules as well.


\item[ii.]  $\bigl(\mathfrak{E},\Phi_{\mathfrak{E}}\bigr)$ defines a non-degenerate Frobenius log-crystal in locally free modules on $\bigl(\overline{\mathcal{X}},\alpha_{\overline{\mathcal{X}}}\bigr)$ relative to the PD thickening  $\mathrm{Spec}(k) \to \mathcal{S}_0$ (with trivial log-structure, considering $ (\overline{\mathcal{X}},\alpha_{\overline{\mathcal{X}}}) $ as a log-scheme over  $k=\mathcal{O}_{K_0}/p\mathcal{O}_{K_0}$).

\end{itemize}
\smallskip
The non-degeneracy means the following. Let $\varphi^\ast(\mathfrak{E})$ be the log-crystal defined by pull-back via the Frobenius morphism $\varphi$ on  $\bigl(\overline{\mathcal{X}},\alpha_{\overline{\mathcal{X}}}\bigr)$ relative to $\mathrm{Spec}(k)$. We require there exists a non-negative integer $h$ and a morphism $V_{\mathfrak{E}}\colon \mathfrak{E} \to \varphi^\ast(\mathfrak{E})$ such that $V_{\mathfrak{E}} \circ \Phi_{\mathfrak{E}}$ and $\Phi_{\mathfrak{E}}\circ V_{\mathfrak{E}} $ are multiplication by $p^h$.  

In the definition we also require the following.  Write $\mathfrak{E}_{\mathcal{O}_K}$ for the log-crystal on $\bigl(\overline{\mathcal{X}},\alpha_{\overline{\mathcal{X}}}\bigr)$ relative to $(\overline{\mathcal{S}},\overline{\mathcal{M}} )\to  (\mathcal{S},\mathcal{M})$ defined by  the pull-back of $\mathfrak{E}$  via  the map $ (\mathcal{S},\mathcal{M}) \to \mathcal{S}_0$. We assume:
\smallskip

\begin{itemize}
\item[iii.]    The evaluation of $\mathfrak{E}_{\mathcal{O}_K} $ at the $p$-adic DP thickening $\bigl(\overline{\mathcal{X}},\alpha_{\overline{\mathcal{X}}}\bigr) \subset \bigl( \mathcal{X},\alpha_{\mathcal{X}}\bigr)$ defines by passing to the generic fiber $(X,\alpha_X)$ the module with log-connection  $(\mathcal{E},\nabla)$.

\end{itemize}

Non-degenerate Frobenius log-crystals relative to $(X,\alpha_X,\overline{\mathcal{X}},\alpha_{\overline{\mathcal{X}}})$ form a category in which we have tensor products and an identity element $\mathbf{1}$  given by the triple $ (\mathcal{O}_X, \mathcal{O}_{\mathcal{X}_k}^{\rm crys}, \varphi )$ with $\mathcal{O}_X$ with trivial filtration and connection given by differentiation and $\mathcal{O}_{\mathcal{X}_k}^{\rm crys}$ the structure sheaf crystal with its Frobenius morphism.

\begin{remark}\label{rmk:dualisocrystal} If one forgets Frobenius and works in the resulting isogeny category, we get a category that we call the category of filtered log-isocrystals relative to $(X,\alpha_X,\overline{\mathcal{X}},\alpha_{\overline{\mathcal{X}}})$ in which we also have duals. It is given by considering the isogeny category of  quadruples $ \bigl(\mathcal{E},\mathrm{Fil}^\bullet \mathcal{E}, \nabla,\mathfrak{E}\bigr)$ satisfying (i) and (iii) above, where we only assume that $\mathfrak{E}$ is a log-crystal in locally free modules. By a non-degenerate Frobenius morphism we mean a morphism  $\Phi_{\mathfrak{E}}\colon \varphi^\ast(\mathfrak{E}) \to \mathfrak{E}$ in the isogeny category such that there exists an $r$ such that $\Phi_{\mathfrak{E}}':=p^r \Phi_{\mathfrak{E}}$ arises from a non-degenerate Frobenius log-crystal $ \bigl(\mathcal{E},\mathrm{Fil}^\bullet \mathcal{E}, \nabla,\mathfrak{E}, \Phi_{\mathfrak{E}}'\bigr)$, i.e., $\Phi_{\mathfrak{E}}'$ satisfies (ii). A non-degenerate filtered  Frobenius log-isocrystal relative to $(X,\alpha_X,\overline{\mathcal{X}},\alpha_{\overline{\mathcal{X}}})$ is a quintuple $ \bigl(\mathcal{E},\mathrm{Fil}^\bullet \mathcal{E}, \nabla,\mathfrak{E},\Phi_{\mathfrak{E}}\bigr)$ such that $ \bigl(\mathcal{E},\mathrm{Fil}^\bullet \mathcal{E}, \nabla,\mathfrak{E}\bigr)$ is a log-isocrystal and $\Phi_{\mathfrak{E}}$ is a non-degenerate Frobenius. In this category we have duals. The dual of $ \bigl(\mathcal{E},\mathrm{Fil}^\bullet \mathcal{E}, \nabla,\mathfrak{E},\Phi_{\mathfrak{E}}\bigr)$ is defined by the dual $\mathcal{E}^\vee$ with the dual filtration  and dual connection, by  the dual crystal $\mathfrak{E}^\vee$ and Frobenius $\Phi_{\mathfrak{E}^\vee}$ given as follows. If $V_{\mathfrak{E}}'$ is such that  $V_{\mathfrak{E}}' \circ \Phi_{\mathfrak{E}}'$ and $\Phi_{\mathfrak{E}}'\circ V_{\mathfrak{E}}' $ are multiplication by $p^h$ set $\Phi_{\mathfrak{E}^\vee}:=V_{\mathfrak{E}}^{',\vee}/p^{h-r}$.

We will usually state our results for non-degenerate Frobenius log-crystals. However, all the notions we are going to introduce and the results we are going to prove have their analogue for non-degenerate Frobenius log-isocrystals as well. On the other hand, when discussing dualities, we will need to consider the dual objects and, hence, we will be forced to work with non-degenerate Frobenius log-isocrystals.
\end{remark}

Using the Frobenius structure we can upgrade Frobenius crystals to convergent Frobenius isocrystals:

\begin{proposition}\label{prop:Econv}
The  Frobenius crystal $\bigl(\mathfrak{E},\Phi_{\mathfrak{E}}\bigr)$  defines a convergent Frobenius isocrystal  $\bigl(\mathfrak{E}^{\rm conv},\Phi_{\mathfrak{E}^{\rm conv}}\bigr)$ on the log-convergent site of $\bigl(\mathcal{X}_k,\alpha_{\mathcal{X}_k}\bigr)$ for the Zariski topology, relative to the  thickening  $\mathrm{Spec}(k) \to \mathcal{S}_0$ in the sense of \cite[Def.~2.1.5]{Shiho}.
\end{proposition}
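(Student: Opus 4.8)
The plan is to reduce the assertion to a local statement about the module with integrable log-connection underlying $\mathfrak{E}$ — namely its evaluation on a local log-smooth lift over $\mathcal{S}_0$ — and then to invoke the classical principle, going back to Dwork and made systematic by Berthelot and Ogus in the constant-coefficient case, that an integrable connection carrying a non-degenerate Frobenius structure is automatically convergent.

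First I would unwind \cite[Def.~2.1.5]{Shiho}: a convergent log-isocrystal on $(\mathcal{X}_k,\alpha_{\mathcal{X}_k})/\mathcal{S}_0$ for the Zariski topology is the datum, functorial on the objects $(\mathcal{T},z)$ of the log-convergent site (flat $p$-adic formal $\mathcal{S}_0$-schemes $\mathcal{T}$ with a map from a log-enlargement of $\mathcal{T}$ to $\mathcal{X}_k$), of a coherent sheaf on the rigid generic fibre of $\mathcal{T}$, compatible with pullback. The essential feature, by contrast with the crystalline site, is that the $\mathcal{T}$ need not be PD-thickenings, so the transition isomorphisms encode a genuine convergence condition on the associated stratification. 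Since the construction is Zariski-local on $\mathcal{X}_k$, and the log-convergent site is generated by objects coming from local log-smooth lifts, it suffices to produce such a datum on an affine cover, to check that it is independent of the auxiliary choices, and to observe that $\Phi_{\mathfrak{E}}$ then induces the required Frobenius $\Phi_{\mathfrak{E}^{\mathrm{conv}}}$ by functoriality.

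Next I would fix, Zariski-locally, an affine open $\mathcal{U}\subset\mathcal{X}$: it is log-smooth over $\mathcal{S}_0$ and lifts $\mathcal{U}_k\subset\mathcal{X}_k$, and after shrinking I may choose étale-local coordinates adapted to the log structure on $\mathcal{U}$ together with a lift $\varphi_{\mathcal{U}}\colon\mathcal{U}\to\mathcal{U}$ of the absolute Frobenius of $\mathcal{U}_k$, compatible with the Frobenius $\sigma$ of $\mathcal{O}_{K_0}=W(k)$ and sending each coordinate $t$ cutting out a component of $\mathcal{D}$ or of the special fibre to a unit times $t^{p}$. Evaluating $\mathfrak{E}$ on $\mathcal{U}$ and inverting $p$ gives a locally free module with integrable log-connection $(E,\nabla)$ on the rigid generic fibre of $\mathcal{U}$ — by condition~(iii) this recovers $(\mathcal{E},\nabla)$ over the generic fibre after base change along $(\mathcal{S},\mathcal{M})\to\mathcal{S}_0$ — while evaluating $\Phi_{\mathfrak{E}}$ on $\mathcal{U}$ gives a horizontal morphism $\varphi_{\mathcal{U}}^{\ast}(E)\to E$ which, by the non-degeneracy hypothesis (use $V_{\mathfrak{E}}$ to invert it up to $p^{h}$), is an isomorphism.

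The heart of the matter, and the step I expect to be the main obstacle, is then Dwork's trick carried out logarithmically. A priori the Taylor stratification attached to $(E,\nabla)$ — equivalently, the evaluation of $\mathfrak{E}$ on the PD-envelopes of the infinitesimal neighbourhoods of the diagonal of $\mathcal{U}$ over $\mathcal{S}_0$ — is only known to converge on the crystalline logarithmic tube, i.e.\ on closed log-polydiscs of radius $|p|^{1/(p-1)}$ about the diagonal. Since $\varphi_{\mathcal{U}}$ lifts Frobenius, it strictly contracts these logarithmic tubes (heuristically $t\mapsto t^{p}$ sends a neighbourhood of radius $\rho<1$ to one of radius $\rho^{p}$), and since the Frobenius comparison $\varphi_{\mathcal{U}}^{\ast}E\cong E$ is horizontal, pulling the region of convergence back along $\varphi_{\mathcal{U}}$ enlarges it to radius $\rho^{1/p}>\rho$; iterating yields convergence on closed log-polydiscs of radius arbitrarily close to $1$. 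This is exactly the condition for $(E,\nabla)$ to extend to a convergent log-isocrystal $\mathfrak{E}^{\mathrm{conv}}$ in the sense of \cite[Def.~2.1.5]{Shiho}. The delicate points are: (a) running the contraction estimate on \emph{logarithmic} tubes, simultaneously along the components of $\mathcal{D}$ and of $\mathcal{X}_k$ and uniformly over the chart — here strict semistability of $(\mathcal{X},\mathcal{D})$ and the availability of standard log charts are used; (b) checking that the estimate is insensitive to replacing $\mathbf{Z}_p$ by $\mathcal{O}_{K_0}=W(k)$, which is harmless since $\sigma$ is its Frobenius; and (c) verifying that the local objects glue on the log-convergent site, which holds because a morphism of log-crystals induces a compatible morphism of stratifications, so the datum attached to $(\mathcal{U},\varphi_{\mathcal{U}})$ is canonically functorial in this pair. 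Finally, $\Phi_{\mathfrak{E}}$ being horizontal and globally defined over $\mathcal{U}$, it descends to a Frobenius $\Phi_{\mathfrak{E}^{\mathrm{conv}}}$ on $\mathfrak{E}^{\mathrm{conv}}$, and $(\mathfrak{E}^{\mathrm{conv}},\Phi_{\mathfrak{E}^{\mathrm{conv}}})$ is the asserted convergent Frobenius isocrystal; the same conclusion is also implicit in Shiho's comparison between log-crystalline and log-convergent cohomology in \cite{Shiho}.
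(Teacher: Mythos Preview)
Your strategy is the same as the paper's: work Zariski-locally, evaluate the crystal on a formally log-smooth lift over $\mathcal{S}_0$ equipped with a Frobenius lift, run the Dwork--Berthelot contraction argument to extend the PD-stratification to a convergent stratification on the full tube, and glue by Zariski descent. The paper outsources the contraction step to \cite[\S 2.4.1]{Berthelot} and the translation into a convergent isocrystal to \cite[Prop.~2.2.7]{Shiho}, whereas you spell out Dwork's trick; that is a matter of presentation, not of mathematics.

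There is, however, one genuine technical slip in your choice of local lift. You take $\mathcal{U}\subset\mathcal{X}$ and assert that it is log-smooth over $\mathcal{S}_0$ and carries a lift $\varphi_{\mathcal{U}}$ of Frobenius compatible with $\sigma$ on $\mathcal{O}_{K_0}$. But $\mathcal{X}$ is a formal scheme over $\mathcal{O}_K$, and when $K/K_0$ is ramified the ring $\mathcal{O}_K$ itself admits no lift of Frobenius, so neither does $\mathcal{U}$; moreover $(\mathcal{S},\mathcal{M})\to\mathcal{S}_0$ with trivial log structure on the target is not log-smooth in that case. The paper sidesteps this by embedding an open $U\subset\overline{\mathcal{X}}$ into an abstract $(P,L)$ formally log-smooth over $\mathcal{S}_0$ carrying a Frobenius lift $\varphi_P$ --- concretely realized later (\S 4.1) as the $\mathfrak{X}_\beta$ built over $\mathcal{O}_{K_0}[\![Z]\!]$ rather than over $\mathcal{O}_K$ --- and then evaluates the crystal on the $p$-adic PD envelope of $U\hookrightarrow P$. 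With that replacement for $\mathcal{U}$ your argument is correct and coincides with the paper's.
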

\begin{proof} 
Consider a Zariski open $U \subset \overline{\mathcal{X}}$ and set $\alpha_U:= \alpha_{\overline{\mathcal{X}}}\vert_U$. Consider an exact closed immersion 

$$
\begin{array}{cccccccccc}
(U, \alpha_U) & \longrightarrow &   (P,L) \\
\downarrow&& \downarrow \\
\mathrm{Spec}(k) &\longrightarrow& \mathcal{S}_0 .\\
\end{array}
$$with $(P,L)\to \mathcal{S}_0$ formally log-smooth endowed with a lift of Frobenius $\varphi_P$. Let $\mathcal{O}_P^{\rm DP}$ be the structure sheaf on $U$ of the $p$-adic DP envelope of $U\subset P$. Then,  $\mathfrak{E}$ defines a locally free $\mathcal{O}_P^{\rm DP}$-module $\mathfrak{E}_P$ endowed with a logarithmic integrable connection that extends to an HPD stratification on $(U, \alpha_U)$ relative to $(P,L)$; see the discussion in \cite[\S 4.3]{ShihoI}. The Frobenius structure provides a morphism $\Phi_{\mathfrak{E}_P}\colon \varphi_P^\ast(\mathfrak{E}_P) \to \mathfrak{E}_P$, which is an isomorphism if we invert $p$. Arguing as in \cite[\S 2.4.1]{Berthelot}  one deduces that the non-degenerate Frobenius log-crystal $\bigl(\mathfrak{E}_P,\Phi_{\mathfrak{E}_P}\bigr)$ defines a module endowed with an integrable logarithmic connection,  extending to a stratification, and an horizontal Frobenius $(\mathfrak{E}_P^{\rm conv}, \Phi_{\mathfrak{E}_P^{\rm conv}})$ on the tubular neighborhood $]U[^{\rm log}_P:=](U, \alpha_U) [_{(P,L)}$. Recall from \cite[Def. 2.2.5]{Shiho} that $]U[^{\rm log}_P$ is  the rigid analytic fiber of the formal completion of $P$ along $U$. Since  $U_k\subset U$ is a nilpotent thickening, the tubular neighborhoods $]U_k[^{\rm log}_P$ and $]U[^{\rm log}_P$ coincide. Hence,  $(\mathfrak{E}_P^{\rm conv}, \Phi_{\mathfrak{E}_P^{\rm conv}})$ defines a convergent Frobenius isocrystal  $\bigl(\mathfrak{E}^{\rm conv},\Phi_{\mathfrak{E}^{\rm conv}}\bigr)\vert_{U}$   on the log-convergent site of $\bigl(U_k,\alpha_{\mathcal{X}_k}\vert_{U_k}\bigr)$ relative to the  thickening  $\mathrm{Spec}(k) \to \mathcal{S}_0$ by \cite[Prop. 2.2.7]{Shiho}. Since  convergent Frobenius isocrystals satisfy Zariski descent and for varying $U$'s the constructions above glue,  we can glue and get the sought for  convergent Frobenius isocrystal $\bigl(\mathfrak{E}^{\rm conv},\Phi_{\mathfrak{E}^{\rm conv}}\bigr)$.
\end{proof}

In \cite[Def. 2.2.12]{Shiho} the author defines complexes $\mathrm{R}\Gamma_{\mathrm{an}}\bigl((\mathcal{X}_k,\alpha_{\mathcal{X}_k})/\mathcal{S}_0,\mathfrak{E}^{\rm conv}\bigr)$ computing the analytic cohomology of $\mathfrak{E}^{\rm conv}$. We sometimes omit $\alpha_{\mathcal{X}_k}$ from the notation for simplicity. The Frobenius defines $\sigma$-linear isomorphisms $\Phi$. We have a natural map, compatible with Frobenii, in the derived category
$$\tau\colon \mathrm{R}\Gamma_{\mathrm{an}}\bigl((\mathcal{X}_k,\alpha_{\mathcal{X}_k})/\mathcal{S}_0,\mathfrak{E}^{\rm conv}\bigr) \longrightarrow \mathrm{R}\Gamma_{\mathrm{crys}}\bigl((\overline{\mathcal{X}},\alpha_{\overline{\mathcal{X}}})/\mathcal{S}_0,\mathfrak{E}\bigr) [p^{-1}].$$By our assumption (iii) above we also have a morphism $$\gamma_{\rm crys}\colon  \mathrm{R}\Gamma_{\mathrm{crys}}\bigl((\overline{\mathcal{X}},\alpha_{\overline{\mathcal{X}}})/\mathcal{S}_0,\mathfrak{E}\bigr) \longrightarrow \mathrm{R}\Gamma_{\mathrm{logdR}}\bigl(X, \mathcal{E}\bigr);$$see \S \ref{sssec:explcictcrystallne}.  Composing with $\gamma_{\rm crys}$ we get a natural morphism 
$$\gamma_{\rm an}=\gamma_{\rm crys}\circ \tau \colon \mathrm{R}\Gamma_{\mathrm{an}}\bigl((\mathcal{X}_k,\alpha_{\mathcal{X}_k})/K_0,\mathfrak{E}^{\rm conv}\bigr) \longrightarrow  \mathrm{R}\Gamma_{\mathrm{logdR}}\bigl(X, \mathcal{E}\bigr).$$Finally, we recall the following:

\begin{definition}\label{def:nilpotentconnection} Let $\mathcal{E}$ be a locally free $\mathcal{O}_X$-module, endowed with an integrable logarithmic connection $\nabla$. We say that $\nabla$ has nilptotent residues if for every $j$ the residue map $$\mathrm{Res}(\nabla)\vert_{D_j}\colon \mathcal{E}\vert_{D_j}\longrightarrow \mathcal{E}_{D_j}$$is nilpotent. We recall that it is the morphism of $\mathcal{O}_{D_J}$ modules defined by composing $\nabla\colon \mathcal{E}\to \mathcal{E}\otimes_{\mathcal{O}_X} \Omega^1_X$    with the base change  $\mathcal{E}\otimes_{\mathcal{O}_X}$ of the Poincar\'e residue map $\Omega^1_X \to \mathcal{O}_{D_j}$ along $D_J$ (see \cite[Ex. 1.2.13 \& Prop. 1.2.14]{OgusBook}).

\end{definition}

\section{The syntomic complex}

Consider a non-degenerate Frobenius log-crystal  $ \bigl(\mathcal{E},\mathrm{Fil}^\bullet \mathcal{E}, \nabla,\mathfrak{E},\Phi_{\mathfrak{E}}\bigr)$
relative to $(X,\alpha_X,\overline{\mathcal{X}},\alpha_{\overline{\mathcal{X}}})$.

\begin{definition}\label{def:syntomiccomplex} We define the (analytic) syntomic complex $\mathbf{Syn}_{\rm an}(\mathcal{E},\mathfrak{E},r)$ or simply $\mathbf{Syn}(\mathcal{E},\mathfrak{E},r)$ associated to $ \bigl(\mathcal{E},\mathrm{Fil}^\bullet \mathcal{E}, \nabla,\mathfrak{E},\Phi_{\mathfrak{E}}\bigr)$ to be the total complex associated to the morphism  of complexes $(1-\frac{\Phi}{p^r},\gamma_{\rm an})$:

$$\mathrm{R}\Gamma_{\mathrm{an}}\bigl((\mathcal{X}_k,\alpha_k)/\mathcal{S}_0,\mathfrak{E}^{\rm conv}\bigr) \longrightarrow \mathrm{R}\Gamma_{\mathrm{an}}\bigl((\mathcal{X}_k,\alpha_k)/\mathcal{S}_0,\mathfrak{E}^{\rm conv}\bigr) \oplus \frac{ \mathrm{R}\Gamma_{\mathrm{logdR}}\bigl(X, \mathcal{E}\bigr) }{\mathrm{Fil}^r \mathrm{R}\Gamma_{\mathrm{logdR}}\bigl(X, \mathcal{E}\bigr)}.$$

\end{definition}

Here $\mathrm{Fil}^r \mathrm{R}\Gamma_{\mathrm{logdR}}\bigl(X, \mathcal{E}\bigr)$ is defined upon passing to a Chech cover by the subcomplex $\mathrm{Fil}^{r-\bullet} \mathcal{E} \otimes_{\mathcal{O}_X}  \Omega_{X/K}^{\log,\bullet}$ of $\mathcal{E} \otimes_{\mathcal{O}_X}  \Omega_{X/K}^{\log,\bullet}$. We denote by $\mathrm{Gr}^r \bigl(\mathcal{E} \otimes_{\mathcal{O}_X}  \Omega_{X/K}^{\log,\bullet}\bigr)$ the quotient of  $\mathcal{E} \otimes_{\mathcal{O}_X}  \Omega_{X/K}^{\log,\bullet}$
by  $\mathrm{Fil}^r \mathrm{R}\Gamma_{\mathrm{logdR}}\bigl(X, \mathcal{E}\bigr)$.

\begin{proposition} \label{prop:spectralanal}
We have an exact sequence
$$ 0\longrightarrow F^1\mathrm{H}^i\bigl(\mathbf{Syn}_{\rm an}(\mathcal{E},\mathfrak{E},r)\bigr) \longrightarrow  \mathrm{H}^i\bigl(\mathbf{Syn}_{\rm an}(\mathcal{E},\mathfrak{E},r)\bigr) \longrightarrow  F^0 \mathrm{H}^i\bigl(\mathbf{Syn}_{\rm an}(\mathcal{E},\mathfrak{E},r)\bigr) \longrightarrow 0$$with $ F^1\mathrm{H}^i\bigl(\mathbf{Syn}_{\rm an}(\mathcal{E},\mathfrak{E},r)\bigr)$ isomorphic to the cokernel of the map $(1-\frac{\Phi}{p^r})  \oplus \gamma^{\rm an}$ $$\mathrm{H}^{i-1}_{\mathrm{an}}\bigl((\mathcal{X}_k,\alpha_k)/K_0,\mathfrak{E}^{\rm conv}\bigr) \longrightarrow \mathrm{H}^{i-1}_{\mathrm{an}}\bigl((\mathcal{X}_k,\alpha_k)/K_0,\mathfrak{E}^{\rm conv}\bigr) \oplus H^{i-1}\bigl(\mathrm{Gr}^r \bigl(\mathcal{E} \otimes_{\mathcal{O}_X}  \Omega_{X/K}^{\log,\bullet}\bigr)\bigr)  $$and $F^0 \mathrm{H}^i\bigl(\mathbf{Syn}_{\rm an}(\mathcal{E},\mathfrak{E},r)\bigr)$ isomorphic to the kernel of the map   $\bigl(1-\frac{\Phi}{p^r}\bigr) \oplus \gamma^{\rm an}$ $$\mathrm{H}^i_{\mathrm{an}}\bigl((\mathcal{X}_k,\alpha_k)/K_0,\mathfrak{E}^{\rm conv}\bigr) \longrightarrow \mathrm{H}^i_{\mathrm{an}}\bigl((\mathcal{X}_k,\alpha_k)/K_0,\mathfrak{E}^{\rm conv}\bigr) \oplus H^{i}\bigl(\mathrm{Gr}^r \bigl(\mathcal{E} \otimes_{\mathcal{O}_X}  \Omega_{X/K}^{\log,\bullet}\bigr)\bigr).$$
When $X$ is proper over $K$ and the Hodge-de Rham spectral sequence for $\mathcal{E}$ degenerates, then we can replace $H^{j}\bigl(\mathrm{Gr}^r \bigl(\mathcal{E} \otimes_{\mathcal{O}_X}  \Omega_{X/K}^{\log,\bullet}\bigr)\bigr)$ with $\mathrm{Gr}^r \mathrm{H}^{j}_{\mathrm{logdR}}(X,\mathcal{E})$ for $j \in \{i-1,i\}$ in the above displayed equations.

\end{proposition}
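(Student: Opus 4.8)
The plan is to read off $\mathbf{Syn}_{\rm an}(\mathcal{E},\mathfrak{E},r)$ as the homotopy fiber (equivalently, the shifted mapping cone $\mathrm{Cone}(u)[-1]$) of the morphism of complexes
$$u:=\Bigl(1-\tfrac{\Phi}{p^r},\,\gamma_{\rm an}\Bigr)\colon \mathrm{R}\Gamma_{\mathrm{an}}\bigl((\mathcal{X}_k,\alpha_k)/\mathcal{S}_0,\mathfrak{E}^{\rm conv}\bigr)\longrightarrow \mathrm{R}\Gamma_{\mathrm{an}}\bigl((\mathcal{X}_k,\alpha_k)/\mathcal{S}_0,\mathfrak{E}^{\rm conv}\bigr)\oplus \frac{\mathrm{R}\Gamma_{\mathrm{logdR}}(X,\mathcal{E})}{\mathrm{Fil}^r\mathrm{R}\Gamma_{\mathrm{logdR}}(X,\mathcal{E})}$$
of Definition \ref{def:syntomiccomplex}; write $A$ for its source and $B$ for its target, so that $\mathbf{Syn}_{\rm an}(\mathcal{E},\mathfrak{E},r)$ is the complex with $A^n\oplus B^{n-1}$ in degree $n$ (the reading forced on us by the shape of the asserted conclusion). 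There is then the tautological short exact sequence of complexes $0\to B[-1]\to \mathbf{Syn}_{\rm an}(\mathcal{E},\mathfrak{E},r)\to A\to 0$, whose long exact cohomology sequence has connecting homomorphism $H^n(A)\to H^n(B)$ equal to $H^n(u)$; extracting the segment around degree $i$ gives
$$0\longrightarrow \mathrm{coker}\bigl(H^{i-1}(u)\bigr)\longrightarrow \mathrm{H}^i\bigl(\mathbf{Syn}_{\rm an}(\mathcal{E},\mathfrak{E},r)\bigr)\longrightarrow \ker\bigl(H^i(u)\bigr)\longrightarrow 0,$$
and one sets $F^1\mathrm{H}^i:=\mathrm{coker}(H^{i-1}(u))$ and $F^0\mathrm{H}^i:=\mathrm{H}^i/F^1=\ker(H^i(u))$. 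This is already the exact sequence in the statement; the rest is to make the two outer terms explicit.

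For that, $H^j(A)=\mathrm{H}^j_{\mathrm{an}}\bigl((\mathcal{X}_k,\alpha_k)/K_0,\mathfrak{E}^{\rm conv}\bigr)$ and the first component of $H^j(u)$ is $1-\tfrac{\Phi}{p^r}$ on it. For the second summand of $B$, recall that $\mathrm{Fil}^r\mathrm{R}\Gamma_{\mathrm{logdR}}(X,\mathcal{E})$ is, after passing to a \v{C}ech cover, the subcomplex $\mathrm{Fil}^{r-\bullet}\mathcal{E}\otimes_{\mathcal{O}_X}\Omega_{X/K}^{\log,\bullet}$, so the quotient $\mathrm{R}\Gamma_{\mathrm{logdR}}(X,\mathcal{E})/\mathrm{Fil}^r\mathrm{R}\Gamma_{\mathrm{logdR}}(X,\mathcal{E})$ is computed by the quotient complex $\mathrm{Gr}^r\bigl(\mathcal{E}\otimes_{\mathcal{O}_X}\Omega_{X/K}^{\log,\bullet}\bigr)$; hence $H^j(B)=\mathrm{H}^j_{\mathrm{an}}\oplus H^j\bigl(\mathrm{Gr}^r(\mathcal{E}\otimes_{\mathcal{O}_X}\Omega_{X/K}^{\log,\bullet})\bigr)$, the second component of $H^j(u)$ being the map $\gamma^{\rm an}$ induced on cohomology by $\gamma_{\rm an}$ followed by the projection to the quotient complex. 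Substituting $j=i-1$ and $j=i$ into $\mathrm{coker}$ and $\ker$ reproduces the two displayed formulas of the proposition verbatim.

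For the last assertion, take hypercohomology of the short exact sequence of (\v{C}ech-level) complexes
$$0\longrightarrow \mathrm{Fil}^{r-\bullet}\mathcal{E}\otimes_{\mathcal{O}_X}\Omega_{X/K}^{\log,\bullet}\longrightarrow \mathcal{E}\otimes_{\mathcal{O}_X}\Omega_{X/K}^{\log,\bullet}\longrightarrow \mathrm{Gr}^r\bigl(\mathcal{E}\otimes_{\mathcal{O}_X}\Omega_{X/K}^{\log,\bullet}\bigr)\longrightarrow 0,$$
obtaining a long exact sequence in which the term between $H^j$ of the first and third complexes is $\mathrm{H}^j_{\mathrm{logdR}}(X,\mathcal{E})$. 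The assumed degeneration at $E_1$ of the Hodge--de Rham spectral sequence for $\mathcal{E}$ is, by the standard argument, equivalent to strictness of the Hodge filtration, i.e.\ to injectivity of every $H^n\bigl(\mathrm{Fil}^{r-\bullet}\mathcal{E}\otimes_{\mathcal{O}_X}\Omega_{X/K}^{\log,\bullet}\bigr)\to \mathrm{H}^n_{\mathrm{logdR}}(X,\mathcal{E})$ --- here properness of $X$ over $K$ ensures all groups in sight are finite-dimensional $K$-vector spaces, so that this is the spectral sequence of a bounded filtered complex of such and the equivalence applies. Granting it, all connecting maps in the long exact sequence vanish and it breaks into short exact sequences $0\to H^j(\mathrm{Fil}^{r-\bullet}\mathcal{E}\otimes_{\mathcal{O}_X}\Omega_{X/K}^{\log,\bullet})\to \mathrm{H}^j_{\mathrm{logdR}}(X,\mathcal{E})\to H^j(\mathrm{Gr}^r(\mathcal{E}\otimes_{\mathcal{O}_X}\Omega_{X/K}^{\log,\bullet}))\to 0$; since the image of the left-hand map is by definition $\mathrm{Fil}^r\mathrm{H}^j_{\mathrm{logdR}}(X,\mathcal{E})$, this yields $H^j(\mathrm{Gr}^r(\mathcal{E}\otimes_{\mathcal{O}_X}\Omega_{X/K}^{\log,\bullet}))\cong \mathrm{H}^j_{\mathrm{logdR}}(X,\mathcal{E})/\mathrm{Fil}^r\mathrm{H}^j_{\mathrm{logdR}}(X,\mathcal{E})=\mathrm{Gr}^r\mathrm{H}^j_{\mathrm{logdR}}(X,\mathcal{E})$ for $j\in\{i-1,i\}$. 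Substituting this isomorphism into the two formulas just obtained gives the final claim.

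There is essentially no hard geometric or analytic input here: everything is the long exact sequence of a mapping cone, together with the textbook equivalence between $E_1$-degeneration of the Hodge--de Rham spectral sequence and strictness of the Hodge filtration. The points demanding care are purely bookkeeping ones: fixing the sign and indexing conventions for the total complex of a morphism so that the connecting homomorphism in the long exact sequence is exactly $H^\bullet(u)$, and tracking the conventions by which $\mathrm{Fil}^r\mathrm{R}\Gamma_{\mathrm{logdR}}$ and $\mathrm{Gr}^r(\mathcal{E}\otimes_{\mathcal{O}_X}\Omega_{X/K}^{\log,\bullet})$ are defined at the level of the \v{C}ech double complex, so that ``$H^j$ of the quotient complex'' is literally the group occurring in the statement. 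I expect the actual write-up to be short, the one genuinely substantive sentence being the invocation of the degeneration hypothesis.
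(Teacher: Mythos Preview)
Your proposal is correct and follows essentially the same approach as the paper. The paper's proof is a one-liner (``a formal consequence of the definition'' plus the isomorphism $\mathrm{H}^i_{\mathrm{dR}}(X,\mathcal{E})/\mathrm{Fil}^r\to \mathrm{H}^i_{\mathrm{dR}}(X,\mathrm{Gr}^r\mathcal{E})$ under the degeneration hypothesis), and you have simply unpacked what that formal consequence is via the mapping-cone long exact sequence; later in \S\ref{S PolyCmpl} the paper phrases the same thing as the two-column spectral sequence of the vertical filtration, which for a double complex concentrated in columns $p=0,1$ is of course equivalent to your cone argument.
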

\begin{proof} The result is a formal consequence of the definition, taking into account that, under the hypotheses above, the map  $$\mathrm{H}^i_{\mathrm{dR}}(X,\mathcal{E})/ \mathrm{Fil}^r \mathrm{H}^i_{\mathrm{dR}}(X,\mathcal{E}) \to \mathrm{H}^i_{\mathrm{dR}}(X,\mathrm{Gr}^r \mathcal{E}) $$ is an isomorphism for every $r$.
\end{proof}

\begin{remark}\label{rmk:crysanal}  We also have a crystalline syntomic complex $\mathbf{Syn}_{\rm crys}(\mathcal{E},\mathfrak{E},r)$ defined replacing $\mathrm{R}\Gamma_{\mathrm{an}}\bigl((\mathcal{X}_k,\alpha_k)/K_0,\mathfrak{E}^{\rm conv}\bigr)$ with  $\mathrm{R}\Gamma_{\mathrm{crys}}\bigl((\overline{\mathcal{X}},\alpha_{\overline{\mathcal{X}}})/\mathcal{S}_0,\mathfrak{E}\bigr) [p^{-1}]$ and $\gamma_{\rm an}$ with $\gamma_{\rm crys}$. Its cohomology $\mathrm{H}^i\bigl(\mathbf{Syn}_{\rm crys}(\mathcal{E},\mathfrak{E},r)\bigr)$ admits a description via an exact sequence as in Proposition \ref{prop:spectralanal} above.  The two complexes are related by a morphism $$\tau_{\rm crys}\colon\mathbf{Syn}_{\rm an}(\mathcal{E},\mathfrak{E},r)\longrightarrow \mathbf{Syn}_{\rm crys}(\mathcal{E},\mathfrak{E},r) $$which induces a morphism on cohomology $$\mathrm{H}^i\bigl(\mathbf{Syn}_{\rm an}(\mathcal{E},\mathfrak{E},r)\bigr) \to \mathrm{H}^i\bigl(\mathbf{Syn}_{\rm crys}(\mathcal{E},\mathfrak{E},r)\bigr). $$
Note also that $\tau_{\rm crys}$ is an isomorphism in cohomology and, hence, the resulting syntomic cohomologies are the same when $K=K_0$ by \cite[Thm.~3.1.1]{Shiho}, while in general the analytic cohomology computes the convergent cohomology by \cite[Thm.~2.3.9]{Shiho}.

\end{remark}

\section{Explicit complexes}

\subsection{Chech covers}\label{sec:Chech}

We fix $\mathcal{X}_\beta\subset \mathcal{X}$, with $\beta\in B$ varying in a  finite totally ordered set $B$, an open affine cover of $\mathcal{X}$ such that 

\begin{itemize}

\item[i.] $\mathcal{X}_\beta=\mathrm{Spf}(R_\beta)$ is formally \'etale over  $\mathrm{Spf}(R_\beta^0)$ with $$R_\beta^0:=\mathcal{O}_{K}\{X_1,\ldots,X_a,Y_1,\ldots,Y_b\}/(X_1\cdots X_a-\pi) $$with chart $$\mathbb{N}^{a+b+1}  \to R_\beta^0,\quad (n,\alpha_1,\ldots,\alpha_a,\beta_1,\ldots,\beta_b)\mapsto \pi^n X_1^{\alpha_1}\cdots X_a^{\alpha_a}Y_1^{\beta_1}\cdots Y_b^{\beta_b};$$

\item[ii.] the divisor $\mathcal{D}_\beta=\mathcal{X}_\beta\cap \mathcal{D}$ is defined  by the equation $Y_1\cdots Y_b=0$

\end{itemize}

We write $X_\beta\subset X$ for the induced open. We take the unique formally \'etale lift $\mathfrak{X}_\beta=\mathrm{Spf}(\mathfrak{R}_\beta)$ over  $\mathrm{Spf}(\mathfrak{R}_\beta^0)$ with $$\mathfrak{R}_\beta^0:=\mathcal{O}_{K_0}[\![Z]\!]\{X_1,\ldots,X_a,Y_1,\ldots,Y_b\}/(X_1\cdots X_a-Z),$$endowed with the $(p,Z)$-adic topology, with chart $$\mathbb{N}^{a+b+1}  \to \mathfrak{R}_\beta^0,\quad (n,\alpha_1,\ldots,\alpha_a,\beta_1,\ldots,\beta_b)\mapsto Z^n X_1^{\alpha_1}\cdots X_a^{\alpha_a}Y_1^{\beta_1}\cdots Y_b^{\beta_b}.$$We denote by $\widetilde{\alpha}_\beta\colon \mathfrak{M}_{\mathfrak{X}_\beta}\to \mathcal{O}_{\mathfrak{X}_\beta}$ the induced  log-structure on $\mathfrak{X}_\beta$. We have the following cartesian diagram of formally log-smooth morphisms
$$
\begin{array}{cccccccccc}
(\mathcal{X}_\beta,\mathcal{M}_{\mathcal{X}_\beta}) &\longrightarrow & (\mathfrak{X}_\beta,\mathfrak{M}_{\mathfrak{X}_\beta})\\
f_\beta\downarrow&& \downarrow\mathfrak{f}_\beta\\
(\mathcal{S},\mathcal{M}) &\longrightarrow& (\mathfrak{S},\mathfrak{M}),
\end{array}
$$sending $X_i\mapsto X_i$, $Y_j\mapsto Y_j$ and $Z\mapsto \pi$. The divisor $Y_1\cdots Y_b=0$ defines a Cartier divisor $\mathfrak{D}_\beta$ on $\mathfrak{X}_\beta$ whose base change to $\mathcal{S}$ is $\mathcal{D}_\beta$. For each $j\in I$ we get  Cartier divisors $\mathfrak{D}_{\beta,j}\subset \mathfrak{X}_\beta$ relative to $\mathcal{O}_{K_0}$   such that
\begin{itemize} 
\item[i.] $\mathfrak{D}_\beta:=\cup_{j\in I} \mathfrak{D}_{\beta,j}\subset \mathfrak{X}_\beta$ is a strict normal crossing divisor relative to $\mathcal{O}_{K_0}$ 
\item[ii.]  for each $i\in J$ we have $\mathfrak{D}_{\beta,i}\cap \mathcal{X}_\beta=\mathcal{D}_i \cap \mathcal{X}_\beta$  
\item[iii.]  for every $J\subset I$ we have that $\mathfrak{X}_{\beta,J}:=\cap_{j\in J} \mathfrak{D}_{\beta,j}$ is smooth over $\mathcal{O}_{K_0}$.  
\end{itemize}

We further take for every $\beta\in B$ a lift  $\varphi_\beta$ of the characteristic $p$ Frobenius  to $\mathfrak{X}_\beta$. For example, we might take the lift of Frobenius on $\mathfrak{R}_\beta^0$ defined as follows: it  is  Frobenius on $\mathcal{O}_{K_0}$, raises the variables  $Z,X_1,\ldots,X_a,Y_1,\ldots,Y_b$ to the $p$-th power and is multiplication by $p$ on  $\mathbb{N}^{a+b+1}$. In particular, it sends each $\mathfrak{D}_{\beta,i}$ to itself. 

\

Given  $\underline{\beta}:=(\beta_1,\ldots,\beta_n)\in B^n$ such that $\beta_1<\ldots < \beta_n$, we write $\mathcal{X}_{\underline{\beta}}:=\cap_{i=1}^n \mathcal{X}_{\beta_i}\subset \mathcal{X}$, $X_{\underline{\beta}}=\cap_{i=1}^n X_{\beta_i} \subset X$ and we let $\mathfrak{X}_{\underline{\beta}}\to \prod_{i=1,\mathfrak{S}}^n \mathfrak{X}_{\beta_i}$ be the $(p,Z)$-adic formal scheme defined as the $(p,Z)$-adic completion of the log-envelope of the product $ \prod_{i=1,\mathfrak{S}}^n \mathfrak{X}_{\beta_i} $; see \cite[Lemma 2.16]{AI}. It comes endowed with a log-structure $\widetilde{\alpha}_{\underline{\beta}}\colon \mathcal{M}_{\mathfrak{X}_{\underline{\beta}}}\to \mathcal{O}_{\mathfrak{X}_{\underline{\beta}}}$. Explicitly, given $$\mathfrak{R}_{\beta_1}^0:=\mathcal{O}_{K_0}[\![Z]\!]\{X_1,\ldots,X_a,Y_1,\ldots,Y_b\}/(X_1\cdots X_a-Z),$$
 then  $\mathfrak{X}_{\underline{\beta}}$ is formally \'etale over

$$\mathfrak{R}_{\underline{\beta}}^0:=\mathcal{O}_{K_0}[\![Z]\!]\{X_1,\ldots,X_a,Y_1,\ldots,Y_b, v_{i,j}^{\pm 1},u_{i,k}^{\pm 1}\}_{i=2,\ldots,n, j=1,\ldots,a, k=1,\ldots,b}/(X_1\cdots X_a-Z).$$Notice that for every $i=2,\ldots,n$ the structural morphism $\mathfrak{R}_{\beta_i}\to \mathfrak{R}_{\underline{\beta}}$ is provided by the morphism
$$\mathfrak{R}_{\beta_i}^0:=\mathcal{O}_{K_0}[\![Z]\!]\{X_{i,1},\ldots,X_{i,a},Y_{i,1},\ldots,Y_{i,b}\}/(X_{i,1}\cdots X_{i,a}-Z) \longrightarrow \mathfrak{R}_{\underline{\beta}}^0,$$ given by $X_{i,j}\mapsto v_{i,j} X_i$ and $Y_{i,k}\mapsto u_{i,k} Y_k$. The  closed immersion $\mathcal{X}_{\underline{\beta}} \subset \prod_{i=1,\mathcal{S}}^n \mathcal{X}_{\beta_i} \subset \prod_{i=1,\mathfrak{S}}^n \mathfrak{X}_{\beta_i}$ lifts to a closed immersion of $\iota_{\underline{\beta}}\colon \mathcal{X}_{\underline{\beta}} \to  \mathfrak{X}_{\underline{\beta}}$. Furthermore, the product of the Fobenius lifts $\prod_{i=1}^n \varphi_{\beta_i}$ on  $\prod_{i=1,\mathfrak{S}}^n \mathfrak{X}_{\beta_i}$ defines a lift  $\varphi_{\underline{\beta}}$ of Frobenius  to $\mathfrak{X}_{\underline{\beta}}$ by functoriality of the log-envelope. This also defines a morphism $\varphi_{\underline{\beta}}$ on the tubular neighborhood $ \big]\mathcal{X}_{\underline{\beta},k}\big[_{\mathfrak{X}_{\underline{\beta}}}^{\log}  $; see \cite[Def.~2.2.5]{Shiho}. Notice that $ \big]\mathcal{X}_{\underline{\beta},k}\big[_{\mathfrak{X}_{\underline{\beta}}}^{\log} =\big]\overline{\mathcal{X}}_{\underline{\beta}}\big[_{\mathfrak{X}_{\underline{\beta}}}^{\log}$ as $\mathcal{X}_{\underline{\beta},k}\subset \overline{\mathcal{X}}_{\underline{\beta}}$ is a closed immersion defined by a nilpotent ideal.

Finally,  for every  $j\in I$  we have a divisor $\mathfrak{D}_{\underline{\beta},j} \subset \mathfrak{X}_{\underline{\beta}}$ defined in the coordinates above by the ideal generated by $Y_j$. Given a subset  $J\subset I$ we then define the closed formal subschemes $\mathfrak{X}_{J,\underline{\beta}}=\cap_{j\in J} \mathfrak{D}_{\underline{\beta},j} \subset \mathfrak{X}_{\underline{\beta}}$. In the coordinates above, it is defined by the ideal sheaf $\mathfrak{I}_{J,\underline{\beta}}=(Y_j\vert j\in J) \mathcal{O}_{\mathfrak{X}_{\underline{\beta}}}$.

\begin{remark}
\label{rmk:Stein} Consider the open unit disk $C$ defined as the rigid analytic space  associated to the formal scheme $\mathcal{O}_{K_0}\left[ \left[ Z\right] \right] $. It is quasi--Stein as it can be written as $C=\cup_{h\in \mathbb{N}} C_h$  with $C_h\subset C$ the admissible open affinoid defined by $\vert Z^h/p\vert\leq 1$.  Similarly, for every $\underline{\beta}\in B^n$ and every open or closed affine subscheme $Y \subset \mathcal{X}_{\underline{\beta},k}$, the tube $ \big]Y\big[_{\mathfrak{X}_{\underline{\beta}}}^{\log}  $ naturally maps to $C$ and is also quasi--Stein. In fact, it can be written as $\cup_h \big]Y\big[_{\mathfrak{X}_{\underline{\beta}},h}^{\log}  $ with $\big]Y\big[_{\mathfrak{X}_{\underline{\beta}},h}^{\log}  $ an admissible affinoid open over $C_h$. If $Y \subset \mathcal{X}_{\underline{\beta},k}$ is open such affinoid is the inverse image of $C_h$. It it is closed and defined by the ideal $(\overline{g}_1,\ldots,\overline{g}_t)$ and we choose lifts $g_1,\ldots,g_t\in \mathfrak{R}_\beta$, then $\big]Y\big[_{\mathfrak{X}_{\underline{\beta}}}^{\log}  $ is defined by the rigid analytic space associated to the $(p,Z,g_1,\ldots,g_t)$-adic completion of $\mathfrak{R}_\beta$. One then defines $\big]Y\big[_{\mathfrak{X}_{\underline{\beta}},h}^{\log}  $  by  the conditions $\vert Z^h/p\vert\leq 1$, $\vert g_1^h/p\vert\leq 1$, $\ldots$, $\vert g_t^h/p\vert\leq 1$. 

For $Y \subset \mathcal{X}_{\underline{\beta},k}$ closed,  we can define a   fundamental system of open admissible covers  of the complement of $ \big]Y\big[_{\mathfrak{X}_{\underline{\beta}}}^{\log}  $ in the rigid analytic fiber of $\mathfrak{X}_{\underline{\beta}}$ as the rigid analytic spaces associated to the $(p,Z)$-adic completion of the ring $\mathfrak{R}_{\underline{\beta}}\bigl[Z/g_i^s\bigr]$ for $i=1,\ldots,t$ and $s\in \mathbb{N}$. Reasoning as before, each of them is quasi-Stein.
 
In each of these cases, the property of being quasi-Stein implies by Kiehl's Theorem B that every coherent module has trivial  higher cohomology groups.

\end{remark}

Consider a non-degenerate Frobenius log-crystal $ \bigl(\mathcal{E},\mathrm{Fil}^\bullet \mathcal{E}, \nabla,\mathfrak{E},\varphi_{\mathfrak{E}}\bigr)$ relative to $(X,\alpha_X,\overline{\mathcal{X}},\alpha_{\overline{\mathcal{X}}})$.

\subsection{De Rham complexes}\label{sec:dR}

We write  $\mathrm{dR}(\mathcal{E})_{\underline{\beta}}$ for the  complex obtained by taking sections of the logarithmic de Rham complex  $\mathcal{E}\otimes_{\mathcal{O}_{X_{\underline{\beta}}}}  \Omega_{X_{\underline{\beta}}}^{\log,\bullet}$. For every integer $r$ we let $\mathrm{Fil}^r\mathrm{dR}(\mathcal{E})_{\underline{\beta}}$ be the complex given by taking  the global sections of the de Rham complex $\mathrm{Fil}^{r-\bullet} \mathcal{E}_{\underline {\beta}} \otimes_{\mathcal{O}_{X_{\underline{\beta}}}}  \Omega_{X_{\underline{\beta}}}^{\log,\bullet}$. For $n\in \mathbb{N}$ let $$\mathrm{dR}(\mathcal{E})_n:=\oplus_{\underline{\beta}\in B^n, \beta_1<\ldots <\beta_n} \mathrm{dR}(\mathcal{E})_{\underline{\beta}}.$$ Then $ \mathrm{R}\Gamma_{\mathrm{logdR}}\bigl(X, \mathcal{E}\bigr)$  is represented by the total complex associated to 
$$\mathrm{dR}(\mathcal{E})_1\longrightarrow \mathrm{dR}(\mathcal{E})_2 \longrightarrow \mathrm{dR}(\mathcal{E})_3 \longrightarrow \cdots. $$ Define similarly $\mathrm{Fil}^r \mathrm{R}\Gamma_{\mathrm{logdR}}\bigl(X, \mathcal{E}\bigr)$ using the $\mathrm{Fil}^r \mathrm{dR}(\mathcal{E})_{\underline{\beta}}$'s. 

Using the coordinates of \S \ref{sec:Chech} we get an explicit description of the residue $$\mathrm{Res}(\nabla_{\underline{\beta},j})\colon \mathcal{E}_{\underline{\beta}}/ Y_j \mathcal{E}_{\underline{\beta}}\longrightarrow \mathcal{E}_{\underline{\beta}}/Y_j \mathcal{E}_{\underline{\beta}}$$of $\nabla_{\underline{\beta}}\colon \mathcal{E}_{\underline{\beta}} \to  \mathcal{E}_{\underline{\beta}}\otimes_{\mathcal{O}_{X_{\underline{\beta}}}} \Omega_{X_{\underline{\beta}}}^{\log}$ along the component of the divisor $D$ defined by $Y_j$, as recalled in Definition \ref{def:nilpotentconnection}, as the reduction modulo $Y_j$ of the composite $(1 \otimes \delta_j)  \circ  \nabla_{\underline{\beta}} $ with $\delta_j:=Y_j \frac{\partial}{\partial Y_j} $.

\subsection{Crystalline complexes}\label{sssec:explcictcrystallne}

For every $\underline{\beta}$ we let $\mathcal{O}_{\mathfrak{X}_{\underline{\beta}}}^{\rm DP}$ be the $p$-adic completion of the sheaf of divided powers with respect to the ideal defining the closed immersion $\mathcal{X}_{\underline{\beta}} \subset  \mathfrak{X}_{\underline{\beta}}$. 
The construction of Frobenius lifts on $\mathfrak{X}_{\underline{\beta}}$, explained in \S \ref{sec:Chech}, defines  a lift of Frobenius  $\varphi_{\underline{\beta}}$ on this divided powers envelope.

Define $\bigl(\mathfrak{E}_{\underline {\beta}}, \nabla_{\underline{\beta}}, \Phi_{\underline{\beta}}\bigr)$ as the $\mathcal{O}_{\mathfrak{X}_{\underline{\beta}}}^{\rm DP}$-module with log-connection $\nabla_{\underline{\beta}}$ relative to $\mathcal{O}_{K_0}$ and Frobenius $\Phi_{\underline{\beta}} \colon \varphi_{\underline{\beta}}^\ast\bigl(\mathfrak{E}_{\underline {\beta}}\bigr)\to \mathfrak{E}_{\underline {\beta}} $ obtained by evaluating the Frobenius log-crystal at $\bigl(\mathfrak{X}_{\underline{\beta}}, \alpha_{\mathfrak{X}_{\underline{\beta}}}\bigr)$.  Define $\mathrm{dR}(\mathfrak{E})_{\underline{\beta}}$ to be the complex obtained by taking global sections of the complex  $\mathfrak{E}_{\underline {\beta}} \otimes_{\mathcal{O}_{\mathfrak{X}_{\underline{\beta}}}^{\rm DP}}  \Omega_{\mathfrak{X}_{\underline{\beta}}}^{\log,{\rm DP},\bullet}[p^{-1}]=\mathfrak{E}_{\underline {\beta}} \otimes_{\mathcal{O}_{\mathfrak{X}_{\underline{\beta}}}}  \Omega_{\mathfrak{X}_{\underline{\beta}}}^{\log,\bullet}[p^{-1}]$.  Proceeding as above we represent the complex $\mathrm{R}\Gamma_{\mathrm{crys}}\bigl((\overline{\mathcal{X}},\alpha_{\overline{\mathcal{X}}})/\mathcal{S}_0,\mathfrak{E}\bigr) [p^{-1}] $, which comes equipped with a Frobenius morphism (of complexes).

For every $\underline{\beta}\in B^n$ such that $\beta_1<\ldots < \beta_n$ the evaluation of $\mathfrak{E}_{\mathcal{O}_K}$ at the open $\mathcal{X}_{\underline{\beta}}$ is the pull-back of   $(\mathfrak{E}_{\underline {\beta}}, \nabla_{\underline{\beta}})$ via the closed immersion $\mathcal{X}_{\underline{\beta}}\subset  \mathfrak{X}_{\underline{\beta}}$. In particular, the restriction of $(\mathcal{E},\nabla)$ to $X_{\underline{\beta}}$ coincides with the module with connection  defined on the analytic generic fiber  by the pull-back of  $(\mathfrak{E}_{\underline {\beta}}, \nabla_{\underline{\beta}})$  to $\mathcal{X}_{\underline{\beta}}$.  This provides a surjective map $\mathrm{dR}(\mathfrak{E})_{\underline{\beta}}\to \mathrm{dR}(\mathcal{E})_{\underline{\beta}}$ for every $\underline{\beta}$ and, hence, a surjective morphism $$\gamma_{\rm crys}\colon \mathrm{R}\Gamma_{\mathrm{crys}}\bigl((\overline{\mathcal{X}},\alpha_{\overline{\mathcal{X}}})/\mathcal{S}_0,\mathfrak{E}\bigr) [p^{-1}] \longrightarrow \mathrm{R}\Gamma_{\mathrm{logdR}}\bigl(X, \mathcal{E}\bigr).$$

\subsection{Convergent complexes}\label{sec:analsyn}

For every $\underline{\beta}$ evaluating $\mathfrak{E}^{\rm conv}$ on the tubular neighborhood  $]\mathcal{X}_{\underline{\beta},k}   [_{\mathfrak{X}_{\underline{\beta}}}^{\log}$ we get the unique module with logarithmic connection and Frobenius $(\mathfrak{E}^{\rm conv}_{\underline{\beta}},\nabla_{\underline{\beta}}^{\rm conv}, \Phi_{\underline{\beta}}^{\rm conv})$ extending $\bigl(\mathfrak{E}_{\underline {\beta}}, \nabla_{\underline{\beta}}, \Phi_{\underline{\beta}}\bigr)$.

Define $\mathrm{dR}(\mathfrak{E}^{\rm conv})_{\underline{\beta}}$ to be the complex $\mathfrak{E}^{\rm conv}_{\underline{\beta}} \otimes_{\mathcal{O}_{]\mathcal{X}_{\underline{\beta},k}   [_{\mathfrak{X}_{\underline{\beta}}}^{\log}} }   \Omega^{\bullet,\log}_{]\mathcal{X}_{\underline{\beta},k}   [_{\mathfrak{X}_{\underline{\beta}}}^{\log}}$, which is also $\mathfrak{E}^{\rm conv}_{\underline{\beta}}  \otimes_{\mathcal{O}_{\mathfrak{X}_{\underline{\beta}}}}  \Omega_{\mathfrak{X}_{\underline{\beta}}}^{\log,\bullet}[p^{-1}] $,  obtained by taking global sections of the logarithmic de Rham  complex of $(\mathfrak{E}^{\rm conv}_{\underline{\beta}},\nabla_{\underline{\beta}}^{\rm conv})$. Thanks to Remark \ref{rmk:Stein} it computes the cohomology of the latter complex on the rigid analytic space $]\mathcal{X}_{\underline{\beta},k}   [_{\mathfrak{X}_{\underline{\beta}}}^{\log} $.  It maps to $\mathrm{dR}(\mathfrak{E})_{\underline{\beta}}$ and it is equipped with a Frobenius morphism (of complexes). 

Proceeding as in \S \ref{sec:dR} we get a representative of $\mathrm{R}\Gamma_{\mathrm{an}}\bigl(\mathcal{X}_k/K_0,\mathfrak{E}^{\rm conv}\bigr)$ and, by Proposition \ref{prop:Econv},  a Frobenius equivariant morphism $$\tau \colon \mathrm{R}\Gamma_{\mathrm{an}}\bigl((\mathcal{X}_k,\alpha_{\mathcal{X}_k})/\mathcal{S}_0,\mathfrak{E}^{\rm conv}\bigr) \longrightarrow  \mathrm{R}\Gamma_{\mathrm{crys}}\bigl((\overline{\mathcal{X}},\alpha_{\overline{\mathcal{X}}})/\mathcal{S}_0,\mathfrak{E}\bigr) [p^{-1}] .$$

\subsection{Log-rigid complexes}\label{sec:rigsyn}

Let $U\subset Y \subset \mathcal{X}_k$ be subschemes with $U\subset \mathcal{X}_k$ open and contained in the smooth locus $\mathcal{X}_k^{\rm sm}$ of $\mathcal{X}_k$ and $Y\subset \mathcal{X}_k$ closed and reduced. Let $\alpha_{U}$, resp.~$\alpha_Y$ be the log-structure on $U$, resp.~$Y$ induced from the log-structure $\alpha_k:=\alpha_{\mathcal{X}_k}$.

Consider $\underline{\beta}=(\beta_1,\ldots,\beta_n) \in B^n$ such that $\beta_1<\ldots < \beta_n$. Define $U_{\underline{\beta}}:= U\cap \mathcal{X}_{\underline{\beta},k}$ and $Y_{\underline{\beta}}:= Y\cap \mathcal{X}_{\underline{\beta},k}$. Restricting $(\mathfrak{E}^{\rm conv}_{\underline{\beta}},\nabla_{\underline{\beta}}^{\rm conv}, \Phi_{\underline{\beta}}^{\rm conv})$ to strict neighborhoods  of the tubular neighborhood of $U_{\underline{\beta}}$ in the tube  of $Y_{\underline{\beta}}$ in the rigid analytic fiber of $\mathfrak{X}_{\underline{\beta}}$ we get a Frobenius overconvergent log-isocrystal $\mathfrak{E}_{\underline{\beta}}^\dagger$ relative to $U_{\underline{\beta}}\subset Y_{\underline{\beta}}$ and the  rigid analytic fiber of $\mathfrak{X}_{\underline{\beta}}$.  See \cite[Rmk. 2.2.19]{Shiho}. Proceeding as in the previous sections using open covers by quasi-Stein spaces as in Remark \ref{rmk:Stein}, we get a complex $\mathrm{R}\Gamma_{\mathrm{rig}}\bigl((U\subset Y,\alpha_k)/K_0,\mathfrak{E}^{\rm conv}\bigr)$ given by taking the total complex of the logarithmic de Rham  complexes
$$\mathrm{dR}(\mathfrak{E}^\dagger)_{\underline{\beta}}:=\mathfrak{E}^\dagger_{\underline {\beta}} \otimes_{\mathcal{O}_{\mathfrak{X}_{\underline{\beta}}}}  \Omega_{\mathfrak{X}_{\underline{\beta}}}^{\log,\bullet}[p^{-1}].$$

Let $\mathcal{E}^\dagger$ be the overconvergent sheaf with connection defined by restricting $\mathcal {E}$ to strict neighbourhoods of the tube $]U[_\mathcal{X}$ in the tube $]Y[_\mathcal{X}$ in $X$. For every $\underline{\beta}$ as above the pull-back of  $\mathfrak{E}_{\underline{\beta}}^\dagger$ to  the tube $]Y_{\underline{\beta}}[_{\mathcal{X}_{\underline{\beta}}}$ in $X_{\underline{\beta}}$ coincides with the overconvergent module with connection $\mathcal{E}^\dagger_{\underline{\beta}}$ given by restricting $\mathcal{E}^\dagger$ to  $]Y_{\underline{\beta}}[_{\mathcal{X}_{\underline{\beta}}}$. These identifications provide a surjective map $\mathrm{dR}(\mathfrak{E}^\dagger)_{\underline{\beta}}\to \mathrm{dR}(\mathcal{E}^\dagger)_{\underline{\beta}}$, given by sending $Z\to \pi$.  We then get a morphism $$\gamma_{\rm rig}\colon \mathrm{R}\Gamma_{\mathrm{rig}}\bigl((U\subset Y,\alpha_k)/K_0,\mathfrak{E}^{\rm conv}\bigr) \longrightarrow \mathrm{R}\Gamma_{\mathrm{rig}}\bigl((U\subset Y,\alpha_k)/K,\mathcal{E}\bigr) .$$As $U$ is contained in the smooth locus of $\mathcal{X}_k$, one can use the embedding $U\subset \mathcal{X}$ to represent $\mathrm{R}\Gamma_{\mathrm{rig}}\bigl((U\subset Y,\alpha_k)/K,\mathcal{E}\bigr)$ as the de Rham complex of $\mathcal{E}$ over strict  neighbourhoods of $]U[_{\mathcal{X}}$ in $]Y[_\mathcal{X}$. We endow $\mathrm{R}\Gamma_{\mathrm{rig}}\bigl((U\subset Y,\alpha_k)/K,\mathcal{E}\bigr)$  with the filtration  $\mathrm{Fil}^r\mathrm{R}\Gamma_{\mathrm{rig}}\bigl((U\subset Y,\alpha_k)/K,\mathcal{E}\bigr)$ defined by restriction of $\mathrm{Fil}^{r-\bullet} \mathcal{E} \otimes_{\mathcal{O}_X}  \Omega_X^\bullet$.

Write $\mathbf{Syn}_{{\rm rig},U\subset Y, \alpha_k}\bigl(\mathcal{E},\mathfrak{E},r\bigr)$ for the total complex associated to the morphism of complexes $ (1-\frac{\Phi}{p^r})\oplus \gamma_{\rm rig} $ from $\mathrm{R}\Gamma_{\mathrm{rig}}\bigl((U\subset Y,\alpha_k)/K_0,\mathfrak{E}^{\rm conv}\bigr) $ to $$\mathrm{R}\Gamma_{\mathrm{rig}}\bigl((U\subset Y,\alpha_k)/K_0,\mathfrak{E}^{\rm conv}\bigr) \oplus \frac{ \mathrm{R}\Gamma_{\mathrm{rig}}\bigl((U\subset Y,\alpha_k)/K,\mathcal{E}\bigr)}{\mathrm{Fil}^r  \mathrm{R}\Gamma_{\mathrm{rig}}\bigl((U\subset Y,\alpha_k)/K,\mathcal{E}\bigr)}.$$By construction we have a morphism of complexes $$\tau_{{\rm rig},U\subset Y, \alpha_k}\colon  \mathbf{Syn}_{\rm an}(\mathcal{E},\mathfrak{E},r) \longrightarrow  \mathbf{Syn}_{{\rm rig},U\subset Y, \alpha_k}\bigl(\mathcal{E},\mathfrak{E},r\bigr)$$which  gives a morphism on cohomology $$\tau_{{\rm rig},U\subset Y, \alpha_k}^i\colon \mathrm{H}^i\bigl(\mathbf{Syn}_{\rm an}(\mathcal{E},\mathfrak{E},r)\bigr) \longrightarrow \mathrm{H}^i\bigl(\mathbf{Syn}_{{\rm rig},U\subset Y, \alpha_k}(\mathcal{E},\mathfrak{E},r)\bigr). $$
As the notation suggests, under the assumption that  $U\subset \mathcal{X}_k^{\rm sm}$, the  cohomology groups we obtain are independent of the choices made in \S \ref{sec:Chech}. This is not clear in general. See \cite[Conj. 2.2.20]{Shiho}.

\begin{lemma}\label{lemma:indchoices} The  complex $\mathrm{R}\Gamma_{\mathrm{rig}}\bigl((U\subset Y,\alpha_k)/K_0,\mathfrak{E}^{\rm conv}\bigr) $ is independent, up to homotopy, of the choices in \S \ref{sec:Chech}. \end{lemma}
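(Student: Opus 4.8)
The plan is to show independence of the choices entering the construction of $\mathrm{R}\Gamma_{\mathrm{rig}}\bigl((U\subset Y,\alpha_k)/K_0,\mathfrak{E}^{\rm conv}\bigr)$ by exhibiting canonical quasi-isomorphisms between the log-rigid de Rham complexes attached to two competing systems of data, and then promoting these quasi-isomorphisms to homotopy equivalences on the Čech total complexes. The choices made in \S\ref{sec:Chech} are: (a) the open affine cover $\{\mathcal{X}_\beta\}_{\beta\in B}$ with its fixed coordinates and charts; (b) the formally étale lifts $\mathfrak{X}_\beta$ and the log-envelopes $\mathfrak{X}_{\underline\beta}$; (c) the Frobenius lifts $\varphi_\beta$. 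First I would isolate the fundamental input, which is Shiho's comparison theorem for log-convergent/log-rigid cohomology \cite[Rmk.~2.2.19]{Shiho} together with the fact that the overconvergent log-isocrystal $\mathfrak{E}^\dagger$ relative to $U\subset Y$ in the rigid analytic fiber of $\mathfrak{X}_{\underline\beta}$ is intrinsically attached to $(\overline{\mathcal{X}},\alpha_{\overline{\mathcal X}})$ and the pair $(U\subset Y)$: it does not depend on how we embed $\mathcal{X}_{\underline\beta}$ into an ambient formally log-smooth formal scheme, only on that embedding being a log-smooth lift. Concretely, the standard Poincaré-lemma argument shows that for any two log-smooth liftings $\mathfrak{P}, \mathfrak{P}'$ of an affine piece $\mathcal{X}_{\underline\beta}$ over $\mathfrak{S}$ (or over $\mathcal{S}_0$ after inverting $p$), with Frobenius lifts, the de Rham complexes of the evaluated overconvergent isocrystal on strict neighbourhoods of the relevant tubes are canonically quasi-isomorphic, and any two choices of connecting map between them differ by a homotopy; this is \cite[\S 2.2]{Shiho} (cf.\ also the classical arguments of Berthelot recalled around \cite[\S 2.4.1]{Berthelot}).

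Next I would organise the comparison into two reductions. The first reduction handles the cover: given two covers $\{\mathcal{X}_\beta\}$ and $\{\mathcal{X}'_{\beta'}\}$, pass to a common refinement $\{\mathcal{X}_\beta\cap \mathcal{X}'_{\beta'}\}$; the two Čech total complexes each receive a refinement map, and one checks these refinement maps are homotopy equivalences by the usual acyclic-Čech-nerve argument, using that all tubes in sight are quasi-Stein and hence have no higher coherent cohomology by Remark \ref{rmk:Stein} (Kiehl's Theorem B), so that the de Rham complexes genuinely compute the log-rigid cohomology of each piece. The second reduction fixes the cover and compares two systems of lifts $\{\mathfrak{X}_\beta,\varphi_\beta\}$ and $\{\mathfrak{X}'_\beta,\varphi'_\beta\}$: here one builds, for each $\underline\beta$, the log-envelope of the product $\prod_{\mathfrak{S}}\mathfrak{X}_{\beta_i}\times_{\mathfrak{S}}\prod_{\mathfrak{S}}\mathfrak{X}'_{\beta_i}$, which dominates both $\mathfrak{X}_{\underline\beta}$ and $\mathfrak{X}'_{\underline\beta}$; evaluating $\mathfrak{E}^\dagger$ there and using the log Poincaré lemma gives compatible quasi-isomorphisms $\mathrm{dR}(\mathfrak{E}^\dagger)_{\underline\beta}\to \mathrm{dR}(\mathfrak{E}^{\dagger\prime})_{\underline\beta}$ (via the intermediate complex on the combined envelope), and one verifies these are compatible with the Čech differentials up to coherent homotopy, which is exactly the content that gives a homotopy equivalence of total complexes. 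I would phrase this via the standard "bicomplex with homotopies" lemma: a map of Čech-type bicomplexes that is a quasi-isomorphism in each column, with homotopies chosen compatibly along the simplicial maps, induces a homotopy equivalence on totalizations.

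I would then assemble these: the composite of a cover-refinement homotopy equivalence and a lift-change homotopy equivalence connects any two instances of the construction, and a standard cofinality argument (any two choices admit a common dominating choice) shows the resulting homotopy class is well defined and that the comparison homotopy equivalences satisfy the obvious cocycle compatibility, so the complex is well defined up to homotopy — indeed up to coherent homotopy, which is what one wants for later functoriality statements. The Frobenius-lift independence is a special case of the lift-change step (two Frobenius lifts on the same $\mathfrak{X}_{\underline\beta}$ are connected by the envelope of $\mathfrak{X}_{\underline\beta}\times_{\mathfrak{S}}\mathfrak{X}_{\underline\beta}$), and because the morphism $\Phi/p^r$ does not enter this lemma at all (we are only asserting independence of the log-rigid de Rham complex, not of its Frobenius action), we need not track Frobenius equivariance here.

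The main obstacle I anticipate is not any single quasi-isomorphism — each is a routine log Poincaré lemma on a quasi-Stein space — but the bookkeeping needed to make all the homotopies \emph{compatible across the simplicial index $\underline\beta$} simultaneously, i.e.\ to upgrade "quasi-isomorphism in each Čech degree" to "homotopy equivalence of total complexes". This requires choosing the connecting maps on the intermediate envelopes functorially in $\underline\beta$ (using functoriality of the log-envelope, as already invoked in \S\ref{sec:Chech} for the Frobenius lifts $\varphi_{\underline\beta}$) and then appealing to a general homological lemma about filtered/Čech bicomplexes; the verification that the naively-defined homotopies are themselves compatible up to higher homotopy is the delicate point, and in a careful write-up one either invokes a model-categorical statement or does the induction on Čech degree by hand, exploiting the quasi-Stein (hence $\Gamma$-acyclic) nature of every term to lift homotopies degree by degree.
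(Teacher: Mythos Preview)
Your route and the paper's are genuinely different, and there is a real gap in yours.

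The paper does \emph{not} run a common-refinement plus envelope-of-products plus log Poincar\'e lemma argument. Instead it argues geometrically: factor the log-envelope construction as
\[
\mathfrak{X}_{\underline{\beta}}\ \stackrel{a}{\longrightarrow}\ \mathfrak{X}'_{\underline{\beta}}\ \stackrel{b}{\longrightarrow}\ \mathfrak{X}_{\beta_1}\times_{\mathfrak{S}}\cdots\times_{\mathfrak{S}}\mathfrak{X}_{\beta_n},
\]
where $b$ exactifies the log-structure coming from the special fibers and $a$ exactifies the one coming from the horizontal divisors $\mathfrak{D}_{\beta_i}$. The key step is that, by \cite[Claim p.~114]{Shiho} \emph{and the hypothesis $U\subset\mathcal{X}_k^{\rm sm}$}, the map $b$ induces an isomorphism of log-rigid spaces on suitable strict neighbourhoods $V_{\underline{\beta}}^{\log}\stackrel{\sim}{\to}V_{\underline{\beta}}$ of the tube of $U_{\underline{\beta}}$. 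On the product side, two different choices of log-smooth lifts $\mathfrak{X}_{\beta_i}$ of $\mathcal{X}_{\beta_i,k}$ give isomorphic $V_{\underline{\beta}}$'s (with the divisorial log-structure) by formal log-smoothness; pulling back through $a$ then shows the log-rigid spaces on which one computes $\mathrm{dR}(\mathfrak{E}^\dagger)_{\underline{\beta}}$ are themselves isomorphic, independent of the choices. So independence is obtained at the level of the underlying spaces, and no bicomplex-homotopy bookkeeping is needed.

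The gap in your proposal is that you never use $U\subset\mathcal{X}_k^{\rm sm}$. The paper explicitly warns, just before the lemma, that without this assumption the independence is not known and points to \cite[Conj.~2.2.20]{Shiho}. Your ``standard Poincar\'e-lemma argument'' on the envelope of the product of two systems of lifts has to hold \emph{overconvergently}, on strict neighbourhoods of $]U_{\underline{\beta}}[$, for the full log-structure $\alpha_k$ including its vertical (special-fiber) part. That is precisely the point where one needs $U$ to sit in the smooth locus: it is what makes the exactification along the vertical log-structure an isomorphism on those strict neighbourhoods (the Shiho claim), so that the comparison reduces to the plain product where formal log-smoothness applies. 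Without isolating this, the citations to \cite[\S 2.2]{Shiho} and \cite[\S 2.4.1]{Berthelot} do not cover the situation, and the envelope-of-both-lifts comparison you sketch is exactly the step that is only conjectural in general. If you want to salvage your approach, you must insert the smoothness hypothesis at the point where you compare strict neighbourhoods in the two envelopes; once you do, you will find you have essentially reproduced the paper's factorisation argument.
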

\begin{proof}  
Consider the natural morphisms of log-formal schemes $$\mathfrak{X}_{\underline{\beta}} \stackrel{a}{\longrightarrow} \mathfrak{X}_{\underline{\beta}}' \stackrel{b}{\longrightarrow} \mathfrak{X}_{\beta_1 } \times_{\mathfrak{S}} \cdots \times_{\mathfrak{S}}  \mathfrak{X}_{\beta_n } ,$$where $b$ is defined by exactifying the log-structure defined on each $\mathfrak{X}_{\beta_i}$ by the special fiber $\mathcal{X}_{\beta_i,k}$   and $a$ is defined by exactifying the log-structure given on each $\mathfrak{X}_{\beta_i}$  by the divisor $\mathfrak{D}_{\beta_i}$. It follows from \cite[Claim p. 114]{Shiho} and the assumption that $U\subset \mathcal{X}_k^{\rm sm}$ that 
the morphism $b$ induces an isomorphism of log-rigid spaces $b_V\colon V_{\underline{\beta}}^{\log} \to V_{\underline{\beta}}$ on suitable strict neighborhoods  $V_{\underline{\beta}}^{\log}$ and $V_{\underline{\beta}}$ of the tubular neighborhood of $U_{\underline{\beta}}$ in the tube  of $\mathcal{X}_{\underline{\beta},k}$ in the rigid analytic fiber of $\mathfrak{X}_{\underline{\beta}}'$, resp.~of $\prod_{i=1,\mathfrak{S}}^n \mathfrak{X}_{\beta_i }$. Here, possibly enlarging the index set $B^n$, we can replace $ V_{\underline{\beta}}$ with an open admissible cover by quasi-Stein spaces, see Remark \ref{rmk:Stein}, living over the rigid analytic fiber of $\mathfrak{S}$, with the  log-structure pull-back of $\mathfrak{M}$.

Since $\mathfrak{X}_{\beta_i} \to \mathfrak{S}$ is a log-smooth lift of $\mathcal{X}_{\beta_i,k}$ for each $\beta_i$, considering also the log-structure defined by the divisors $\mathfrak{D}_{\beta_i}$ and $\mathcal{D}_{\beta_i,k}$ respectively, it follows from formal log-smoothness that we get isomorphic $V_{\underline{\beta}}$'s, considering also the log-structure induced by the $\mathfrak{D}_{\beta_i}$'s,  if we take different choices of $\mathfrak{X}_{\beta_i}$ in \S \ref{sec:Chech}. This implies that the inverse image of $V_{\underline{\beta}}^{\log}$ via the map of log-rigid spaces induced by $a$ is independent of the choices made, up to isomorphism as log-rigid spaces. The Lemma follows.

\end{proof}

Finally, we remark that  $\mathbf{Syn}_{{\rm rig},U\subset Y, \alpha_k}\bigl(\mathcal{E},\mathfrak{E},r\bigr)$ is functorially contravariant in the pair $(U,Y)$ so that if $U'\subset U$ is an open subscheme and $Y'\subset Y$ is a closed reduced subscheme such that $U'\subset Y'$ we have a morphism of complexes: 
$$\mathbf{Syn}_{{\rm rig},U\subset Y, \alpha_k}\bigl(\mathcal{E},\mathfrak{E},r\bigr)\longrightarrow \mathbf{Syn}_{{\rm rig},U'\subset Y', \alpha_k}\bigl(\mathcal{E},\mathfrak{E},r\bigr).$$

\section{Relation with Hyodo-Kato cohomology I}\label{sec:HK}

Consider a non-degenerate  Frobenius log-crystal $ \bigl(\mathcal{E},\mathrm{Fil}^\bullet \mathcal{E}, \nabla,\mathfrak{E},\Phi_{\mathfrak{E}}\bigr)$ relative to $(X,\alpha_X,\overline{\mathcal{X}},\alpha_{\overline{\mathcal{X}}})$.  Let $\bigl(\mathfrak{E}_{\mathrm{rel}},\Phi_{\mathfrak{E}_{\mathrm{rel}}}\bigr)$ be the Frobenius log-crystal on $\bigl(\overline{\mathcal{X}},\alpha_{\overline{\mathcal{X}}}\bigr)$ relative to  $(\overline{\mathcal{S}},\overline{\mathcal{M}}) \to (\mathfrak{S}^{\mathrm DP}, \mathfrak{M}^{\mathrm DP})$ given as pull-back of $\bigl(\mathfrak{E},\Phi_{\mathfrak{E}}\bigr)$ via the natural projections $(\overline{\mathcal{S}},\overline{\mathcal{M}})\to \mathrm{Spec}(k)$ and  $(\mathfrak{S}^{\mathrm DP}, \mathfrak{M}^{\mathrm DP})\to \mathcal{S}_0$.

\begin{definition}\label{def:HK}

Let  $\bigl(\mathfrak{E}_0,\Phi_{\mathfrak{E}_0}\bigr)$ be the Frobenius log-crystal on $(\mathcal{X}_k,\alpha_{\mathcal{X}_k})$ relative to $ (\mathcal{S}_0,\mathcal{M}_0)$ given by the pull-back of $\bigl(\mathfrak{E}_{\mathrm{rel}},\Phi_{\mathfrak{E}_{\mathrm{rel}}}\bigr)$ via the map  $(\mathcal{S}_0,\mathcal{M}_0) \to (\mathfrak{S}^{\mathrm DP}, \mathfrak{M}^{\mathrm DP})$ defined by sending $Z\mapsto 0$. We write $$\mathrm{R}\Gamma_{\mathrm{HK}}\bigl((\mathcal{X}_k,\alpha_{\mathcal{X}_k})/ \mathcal{S}_0^0,\mathfrak{E}_0\bigr):=\mathrm{R}\Gamma_{\mathrm{crys}}\bigl((\mathcal{X}_k,\alpha_{\mathcal{X}_k})/(\mathcal{S}_0,\mathcal{M}_0),\mathfrak{E}_0\bigr)[p^{-1}] $$and $\Phi$ for the induced Frobenius morphism.

\end{definition}

Consider a complex $ \mathrm{R}\Gamma_{\mathrm{an}}\bigl((\mathcal{X}_k,\alpha_{\mathcal{X}_k})/(\mathfrak{S},\mathfrak{M}),\mathfrak{E}^{\rm conv}\bigr)$ computing the relative analytic cohomology with  Frobenius $\Phi$ induced by $\Phi_{\mathfrak{E}_{\mathrm{rel}}}$ proceeding as in \S \ref{sec:analsyn}.  Its base change to the $(\mathcal{S}_0,\mathcal{M}_0)$-valued point of $(\mathfrak{S},\mathfrak{M})$ defined by the ideal $\mathcal{I}$ generated by $Z$ represents $\mathrm{R}\Gamma_{\mathrm{HK}}\bigl((\mathcal{X}_k,\alpha_{\mathcal{X}_k})/ \mathcal{S}_0^0,\mathfrak{E}_0\bigr)$. The map $$Z \frac{\partial}{\partial Z}\colon \mathrm{R}\Gamma_{\mathrm{an}}\bigl((\mathcal{X}_k,\alpha_{\mathcal{X}_k})/(\mathfrak{S},\mathfrak{M}),\mathfrak{E}^{\rm conv}\bigr) \to \mathrm{R}\Gamma_{\mathrm{an}}\bigl((\mathcal{X}_k,\alpha_{\mathcal{X}_k})/(\mathfrak{S},\mathfrak{M}),\mathfrak{E}^{\rm conv}\bigr)$$of complexes defines the {\em monodromy operator} as a morphism of complexes $$N\colon \mathrm{R}\Gamma_{\mathrm{HK}}\bigl((\mathcal{X}_k,\alpha_{\mathcal{X}_k})/ \mathcal{S}_0^0,\mathfrak{E}_0\bigr) \longrightarrow \mathrm{R}\Gamma_{\mathrm{HK}}\bigl((\mathcal{X}_k,\alpha_{\mathcal{X}_k})/  \mathcal{S}_0^0,\mathfrak{E}_0\bigr)$$satisfying the relation $ N \circ \Phi=p \cdot \Phi \circ N$

\begin{definition}\label{def:abs} We denote by $ \mathrm{R}\Gamma_{\mathrm{abs}}\bigl((\mathcal{X}_k,\alpha_{\mathcal{X}_k})/\mathcal{S}_0,\mathfrak{E}^{\rm conv}\bigr) $ the cone of the morphism $Z \frac{\partial}{\partial Z}$ above, by $\mathrm{R}\Gamma_{\mathrm{abs}}\bigl((\mathcal{X}_k,\alpha_{\mathcal{X}_k})/\mathcal{S}_0,\mathfrak{E}_0\bigr) $ the cone of  $N$ and by $$\delta\colon \mathrm{R}\Gamma_{\mathrm{abs}}\bigl((\mathcal{X}_k,\alpha_{\mathcal{X}_k})/\mathcal{S}_0,\mathfrak{E}^{\rm conv}\bigr) \longrightarrow \mathrm{R}\Gamma_{\mathrm{abs}}\bigl((\mathcal{X}_k,\alpha_{\mathcal{X}_k})/\mathcal{S}_0,\mathfrak{E}_0\bigr) $$the projection map.  The Frobenius on the source and $p$ times Frobenius on the target of the cones of $Z \frac{\partial}{\partial Z}$ and of $N$  define a Frobenius morphism on  $ \mathrm{R}\Gamma_{\mathrm{abs}}\bigl((\mathcal{X}_k,\alpha_{\mathcal{X}_k})/\mathcal{S}_0,\mathfrak{E}^{\rm conv}\bigr) $  and on   $\mathrm{R}\Gamma_{\mathrm{abs}}\bigl((\mathcal{X}_k,\alpha_{\mathcal{X}_k})/\mathcal{S}_0,\mathfrak{E}_0\bigr) $ so that $\delta$ is Frobenius equivariant.

\end{definition}

\begin{proposition}\label{prop:anabs} Assume that $\mathcal{X}$ is proper over $\mathcal{O}_K$. Then, \smallskip

 (1) We have an isomorphism, equivariant with respect to Frobenius, $$\mathrm{R}\Gamma_{\mathrm{an}}\bigl((\mathcal{X}_k,\alpha_{\mathcal{X}_k})/\mathcal{S}_0,\mathfrak{E}^{\rm conv}\bigr)[1]\stackrel{\sim}{ \longrightarrow} \mathrm{R}\Gamma_{\mathrm{abs}}\bigl((\mathcal{X}_k,\alpha_{\mathcal{X}_k})/ \mathcal{S}_0,\mathfrak{E}^{\rm conv}\bigr).$$

(2) Each $\mathrm{H}^i_{\mathrm{abs}}\bigl((\mathcal{X}_k,\alpha_{\mathcal{X}_k})/ \mathcal{S}_0,\mathfrak{E}_0\bigr)$ is a finite dimensional $K_0$-vector space.

\smallskip

(3) For every polynomial $P(T)\in K_0[T]$ of the form $P(T)=1+ T Q(T)$ the map on cohomology $$\mathrm{H}^{i}_{\mathrm{an}}\bigl((\mathcal{X}_k,\alpha_{\mathcal{X}_k})/\mathcal{S}_0,\mathfrak{E}^{\rm conv}\bigr)\longrightarrow \mathrm{H}^{i-1}_{\mathrm{abs}}\bigl((\mathcal{X}_k,\alpha_{\mathcal{X}_k})/\mathcal{S}_0,\mathfrak{E}_0\bigr),$$induced by  $\delta$ and (1) defines an isomorphism on kernel and cokernel of $P(\Phi)$.

\smallskip

(4) For every $i$, omitting $\alpha_{\mathcal{X}_k}$ from the notation for simplicity, we have a Frobenius equivariant, exact sequence $$0\to \frac{\mathrm{H}^{i-1}_{\mathrm{HK}}\bigl(\mathcal{X}_k/ \mathcal{S}_0^0,\mathfrak{E}_0\bigr)(-1)}{N\bigl(\mathrm{H}^{i-1}_{\mathrm{HK}}\bigl(\mathcal{X}_k/ \mathcal{S}_0^0,\mathfrak{E}_0\bigr)(-1)\bigr)} \to \mathrm{H}^{i-1}_{\mathrm{abs}}\bigl(\mathcal{X}_k/\mathcal{S}_0,\mathfrak{E}_0\bigr) \to \mathrm{H}^i_{\mathrm{HK}}\bigl(\mathcal{X}_k/ \mathcal{S}_0^0,\mathfrak{E}_0\bigr)^{N=0} \to 0.$$
Here, the twist by $-1$ means that the Frobenius acts as $p\phi$ if $\phi$ denotes the Frobenius acting on the Hyodo-Kato cohomology.

\end{proposition}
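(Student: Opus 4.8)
The plan is to deduce all four statements from the explicit logarithmic de Rham representatives of \S\ref{sec:analsyn} and \S\ref{sec:HK}, exploiting the geometry of the fibration $(\mathfrak{S},\mathfrak{M})\to\mathcal{S}_0$. For (1) I would work on each chart $\mathfrak{X}_{\underline{\beta}}$ of \S\ref{sec:Chech}. The exact sequence of logarithmic Kähler differentials attached to $(\mathfrak{X}_{\underline{\beta}},\widetilde{\alpha}_{\underline{\beta}})\to(\mathfrak{S},\mathfrak{M})\to\mathcal{S}_0$ presents $\Omega^{\log,\bullet}_{\mathfrak{X}_{\underline{\beta}}/\mathcal{S}_0}$ as a two-step filtered complex with graded pieces $\Omega^{\log,\bullet}_{\mathfrak{X}_{\underline{\beta}}/(\mathfrak{S},\mathfrak{M})}$ and $\Omega^{\log,\bullet-1}_{\mathfrak{X}_{\underline{\beta}}/(\mathfrak{S},\mathfrak{M})}\wedge d\log Z$ (the sub being $\mathcal{O}_{\mathfrak{X}_{\underline{\beta}}}\,d\log Z$, free of rank one, $d\log Z=\sum_i d\log X_i$), the connecting differential being the contraction of $\nabla$ against the vector field dual to $d\log Z$, which is exactly the operator $Z\frac{\partial}{\partial Z}$. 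Tensoring with $\mathfrak{E}^{\rm conv}_{\underline{\beta}}$, taking sections over the quasi-Stein tubes (Remark \ref{rmk:Stein}) and passing to the Čech total complex, one identifies the representative of $\mathrm{R}\Gamma_{\mathrm{an}}((\mathcal{X}_k,\alpha_k)/\mathcal{S}_0,\mathfrak{E}^{\rm conv})$ with the mapping cone of $Z\frac{\partial}{\partial Z}$ on the representative of $\mathrm{R}\Gamma_{\mathrm{an}}((\mathcal{X}_k,\alpha_k)/(\mathfrak{S},\mathfrak{M}),\mathfrak{E}^{\rm conv})$, shifted by $[-1]$; by Definition \ref{def:abs} this cone is $\mathrm{R}\Gamma_{\mathrm{abs}}((\mathcal{X}_k,\alpha_k)/\mathcal{S}_0,\mathfrak{E}^{\rm conv})$. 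The one point requiring care is Frobenius equivariance: since $\varphi_{\underline{\beta}}(Z)=Z^p$ gives $\varphi_{\underline{\beta}}^\ast(d\log Z)=p\,d\log Z$, Frobenius acquires an extra factor $p$ on the $d\log Z$-summand, which is precisely the prescription in Definition \ref{def:abs} of Frobenius on the source and $p$ times Frobenius on the target of the cone; it is a chain map because $Z\frac{\partial}{\partial Z}\circ\varphi_{\underline{\beta}}=p\,\varphi_{\underline{\beta}}\circ Z\frac{\partial}{\partial Z}$.

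Granting (1), parts (4) and (2) are essentially formal. By Definition \ref{def:abs}, $\mathrm{R}\Gamma_{\mathrm{abs}}((\mathcal{X}_k,\alpha_k)/\mathcal{S}_0,\mathfrak{E}_0)$ is the cone of $N$ on $\mathrm{R}\Gamma_{\mathrm{HK}}((\mathcal{X}_k,\alpha_k)/\mathcal{S}_0^0,\mathfrak{E}_0)$, so the sequence in (4) is its long exact sequence, with outer terms $\mathrm{coker}\,N$ and $\ker N$ on $\mathrm{H}^\bullet_{\mathrm{HK}}$; the Tate twist on the cokernel term is forced by $N\Phi=p\Phi N$, which makes $N$ a morphism $\mathrm{H}^{i-1}_{\mathrm{HK}}\to\mathrm{H}^{i-1}_{\mathrm{HK}}(-1)$, and Frobenius equivariance of the sequence is part of the definition of the Frobenius on the cone. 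For (2), each $\mathrm{H}^i_{\mathrm{HK}}$ is finite dimensional over $K_0$ since it is the log-crystalline cohomology, after inverting $p$, of the proper log-smooth $k$-scheme $(\mathcal{X}_k,\alpha_k)$ with coefficients in the non-degenerate Frobenius crystal $\mathfrak{E}_0$; then (4) exhibits $\mathrm{H}^{i}_{\mathrm{abs}}((\mathcal{X}_k,\alpha_k)/\mathcal{S}_0,\mathfrak{E}_0)$ as an extension of a subspace of $\mathrm{H}^{i+1}_{\mathrm{HK}}$ by a quotient of $\mathrm{H}^{i}_{\mathrm{HK}}$.

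For (3), using (1) the map under consideration is the morphism of mapping cones induced by the specialisation $s\colon\mathrm{R}\Gamma_{\mathrm{an}}((\mathcal{X}_k,\alpha_k)/(\mathfrak{S},\mathfrak{M}),\mathfrak{E}^{\rm conv})\to\mathrm{R}\Gamma_{\mathrm{HK}}((\mathcal{X}_k,\alpha_k)/\mathcal{S}_0^0,\mathfrak{E}_0)$ at $Z=0$, which by construction intertwines $Z\frac{\partial}{\partial Z}$ with $N$ and $\Phi$ with $\Phi$. I would proceed in four steps. First, by the octahedral axiom (equivalently, by comparing total complexes) $\mathrm{Cone}(\delta)$ is, up to a shift, the cone of $Z\frac{\partial}{\partial Z}$ acting on $\mathrm{Cone}(s)$, and $\mathrm{Cone}(s)$ is, up to a shift, the relative analytic cohomology complex $\mathrm{R}\Gamma_{\mathrm{an}}((\mathcal{X}_k,\alpha_k)/(\mathfrak{S},\mathfrak{M}),\mathfrak{E}^{\rm conv})$ with coefficients twisted by the $\Phi$-stable ideal $Z\mathcal{O}(C)\subset\mathcal{O}(C)$ of functions vanishing at the origin of the open disc $C$. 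Second, on that ideal Frobenius is topologically nilpotent: $\Phi$ is $\varphi$-semilinear with $\varphi(Z)=Z^p$, so $\Phi^k$ turns multiplication by $Z$ into multiplication by $Z^{p^k}$, and $Z^{p^k}\to 0$ in the Fréchet topology of $\mathcal{O}(C)$; hence $\Phi$ is topologically nilpotent on every $\mathrm{H}^j(\mathrm{Cone}(\delta))$. Third, as $P(0)=1$, the operator $P(\Phi)=\mathrm{id}+(\text{topologically nilpotent})$ is bijective on each $\mathrm{H}^j(\mathrm{Cone}(\delta))$ (a complete space, being a subquotient of the Fréchet cohomology of coherent complexes on quasi-Stein spaces; it is moreover finite dimensional over $K_0$). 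Fourth, regarding $P(\Phi)$ as an endomorphism of the distinguished triangle $\mathrm{R}\Gamma_{\mathrm{abs}}(\mathfrak{E}^{\rm conv})\xrightarrow{\delta}\mathrm{R}\Gamma_{\mathrm{abs}}(\mathfrak{E}_0)\to\mathrm{Cone}(\delta)\xrightarrow{+1}$, its fibre on $\mathrm{Cone}(\delta)$ is acyclic, so the fibres of $P(\Phi)$ on the first two terms are quasi-isomorphic; the cohomology of a fibre of $P(\Phi)$ carries a canonical two-step filtration with graded pieces $\mathrm{coker}\,P(\Phi)$ and $\ker P(\Phi)$ in consecutive degrees, and reading this off gives the claimed isomorphisms on kernels and cokernels, which the shift of (1) brings to the stated indexing.

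The hard part will be the first two steps of (3): making rigorous that $\mathrm{Cone}(s)$ is represented by modules over $\mathcal{O}(C)$ on which the convergence $Z^{p^k}\to 0$ genuinely produces topological nilpotence of $\Phi$ on cohomology. This requires controlling the natural Fréchet topology on the relative analytic cohomology and identifying the reduction at $Z=0$ used to define $\mathrm{R}\Gamma_{\mathrm{HK}}$ with the derived one, which is where properness of $\mathcal{X}$ is genuinely needed (finiteness and coherence of the relative $\mathrm{R}\Gamma_{\mathrm{an}}$ over $\mathcal{O}(C)$). Everything else is bookkeeping with mapping cones, the relation $N\Phi=p\Phi N$, and the identity $\varphi^\ast(d\log Z)=p\,d\log Z$.
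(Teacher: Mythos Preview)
Your arguments for (1), (2), and (4) coincide with the paper's: the short exact sequence of logarithmic differentials over $(\mathfrak{S},\mathfrak{M})\to\mathcal{S}_0$ identifies the absolute complex with the shifted cone of $Z\frac{\partial}{\partial Z}$, Frobenius equivariance comes from $\varphi^\ast(d\log Z)=p\,d\log Z$, and (2), (4) are formal consequences of the cone description together with finiteness of Hyodo--Kato cohomology in the proper case.

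For (3) your global architecture also matches the paper's: one shows that $P(\Phi)$ is bijective on the cohomology of the $\mathcal{I}=(Z)$-twisted subcomplex, and then a diagram chase (the paper's Lemma~\ref{lemma:Marco}, using the finite dimensionality from (2)) transfers this to the desired isomorphisms on kernels and cokernels. The difference is in how bijectivity of $P(\Phi)$ on the twisted piece is established. You propose to argue via topological nilpotence of $\Phi$ in the Fr\'echet topology on the \emph{cohomology} $\mathrm{H}^j(\mathrm{Cone}(\delta))$; the paper instead proves the stronger statement that $P(\Phi)$ is bijective on each \emph{term} $Z\cdot\mathfrak{E}^{\rm conv}_{\underline{\beta}}\otimes\Omega^{\log,n}_{\mathfrak{X}_{\underline{\beta}}}$. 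This is done by an explicit $p$-adic argument (Lemma~\ref{lemma:PHIM0}): over the affinoid $\vert Z^e\vert\le\vert p\vert$ one has the integral model $\mathfrak{E}_{\underline{\beta}}$ over $\mathcal{O}_{e,1}=\mathcal{O}_{K_0}\{Z,Z^e/p\}$, and since $\varphi(Z^e/p)=p^{p-1}(Z^e/p)^p$ one sees that a high power of $Q(\Phi)$ lands in $p\cdot(\text{lattice})$, so the geometric series for $(1+Q(\Phi))^{-1}$ converges $p$-adically. Bijectivity on terms gives bijectivity on cohomology for free, with no topology needed on the latter.

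Your route has two soft spots you should be aware of. First, the parenthetical claim that $\mathrm{H}^j(\mathrm{Cone}(\delta))$ is finite dimensional over $K_0$ is not correct: this cohomology is built from the relative analytic cohomology over the open disc $C$ twisted by $Z\mathcal{O}(C)$, which is an infinite-dimensional $K_0$-vector space. Second, and this is the gap you yourself flag, subquotients of Fr\'echet spaces need not be complete (the image of a differential need not be closed), so ``$\Phi$ topologically nilpotent on terms'' does not transparently give ``$P(\Phi)$ invertible on cohomology''. The paper's term-by-term $p$-adic argument sidesteps both issues entirely. Your step~4 is essentially correct but, as stated, the quasi-isomorphism of cones of $P(\Phi)$ only gives that the two short exact sequences $0\to\mathrm{coker}\to\mathrm{H}^j(\mathrm{Cone})\to\ker\to 0$ have isomorphic middle terms; separating kernels from cokernels requires exactly the artinian hypothesis on $\mathrm{H}^j_{\mathrm{abs}}(\mathfrak{E}_0)$ that Lemma~\ref{lemma:Marco} encodes.
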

\begin{proof}
(1) Recall that we have constructed the formal schemes $\mathfrak{X}_{\underline{\beta}}$ so that they factor through formally log smooth morphisms $\mathfrak{f}_{\underline{\beta}}\colon \mathfrak{X}_{\underline{\beta}} \to (\mathfrak{S},\mathfrak{M})$.  The complex $\mathfrak{E}^{\rm conv}_{\underline{\beta}}\otimes_{\mathcal{O}_{\mathfrak{X}_{\underline{\beta}}}} \Omega^{\log,\bullet}_{\mathfrak{X}_{\underline{\beta}}}$ computes analytic cohomology relative to $\mathcal{S}_0$ with trivial log-structure (cfr. condition ii. in the definition of non-degenerate Frobenius log-crystal). We may then also consider  the complex $\mathfrak{E}^{\rm conv}_{\underline{\beta}}\otimes_{\mathcal{O}_{\mathfrak{X}_{\underline{\beta}}}} \Omega^{\log,\bullet}_{\mathfrak{X}_{\underline{\beta}}/(\mathfrak{S},\mathfrak{M})}$ computing the analytic cohomology relative to  $(\mathfrak{S},\mathfrak{M})$.   Taking the total complex for varying $\underline{\beta}$'s, as we did in \S \ref{sec:analsyn}, we get a complex defining $ \mathrm{R}\Gamma_{\mathrm{an}}\bigl((\mathcal{X}_k,\alpha_{\mathcal{X}_k})/(\mathfrak{S},\mathfrak{M}),\mathfrak{E}^{\rm conv}\bigr)$.
Claim (1) follows then  from  the exact sequences below, compatible for varying $\underline{\beta}$'s and with Frobenii, noticing that the boundary map coincides with $Z \frac{\partial}{\partial Z}$:
$$0\to \mathfrak{E}^{\rm conv}_{\underline{\beta}}\otimes_{\mathcal{O}_{\mathfrak{X}_{\underline{\beta}}}}  \Omega_{\mathfrak{X}_{\underline{\beta}}/(\mathfrak{S},\mathfrak{M})}^{\log,\bullet}\wedge \mathcal{O}_{\mathfrak{X}_{\underline{\beta}}}\frac{dZ}{Z}\longrightarrow \mathfrak{E}^{\rm conv}_{\underline{\beta}}\otimes_{\mathcal{O}_{\mathfrak{X}_{\underline{\beta}}}}  \Omega^{\log,\bullet}_{\mathfrak{X}_{\underline{\beta}}}[1]  \longrightarrow $$ $$\longrightarrow \mathfrak{E}^{\rm conv}_{\underline{\beta}}\otimes_{\mathcal{O}_{\mathfrak{X}_{\underline{\beta}}}}  \Omega^{\log,\bullet}_{\mathfrak{X}_{\underline{\beta}}/(\mathfrak{S},\mathfrak{M})}[1]\to 0.$$Note that the induced isomorphism in cohomology is compatible with the Frobenii defined above, since Frobenius on $dZ/Z$ is multiplication by $p$.

(2) Since each $\mathrm{H}^{j}_{\mathrm{HK}}\bigl((\mathcal{X}_k,\alpha_{\mathcal{X}_k})/ \mathcal{S}_0^0,\mathfrak{E}_0\bigr)$ is a finite dimensional $K_0$-vector space due to the properness assumption (see \cite[\S 7]{BerthelotOgusBook}), also each $\mathrm{H}^j_{\mathrm{abs}}\bigl((\mathcal{X}_k,\alpha_{\mathcal{X}_k})/\mathcal{S}_0,\mathfrak{E}_0\bigr)$ is finite dimensional as $K_0$-vector space and, hence, as $\mathbb{Q}_p$-vector space.

(3) We first reduce the claim to the following:

\smallskip
CLAIM:  $P(\Phi)$ is an isomorphism on each  $\mathcal{I} \mathfrak{E}^{\rm conv}_{\underline{\beta}}\otimes_{\mathcal{O}_{\mathfrak{X}_{\underline{\beta}}}} \Omega^{\log,\bullet}_{\mathfrak{X}_{\underline{\beta}}} $.
\smallskip

\noindent 
By (1) we can replace the total complex $\mathcal{I} \mathfrak{E}^{\rm conv}_{\underline{\beta}}\otimes_{\mathcal{O}_{\mathfrak{X}_{\underline{\beta}}}} \Omega^{\log,\bullet}_{\mathfrak{X}_{\underline{\beta}}}[1]$ with the complex
$\mathcal{I}  \mathrm{R}\Gamma_{\mathrm{abs}}\bigl((\mathcal{X}_k,\alpha_{\mathcal{X}_k})/\mathcal{S}_0,\mathfrak{E}^{\rm conv}\bigr)$. Also,  the Claim implies that $P(\Phi)$ is an isomorphism on the cohomology groups of  $\mathcal{I}  \mathrm{R}\Gamma_{\mathrm{abs}}\bigl((\mathcal{X}_k,\alpha_{\mathcal{X}_k})/\mathcal{S}_0,\mathfrak{E}^{\rm conv}\bigr)$. Considering the long exact sequence of cohomology defined by the short exact sequence of complexes $$0 \longrightarrow \mathcal{I}  \mathrm{R}\Gamma_{\mathrm{abs}}\bigl((\mathcal{X}_k,\alpha_{\mathcal{X}_k})/\mathcal{S}_0,\mathfrak{E}^{\rm conv}\bigr) \longrightarrow  \mathrm{R}\Gamma_{\mathrm{abs}}\bigl((\mathcal{X}_k,\alpha_{\mathcal{X}_k})/\mathcal{S}_0,\mathfrak{E}^{\rm conv}\bigr) \stackrel{\delta}{\longrightarrow} $$ $$\longrightarrow \mathrm{R}\Gamma_{\mathrm{abs}}\bigl((\mathcal{X}_k,\alpha_{\mathcal{X}_k})/\mathcal{S}_0,\mathfrak{E}_0\bigr) \longrightarrow 0$$and using the finite dimensionality of $\mathrm{H}^j_{\mathrm{abs}}\bigl((\mathcal{X}_k,\alpha_{\mathcal{X}_k})/\mathcal{S}_0,\mathfrak{E}_0\bigr)$,  we deduce claim (3) from Lemma \ref{lemma:Marco}. 

 We are left to prove the Claim. It suffices to prove the Claim for each term $Z \cdot \mathfrak{E}^{\rm conv}_{\underline{\beta}} \otimes_{\mathcal{O}_{\mathfrak{X}_{\underline{\beta}}}} \Omega^{\log,n}_{\mathfrak{X}_{\underline{\beta}}} $. This follows from Lemma \ref{lemma:PHIM0} (for $r=1$ and $Z_1=Z$) if we restrict to the open of the strict neighborhood  $]\mathcal{X}_{\underline{\beta},k}[^{\log}_{\mathfrak{X}_{\underline{\beta}}}$ defined by the condition $\vert P_\pi(Z)\vert \leq \vert p \vert$ or equivalently $\vert Z^e\vert \leq \vert p \vert$ with $e$ the degree of $P_\pi(Z)$. 
The assumptions of loc.~cit.~are satisfied as  on this open $\mathfrak{E}^{\rm conv}$ admits an integral model $\mathfrak{E}_{\underline{\beta}}$ which is an $\mathcal{O}_{e,1}$-module, with an integrally defined  Frobenius $\Phi_{\mathfrak{E}_{\underline{\beta}}}$ compatible with the one on  $\mathcal{O}_{e,1}$, as explained in the proof of Proposition \ref{prop:Econv}. Since $\mathfrak{E}^{\rm conv}_{\underline{\beta}}$ is defined by Frobenius-pull back of $\mathfrak{E}_{\underline{\beta}}$ it follows that for every affinoid open $W$ in $]\mathcal{X}_{\underline{\beta},k}[^{\log}_{\mathfrak{X}_{\underline{\beta}}}$, stable under Frobenius,  there exists an $m$  such that  $P(\Phi)$  is bijective on $Z^m \cdot \mathfrak{E}^{\rm conv}_{\underline{\beta}} \otimes_{\mathcal{O}_{\mathfrak{X}_{\underline{\beta}}}} \Omega^{\log,n}_{\mathfrak{X}_{\underline{\beta}}/\mathcal{S}_0} $ over $W$. Since $\phi(Z)=Z^p$ this implies that  $P(\Phi)$  is bijective also for $m=1$; see the argument in the second part of the proof of Lemma \ref{lemma:PHIM0}. 

(4) This is formal, taking into account that the Frobenius of Definition \ref{def:abs} makes the short exact sequence of the cone that defines the absolute cohomology groups  ${\rm H}^i_{\rm abs}$ Frobenius equivariant.
\end{proof}

For every $n$, $r\in \mathbb{N}_{\geq 1}$, let $\mathcal{O}_{n,r}:=\mathcal{O}_{K_0}\left[ \underline{Z},\underline{T}\right] /\left( \underline{Z}^{n}-p
\underline{T}\right) $ where $\underline{T}=\left( T_{1},\ldots,T_{r}\right) $, 
$\underline{Z}=\left( Z_{1},\ldots,Z_{r}\right) $ and similarly for the
relations. Let $\varphi =\varphi _{n,r}:\mathcal{O}_{n,r}\rightarrow 
\mathcal{O}_{n,r}$ be the ring homomorphism which is Frobenius on $\mathcal{O
}_{K_0}$, sends $Z_{i}$ to $Z_{i}^{p}$: then necessarily $T_{i}=
Z_{i}^{n}/p$ (the equality after inverting $p$) is sent to $
Z_{i}^{pn}/p=p^{p-1}T_{i}^{p}$. Let $\mathcal{O}_{n,r}\rightarrow \mathcal{
O}_{K_0}$ be the homomorphism of $\mathcal{O}_{K_0}$-algebras sending $
Z_{i}$ and $T_{i}$ to $0$. It is Frobenius equivariant. We denote by $
\mathfrak{m}=\left( \underline{Z},\underline{T}\right) $ its kernel.

\begin{lemma}\label{lemma:PHIM0}
Let $M_{0}$ be an $\mathcal{O}_{n,r}$-module and which is $p$-adically
complete and separated and is endowed with a Frobenius linear map $\Phi
_{0}:M_{0}\rightarrow M_{0}$, i.~e., $\Phi _{0}\left( \alpha m\right) =\varphi
\left( \alpha \right) \Phi _{0}\left( m\right) $ for every $\alpha \in 
\mathcal{O}_{n,r}$ and $m\in M_{0}$. Let $M:=M_{0}\left[ 1/p\right] $ and
let $\Phi $ be the map induced by $\Phi _{0}$. Then, for every polynomial $
P\left( X\right) =1+Q\left( X\right) $ with $Q\left( X\right) \in XK_0
\left[ X\right] $, we have that $P\left( \Phi \right) $ defines an
automorphism of the submodule $\mathfrak{m}M=\left( \underline{Z}\right)
M=\left( \underline{T}\right) M$ of $M$.
\end{lemma}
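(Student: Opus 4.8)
The plan is to prove that $P(\Phi)=\mathrm{id}+Q(\Phi)$ is bijective on $\mathfrak{m}M$ by showing that the Neumann series $\sum_{k\geq 0}(-1)^kQ(\Phi)^k$ converges on $\mathfrak{m}M$ and supplies a two-sided inverse. First I would pin down the shape of $\mathfrak{m}M$: since $pT_i=Z_i^n$ in $\mathcal{O}_{n,r}$, we have $T_iM=Z_i^nM\subseteq Z_iM$ inside $M=M_0[1/p]$, so $\mathfrak{m}M=(\underline Z)M=\sum_{i=1}^rZ_iM$, and $P(\Phi)$ preserves this submodule because $\Phi(Z_im)=\varphi(Z_i)\Phi(m)=Z_i^p\Phi(m)\in Z_i^pM\subseteq Z_iM$. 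Thus it suffices to study $P(\Phi)$ on $\sum_iZ_iM$, and after clearing denominators it is enough to control $Q(\Phi)^k$ on elements of the form $\sum_iZ_im_i$ with $m_i\in M_0$.

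The heart of the argument is a quantitative $p$-adic contraction estimate. Frobenius-linearity of $\Phi_0$ together with $\varphi(Z_i)=Z_i^p$ gives by induction $\Phi_0^s(Z_im)=Z_i^{p^s}\Phi_0^s(m)$ for all $s\geq 0$ and $m\in M_0$; combined with the relation $Z_i^n=pT_i$, which yields $Z_i^{p^s}M_0\subseteq p^{\lfloor p^s/n\rfloor}M_0$, we obtain $\Phi_0^s\bigl(\sum_iZ_iM_0\bigr)\subseteq p^{\lfloor p^s/n\rfloor}M_0$. Now write $Q(X)=\sum_{j=1}^dq_jX^j$ with $q_j\in K_0$ and choose $c\in\mathbb{N}$ with $p^cq_j\in\mathcal{O}_{K_0}$ for all $j$; then $Q(\Phi)^k$ is $p^{-ck}$ times an $\mathcal{O}_{K_0}$-linear combination of operators $\Phi^s$ with $s\geq k$. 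Applying this to $y\in\sum_iZ_iM$ with $p^Ny\in\sum_iZ_iM_0$ and using $\Phi_0^s(p^Ny)\in p^{\lfloor p^s/n\rfloor}M_0\subseteq p^{\lfloor p^k/n\rfloor}M_0$ for $s\geq k$ gives $Q(\Phi)^ky\in p^{\lfloor p^k/n\rfloor-ck-N}M_0$. Since $\lfloor p^k/n\rfloor-ck\to+\infty$ exponentially, the terms of the Neumann series tend to $0$ and its partial sums lie in a fixed $p^{-N'}M_0$, which is $p$-adically complete, so the series converges in $M$; that it is a two-sided inverse of $P(\Phi)$ then follows by telescoping, using that $\Phi_0$, hence $Q(\Phi)$, is continuous for the $p$-adic topology on $M_0$ because $\varphi(p)=p$.

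The main obstacle is precisely that, once $p$ is inverted, $M$ carries no $p$-adic topology, so the naive statement ``$\Phi$ is topologically nilpotent on $\mathfrak{m}M$'' is meaningless: one must produce a contraction estimate on the integral model $M_0$ strong enough to beat both the denominators $p^{-ck}$ introduced by the non-integral coefficients of $Q$ and the arbitrary denominator $p^{-N}$ of the input. This is supplied by the exponential gain $\lfloor p^k/n\rfloor$ coming from the fact that $\Phi^k$ raises each $Z_i$ to the $p^k$-th power, together with $Z_i^n=pT_i$; exponential growth defeats the linear losses. Finally, for applications such as Proposition~\ref{prop:anabs}, where one only disposes of an integral model adapted to a higher power $Z_i^m$, I would record the complementary descent: $P(\Phi)$ preserves the short exact sequence $0\to Z_i^mM\to Z_iM\to Z_iM/Z_i^mM\to 0$, on the quotient $\Phi$ is genuinely nilpotent (since $\Phi^j(Z_iv)\in Z_i^{p^j}M\subseteq Z_i^mM$ once $p^j\geq m$), hence $P(\Phi)=\mathrm{id}+(\text{nilpotent})$ is bijective there, and bijectivity on $Z_i^mM$ then forces bijectivity on $Z_iM$ by the snake lemma.
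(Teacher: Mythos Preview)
Your argument is correct and rests on the same mechanism as the paper's proof—namely that $\varphi(Z_i)=Z_i^p$ together with $Z_i^n=pT_i$ forces iterates of $\Phi$ to pick up rapidly growing powers of $p$ on $\mathfrak{m}M$—but you organize it differently. The paper proceeds in two steps: first it shows $Q(\Phi)$ maps $(\underline{T})^mM_0$ into $p\,(\underline{T})^mM_0$ once $m$ exceeds the denominator bound for $Q$ (using $\varphi(T_i)=p^{p-1}T_i^p\in p\,\mathcal{O}_{n,r}$), so a geometric series gives invertibility on $(\underline{T})^mM_0$ and hence on $(\underline{T})^mM$; then it observes that on the quotient $\mathfrak{m}M/(\underline{T})^mM$ the operator $Q(\Phi)$ is genuinely nilpotent (since $\Phi(\mathfrak{m}^sM)\subset\mathfrak{m}^{s+1}M$ and $\mathfrak{m}^{2mnr}\subset(\underline{T})^m\mathcal{O}_{n,r}$). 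Your single quantitative estimate $Q(\Phi)^ky\in p^{\lfloor p^k/n\rfloor-ck-N}M_0$ collapses these two steps into one, which is cleaner for the lemma itself; the paper's two-step structure, on the other hand, is exactly what is invoked in the application (Proposition~\ref{prop:anabs}), and your final paragraph correctly reconstructs that descent.

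One small gap: you establish convergence of the Neumann series in $M$, but to conclude that $P(\Phi)$ is \emph{surjective as an endomorphism of $\mathfrak{m}M$} you need the limit $S(y)$ to land in $\mathfrak{m}M$, not merely in $M$. This is easily repaired with your own estimate: for $y=\sum_iZ_iy_i$ with $p^Ny_i\in M_0$ one has $\Phi^s(Z_iy_i)=Z_i\cdot\bigl(Z_i^{p^s-1}\Phi^s(y_i)\bigr)$ with $Z_i^{p^s-1}\Phi^s(y_i)\in p^{\lfloor(p^s-1)/n\rfloor-N}M_0$, so each $Q(\Phi)^k(Z_iy_i)$ can be written as $Z_iw_{i,k}$ with $w_{i,k}\to 0$ in a fixed $p^{-N'}M_0$; since multiplication by $Z_i$ is continuous, $S(y)=\sum_iZ_i\bigl(\sum_k(-1)^kw_{i,k}\bigr)\in\sum_iZ_iM=\mathfrak{m}M$. (A minor side remark: because $\Phi$ is $\varphi$-semilinear rather than linear, expanding $Q(\Phi)^k$ produces coefficients of the form $q_{j_1}\varphi^{j_1}(q_{j_2})\varphi^{j_1+j_2}(q_{j_3})\cdots$ rather than plain products of the $q_j$; this does not affect your $p^{-ck}$ bound since $\varphi$ preserves $p$-adic valuation on $K_0$, but it is worth saying.)
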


\begin{proof}
Assume that $p^{h}Q\left( X\right) \in \mathcal{O}_{K_0}\left[ X\right] $.
For every $i=1,\ldots,r$ we have $\varphi \left( T_{i}\right) =pT_{i}g_{i}$ for some $g_{i}\in \mathcal{O}
_{n,r}$. Hence, for $a_1,\ldots,a_r\in \mathbb{N}$ such that $a_1+\ldots + a_r=m$, we have $\varphi \left( T_{1}^{a_1}\cdots T_r^{a_r}\right)
=p^{m}T_{1}^{a_1}\cdots T_r^{a_r}g_{1}^{a_1}\cdots g_r^{a_r}$ and, taking $m$ such that $m\geq h+1$, we see
that $Q\left( \Phi \right) \left( \left( \underline{T}\right)
^{m}M_{0}\right) \subset p\left( \underline{T}\right) ^{m}M_{0}$. Let us
regard $End_{\mathbb{Z}_{p}}\left( \left( \underline{T}\right)
^{m}M_{0}\right) $ as an $\mathcal{O}_{n,r}$-module via the composition with
the multiplication by elements of $\mathcal{O}_{n,r}$ on the codomain: then the inclusion $Q\left( \Phi \right) \left( \left( 
\underline{T}\right) ^{m}M_{0}\right) \subset p\left( \underline{T}\right)
^{m}M_{0}$ implies that $Q\left( \Phi \right) \in \mathrm{End}_{\mathbb{Q}_{p}}\left(
\left( \underline{T}\right) ^{m}M_{0}\right) $ is topologically nilpotent.
Because $M_{0}$ is $p$-adically separated and complete, $\left( \underline{T}%
\right) ^{m}M_{0}$ is also $p$-adically separated and complete and $End_{\mathbb{Q}
_{p}}\left( \left( \underline{T}\right) ^{m}M_{0}\right) $ is $p$
-adically separated and complete as well. In particular, a geometric series argument shows that
$P\left( \Phi \right) =1+Q\left( \Phi \right)$ defines an automorphism of
$\left(\underline{T}\right) ^{m}M_{0}$ and, hence, of $\left(\underline{T}\right) ^{m}M$.

We are left to show that $P\left( \Phi \right) $ is an isomorphism on $
\mathfrak{m}M/ \left( \underline{T}\right) ^{m}M$. Since $\mathfrak{m}
^{2mnr}\subset \left( \underline{T}\right) ^{m}\mathcal{O}
_{n,r}$, $\Phi \left( \mathfrak{m}^{s}M\right) \subset \mathfrak{m}^{s+1}M$
(because $\mathfrak{m}M=\left( \underline{Z}\right) M$ and $\varphi \left(
Z_{i}\right) =Z_{i}^{p}$) and $Q\left( X\right) \in XK_0\left[ X\right] $,
we deduce that $Q\left( \Phi \right) ^{2mnr}\equiv 0$ on $
\mathfrak{m}M/\left( \underline{T}\right) ^{m}M$. Hence $P\left(
\Phi \right) $ is an isomorphism on $\mathfrak{m}M/\left( \underline{T
}\right) ^{m}M$ with inverse $\sum\nolimits_{i=0}^{2mnr}\left( -1\right) ^{i}Q\left( \Phi \right) ^{i}$, as wanted.
\end{proof}

\begin{lemma}\label{lemma:Marco}
Suppose that we are given a morphism between distinguished triangle in the derived category of
complexes of modules over some ring
\begin{equation*}
\begin{array}{ccccccc}
X & \longrightarrow  & Y & \overset{\pi }{\longrightarrow } & Z & 
\longrightarrow  & X\left[ 1\right]  \\ 
x\downarrow \text{ } &  & y\downarrow \text{ } &  & z\downarrow \text{ } & 
& x\left[ 1\right] \downarrow \text{ \ \ } \\ 
X & \longrightarrow  & Y^{\prime } & \overset{\pi }{\longrightarrow } & Z & 
\longrightarrow  & X\left[ 1\right] 
\end{array}
\end{equation*}
satisfying the following properties. The morphism $H^{j+1}\left( x\right) $
is a monomorphisms, the morphism $H^{j}\left( x\right) $ is an isomorphism
and $H^{j}\left( Z\right) $ and $H^{j-1}\left( Z\right) $ are artinian. Then
the morphism $\pi $\ induces isomorphisms%
\begin{equation*}
\mathrm{ker}\left( H^{j}\left( y\right) \right) \overset{\sim }{%
\longrightarrow }\mathrm{ker}\left( H^{j}\left( z\right) \right) \text{ and }%
\mathrm{coker}\left( H^{j}\left( y\right) \right) \overset{\sim }{%
\longrightarrow }\mathrm{coker}\left( H^{j}\left( z\right) \right) \text{.}
\end{equation*}
\end{lemma}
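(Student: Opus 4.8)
The plan is to pass to the long exact cohomology sequences of the two distinguished triangles and argue entirely with the resulting ladder of modules. Write $A^i=H^i(X)$, $B^i=H^i(Y)$, $C^i=H^i(Z)$, let $\alpha^i\colon A^i\to B^i$, $\beta^i\colon B^i\to C^i$, $\gamma^i\colon C^i\to A^{i+1}$ be the maps of the long exact sequence, and let $a^i=H^i(x)$, $b^i=H^i(y)$, $c^i=H^i(z)$ be the vertical maps; the three commuting squares of the morphism of triangles give $b^i\alpha^i=\alpha^i a^i$, $c^i\beta^i=\beta^i b^i$ and $a^{i+1}\gamma^i=\gamma^i c^i$. (Here I have identified the horizontal maps of the two rows, as one may do once the morphism of triangles is realised by an endomorphism of a short exact sequence of complexes $0\to X\to Y\to Z\to 0$, which is the situation in which the lemma is applied; in particular $Y=Y'$ and $b^i$ is an endomorphism.) The map induced by $\pi$ on kernels is the restriction $q^j\colon\mathrm{ker}(b^j)\to\mathrm{ker}(c^j)$ of $\beta^j$, and on cokernels it is the map $p^j\colon\mathrm{coker}(b^j)\to\mathrm{coker}(c^j)$ induced by $\beta^j$; I must show both are isomorphisms. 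The hypotheses enter as follows: surjectivity of $a^j$ and injectivity of $a^{j+1}$ will be used in elementary diagram chases, while the artinian hypotheses will be used through Fitting's lemma and through the remark that an injective endomorphism of an artinian module is automatically bijective.

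First I would dispose of surjectivity of $q^j$ and injectivity of $p^j$, which need only that $a^j$ is onto and $a^{j+1}$ is injective. For $q^j$: given $t\in\mathrm{ker}(c^j)$, from $a^{j+1}\gamma^j(t)=\gamma^j c^j(t)=0$ and injectivity of $a^{j+1}$ one gets $\gamma^j(t)=0$, hence $t=\beta^j(u_0)$; then $\beta^j b^j(u_0)=c^j\beta^j(u_0)=c^j(t)=0$, so $b^j(u_0)=\alpha^j(s)$, and writing $s=a^j(s')$ gives $b^j(u_0)=b^j\alpha^j(s')$, so $u:=u_0-\alpha^j(s')$ lies in $\mathrm{ker}(b^j)$ with $\beta^j(u)=t$. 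For $p^j$: if $\beta^j(s)=c^j(t)$ for some $t$, then $\gamma^j c^j(t)=\gamma^j\beta^j(s)=0$, so $a^{j+1}\gamma^j(t)=0$, so $\gamma^j(t)=0$ and $t=\beta^j(s_1)$; thus $\beta^j(s)=c^j\beta^j(s_1)=\beta^j b^j(s_1)$, so $s-b^j(s_1)=\alpha^j(r)$, and writing $r=a^j(r')$ gives $s-b^j(s_1)=b^j\alpha^j(r')$, whence $s\in\mathrm{im}(b^j)$.

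Next, surjectivity of $p^j$, which uses the artinian hypothesis on $H^j(Z)$. The submodule $\mathrm{ker}(\alpha^{j+1})=\mathrm{im}(\gamma^j)\subseteq A^{j+1}$ is a quotient of $C^j=H^j(Z)$, hence artinian; the relation $a^{j+1}\gamma^j=\gamma^j c^j$ shows $a^{j+1}$ maps $\mathrm{im}(\gamma^j)$ into itself, and it does so injectively, so being an injective endomorphism of an artinian module it restricts to a bijection of $\mathrm{im}(\gamma^j)$. Therefore for any $t\in C^j$ one may write $\gamma^j(t)=a^{j+1}\gamma^j(t_1)=\gamma^j c^j(t_1)$, so $t-c^j(t_1)\in\mathrm{ker}(\gamma^j)=\mathrm{im}(\beta^j)$, which exhibits the class of $t$ in $C^j/\mathrm{im}(c^j)$ as $p^j$ of a class; hence $p^j$ is onto.

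Finally, injectivity of $q^j$, which is the crux and the only place the artinian hypothesis on $H^{j-1}(Z)$ is needed. Let $u\in\mathrm{ker}(b^j)$ with $\beta^j(u)=0$; then $u=\alpha^j(v)$ and $\alpha^j a^j(v)=b^j\alpha^j(v)=0$, so $a^j(v)=\gamma^{j-1}(w)$ for some $w\in C^{j-1}$. Now apply Fitting's lemma to the endomorphism $c^{j-1}$ of the artinian module $C^{j-1}$: write $w=w_s+w_n$ with $w_s$ in the summand on which $c^{j-1}$ is bijective (so $w_s=c^{j-1}(w_s')$) and $(c^{j-1})^k(w_n)=0$ for some $k$; replacing $v$ by $v-\gamma^{j-1}(w_s')$ changes neither $u=\alpha^j(v)$ (as $\alpha^j\gamma^{j-1}=0$) nor the identity $a^j(v)=\gamma^{j-1}(w_n)$, so we may assume $w=w_n$. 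Iterating $a^j\gamma^{j-1}=\gamma^{j-1}c^{j-1}$ then gives $(a^j)^{k}\gamma^{j-1}(w_n)=\gamma^{j-1}(c^{j-1})^{k}(w_n)=0$, i.e.\ $(a^j)^{k+1}(v)=0$, and injectivity of $a^j$ forces $v=0$, hence $u=0$. Combining the four parts proves the lemma. The main obstacle is exactly this last step: arranging the Fitting reduction so that the invertible part of $C^{j-1}$ is genuinely absorbed and the local nilpotence of $c^{j-1}$ is turned into a relation killed by a power of the injective map $a^j$; a secondary, bookkeeping point — relevant only for the statement in full generality — is the identification of the two connecting homomorphisms, which is automatic in the concrete situation in which the lemma is used.
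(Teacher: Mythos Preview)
The paper's own proof is literally ``Left to the reader,'' so there is nothing to compare against; your argument stands on its own and is essentially correct under the simplifying assumption you make (that the morphism of triangles arises from an endomorphism of a short exact sequence, so the two rows share the same horizontal maps). This is indeed the only situation in which the lemma is applied in the paper.

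One technical point deserves comment. In the injectivity-of-$q^j$ step you invoke ``Fitting's lemma'' for the endomorphism $c^{j-1}$ of the \emph{artinian} module $C^{j-1}$. The classical Fitting decomposition $M=\ker(\phi^n)\oplus\mathrm{im}(\phi^n)$ requires $M$ to have finite length, not merely to be artinian (e.g.\ multiplication by $p$ on the Pr\"ufer group has no such direct-sum splitting). Fortunately your argument only uses the weaker sum decomposition $M=\bigcap_n\mathrm{im}(\phi^n)+\bigcup_n\ker(\phi^n)$, which \emph{does} hold for artinian modules: the descending chain of images stabilises at some $M_\infty$ on which $\phi$ is surjective, and then every $m$ differs from an element of $M_\infty$ by something in $\ker(\phi^N)$. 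Since you only use that $w_s=c^{j-1}(w_s')$ and that $(c^{j-1})^k(w_n)=0$, the proof goes through. In the paper's applications the modules are finite-dimensional $K_0$-vector spaces anyway, so the full Fitting lemma is available there.

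Alternatively, the injectivity of $q^j$ can be argued more directly: the submodule $N:=\mathrm{im}(\gamma^{j-1})=\ker(\alpha^j)\subseteq A^j$ is artinian (as a quotient of $C^{j-1}$) and $a^j$-stable, so $a^j|_N$ is an injective endomorphism of an artinian module, hence bijective; since $a^j$ is an automorphism of $A^j$ with $a^j(N)=N$, one has $(a^j)^{-1}(N)=N$, so $a^j(v)\in N$ forces $v\in N=\ker(\alpha^j)$ and $u=\alpha^j(v)=0$. This bypasses the Fitting-type decomposition entirely.
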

\begin{proof} Left to the reader.
\end{proof}

\section{Syntomic complexes with support conditions}
\label{sec:compactsupport}

The divisor $\mathcal{D}_k$ defines a crystal of invertible ideals $\mathcal{O}_{\mathcal{X}_k}^{\rm crys}(-\mathcal{D}_k) \subset \mathcal{O}_{\mathcal{X}_k}^{\rm crys} $; see \cite[Def. 4.3 \& Rmk, 4.5]{EY}. Given a Chech cover as in \S \ref{sec:Chech}, for every $\beta\in B$  its value on  $\mathfrak{X}_\beta$ is the ideal sheaf in $\mathcal{O}_{\mathfrak{X}_\beta}$  defined by the Cartier divisor $\mathfrak{D}_{\beta}\subset \mathfrak{X}_\beta$ of loc.~cit.~Let $\mathbf{1}(-D)\subset \mathbf{1}$ be the Frobenius subcrystal defined by $\mathcal{O}_{X}(-D)\subset \mathcal{O}_{X}$ and $\mathcal{O}_{\mathcal{X}_k}^{\rm crys}(-\mathcal{D}_k) \subset \mathcal{O}_{\mathcal{X}_k}^{\rm crys} $. Given  a non-degenerate Frobenius log-crystal  $ \bigl(\mathcal{E},\mathrm{Fil}^\bullet \mathcal{E}, \nabla,\mathfrak{E},\Phi_{\mathfrak{E}}\bigr)$ relative to $(X,\alpha_X,\overline{\mathcal{X}},\alpha_{\overline{\mathcal{X}}})$  we write $ \bigl(\mathcal{E}(-D),\mathrm{Fil}^\bullet \mathcal{E}(-D), \nabla,\mathfrak{E}(-\mathfrak{D}),\Phi_{\mathfrak{E}}(-\mathfrak{D})\bigr)$ for the tensor product with $\mathbf{1}(-D)$.

\begin{definition}\label{def:syntomiccompactcomplex} Define the analytic syntomic complex $\mathbf{Syn}_{\rm an}^{c-\infty} (\mathcal{E},\mathfrak{E},r)$, or simply $\mathbf{Syn}^{c-\infty}(\mathcal{E},\mathfrak{E},r)$, with support along $(\mathcal{X}_k\backslash \mathcal{D}_k, X\backslash D)$,   associated to $ \bigl(\mathcal{E},\mathrm{Fil}^\bullet \mathcal{E}, \nabla,\mathfrak{E},\Phi_{\mathfrak{E}}\bigr)$ to be the total complex associated to the morphism  of complexes $(1-\frac{\Phi}{p^r},\gamma_{\rm an})$  from the complex $\mathrm{R}\Gamma_{\mathrm{an}}\bigl((\mathcal{X}_k,\alpha_k)/\mathcal{S}_0,\mathfrak{E}^{\rm conv}(-\mathfrak{D})\bigr)$ to the complex $$ \mathrm{R}\Gamma_{\mathrm{an}}\bigl((\mathcal{X}_k,\alpha_k)/\mathcal{S}_0,\mathfrak{E}^{\rm conv}(-\mathfrak{D})\bigr) \oplus \frac{ \mathrm{R}\Gamma_{\mathrm{logdR}}\bigl(X, \mathcal{E}(-D)\bigr) }{\mathrm{Fil}^r \mathrm{R}\Gamma_{\mathrm{logdR}}\bigl(X, \mathcal{E}(-D)\bigr)}.$$

\end{definition}
Similarly we have the crystalline analytic syntomic complex $\mathbf{Syn}_{\rm crys}^{c-\infty}(\mathcal{E},\mathfrak{E},r)$ with support along $D$ with a morphism 
$\mathbf{Syn}_{\rm an}^{c-\infty}(\mathcal{E},\mathfrak{E},r)\to \mathbf{Syn}_{\rm crys}^{c-\infty}(\mathcal{E},\mathfrak{E},r)$; see Remark \ref{rmk:crysanal}. One has also an analogue of Proposition \ref{prop:spectralanal}.

\smallskip


\subsection{Alternative descriptions}

In order to compare our definition of these syntomic complexes with rigid cohomology and with \'etale cohomology we need the following alternative definition.

We start with a definition close to \cite{Ber}. Let $\mathbf{Syn}^\partial_{\mathrm{an}}(\mathcal{E},\mathfrak{E},r)$ or simply $\mathbf{Syn}^\partial(\mathcal{E},\mathfrak{E},r)$  be defined as the total complex associated to the morphism  of complexes $(1-\frac{\Phi}{p^r},\gamma_{\rm an})$ $$\mathrm{R}\Gamma_{\mathrm{an}}\bigl(\mathcal{X}_{(\bullet),k}^\partial/\mathcal{S}_0,\mathfrak{E}_{(\bullet)}^{\rm conv}\bigr)\to \mathrm{R}\Gamma_{\mathrm{an}}\bigl(\mathcal{X}_{(\bullet),k}^\partial/\mathcal{S}_0,\mathfrak{E}_{(\bullet)}^{\rm conv}\bigr) \oplus \frac{ \mathrm{R}\Gamma_{\mathrm{logdR}}\bigl(X_{(\bullet)}, \widehat{\mathcal{E}}^\partial_{(\bullet)}\bigr) }{\mathrm{Fil}^r \mathrm{R}\Gamma_{\mathrm{logdR}}\bigl(X_{(\bullet)},\widehat{\mathcal{E}}^\partial_{(\bullet)}\bigr)}.$$See \S \ref{sec:setting} for the notation $X_{(\bullet)}$, $\mathcal{X}_{(\bullet),k}^\partial$ etc. We denote $\widehat{\mathcal{E}}^\partial_{(a)}$ (resp.~$\mathfrak{E}_{(a)}^{\rm conv}$)  the pull-back of $\mathcal{E}_{(a)}$ to the rigid analytic space associated to formal completion of $\mathcal{X}$ along each $\mathcal{X}_{J}$ with $J\subset I$ and $\vert J\vert=a$ (resp.~of $\mathfrak{E}^{\rm conv}$ to $\mathcal{X}_{(a),k}^\partial\to \mathcal{X}_k$).

We have the following explicit description. Recall from \S \ref{sec:Chech} that for every $\underline{\beta}\in B^n$ with $\beta_1<\ldots < \beta_n$ and every $J\subset I$ we have defined closed formal subschemes $\mathfrak{X}_{J,\underline{\beta}} \subset \mathfrak{X}_{\underline{\beta}}$ defined by an ideal sheaf  $\mathfrak{I}_{J,\underline{\beta}}$. Let $\widehat{\mathfrak{X}}^J_{\underline{\beta}}$ be the formal completion along this closed immersion.  Then for each $a\in \mathbb{N }$ the complex $\mathrm{R}\Gamma_{\mathrm{an}}\bigl(\mathcal{X}_{(a),k}^\partial/\mathcal{S}_0,\mathfrak{E}_{(a)}^{\rm conv}\bigr)$ is represented by the product over all $J\subset I$ of cardinality $\vert J \vert=a$ of the global sections of the logarithmic de Rham complex of the pull-back of $(\mathfrak{E}_{\underline{\beta}}^{\rm conv},\nabla_{\underline{\beta}})$ to the tube $]\mathcal{X}_{J,\underline{\beta},k}[_{\widehat{\mathfrak{X}}^J_{\underline{\beta}}} \to ]\mathcal{X}_{\underline{\beta},k}[_{\mathfrak{X}_{\underline{\beta}}}^{\log} $.

For each $\underline{\beta}$ let $\mathfrak{X}_{J,\underline{\beta}}^\partial $ be the formal scheme $\mathfrak{X}_{J,\underline{\beta}} $ endowed with the log-structure pulled-back from the one on $\mathfrak{X}_{\underline{\beta}}$ and similarly for  $X_J^\partial \subset X$ which is $X_J$ with the log-structure defined by pull-back from $X$. 

We let $\overline{\mathbf{Syn}}_{\mathrm{an}}(\mathcal{E},\mathfrak{E},r)$, or simply $\overline{\mathbf{Syn}}(\mathcal{E},\mathfrak{E},r)$,  be defined as the total complex associated, for varying $\underline{\beta}$ and $J$  to the morphism  of complexes $(1-\frac{\Phi}{p^r},\gamma_{\rm an})$ $$\mathrm{R}\Gamma_{\mathrm{logdR}}\bigl(]\mathcal{X}_{J,\underline{\beta}, k}[_{\mathfrak{X}_{J,\underline{\beta}}^\partial},\mathfrak{E}_{\underline{\beta}}^{\rm conv}\bigr)\to\mathrm{R}\Gamma_{\mathrm{logdR}}\bigl(]\mathcal{X}_{J,\underline{\beta}, k}[_{\mathfrak{X}_{J,\underline{\beta}}^\partial},\mathfrak{E}_{\underline{\beta}}^{\rm conv}\bigr)  \oplus \frac{ \mathrm{R}\Gamma_{\mathrm{logdR}}\bigl(X_{J,\underline{\beta}}^\partial, \mathcal{E}_{J,\underline{\beta}}\bigr) }{\mathrm{Fil}^r \mathrm{R}\Gamma_{\mathrm{logdR}}\bigl(X_{J,\underline{\beta}}^\partial, \mathcal{E}_{J,\underline{\beta}}\bigr)}.$$Here $\mathcal{E}_{J,\underline{\beta}}$ is the pull back of $\mathcal{E}$ to the rigid analytic fiber of $ \mathcal{X}_{J,\underline{\beta}}^\partial$.

\begin{proposition}\label{prop:samecohogroups} Assume that $(\mathcal{E},\nabla)$ has nilpotent residues (see Definition \ref{def:nilpotentconnection}). Then, for every $i$ the morphisms  $\mathbf{Syn}^{c-\infty}(\mathcal{E},\mathfrak{E},r)
\to\overline{\mathbf{Syn}}(\mathcal{E},\mathfrak{E},r)$ and $\mathbf{Syn}^\partial(\mathcal{E},\mathfrak{E},r)\to\overline{\mathbf{Syn}}(\mathcal{E},\mathfrak{E},r)$ are quasi-isomorphisms and, hence, induce isomorphisms on cohomology.

\end{proposition}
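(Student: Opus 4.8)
The plan is to deduce both quasi-isomorphisms from comparisons of the geometric constituents of the three syntomic complexes. Each of $\mathbf{Syn}^{c\text{-}\infty}(\mathcal{E},\mathfrak{E},r)$, $\mathbf{Syn}^\partial(\mathcal{E},\mathfrak{E},r)$ and $\overline{\mathbf{Syn}}(\mathcal{E},\mathfrak{E},r)$ is the total complex of a bicomplex in which one direction is the syntomic map $(1-\Phi/p^r)\oplus\gamma_{\mathrm{an}}$ and the other assembles convergent and logarithmic de Rham complexes; the two comparison morphisms respect this structure and the Frobenius. Filtering by the syntomic direction and using that a Frobenius-equivariant filtered morphism inducing quasi-isomorphisms on graded pieces is a quasi-isomorphism, it suffices to prove that each comparison is a quasi-isomorphism (i) on the analytic (convergent) de Rham terms and (ii) on the logarithmic de Rham terms compatibly with $\mathrm{Fil}^r$; for (ii), since the graded pieces of $\mathrm{Fil}^r$ involve only locally free modules, it is enough to treat $\mathrm{R}\Gamma_{\mathrm{logdR}}$ together with its filtration by the subcomplexes $\mathrm{Fil}^{r-\bullet}\mathcal{E}\otimes\Omega^\bullet_X(\log D)$.

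For $\mathbf{Syn}^{c\text{-}\infty}\to\overline{\mathbf{Syn}}$ I would use a Gysin--\v Cech resolution. For any locally free module $\mathcal{F}$ on $X$ (respectively on a tube $]\mathcal{X}_{\underline{\beta},k}[^{\log}_{\mathfrak{X}_{\underline{\beta}}}$) the augmented alternating-sum complex $0\to\mathcal{F}(-D)\to\mathcal{F}\to\bigoplus_{|J|=1}\mathcal{F}|_{X_J}\to\bigoplus_{|J|=2}\mathcal{F}|_{X_J}\to\cdots$ is exact: it is the Mayer--Vietoris (co-\v Cech) complex of the closed covering $D=\bigcup_j D_j$ tensored with the locally free $\mathcal{F}$. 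Applying this with $\mathcal{F}=\mathrm{Fil}^{r-p}\mathcal{E}\otimes\Omega^p_X(\log D)$ for every $p$ and $r$, using the canonical identification $(\Omega^\bullet_X(\log D))|_{X_J}=\Omega^\bullet_{X_J^\partial}$ of the restriction with the log differentials of the face for its pulled-back log structure, and that the induced differential is the log-connection $\nabla|_{X_J^\partial}$, one obtains a double complex (indexed by the de Rham degree and by $|J|$, refined by the cover $\{X_\beta\}$) whose columns are quasi-isomorphic to $\mathrm{Fil}^r\mathrm{dR}(\mathcal{E}(-D))$ concentrated in $J$-degree $0$; hence its total complex, which is precisely the logarithmic de Rham constituent of $\overline{\mathbf{Syn}}$ with its filtration, is filtered-quasi-isomorphic to $\mathrm{R}\Gamma_{\mathrm{logdR}}(X,\mathcal{E}(-D))$. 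The identical computation on the tubes with $\mathcal{F}=\mathfrak{E}^{\mathrm{conv}}_{\underline{\beta}}\otimes\Omega^p_{\mathfrak{X}_{\underline{\beta}}}(\log)$ handles the convergent constituent; here the nilpotent-residue hypothesis is what ensures that the restrictions $\mathfrak{E}^{\mathrm{conv}}|_{\mathcal{X}_{J,k}^\partial}$ are genuine convergent log-isocrystals on the boundary faces, whose analytic cohomology is computed by these explicit complexes.

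For $\mathbf{Syn}^\partial\to\overline{\mathbf{Syn}}$ the difference is that $\mathbf{Syn}^\partial$ is built from the formal completions of $\mathcal{X}$ along the $\mathcal{X}_J$ (and the tubes $]\mathcal{X}_{J,\underline{\beta},k}[_{\widehat{\mathfrak{X}}^J_{\underline{\beta}}}$) rather than from the faces themselves. On the convergent constituent the two are identified because log-convergent cohomology is independent of the choice of formally log-smooth embedding: the argument of Lemma~\ref{lemma:indchoices} and \cite{Shiho} applies, as both $\widehat{\mathfrak{X}}^J_{\underline{\beta}}$ and $\mathfrak{X}_{J,\underline{\beta}}^\partial$ are formally log-smooth lifts of $\mathcal{X}_{J,k}^\partial$. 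On the de Rham constituent the comparison is a relative logarithmic Poincar\'e lemma: the restriction along $X_{J,\underline{\beta}}^\partial\hookrightarrow{}]X_{J,\underline{\beta}}[$ from the tube is a quasi-isomorphism on logarithmic de Rham cohomology, compatibly with $\mathrm{Fil}^r$. Up to choices of local coordinates, the de Rham complex of the tube is (filtered) quasi-isomorphic to the log de Rham complex of $X_J^\partial$ tensored with the transverse Koszul complexes $\bigl[\mathcal{O}\xrightarrow{\,z_j\partial_{z_j}+N_j\,}\mathcal{O}\,\tfrac{dz_j}{z_j}\bigr]$, $j\in J$, where $\mathcal{O}$ denotes bounded power series in the transverse coordinates $z_j$ over $X_J^\partial$ and $N_j=\mathrm{Res}_{D_j}(\nabla)$; since $z_j\partial_{z_j}+N_j$ acts as $n\cdot\mathrm{id}+N_j$ on the $z_j^n$-homogeneous part and $N_j$ is nilpotent, this operator is invertible for every $n\geq1$, so each Koszul factor is quasi-isomorphic to its degree-$0$ cohomology ($\ker N_j$ in degree $0$, $\mathrm{coker}\,N_j$ in degree $1$) -- which are exactly the two $j$-direction pieces of the log de Rham complex of $X_J^\partial$ -- and the de Rham complex of the tube collapses onto that of $X_J^\partial$.

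I expect the main obstacle to be precisely this relative Poincar\'e lemma in the rigid-analytic setting: one must turn the inverses of $n\cdot\mathrm{id}+N_j$ into bounded operators uniformly on the quasi-Stein (affinoid) exhaustions of the tubes from Remark~\ref{rmk:Stein}, so that the contraction is realized at the level of complexes of topological modules, and one must check that it is compatible with $\mathrm{Fil}^r$ and with the Frobenius structure (where the hypothesis of nilpotent residues is genuinely used, via the eigenvalues of the $N_j$ lying in $\{0\}$). The remaining verifications -- that all the double-complex spectral-sequence identifications above are compatible with the maps $(1-\Phi/p^r)\oplus\gamma_{\mathrm{an}}$, and the bookkeeping of \v Cech signs -- are routine but lengthy.
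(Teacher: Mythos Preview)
Your treatment of $\mathbf{Syn}^{c\text{-}\infty}\to\overline{\mathbf{Syn}}$ and of the de Rham constituent of $\mathbf{Syn}^\partial\to\overline{\mathbf{Syn}}$ matches the paper's: the Mayer--Vietoris resolution $0\to\mathcal{F}(-D)\to\mathcal{F}\to\bigoplus_{|J|=1}\mathcal{F}|_{X_J}\to\cdots$ is exactly what the paper invokes via \cite[Lemma~3.6]{EY}, and your transverse Koszul computation (invertibility of $z_j\partial_{z_j}+N_j$ on the $z_j^n$-graded pieces for $n\geq 1$) is the content of the paper's Lemma~\ref{lemma:Kos L1} and its application in \S\ref{section:incarnation}, which show that $I_{J,\underline{\beta}}\,\widehat{\mathcal{E}}^\partial_{J,\underline{\beta}}\otimes\Omega^{\log,\bullet}$ is acyclic.

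The genuine divergence is on the \emph{convergent} constituent of $\mathbf{Syn}^\partial\to\overline{\mathbf{Syn}}$. You assert that the restriction
\[
\mathrm{R}\Gamma_{\mathrm{logdR}}\bigl(]\mathcal{X}_{J,\underline{\beta},k}[_{\widehat{\mathfrak{X}}^J_{\underline{\beta}}},\mathfrak{E}^{\rm conv}_{\underline{\beta}}\bigr)\longrightarrow \mathrm{R}\Gamma_{\mathrm{logdR}}\bigl(]\mathcal{X}_{J,\underline{\beta},k}[_{\mathfrak{X}_{J,\underline{\beta}}^\partial},\mathfrak{E}^{\rm conv}_{\underline{\beta}}\bigr)
\]
is itself a quasi-isomorphism, by Shiho's independence of the log-smooth embedding. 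The paper does \emph{not} prove this and takes a different route: it shows only that $1-\Phi/p^r$ is an isomorphism \emph{termwise} on the kernel $\mathfrak{I}_{J,\underline{\beta}}\mathfrak{E}^{\rm conv}_{\underline{\beta}}$ of this map, via the ``Frobenius is topologically nilpotent on the augmentation ideal'' argument of Lemma~\ref{lemma:PHIM0} (as in the proof of Proposition~\ref{prop:anabs}(3)). Combined with the de Rham acyclicity, this makes the kernel of the map of \emph{syntomic} total complexes acyclic, without ever deciding whether the convergent map alone is a quasi-isomorphism. Your route, if it goes through, would yield a stronger intermediate statement but trades an elementary Frobenius contraction for an appeal to Shiho's invariance machinery; note also that the Koszul argument of Lemma~\ref{lemma:Kos L1} does not transport directly to the convergent side, since the ring of functions on the tube is quasi-Stein rather than noetherian affinoid --- presumably why the paper switches strategy there. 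One minor correction: nilpotent residues are not what make $\mathfrak{E}^{\rm conv}|_{\mathcal{X}_{J,k}^\partial}$ a convergent isocrystal (restriction of a convergent isocrystal along a log-smooth closed substratum is always one); they enter only through the invertibility of $n\cdot\mathrm{id}+N_j$, exactly where you later use them.
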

\begin{proof} The first statement follows as the natural map $\mathbf{Syn}^{c-\infty}(\mathcal{E},\mathfrak{E},r)\to  \overline{\mathbf{Syn}}(\mathcal{E},\mathfrak{E},r)$ is a quasi-isomorphism.  See the proof of \cite[Lemma 3.6]{EY} for the computation.

We prove the second claim. It suffices to show that, for every $J\subset I$ and every $\underline{\beta}\in B^n$ such that $\beta_1<\ldots < \beta_n$, the following two assertions hold: 

(1) the map of de Rham complexes  $\mathrm{R}\Gamma_{\mathrm{logdR}}\bigl(X_{J,\underline{\beta}}, \widehat{\mathcal{E}}^\partial_{J,\underline{\beta}}\bigr) \longrightarrow \mathrm{R}\Gamma_{\mathrm{logdR}}\bigl(X_{J,\underline{\beta}}^\partial, \mathcal{E}_{J,\underline{\beta}}\bigr) $ is a quasi-isomorphism and similarly for the $ \mathrm{Fil}^r \mathrm{R}\Gamma_{\mathrm{logdR}}$;

(2) $1-\frac{\Phi}{p^r}$ defines an isomorphism on the kernel of surjective morphism of the representing complexes $$\mathrm{R}\Gamma_{\mathrm{logdR}}\bigl(]\mathcal{X}_{J,\underline{\beta},k}[_{\widehat{\mathfrak{X}}^J_{\underline{\beta}}},\mathfrak{E}_{\underline{\beta}}^{\rm conv}\bigr) \to  \mathrm{R}\Gamma_{\mathrm{logdR}}\bigl(]\mathcal{X}_{J,\underline{\beta}, k}[_{\mathfrak{X}_{J,\underline{\beta}}^\partial},\mathfrak{E}_{\underline{\beta}}^{\rm conv}\bigr).$$

We prove (1). Let $I_{J,\underline{\beta}}$  be the kernel of the map $\mathcal{O}_{X_{\underline{\beta}}} \to \mathcal{O}_{X_{{J,\underline{\beta}}}}$. Then $I_{J,\underline{\beta}} \widehat{\mathcal{E}}^\partial_{J,\underline{\beta}}$ is the kernel of the map $\widehat{\mathcal{E}}^\partial_{J,\underline{\beta}}\to \mathcal{E}_{J,\underline{\beta}}$. It suffices to prove that  the de Rham complex $I_{J,\underline{\beta}} \widehat{\mathcal{E}}^\partial_{J,\underline{\beta}}\otimes_{\mathcal{O}_{X_{\underline{\beta}}}} \Omega_{X_{\underline{\beta}}}^{\log,\bullet}$ is exact. This will be proved in \S\ref{section:incarnation}.


To prove (2), let  $\mathfrak{I}_{J,\underline{\beta}}$  be the kernel of the map $\mathcal{O}_{\mathfrak{X}_{\underline{\beta}}} \to \mathcal{O}_{\mathfrak{X}_{{J,\underline{\beta}}}}$.  It  suffices to show that $1-\frac{\Phi_{\underline{\beta}}}{p^r}$
is invertible on the restriction of $\mathfrak{I}_{J,\underline{\beta}}\mathfrak{E}_{\underline{\beta}}^{\rm conv}$ to $]\mathcal{X}_{J,\underline{\beta},k}[_{\widehat{{\mathfrak{X}}_{\underline{\beta}}^J}}$.  One checks on local coordinates \ref{sec:Chech} that  $\mathfrak{I}_{J,\underline{\beta}}$ is stable under Frobenius. The claim then follows as in the proof of the Claim in part (3) of Proposition \ref{prop:anabs}, replacing $Z_1,\ldots,Z_r$ with generators of $\mathfrak{I}_{J,\underline{\beta}}$ and using Lemma \ref{lemma:PHIM0} for $n=1$ and $r=\vert J \vert$.
\end{proof}

\subsection{Relation with Hyodo-Kato cohomology with support}\label{sec:HKcompact}

One defines complexes  $\mathrm{R}\Gamma_{\mathrm{abs}}\bigl((\mathcal{X}_k,\alpha_{\mathcal{X}_k})/\mathcal{S}_0,\mathfrak{E}^{\rm conv}(-\mathfrak{D})\bigr)$,  $\mathrm{R}\Gamma_{\mathrm{abs}}\bigl((\mathcal{X}_k,\alpha_{\mathcal{X}_k})/\mathcal{S}_0,\mathfrak{E}_0(-\mathfrak{D}_0)\bigr)$ and $\mathrm{R}\Gamma_{\mathrm{HK}}\bigl(\mathcal{X}_k/ \mathcal{S}_0^0,\mathfrak{E}_0(-\mathfrak{D}_0)\bigr)$ proceeding as in \S \ref{sec:HK}. We have the following:

\begin{proposition}\label{prop:anabscompact} Assume that $\mathcal{X}$ is proper over $\mathcal{O}_K$. Then, \smallskip

(1) We have an isomorphism, equivariant with respect to Frobenius, $$\mathrm{R}\Gamma_{\mathrm{an}}\bigl((\mathcal{X}_k,\alpha_{\mathcal{X}_k})/\mathcal{S}_0,\mathfrak{E}^{\rm conv}(-\mathfrak{D})\bigr)[1]\stackrel{\sim}{ \longrightarrow} \mathrm{R}\Gamma_{\mathrm{abs}}\bigl((\mathcal{X}_k,\alpha_{\mathcal{X}_k})/\mathcal{S}_0,\mathfrak{E}^{\rm conv}(-\mathfrak{D})\bigr).$$

(2) Each $\mathrm{H}^i_{\mathrm{abs}}\bigl((\mathcal{X}_k,\alpha_{\mathcal{X}_k})/\mathcal{S}_0,\mathfrak{E}_0(-\mathfrak{D}_0)\bigr)$ is a finite dimensional $K_0$-vector space.

\smallskip

(3) For every polynomial $P(T)\in K_0[T]$ of the form $P(T)=1+ T Q(T)$ the map on cohomology $$\mathrm{H}^i_{\mathrm{an}}\bigl((\mathcal{X}_k,\alpha_{\mathcal{X}_k})/\mathcal{S}_0,\mathfrak{E}^{\rm conv}(-\mathfrak{D})\bigr)\longrightarrow \mathrm{H}^{i-1}_{\mathrm{abs}}\bigl((\mathcal{X}_k,\alpha_{\mathcal{X}_k})/\mathcal{S}_0,\mathfrak{E}_0(-\mathfrak{D}_0)\bigr),$$induced by  $\delta$ and (1) defines an isomorphism on kernel and cokernel of $P(\Phi)$.

\smallskip

(4) For every $i$, omitting $\alpha_{\mathcal{X}_k}$ from the notation for simplicity, we have a Frobenius equivariant, exact sequence $$0\longrightarrow \frac{\mathrm{H}^{i-1}_{\mathrm{HK}}\bigl(\mathcal{X}_k/ \mathcal{S}_0^0,\mathfrak{E}_0(-\mathfrak{D}_0)\bigr)(-1)}{N\bigl(\mathrm{H}^{i-1}_{\mathrm{HK}}\bigl(\mathcal{X}_k/ \mathcal{S}_0^0,\mathfrak{E}_0(-\mathfrak{D}_0)\bigr)(-1)\bigr)} \longrightarrow \mathrm{H}^{i-1}_{\mathrm{abs}}\bigl(\mathcal{X}_k/\mathcal{S}_0,\mathfrak{E}_0(-\mathfrak{D}_0)\bigr) \longrightarrow $$ $$\longrightarrow \mathrm{H}^i_{\mathrm{HK}}\bigl(\mathcal{X}_k/ \mathcal{S}_0^0,\mathfrak{E}_0(-\mathfrak{D}_0)\bigr)^{N=0} \longrightarrow 0.$$
Recall that the twist by $-1$ means that the Frobenius acts as $p\phi$ if $\phi$ denotes the Frobenius acting on the untwisted cohomology.

\end{proposition}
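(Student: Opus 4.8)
The plan is to repeat the proof of Proposition~\ref{prop:anabs} essentially verbatim; twisting the coefficients by the invertible Frobenius-subcrystal $\mathbf{1}(-D)$ introduced at the start of \S\ref{sec:compactsupport} changes nothing of substance. The extra input to keep in mind is that the twisting ideal is stable under the chosen Frobenius lifts: in the coordinates of \S\ref{sec:Chech} the divisor $\mathfrak{D}_{\underline{\beta}}$ is cut out by $Y_1\cdots Y_b$ and $\varphi_{\underline{\beta}}$ raises each $Y_j$ to its $p$-th power, so $\mathfrak{I}_{\underline{\beta}}:=\mathcal{O}_{\mathfrak{X}_{\underline{\beta}}}(-\mathfrak{D}_{\underline{\beta}})$ is $\varphi_{\underline{\beta}}$-stable; being locally free of rank one, tensoring by it is exact and preserves $p$-adic completeness and separatedness of integral models.

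For part~(1) I would take the same formal schemes $\mathfrak{X}_{\underline{\beta}}$ factoring through formally log-smooth $\mathfrak{f}_{\underline{\beta}}\colon\mathfrak{X}_{\underline{\beta}}\to(\mathfrak{S},\mathfrak{M})$, twist the complexes there by $\mathcal{O}_{\mathfrak{X}_{\underline{\beta}}}(-\mathfrak{D}_{\underline{\beta}})$, and observe that the three-term exact sequences of complexes used in the proof of Proposition~\ref{prop:anabs}(1) stay exact after this twist by a locally free rank-one module, that the connecting map is still $Z\frac{\partial}{\partial Z}$, and that Frobenius equivariance survives because $\mathfrak{D}_{\underline{\beta}}$ is $\varphi_{\underline{\beta}}$-stable and Frobenius on $dZ/Z$ is multiplication by $p$; gluing over $\underline{\beta}$ gives the isomorphism. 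Part~(2) follows the same pattern: $\mathfrak{E}_0(-\mathfrak{D}_0)$ is a coherent isocrystal on the proper log-scheme $(\mathcal{X}_k,\alpha_{\mathcal{X}_k})/\mathcal{S}_0^0$, so its Hyodo--Kato cohomology is finite over $K_0$ by \cite[\S7]{BerthelotOgusBook}, and then the long exact sequence of the cone defining $\mathrm{R}\Gamma_{\mathrm{abs}}\bigl(\mathcal{X}_k/\mathcal{S}_0,\mathfrak{E}_0(-\mathfrak{D}_0)\bigr)$ forces each $\mathrm{H}^i_{\mathrm{abs}}$ to be finite-dimensional over $K_0$, hence over $\mathbb{Q}_p$. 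Part~(4) is purely formal, exactly as in Proposition~\ref{prop:anabs}(4): the Frobenius of Definition~\ref{def:abs}, with the $p$-twist on the target of the monodromy cone, makes the short exact sequence of the cone Frobenius equivariant, and taking cohomology yields the displayed four-term sequence.

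The only place requiring an actual argument is part~(3). As before, using~(1), the short exact sequence $0\to\mathcal{I}\,\mathrm{R}\Gamma_{\mathrm{abs}}\bigl(\mathcal{X}_k/\mathcal{S}_0,\mathfrak{E}^{\rm conv}(-\mathfrak{D})\bigr)\to\mathrm{R}\Gamma_{\mathrm{abs}}\bigl(\mathcal{X}_k/\mathcal{S}_0,\mathfrak{E}^{\rm conv}(-\mathfrak{D})\bigr)\stackrel{\delta}{\longrightarrow}\mathrm{R}\Gamma_{\mathrm{abs}}\bigl(\mathcal{X}_k/\mathcal{S}_0,\mathfrak{E}_0(-\mathfrak{D}_0)\bigr)\to0$, the finiteness from~(2), and Lemma~\ref{lemma:Marco}, one reduces to showing that $P(\Phi)$ is an isomorphism on each $\mathcal{I}\,\mathfrak{E}^{\rm conv}_{\underline{\beta}}(-\mathfrak{D}_{\underline{\beta}})\otimes_{\mathcal{O}_{\mathfrak{X}_{\underline{\beta}}}}\Omega^{\log,\bullet}_{\mathfrak{X}_{\underline{\beta}}}$, and hence on each term $Z\cdot\mathfrak{E}^{\rm conv}_{\underline{\beta}}(-\mathfrak{D}_{\underline{\beta}})\otimes\Omega^{\log,n}_{\mathfrak{X}_{\underline{\beta}}}$. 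Restricting to the open $\{\vert P_\pi(Z)\vert\leq\vert p\vert\}$ of $]\mathcal{X}_{\underline{\beta},k}[^{\log}_{\mathfrak{X}_{\underline{\beta}}}$, the twisted module has an integral $\mathcal{O}_{e,1}$-model which is $p$-adically complete and separated and carries an integrally defined Frobenius compatible with that on $\mathcal{O}_{e,1}$, and $\mathfrak{E}^{\rm conv}_{\underline{\beta}}(-\mathfrak{D}_{\underline{\beta}})$ is the Frobenius pull-back of this model. Then Lemma~\ref{lemma:PHIM0} (with $r=1$, $Z_1=Z$) gives bijectivity of $P(\Phi)$ on $Z^m\cdot(-)$ for $m\gg0$, and the geometric-series argument at the end of the proof of Lemma~\ref{lemma:PHIM0}, using $\varphi(Z)=Z^p$, descends this to $m=1$.

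The main difficulty is purely one of bookkeeping: carrying the twist by $\mathbf{1}(-D)$ consistently through the identification in~(1) and the reduction in~(3), and keeping track of the interplay between the ideal $\mathcal{I}=(Z)$ --- which governs the passage from the relative-over-$(\mathfrak{S},\mathfrak{M})$ picture to the Hyodo--Kato one --- and the twisting ideal $\mathfrak{I}_{\underline{\beta}}$. Once one records that both are $\varphi$-stable and that $\mathfrak{I}_{\underline{\beta}}$ is invertible, Lemmas~\ref{lemma:PHIM0} and~\ref{lemma:Marco} apply with no change, and the argument closes.
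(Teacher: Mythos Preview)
Your proposal is correct and follows exactly the paper's approach: the paper's proof is the single line ``It follows from Proposition~\ref{prop:anabs},'' and you have simply unpacked why the twist by the invertible Frobenius-stable ideal $\mathbf{1}(-D)$ does not disturb any step of that argument. Your added bookkeeping (Frobenius-stability of $\mathfrak{I}_{\underline{\beta}}$, exactness of the twist, applicability of Lemmas~\ref{lemma:PHIM0} and~\ref{lemma:Marco}) is accurate and more explicit than what the paper records, but there is no genuine difference in method.
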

\begin{proof}  It follows from Proposition \ref{prop:anabs}.

\end{proof}

\subsection{Log-rigid cohomology with support conditions}\label{sec:rigcmpct}

Consider $U\subset Y \subset \mathcal{X}_k$ subschemes with $U\subset \mathcal{X}_k^{\rm sm}$ open in the smooth locus of $\mathcal{X}_k$ and $Y\subset \mathcal{X}_k$ closed and reduced, as in \S \ref{sec:rigsyn}.  

First of all we introduce a rigid syntomic cohomology  of $U-\mathcal{D}
_{k}\subset Y$ with compact support along $\mathcal{D}_{k}$
by setting $\mathbf{Syn}_{\mathrm{rig},U\subset Y,\alpha
_{k}}^{c-\infty }\left( \mathcal{E},\mathfrak{E},r\right) :=\mathbf{
Syn}_{\mathrm{rig},U\subset Y,\alpha _{k}}\left( \mathcal{E}\left( -\mathcal{D
}\right) ,\mathfrak{E}\left( -\mathfrak{D}\right) ,r\right) $.

Next we define a rigid syntomic cohomology of $U\subset Y$
with compact support as follows. Let $Z:=Y \backslash U$ and $U^o=U\backslash \mathcal{D}_k$. With the notation of \S \ref{sec:Chech} and \S \ref{sec:rigsyn}, for every $\underline{\beta}\in B^n$ with $\beta_1<\ldots < \beta_n$  we let $$j_{\underline{\beta}} \colon ]Z_{\underline{\beta}} [_{\mathfrak{X}_{\underline{\beta}}}^{\log}\longrightarrow ]Y_{\underline{\beta}} [_{\mathfrak{X}_{\underline{\beta}}}^{\log}, \quad \iota_{\underline{\beta}}\colon ]Z_{\underline{\beta}} [_{\mathcal{X}}\longrightarrow ]Y_{\underline{\beta}} [_{\mathcal{X}} $$be the inclusions of the logarithmic tubes of $Z_{\underline{\beta}}:=Z \cap \mathcal{X}_{\underline{\beta},k}$  in the rigid analytic fiber of $\mathfrak{X}_{\underline{\beta}}$ in the tubes of $Y_{\underline{\beta}}$ in  the rigid analytic fiber  of $\mathfrak{X}_{\underline{\beta}}$, resp.~of $\mathcal{X}$. We get  natural morphisms  $$\mathfrak{E}^{\rm conv}(-\mathfrak{D})_{\underline{\beta}} \to j_{\underline{\beta},\ast} \circ j_{\underline{\beta}}^\ast\bigl(\mathfrak{E}^{\rm conv}(-\mathfrak{D})_{\underline{\beta}}\bigr), \quad \mathcal{E}(-\mathcal{D})_{\underline{\beta}} \to \iota_{\underline{\beta},\ast} \circ \iota_{\underline{\beta}}^\ast\bigl(\mathcal{E}(-\mathcal{D})_{\underline{\beta}}\bigr)$$compatible with logarithmic connections.  In particular, we get a morphism of the sections of the logarithmic de Rham complexes $$ \mathrm{dR}\bigl(\mathfrak{E}^{\rm conv}(-\mathfrak{D})_{\underline{\beta}}\bigr)\to \mathrm{dR}\bigl( j_{\underline{\beta},\ast} \circ j_{\underline{\beta}}^\ast\bigl(\mathfrak{E}^{\rm conv}(-\mathfrak{D})_{\underline{\beta}}\bigr)\bigr)$$and $$ \mathrm{dR}\bigl(\mathcal{E}(-\mathcal{D})_{\underline{\beta}}\bigr)\to \mathrm{dR}\bigl( \iota_{\underline{\beta},\ast} \circ \iota_{\underline{\beta}}^\ast\bigl(\mathcal{E}(-\mathcal{D})_{\underline{\beta}}\bigr)\bigr);$$see \S \ref{sec:rigsyn}. 
Define
$$\mathrm{dR}_{(U\subset Y,\alpha_k)/K_0}\bigl(\mathfrak{E}^{\rm conv}(-\mathfrak{D})\bigr)_{\underline{\beta}}, \quad \mathrm{dR}_{(U\subset Y,\alpha_k)/K} (\mathcal{E}(-\mathcal{D}))_{\underline{\beta}} $$to be the total complexes. Using Remark \ref{rmk:Stein}, one can prove that they compute the cohomology of the total complexes of the corresponding morphisms of logarithmic de Rham complexes of sheaves.  Following \cite[\S 3 \& \S 4]{Ber} denote by  $$ \mathrm{R\Gamma}_{\rm rig,c}\bigl( (U^o\subset Y,\alpha_k)/K_0,\mathfrak{E}^{\rm conv}\bigr), \quad  \mathrm{R\Gamma}_{\rm rig,c}\bigl( (U^o\subset Y,\alpha_k)/K ,\mathcal{E}\bigr)$$the total complexes defined varying the $\underline{\beta}$'s as in \S \ref{sec:dR} and let $\gamma_{{\rm rig},c}$ be the induced map obtained as in \S \ref{sec:rigsyn}.

\begin{definition}\label{def:syntomiccompactonU}  As in \S \ref{sec:rigsyn} define the {\em rigid syntomic complex} $\mathbf{Syn}_{{\rm rig},U^o\subset Y, \alpha_k}^c(\mathcal{E},\mathfrak{E},r)$  with support on $U^o=U\backslash \mathcal{D}_k$  to be the total complex associated to the morphism  of complexes $(1-\frac{\Phi}{p^r},\gamma_{{\rm rig},c})$  from the complex $\mathrm{R\Gamma}_{\rm rig,c}\bigl( (U^o\subset Y,\alpha_k)/K_0,\mathfrak{E}^{\rm conv}\bigr)$ to the complex $$ \mathrm{R\Gamma}_{\rm rig,c}\bigl( (U^o\subset Y,\alpha_k)/K_0,\mathfrak{E}^{\rm conv}\bigr) \oplus \frac{\mathrm{R\Gamma}_{\rm rig,c}\bigl( (U^o\subset Y,\alpha_k)/K ,\mathcal{E}\bigr)}{\mathrm{Fil}^r \mathrm{R\Gamma}_{\rm rig,c}\bigl( (U^o\subset Y,\alpha_k)/K ,\mathcal{E}\bigr)\bigr)}.$$
Consider the complex $\mathbf{Syn}^{c-\infty}(\mathcal{E},\mathfrak{E},r)$  introduced in Definition \ref{def:syntomiccompactcomplex}. By construction, we get a morphism of complexes $$\tau_{{\rm rig},c}\colon  \mathbf{Syn}_{{\rm rig},U^o\subset Y, \alpha_k}^c(\mathcal{E},\mathfrak{E},r) \longrightarrow \mathbf{Syn}^{c-\infty}(\mathcal{E},\mathfrak{E},r).$$
\end{definition}

\begin{lemma}\label{lemma:indchoicescompact}  The complexes $\mathrm{R\Gamma}_{\rm rig,c}\bigl( (U^o\subset Y,\alpha_k)/K_0,\mathfrak{E}^{\rm conv}\bigr)$ and $\mathrm{R\Gamma}_{\rm rig,c}\bigl( (U^o\subset Y,\alpha_k)/K ,\mathcal{E}\bigr)$, and hence also the complex $\mathbf{Syn}_{{\rm rig},U^o\subset Y, \alpha_k}^c(\mathcal{E},\mathfrak{E},r)$, are independent, up to homotopy, of the choices made in \S \ref{sec:Chech}.
\end{lemma}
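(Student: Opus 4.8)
The plan is to reduce to Lemma~\ref{lemma:indchoices} by realising each of the two complexes of Definition~\ref{def:syntomiccompactonU} as a mapping cone built from complexes already covered by that lemma, and then to lift the resulting homotopy equivalences to $\mathbf{Syn}^c_{\mathrm{rig},U^o\subset Y,\alpha_k}(\mathcal{E},\mathfrak{E},r)$. Since the latter is the total complex of $(1-\tfrac{\Phi}{p^r})\oplus\gamma_{\mathrm{rig},c}$ formed out of $\mathrm{R\Gamma}_{\mathrm{rig},c}\bigl((U^o\subset Y,\alpha_k)/K_0,\mathfrak{E}^{\rm conv}\bigr)$ and $\mathrm{R\Gamma}_{\mathrm{rig},c}\bigl((U^o\subset Y,\alpha_k)/K,\mathcal{E}\bigr)$, it suffices to prove the independence of these two complexes, together with the compatibility of the transition homotopies with $\gamma_{\mathrm{rig},c}$ and the Frobenii; the latter point is addressed at the end.

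By construction $\mathrm{R\Gamma}_{\mathrm{rig},c}\bigl((U^o\subset Y,\alpha_k)/K_0,\mathfrak{E}^{\rm conv}\bigr)$ is the total complex over the $\underline{\beta}$'s of the morphisms $\mathrm{dR}\bigl(\mathfrak{E}^{\rm conv}(-\mathfrak{D})_{\underline{\beta}}\bigr)\to \mathrm{dR}\bigl(j_{\underline{\beta},\ast}j_{\underline{\beta}}^{\ast}\mathfrak{E}^{\rm conv}(-\mathfrak{D})_{\underline{\beta}}\bigr)$ of \S\ref{sec:rigcmpct}, and similarly for the generic fibre version with $\iota_{\underline{\beta}}$ and $\mathcal{E}(-\mathcal{D})$. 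The source term, assembled over the $\underline{\beta}$'s, is $\mathrm{R\Gamma}_{\mathrm{rig}}\bigl((U\subset Y,\alpha_k)/K_0,(\mathfrak{E}(-\mathfrak{D}))^{\rm conv}\bigr)$: as $\mathfrak{E}(-\mathfrak{D})$ is again a non-degenerate Frobenius log-crystal (the tensor product with $\mathbf{1}(-D)$) and $U\subset\mathcal{X}_k^{\rm sm}$, Lemma~\ref{lemma:indchoices} yields a homotopy equivalence between the source complexes attached to two sets of choices. Hence it remains to produce a compatible homotopy equivalence for the target complexes $\mathrm{dR}\bigl(j_{\underline{\beta},\ast}j_{\underline{\beta}}^{\ast}\mathfrak{E}^{\rm conv}(-\mathfrak{D})_{\underline{\beta}}\bigr)$ (resp. with $\iota_{\underline{\beta}}$, $\mathcal{E}$), i.e. one intertwining the above restriction morphisms.

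For this I would rerun the argument of Lemma~\ref{lemma:indchoices}, carrying the closed subset $Z$ along with the pair $U\subset Y$. Given two choices one forms for every $\underline{\beta}$ the morphisms of log-formal schemes $\mathfrak{X}_{\underline{\beta}}\xrightarrow{a}\mathfrak{X}_{\underline{\beta}}'\xrightarrow{b}\mathfrak{X}_{\beta_1}\times_{\mathfrak{S}}\cdots\times_{\mathfrak{S}}\mathfrak{X}_{\beta_n}$ of loc.~cit. These are compatible for varying $\underline{\beta}$ and, by functoriality of tubes, induce morphisms of the relevant log-rigid tubes that are compatible with the closed immersion $Z_{\underline{\beta}}\subset Y_{\underline{\beta}}$, hence intertwine the open immersions $j_{\underline{\beta}}$ and $\iota_{\underline{\beta}}$ for the two choices, since these are defined purely in terms of the reductions. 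The input of Lemma~\ref{lemma:indchoices}, namely \cite[Claim p.~114]{Shiho} together with the formal log-smoothness of $\mathfrak{X}_{\beta_i}\to\mathfrak{S}$, produces the isomorphism $b_V$ on a cofinal family of strict neighbourhoods $V_{\underline{\beta}}$ of the tubular neighbourhood of $U_{\underline{\beta}}$; intersecting these $V_{\underline{\beta}}$'s with $]Z_{\underline{\beta}}[$ provides a cofinal family of opens computing the overconvergent restriction $j_{\underline{\beta},\ast}j_{\underline{\beta}}^{\ast}\mathfrak{E}^{\rm conv}(-\mathfrak{D})_{\underline{\beta}}$ along $Z_{\underline{\beta}}$. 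Thus the same isomorphisms transport the target complexes, and being compatible with the restriction morphisms they transport the whole total mapping-cone complex; in the generic fibre case one uses the canonical embedding $U\subset\mathcal{X}$ as in \S\ref{sec:rigsyn}. Invoking \cite[\S3 \& \S4]{Ber} for the formal properties of the compact-support construction keeps this bookkeeping under control.

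Finally one assembles these homotopy equivalences into one of $\mathbf{Syn}^c_{\mathrm{rig},U^o\subset Y,\alpha_k}(\mathcal{E},\mathfrak{E},r)$: the transition maps are compatible with $\gamma_{\mathrm{rig},c}$ (induced by $Z\mapsto\pi$, hence canonical) and with the Frobenii, since the lifts $\varphi_{\underline{\beta}}$ are transported through $a,b$ by functoriality of the log-envelope exactly as in \S\ref{sec:Chech}, while a change of the individual lift $\varphi_\beta$ on a single $\mathfrak{X}_\beta$ produces the usual Frobenius-homotopy. I expect the real content of the proof to be this last bundle of compatibility checks rather than any new geometric input: the isomorphisms furnished by Lemma~\ref{lemma:indchoices} are a priori controlled only on strict neighbourhoods of $]U_{\underline{\beta}}[$ with $U$ in the smooth locus, whereas $j_{\underline{\beta},\ast}j_{\underline{\beta}}^{\ast}$ also sees the tube of the possibly singular $Z$; one must verify that the portion of those strict neighbourhoods lying over $Z$ still forms a cofinal system for the overconvergent restriction, and that the resulting homotopies remain equivariant for $1-\tfrac{\Phi}{p^r}$ so as to descend to $\mathbf{Syn}^c$.
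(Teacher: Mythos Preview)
Your overall strategy --- write the compactly-supported complex as a cone and reduce to Lemma~\ref{lemma:indchoices} --- matches the paper's, but there is a genuine gap in your step~2. You identify the source of the cone
\[
\mathrm{dR}\bigl(\mathfrak{E}^{\rm conv}(-\mathfrak{D})_{\underline{\beta}}\bigr)\longrightarrow \mathrm{dR}\bigl(j_{\underline{\beta},\ast}j_{\underline{\beta}}^{\ast}\mathfrak{E}^{\rm conv}(-\mathfrak{D})_{\underline{\beta}}\bigr)
\]
with $\mathrm{R\Gamma}_{\mathrm{rig}}\bigl((U\subset Y,\alpha_k)/K_0,(\mathfrak{E}(-\mathfrak{D}))^{\rm conv}\bigr)$ and then invoke Lemma~\ref{lemma:indchoices}. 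But this identification is wrong: by Definition~\ref{def:syntomiccompactonU} the source lives on the \emph{full} tube $]Y_{\underline{\beta}}[^{\log}_{\mathfrak{X}_{\underline{\beta}}}$, whereas $\mathrm{R\Gamma}_{\mathrm{rig}}\bigl((U\subset Y,\alpha_k)/K_0,\,\cdot\,\bigr)$ is by \S\ref{sec:rigsyn} the \emph{overconvergent} complex on strict neighbourhoods of $]U_{\underline{\beta}}[$ in $]Y_{\underline{\beta}}[$. Lemma~\ref{lemma:indchoices} uses precisely that $U\subset\mathcal{X}_k^{\rm sm}$, so it does not apply to the convergent complex over the full (possibly singular) $Y$.

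The paper supplies exactly the missing step. One first replaces $]Y_{\underline{\beta}}[^{\log}$ by admissible opens $V^{\log}_{\underline{\beta},i}$ in strict neighbourhoods of $]U\cap Y_{\underline{\beta}}[^{\log}$, and $]Z_{\underline{\beta}}[^{\log}$ by $]Z_{\underline{\beta}}[^{\log}\cap V^{\log}_{\underline{\beta},i}$ (and likewise over $\mathcal{X}$). Since $]Z_{\underline{\beta}}[^{\log}$ together with the $V^{\log}_{\underline{\beta},i}$'s form an admissible cover of $]Y_{\underline{\beta}}[^{\log}$, an excision/Mayer--Vietoris argument shows this replacement yields quasi-isomorphic complexes. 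Only \emph{after} this replacement is the source the overconvergent complex, and the argument of Lemma~\ref{lemma:indchoices} then runs verbatim for the whole cone. You sensed this issue in your last paragraph, but framed it only for the target $j_{\underline{\beta},\ast}j_{\underline{\beta}}^{\ast}$; in fact the same reduction is needed for the source, and the admissible-cover step handles both at once.
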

\begin{proof}
By Remark \ref{rmk:Stein} we can replace $]Y_{\underline{\beta}} [_{\mathfrak{X}_{\underline{\beta}}}^{\log}$ and $]Y_{\underline{\beta}} [_{\mathcal{X}}$  in the above with admissible opens $V_{\underline{\beta},i}^{\log}$ and $V_{\underline{\beta},i}$ in strict neighbourhoods  of  $]U\cap Y_{\underline{\beta}}[_{\mathfrak{X}_{\underline{\beta}}}^{\log}$ and of $]U\cap Y_{\underline{\beta}} [_{\mathcal{X}}$ in $]Y_{\underline{\beta}} [_{\mathfrak{X}_{\underline{\beta}}}^{\log}$ and $]Y_{\underline{\beta}} [_{\mathcal{X}}$ respectively, in such a way that $V_{\underline{\beta},i}\subset V_{\underline{\beta},i}^{\log}$. Replace 
$]Z_{\underline{\beta}} [_{\mathfrak{X}_{\underline{\beta}}}^{\log}$ with its intersection with $V_{\underline{\beta},i}^{\log}$ and replace $ ]Z_{\underline{\beta}} [_{\mathcal{X}}$ with its intersection with $V_{\underline{\beta},i}$. Making compatible choices for varying $\underline{\beta}$'s and varying $i$'s we get complexes $ \mathrm{R\Gamma}_{\rm rig,c}\bigl( (U^o\subset Y,\alpha_k)/K_0,\mathfrak{E}^{\rm conv}\bigr)$ and $  \mathrm{R\Gamma}_{\rm rig,c}\bigl( (U^o\subset Y,\alpha_k)/K ,\mathcal{E}\bigr)$ analogous to those defined above.  Since, for fixed  $\underline{\beta}$ and varying $i$'s, the spaces $]Z_{\underline{\beta}} [_{\mathfrak{X}_{\underline{\beta}}}^{\log}$ and the open subspaces $V_{\underline{\beta},i}^{\log}$ give admissible covers of  $]Y_{\underline{\beta}} [_{\mathfrak{X}_{\underline{\beta}}}^{\log}$ and $ ]Z_{\underline{\beta}} [_{\mathcal{X}}$ and the open subspaces $V_{\underline{\beta},i}$ give  an admissible cover of 
$]Y_{\underline{\beta}} [_{\mathcal{X}}$, we deduce that these complexes are quasi-isomorphic to those used in Definition \ref{def:syntomiccompactonU}. The claim then follows  arguing as in the proof of Lemma \ref{lemma:indchoices}.
\end{proof}

It follows from the proof of Lemma \ref{lemma:indchoicescompact} that the complexes appearing in Definition \ref{def:syntomiccompactonU} can be replaced by analogous complexes defined on strict neighbourhoods  of  $]U\cap Y_{\underline{\beta}}[_{\mathfrak{X}_{\underline{\beta}}}^{\log}$ and of $]U\cap Y_{\underline{\beta}} [_{\mathcal{X}}$ in $]Y_{\underline{\beta}} [_{\mathfrak{X}_{\underline{\beta}}}^{\log}$ and $]Y_{\underline{\beta}} [_{\mathcal{X}}$ respectively. Considering the sheaves of overconvergent sections $\mathfrak{E}^\dagger$ and $\mathcal{E}^\dagger$  of \S\ref{sec:rigsyn} and the corresponding syntomic complex $ \mathbf{Syn}_{{\rm rig},U^o\subset Y, \alpha_k}^c(\mathcal{E}^\dagger,\mathfrak{E}^\dagger,r)$, we then obtain the following corollary that justifies the notation in Definition \ref{def:syntomiccompactonU}:

\begin{cor}\label{cor:BerCoh1} The natural morphisms $$\mathrm{R\Gamma}_{\rm rig,c}\bigl( (U^o\subset Y,\alpha_k)/K_0,\mathfrak{E}^{\rm conv}\bigr)\longrightarrow \mathrm{R\Gamma}_{\rm rig,c}\bigl( (U^o\subset Y,\alpha_k)/K_0,\mathfrak{E}^\dagger\bigr) $$ and $$\mathrm{R\Gamma}_{\rm rig,c}\bigl( (U^o\subset Y,\alpha_k)/K ,\mathcal{E}\bigr)\longrightarrow \mathrm{R\Gamma}_{\rm rig,c}\bigl( (U^o\subset Y,\alpha_k)/K ,\mathcal{E}^\dagger\bigr)$$are quasi-isomorphisms. In particular,  we have a natural quasi-isomorphism of complexes 
$$\mathbf{Syn}_{{\rm rig},U^o\subset Y, \alpha_k}^c(\mathcal{E},\mathfrak{E},r)\longrightarrow \mathbf{Syn}_{{\rm rig},U^o\subset Y, \alpha_k}^c(\mathcal{E}^\dagger,\mathfrak{E}^\dagger,r).$$
\end{cor}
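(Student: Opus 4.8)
The plan is to reduce everything to the corresponding statement for the two compact-support cohomology theories $\mathrm{R\Gamma}_{\rm rig,c}$ and then to pass to cones. By Definition \ref{def:syntomiccompactonU} the complex $\mathbf{Syn}_{{\rm rig},U^o\subset Y, \alpha_k}^c(\mathcal{E},\mathfrak{E},r)$ is the total complex of $(1-\frac{\Phi}{p^r})\oplus \gamma_{{\rm rig},c}$ built from $\mathrm{R\Gamma}_{\rm rig,c}((U^o\subset Y,\alpha_k)/K_0,\mathfrak{E}^{\rm conv})$, from $\mathrm{R\Gamma}_{\rm rig,c}((U^o\subset Y,\alpha_k)/K,\mathcal{E})$ and from the Hodge filtration on the latter, and all of this extra structure — the Frobenius $\Phi$, the map $\gamma_{\rm rig,c}$, and the filtered subcomplexes $\mathrm{Fil}^r$ — is induced by restriction of the underlying modules with connection and Frobenius, hence is compatible with the morphisms $\mathfrak{E}^{\rm conv}\to\mathfrak{E}^\dagger$ and $\mathcal{E}\to\mathcal{E}^\dagger$. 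Therefore, once the two displayed morphisms are known to be quasi-isomorphisms, the very same argument applied to the filtered subcomplexes shows that the quotients by $\mathrm{Fil}^r$ are quasi-isomorphic as well, and passing to the cone then yields the assertion for the syntomic complexes formally.

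For the first displayed morphism I would proceed as follows. By the remark following Lemma \ref{lemma:indchoicescompact}, for each $\underline{\beta}$ the complex computing $\mathrm{R\Gamma}_{\rm rig,c}(\ldots,\mathfrak{E}^{\rm conv})$ is quasi-isomorphic to the analogous complex built from the restriction of $\mathfrak{E}^{\rm conv}_{\underline{\beta}}$ to any member $V^{\log}_{\underline{\beta},i}$ of a cofinal system of strict neighbourhoods of $]U\cap Y_{\underline{\beta}}[_{\mathfrak{X}_{\underline{\beta}}}^{\log}$ in $]Y_{\underline{\beta}}[_{\mathfrak{X}_{\underline{\beta}}}^{\log}$; this is really an excision statement, since $\{V^{\log}_{\underline{\beta},i},\ ]Z_{\underline{\beta}}[_{\mathfrak{X}_{\underline{\beta}}}^{\log}\}$ is an admissible cover of $]Y_{\underline{\beta}}[_{\mathfrak{X}_{\underline{\beta}}}^{\log}$ and the compact-support complex is, by construction, the mapping fibre of the restriction map along $]Z_{\underline{\beta}}[_{\mathfrak{X}_{\underline{\beta}}}^{\log}$, which is left unchanged (by Mayer--Vietoris) when one restricts to $V^{\log}_{\underline{\beta},i}$. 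On the other hand, by the construction in \S\ref{sec:rigsyn} the overconvergent log-isocrystal $\mathfrak{E}^\dagger_{\underline{\beta}}$ is precisely the filtered colimit of the restrictions $\mathfrak{E}^{\rm conv}_{\underline{\beta}}|_{V^{\log}_{\underline{\beta},i}}$ over such a cofinal system. Since all the rigid spaces entering the construction — the logarithmic tubes, their strict neighbourhoods, and the tubes of $Z$ inside them — are quasi-Stein by Remark \ref{rmk:Stein}, coherent sheaves on them have vanishing higher cohomology and global sections commute with the filtered colimit over the cofinal system; hence $\mathrm{R\Gamma}_{\rm rig,c}(\ldots,\mathfrak{E}^\dagger)$ is represented by the filtered colimit over $i$ of the complexes built from $\mathfrak{E}^{\rm conv}_{\underline{\beta}}|_{V^{\log}_{\underline{\beta},i}}$. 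The natural morphism from $\mathrm{R\Gamma}_{\rm rig,c}(\ldots,\mathfrak{E}^{\rm conv})$ maps quasi-isomorphically onto each term of this colimit, compatibly with the transition maps, so it is a quasi-isomorphism onto the colimit. The second displayed morphism is treated by the identical argument over $K$, with $\mathcal{E}$, $\mathcal{E}^\dagger$ and the ordinary (non-logarithmic) tubes in $\mathcal{X}$ in place of $\mathfrak{E}^{\rm conv}$, $\mathfrak{E}^\dagger$ and the logarithmic tubes.

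I expect the only real obstacle to be the interchange of global sections with the colimit over strict neighbourhoods in the presence of the pushforward $j_{\underline{\beta},\ast}\circ j_{\underline{\beta}}^\ast$ defining the compact-support complex: one must check that restricting to a smaller strict neighbourhood and then applying $j^\ast$, $j_\ast$ and global sections is compatible with passing to the colimit, and that the resulting colimit of de Rham complexes still computes $\mathrm{R\Gamma}_{\rm rig,c}(\ldots,\mathfrak{E}^\dagger)$. This is handled using the explicit quasi-Stein exhaustions of Remark \ref{rmk:Stein} together with Kiehl's Theorem B, in the same way as in the proofs of Lemma \ref{lemma:indchoices} and Lemma \ref{lemma:indchoicescompact}; no essentially new ingredient is needed.
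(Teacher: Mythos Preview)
Your proposal is correct and follows essentially the same approach as the paper: the paper states the corollary as an immediate consequence of the proof of Lemma~\ref{lemma:indchoicescompact}, namely that the compact-support complexes may be computed on any strict neighbourhood $V_{\underline{\beta},i}^{\log}$ (resp.\ $V_{\underline{\beta},i}$) via the admissible-cover/excision argument and Remark~\ref{rmk:Stein}, so that passing to the overconvergent sheaves $\mathfrak{E}^\dagger$, $\mathcal{E}^\dagger$ changes nothing up to quasi-isomorphism. Your write-up unpacks this with the filtered-colimit language and the explicit reduction to the syntomic cone, but the ingredients and the line of argument are the same.
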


\subsubsection{The case of $\mathcal{X}_k^{\rm sm}$}\label{sec:caseXsm}
We now specialize to the case $U=\mathcal{X}_k^{\rm sm}$, $U^o=\mathcal{X}_k^{\rm sm,o}=\mathcal{X}_k^{\rm sm}\backslash\mathcal{D}_k$ and $Y=\mathcal{X}_k$. 
Let $W_1,\ldots, W_s\subset \mathcal{X}_k$ be the irreducible components and let $W_i^o=W_i\cap U^o$.   For each $i$ and each $\underline{\beta}$, replacing the tubes of $\mathcal{X}_{\underline{\beta},k}$ and of $Z_k$ in the rigid analytic fiber of $\mathfrak{X}_{\underline{\beta}}$, resp.~of $\mathcal{X}$ with the tubes of $W_i\cap \mathcal{X}_{\underline{\beta},k}$ and of $Z_k\cap W_i\cap \mathcal{X}_{\underline{\beta},k}$  in the construction above we get complexes $\mathrm{R\Gamma}_{\rm rig,c}\bigl((W_i^o\subset W_i,\alpha_k)/K_0,\mathfrak{E}^{\rm conv}\bigr) $, $\mathrm{R\Gamma}_{\rm rig,c}\bigl((W_i^o\subset W_i,\alpha_k)/K ,\mathcal{E}\bigr)$  and  $\mathbf{Syn}_{{\rm rig},W_i^o\subset W_i,\alpha_k}^c(\mathcal{E},\mathfrak{E},r)$. We also have natural morphisms  from $$\mathrm{R\Gamma}_{\rm rig,c}\bigl( (\mathcal{X}_k^{\rm sm,o}\subset \mathcal{X}_k,\alpha_k)/K_0,\mathfrak{E}^{\rm conv}\bigr)\longrightarrow \oplus_{i=1}^s \mathrm{R\Gamma}_{\rm rig,c}\bigl( (W_i^o\subset W_i,\alpha_k)/K_0,\mathfrak{E}^{\rm conv}\bigr) $$and $$\mathrm{R\Gamma}_{\rm rig,c}\bigl((\mathcal{X}_k^{\rm sm,o}\subset \mathcal{X}_k,\alpha_k)/K ,\mathcal{E}\bigr) \longrightarrow\oplus_{i=1}^s\mathrm{R\Gamma}_{\rm rig,c}\bigl((W_i^o\subset W_i,\alpha_k)/K ,\mathcal{E}\bigr)$$and, hence,
$$\xi\colon \mathbf{Syn}_{{\rm rig},\mathcal{X}_k^{\rm sm,o}\subset \mathcal{X}_k, \alpha_k}^c(\mathcal{E},\mathfrak{E},r) \longrightarrow \oplus_{i=1}^s \mathbf{Syn}_{{\rm rig},W_i^o\subset W_i,\alpha_k}^c(\mathcal{E},\mathfrak{E},r) .$$

\begin{lemma}\label{lemma:xiiso} The morphisms above, and in particular $\xi$, are  quasi-isomorphisms.
\end{lemma}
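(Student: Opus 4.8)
The plan is to deduce the lemma from the additivity of rigid cohomology with compact support, realised at the level of the explicit complexes of \S\ref{sec:rigcmpct}. The geometric input is that, since $\mathcal{X}_k$ is strict semistable, each irreducible component $W_i$ is smooth over $k$ and the complement $Z:=\mathcal{X}_k\setminus\mathcal{X}_k^{\rm sm}$ of the smooth locus is exactly the multiple-point locus $\bigcup_{i\neq j}(W_i\cap W_j)$; in particular $W_i\cap W_j\subset Z$ for $i\neq j$, so that $\mathcal{X}_k^{\rm sm}=\amalg_{i=1}^s(W_i\cap\mathcal{X}_k^{\rm sm})$ is a disjoint union of open and closed subschemes and $U^o=\mathcal{X}_k^{\rm sm,o}=\amalg_{i=1}^s W_i^o$. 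It suffices to prove that the two morphisms
\[
\mathrm{R\Gamma}_{\rm rig,c}\bigl((\mathcal{X}_k^{\rm sm,o}\subset\mathcal{X}_k,\alpha_k)/K_0,\mathfrak{E}^{\rm conv}\bigr)\longrightarrow\bigoplus_{i=1}^s\mathrm{R\Gamma}_{\rm rig,c}\bigl((W_i^o\subset W_i,\alpha_k)/K_0,\mathfrak{E}^{\rm conv}\bigr)
\]
and its analogue over $K$ with coefficients in $\mathcal{E}$ are quasi-isomorphisms: the assertion for $\xi$ then follows formally, because by Definition \ref{def:syntomiccompactonU} the complexes $\mathbf{Syn}_{{\rm rig},U^o\subset Y,\alpha_k}^c(\mathcal{E},\mathfrak{E},r)$ are total complexes of $(1-\frac{\Phi}{p^r})\oplus\gamma_{{\rm rig},c}$ built from these, and the comparison morphisms are visibly compatible with $\Phi$, with the Hodge filtration and with $\gamma_{{\rm rig},c}$.

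For the two remaining statements I would fix $\underline{\beta}\in B^n$ with $\beta_1<\cdots<\beta_n$ and argue by Mayer--Vietoris for the admissible cover of the logarithmic tube $]\mathcal{X}_{\underline{\beta},k}[_{\mathfrak{X}_{\underline{\beta}}}^{\log}$ (respectively of $]\mathcal{X}_{\underline{\beta},k}[_{\mathcal{X}}$) by the tubes $]W_i\cap\mathcal{X}_{\underline{\beta},k}[_{\mathfrak{X}_{\underline{\beta}}}^{\log}$ of the finitely many components meeting $\mathcal{X}_{\underline{\beta},k}$, recalling that an intersection of such tubes is again the tube of the intersection of the corresponding $W_i$'s. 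By construction (see \S\ref{sec:rigcmpct}), the complex computing $\mathrm{R\Gamma}_{\rm rig,c}\bigl((\mathcal{X}_k^{\rm sm,o}\subset\mathcal{X}_k)/K_0,\mathfrak{E}^{\rm conv}\bigr)$ is, for each $\underline{\beta}$, the total complex of a morphism $\mathrm{dR}\bigl(\mathfrak{E}^{\rm conv}(-\mathfrak{D})_{\underline{\beta}}\bigr)\to\mathrm{dR}\bigl(j_{\underline{\beta},\ast}\circ j_{\underline{\beta}}^\ast\bigl(\mathfrak{E}^{\rm conv}(-\mathfrak{D})_{\underline{\beta}}\bigr)\bigr)$ of logarithmic de Rham complexes of sheaves, where $j_{\underline{\beta}}\colon ]Z_{\underline{\beta}}[_{\mathfrak{X}_{\underline{\beta}}}^{\log}\hookrightarrow ]\mathcal{X}_{\underline{\beta},k}[_{\mathfrak{X}_{\underline{\beta}}}^{\log}$ is the open immersion of the tube of $Z_{\underline{\beta}}=Z\cap\mathcal{X}_{\underline{\beta},k}$. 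Since the natural map $\mathcal{F}\to j_{\underline{\beta},\ast}j_{\underline{\beta}}^\ast\mathcal{F}$ pulls back to the identity along $j_{\underline{\beta}}$, this cone is contractible over $]Z_{\underline{\beta}}[_{\mathfrak{X}_{\underline{\beta}}}^{\log}$, hence acyclic on every multiple overlap $]W_{i_0}\cap\cdots\cap W_{i_p}\cap\mathcal{X}_{\underline{\beta},k}[_{\mathfrak{X}_{\underline{\beta}}}^{\log}$ with $p\geq 1$, because then $W_{i_0}\cap W_{i_1}\subset Z$. The \v{C}ech spectral sequence of the cover therefore degenerates onto its $p=0$ column, and — using that pushforward along an open immersion commutes with restriction to an open — the restriction of the complex to $]W_i\cap\mathcal{X}_{\underline{\beta},k}[_{\mathfrak{X}_{\underline{\beta}}}^{\log}$ is exactly the complex defining $\mathrm{R\Gamma}_{\rm rig,c}\bigl((W_i^o\subset W_i)/K_0,\mathfrak{E}^{\rm conv}\bigr)$ for the same \v{C}ech datum, its $Z$-locus being $Z\cap W_i\cap\mathcal{X}_{\underline{\beta},k}$ and its coefficient sheaf the restriction of $\mathfrak{E}^{\rm conv}(-\mathfrak{D})_{\underline{\beta}}$. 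Passing to the total complex over the $\underline{\beta}$'s and summing over $i$ then exhibits the comparison morphism as a quasi-isomorphism; the argument over $K$ with $\mathcal{E}(-\mathcal{D})$ is identical, and $\xi$ follows as explained above.

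The main obstacle I expect is the bookkeeping with the tubes: one must check that the $]W_i\cap\mathcal{X}_{\underline{\beta},k}[_{\mathfrak{X}_{\underline{\beta}}}^{\log}$ do form an admissible cover, that the higher \v{C}ech terms are still computed by logarithmic de Rham complexes over quasi-Stein spaces in the sense of Remark \ref{rmk:Stein}, and that the whole argument is compatible with the replacements permitted by Lemma \ref{lemma:indchoicescompact} and Corollary \ref{cor:BerCoh1} — passing to strict neighbourhoods of $]U\cap\mathcal{X}_{\underline{\beta},k}[$ and to the overconvergent sheaves $\mathfrak{E}^\dagger$, $\mathcal{E}^\dagger$; on strict neighbourhoods the acyclicity over the tube of $Z$ becomes acyclicity over a strict neighbourhood of that tube, which is again immediate since the relevant cone of overconvergent sheaves vanishes there. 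An equivalent alternative would be to avoid Mayer--Vietoris altogether and use Lemma \ref{lemma:indchoicescompact} to replace the ambient tubes by a disjoint union of strict neighbourhoods adapted to $\mathcal{X}_k^{\rm sm}\cap\mathcal{X}_{\underline{\beta},k}=\amalg_i(W_i\cap\mathcal{X}_k^{\rm sm}\cap\mathcal{X}_{\underline{\beta},k})$, after which the comparison morphisms become genuine isomorphisms of complexes.
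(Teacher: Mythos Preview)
Your proposal is correct and follows essentially the same route as the paper: reduce to the two underlying complexes (over $K_0$ and over $K$, together with their $\mathrm{Fil}^r$-pieces), work for each fixed $\underline{\beta}$, use the admissible open cover of the tube of $\mathcal{X}_{\underline{\beta},k}$ by the tubes of the $W_i\cap\mathcal{X}_{\underline{\beta},k}$, and conclude from the fact that the cone of $F\to j_{\underline{\beta},\ast}j_{\underline{\beta}}^\ast F$ is supported on the tube of $Z_{\underline{\beta}}$ together with $W_i\cap W_j\subset Z$ for $i\neq j$. The paper phrases the last step as a direct comparison of cones over the cover, while you phrase it as degeneration of the \v{C}ech/Mayer--Vietoris spectral sequence; these are the same argument.
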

\begin{proof} Write $U$ for $\mathcal{X}_k^{\rm sm}$ for simplicity. It suffices to show that  the first two morphisms are quasi-isomorphisms and that the morphism $\mathrm{Fil}^s\mathrm{R\Gamma}_{\rm rig,c}\bigl( U^o\subset \mathcal{X}_k/K ,\mathcal{E}\bigr) \to \oplus_{i=1}^s \mathrm{Fil}^r \mathrm{R\Gamma}_{\rm rig,c}\bigl( W_i^o \subset W_i/K ,\mathcal{E}\bigr)$, induced on $\mathrm{Fil}^r$, is also a quasi-isomorphism. This reduces  to prove that for every $\underline{\beta}$ the morphisms $$ \mathrm{dR}_{(U\subset \mathcal{X}_k,\alpha_k)/K_0} \bigl(\mathfrak{E}^{\rm conv}(-\mathfrak{D})\bigr)_{\underline{\beta}}\longrightarrow \oplus_{i=1}^r
\mathrm{dR}_{(U\cap W_i \subset W_i,\alpha_k)/K_0} \bigl(\mathfrak{E}^{\rm conv}(-\mathfrak{D})\bigr)_{\underline{\beta}},$$
$$\mathrm{dR}_{(U\subset Y,\alpha_k)/K} (\mathcal{E}(-\mathcal{D}))_{\underline{\beta}}\longrightarrow \oplus_{i=1}^s \mathrm{dR}_{(U\cap W_i \subset W_i,\alpha_k/K} (\mathcal{E}(-\mathcal{D}))_{\underline{\beta}}$$and similarly with $\mathrm{Fil}^r$ are quasi-isomorphisms. This follows if we show that for every locally free $\mathcal{O}_{\mathfrak{X}_{\underline{\beta}}}$-module $F$, resp.~$\mathcal{O}_{X_{\underline{\beta}}}$-module $F$ the cone of $F\to  j_{\underline{\beta},\ast} \circ j_{\underline{\beta}}^\ast(F)$, resp.~$F\to   \iota_{\underline{\beta},\ast} \circ \iota_{\underline{\beta}}^\ast(F)$ is quasi isomorphic to the cone provided by the sum of their restrictions to the tubes of $W_1\cap \mathcal{X}_{\underline{\beta},k}, \ldots, W_s\cap \mathcal{X}_{\underline{\beta},k}$  in the rigid analytic fiber of $\mathfrak{X}_{\underline{\beta}}$, resp.~of $X$.  This follows as the morphism $\amalg_i \bigl(W_i\cap \mathcal{X}_{\underline{\beta},k}\bigr) \to \mathcal{X}_{\underline{\beta},k}$ defines on logarithmic tubes in the rigid analytic fiber of $\mathfrak{X}_{\underline{\beta}}$, resp.~of $\mathcal{X}$ a cover by analytic open subspaces and $\amalg_{i<j} (W_i\cap W_j)\subset Z_k$.  
\end{proof}

Let  $U\subset W_\ell \subset \mathcal{X}_k$ for $\ell=1,\ldots,s$ and $U\subset \mathcal{X}_k^{\rm sm}$ an open non-empty subscheme. Set $U^o=U\backslash \mathcal{D}_k$. It follows from Definition \ref{def:syntomiccompactonU} that we have a morphism of complexes $$\mathbf{Syn}_{{\rm rig},U^o\subset W_\ell, \alpha_k}^c(\mathcal{E},\mathfrak{E},r) {\longrightarrow} \mathbf{Syn}_{{\rm rig},W_\ell^o\subset W_\ell, \alpha_k}^c(\mathcal{E},\mathfrak{E},r).$$ Using Lemma \ref{lemma:xiiso} and the morphism $\tau_{{\rm rig},c}$ from Definition \ref{def:syntomiccompactonU}, we get, for every $i\in \mathbb{N}$  a morphism
\begin{equation}\label{eq:tauRigc}
\tau_{\mathrm{rig},U,\ell,c}^i\colon\mathrm{H}^{i}\left( \mathbf{Syn}_{\mathrm{rig}
,U^{o}\subset W_\ell, \alpha_k}^{c}\left( \mathcal{E},\mathfrak{E},r\right)
\right) \longrightarrow \mathrm{H}^{i}\left( \mathbf{Syn}^{c-\infty}\left( \mathcal{
E},\mathfrak{E},r\right) \right) .  
\end{equation}
Recall from \S \ref{sec:rigsyn} that we also have a morphism 
\begin{equation} \label{eq:tauRig}
\tau_{{\rm rig},U,\ell}^i\colon \mathrm{H}^i\bigl(\mathbf{Syn}_{\rm an}(\mathcal{E},\mathfrak{E},r)\bigr) \longrightarrow \mathrm{H}^i\bigl(\mathbf{Syn}_{{\rm rig},U\subset W_\ell, \alpha_k}(\mathcal{E},\mathfrak{E},r)\bigr),
\end{equation}
as well as the analogue in which $\mathbf{Syn}$ is replaced by $\mathbf{Syn}%
^{c\text{-}\infty }$ obtained applying the construction to the twisted
sheaves.

\subsection{Coefficients with good reduction}\label{sec:Egoodred}

Consider the log-structure  $\alpha _{\mathcal{X}}^{\rm hor}$ on $\mathcal{X}_k$ arising from the horizontal divisor $\mathcal{D}_k$. We keep the notation of \S \ref{sec:rigsyn} and \S \ref{sec:rigcmpct} so that $U\subset Y\subset \mathcal{X}_k^{\rm sm}$ with $U \subset \mathcal{X}_k^{\rm sm}$ open  and $Y \subset \mathcal{X}_k$ closed and reduced. The good reduction alluded to in the title refers to the following.\smallskip

{\em Assume}  that $Y$ is smooth over $k$ and that the non-degenerate filtered Frobenius log-crystal  $ \bigl(\mathcal{E},\mathrm{Fil}^\bullet \mathcal{E}, \nabla,\mathfrak{E},\Phi_{\mathfrak{E}}\bigr)$ defines a Frobenius overconvergent log-isocrystal $\mathfrak{E}^\dagger$ as in \S \ref{sec:rigsyn} with respect to the log-structure $\alpha^{\rm hor}_k$.\smallskip

We define complexes $\mathrm{R}\Gamma_{\mathrm{rig}}\bigl((U\subset Y,\alpha_k^{\rm hor})/K_0,\mathfrak{E}^{\rm conv}\bigr)$ and $\mathbf{Syn}_{{\rm rig},U\subset Y, \alpha_k^{\rm hor}}\bigl(\mathcal{E},\mathfrak{E},r\bigr)$ proceeding as in \S \ref{sec:rigsyn} and $\mathbf{Syn}_{\mathrm{rig},U\subset Y,\alpha _{k}^{hor}}^{c\text{-}\infty
}\left( \mathcal{E},\mathfrak{E},r\right) :=\mathbf{Syn}_{\mathrm{rig}%
,U\subset Y,\alpha _{k}^{hor}}\left( \mathcal{E}\left( \mathcal{D}\right) ,%
\mathfrak{E}\left( \mathfrak{D}\right) ,r\right) $, $\mathrm{R}\Gamma_{\mathrm{rig},c}\bigl((U^o\subset Y,\alpha_k^{\rm hor})/K_0,\mathfrak{E}^{\rm conv}\bigr)$ and
$\mathbf{Syn}_{{\rm rig},U^o\subset Y, \alpha_k}\bigl(\mathcal{E},\mathfrak{E},r\bigr)$ as in \S \ref{sec:rigcmpct}. In terms of the complexes of loc.~cit.~we take the logarithmic complexes only with respect to the variables $Y_1,\ldots,Y_b$ and {\em not} with respect to the variables $X_1,\ldots,X_a$.
In particular, we have natural morphisms of complexes $\Xi$ from $$\mathrm{R}\Gamma_{\mathrm{rig}}\bigl((U\subset Y,\alpha_k^{\rm hor})/K_0,\mathfrak{E}^{\rm conv}\bigr)\otimes_{K_0} \mathrm{R}\Gamma_{\mathrm{rig}}\bigl((\mathcal{S}_k,\mathcal{M}_k)/K_0\bigr)$$to $\mathrm{R}\Gamma_{\mathrm{rig}}\bigl((U\subset Y,\alpha_k)/K_0,\mathfrak{E}^{\rm conv}\bigr)$ and $\Xi_c$ from
$$\mathrm{R}\Gamma_{\mathrm{rig},c}\bigl((U^o\subset Y,\alpha_k^{\rm hor})/K_0,\mathfrak{E}^{\rm conv}\bigr)\otimes_{K_0} \mathrm{R}\Gamma_{\mathrm{rig}}\bigl((\mathcal{S}_k,\mathcal{M}_k)/K_0\bigr)$$ to $\mathrm{R}\Gamma_{\mathrm{rig},c}\bigl((U^o \subset Y,\alpha_k)/K_0,\mathfrak{E}^{\rm conv}\bigr)$; recall that $\mathcal{S}_k=\mathrm{Spec}(k)$ with the log-structure $\mathcal{M}_k$ given by $\mathbb {N }\to k$ sending $0\neq n\mapsto 0$. 

\begin{lemma} The morphisms of complexes $\Xi$ and $\Xi_c$ are quasi-isomorphisms.
\end{lemma}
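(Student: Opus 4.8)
The plan is to reduce the statement to a local computation on the Čech pieces of \S\ref{sec:rigsyn}--\S\ref{sec:rigcmpct} and there to exhibit the logarithmic de Rham complex attached to $\alpha_k$ as the tensor product of the one attached to $\alpha_k^{\rm hor}$ with the logarithmic de Rham complex of the point $(\mathcal{S}_k,\mathcal{M}_k)$; the hypotheses that $Y$ is smooth and that $\mathfrak{E}^\dagger$ is a log-isocrystal for $\alpha_k^{\rm hor}$ are precisely what make this factorization work. First I would unwind the definitions: by construction $\Xi$ and $\Xi_c$ are the total complexes of the cup-product maps between the Čech--de Rham bicomplexes indexed by $\underline{\beta}\in B^n$ (and, in the support case, by the additional data of $Z_{\underline{\beta}}=Z\cap\mathcal{X}_{\underline{\beta},k}$ and of $\mathcal{D}_k$). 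By Remark \ref{rmk:Stein} every tube and strict neighborhood that occurs is quasi-Stein, so Kiehl's Theorem B lets us work with complexes of global sections (Fréchet $K_0$-spaces), and the claim reduces to showing, for each $\underline{\beta}$, that the corresponding local map of de Rham complexes of sheaves on the relevant strict neighborhood is a quasi-isomorphism.

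For the local factorization, fix $\underline{\beta}$ and restrict to a strict neighborhood of $]U_{\underline{\beta}}[$ inside $]Y_{\underline{\beta}}[$. Since $U_{\underline{\beta}}\subset\mathcal{X}_k^{\rm sm}$, after refining the cover $\{\mathcal{X}_\beta\}_{\beta\in B}$ we may assume that on such a neighborhood exactly one coordinate among $X_1,\dots,X_a$ --- say $X_1$ --- cuts out the vertical (special-fiber) divisor, the others being units. Using $X_1\cdots X_a=Z$, the forms $d\log Z, d\log X_2,\dots,d\log X_a, d\log Y_1,\dots,d\log Y_b$ are then an $\mathcal{O}$-basis of $\Omega^{\log,1}_{\mathfrak{X}_{\underline{\beta}}}$ there, with $d\log Z$ splitting off as a direct summand; equivalently $\widetilde{\alpha}_{\underline{\beta}}$ restricted to this neighborhood is the sum of the pullback of $\mathfrak{M}$ along $\mathfrak{X}_{\underline{\beta}}\to\mathfrak{S}$ and of $\alpha_k^{\rm hor}$. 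Hence the logarithmic de Rham complex for $\alpha_k$ is obtained from the one for $\alpha_k^{\rm hor}$ by applying the two-term complex $[\,\cdot\xrightarrow{Z\partial_Z}\cdot\,d\log Z\,]$, i.e.\ it is the tensor product of the two. Now the good-reduction hypothesis enters: $\mathfrak{E}^\dagger$ being a Frobenius overconvergent log-isocrystal for $\alpha_k^{\rm hor}$ means exactly that the connection on $\mathfrak{E}^{\rm conv}$ has no component along $d\log Z$ on these neighborhoods, i.e.\ $\mathfrak{E}^{\rm conv}$ is the external product of an $\alpha_k^{\rm hor}$-isocrystal with the unit object on $(\mathcal{S}_k,\mathcal{M}_k)$. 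So the factorization persists with coefficients: the local de Rham complex for $(\mathcal{E},\mathfrak{E},\alpha_k)$ is the tensor product of the one for $(\mathcal{E},\mathfrak{E},\alpha_k^{\rm hor})$ with $[\mathcal{O}\xrightarrow{Z\partial_Z}\mathcal{O}\,d\log Z]$, and, under these identifications, $\Xi$ is the natural map realizing this tensor product.

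To conclude for $\Xi$, note that $\mathrm{R}\Gamma_{\mathrm{rig}}\bigl((\mathcal{S}_k,\mathcal{M}_k)/K_0\bigr)$ is represented by $[\mathcal{O}_C\xrightarrow{Z\partial_Z}\mathcal{O}_C\,d\log Z]$ on the open unit disk $C$ and has finite-dimensional cohomology; therefore the algebraic tensor product over $K_0$ with it is exact, and a Künneth/spectral-sequence argument shows that the map on total complexes induced by the local factorization --- that is, $\Xi$ --- is a quasi-isomorphism. (Alternatively one may match the cone-of-$N$ description of $\mathrm{R}\Gamma_{\mathrm{rig}}\bigl((U\subset Y,\alpha_k)/K_0,\mathfrak{E}^{\rm conv}\bigr)$ coming from \S\ref{sec:HK} and Proposition \ref{prop:anabs} with the evident one on the $\alpha_k^{\rm hor}$-side, where the monodromy $N$ vanishes because $\mathfrak{E}^{\rm conv}$ carries no vertical connection.) For $\Xi_c$ the argument is verbatim: the compact-support complexes of \S\ref{sec:rigcmpct} are built from the very same local de Rham complexes by the functorial mapping-cone construction sending $F$ to $[F\to j_{\underline{\beta},\ast}j_{\underline{\beta}}^\ast F]$ along the closed set $Z=Y\setminus U$ and the horizontal divisor $\mathcal{D}_k$, both of which are purely horizontal, so this construction is performed only in the variables $X_2,\dots,X_a,Y_1,\dots,Y_b$ and commutes with the vertical factorization above; hence $\Xi_c$ is again the map on total complexes induced by tensoring with $[\mathcal{O}_C\xrightarrow{Z\partial_Z}\mathcal{O}_C\,d\log Z]$, and the same conclusion holds.

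The main obstacle is the content of the second paragraph. One must (i) arrange, by refining the cover, that the vertical divisor is principal on each piece of the tube of the smooth locus, so that $\alpha_k$ genuinely splits as a sum of the pullback of $\mathfrak{M}$ and $\alpha_k^{\rm hor}$ there, and --- the conceptual crux --- (ii) translate the hypothesis ``$\mathfrak{E}^\dagger$ is a log-isocrystal for $\alpha_k^{\rm hor}$'' into the vanishing of the $d\log Z$-component of the connection on $\mathfrak{E}^{\rm conv}$ over these neighborhoods, i.e.\ the identification of $\mathfrak{E}^{\rm conv}$ with an external product. Step (ii) is the only place where the assumption is really used. The remaining points --- compatibility of cup products with passage to total complexes, and the interaction of the algebraic $\otimes_{K_0}$ with the Fréchet topologies on quasi-Stein sections (harmless, because the log-point factor has finite-dimensional cohomology) --- are routine.
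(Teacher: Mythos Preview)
Your approach is essentially the one the paper takes: reduce to the local \v{C}ech pieces, exhibit the $\alpha_k$-de Rham complex as the tensor product of the $\alpha_k^{\rm hor}$-de Rham complex with the logarithmic de Rham complex of the open unit disk $C$ (representing $\mathrm{R}\Gamma_{\mathrm{rig}}\bigl((\mathcal{S}_k,\mathcal{M}_k)/K_0\bigr)$), and then conclude by K\"unneth, treating the compact-support case by the same local factorization carried through the cone construction.

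The one point where the paper is more efficient is your step (ii). The paper does not try to show on the original $\mathfrak{X}_{\underline{\beta}}$-charts that $\mathfrak{E}^{\rm conv}$ is an external product; instead it invokes Lemmas~\ref{lemma:indchoices} and~\ref{lemma:indchoicescompact} to \emph{change the embedding system}: since $Y$ is smooth over $k$, one may take local log-smooth lifts $\tilde{Y}$ of $(Y,\mathcal{D}_k\cap Y)$ over $\mathcal{O}_{K_0}$ and embed $Y$ into $\tilde{Y}\times_{\mathcal{O}_{K_0}}\mathfrak{S}$. On these product charts the tensor decomposition of the log de Rham complex is immediate, and the good-reduction Assumption is exactly what says that $\mathfrak{E}^\dagger$ evaluates on $\tilde{Y}$ (with $\alpha_k^{\rm hor}$) to give the horizontal factor. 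In your formulation the phrase ``the connection on $\mathfrak{E}^{\rm conv}$ has no component along $d\log Z$'' is not literally correct---the structural $Z\partial_Z$ is always there---but your gloss ``$\mathfrak{E}^{\rm conv}$ is the external product of an $\alpha_k^{\rm hor}$-isocrystal with the unit object'' is the right statement, and to verify it on the original charts you would in effect be reproving the comparison with the product embedding, i.e.\ invoking the independence lemmas anyway. So your argument is correct, and converges to the paper's once step (ii) is made precise in this way.
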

\begin{proof} It follows from Lemma \ref{lemma:indchoices} and  Lemma \ref{lemma:indchoicescompact}
and the Assumption that one can compute $\mathrm{R}\Gamma_{\mathrm{rig}}\bigl((U\subset Y,\alpha_k)/K_0,\mathfrak{E}^{\rm conv}\bigr)$ and $\mathrm{R}\Gamma_{\mathrm{rig},c}\bigl((U^o \subset Y,\alpha_k)/K_0,\mathfrak{E}^{\rm conv}\bigr)$ as the tensor product over $K_0$ of (1) de Rham complexes of $\mathfrak{E}^\dagger$, with or without support conditions, evaluated on the rigid analytic fiber of local lifts of $Y$, with log-structure given by the divisor $\mathcal{D}_k\cap Y\subset Y$, as a log smooth scheme over $\mathcal{O}_{K_0}$ and (2)  the de Rham complex of the open unit disk associated to the formal scheme  $\mathcal{O}_{K_0}[\![Z]\!]$ (with log-structure defined by the divisor $Z=0$).  As (1) represents $\mathrm{R}\Gamma_{\mathrm{rig}}\bigl((U\subset Y,\alpha_k^{\rm hor})/K_0,\mathfrak{E}^{\rm conv}\bigr)$, respectively $\mathrm{R}\Gamma_{\mathrm{rig},c}\bigl((U^o\subset Y,\alpha_k^{\rm hor})/K_0,\mathfrak{E}^{\rm conv}\bigr)$ and (2) represents  $\mathrm{R}\Gamma_{\mathrm{rig}}\bigl((\mathcal{S}_k,\mathcal{M}_k)/K_0\bigr)$, the conclusion follows.
\end{proof}

Notice that  $K_0 \oplus K_0(-1)[-1]\cong \mathrm{R}\Gamma_{\mathrm{rig}}\bigl((\mathcal{S}_k,\mathcal{M}_k)/K_0\bigr) $, where $[-1]$ stands for a shift as a complex while $(-1)$ stands for the fact that Frobenius is multiplied by $p$. In terms of the logarithmic rigid cohomology of the rigid space $\mathfrak{S}^{\rm rig}$ associated to $\mathcal{O}_{K_0}[\![Z]\!]$, the term $K_0$ is identified with $\mathrm{H}^0$ of the logarithmic de Rham complex of $\mathfrak{S}^{\rm rig}$,  while $K_0(-1)[-1]$ is identified with the element $K_0 \cdot dZ/Z\subset \Omega^{\rm log,1}_{\mathfrak{S}^{\rm rig}}$ which maps isomorphically onto $\mathrm{H}^1$ of the logarithmic de Rham complex of $\mathfrak{S}^{\rm rig}$.
We then get morphisms of complexes
$$\nu_{{\rm rig}, U\subset Y,\ast}\colon \mathbf{Syn}_{{\rm rig},U\subset Y, \alpha_k^{\rm hor}}\bigl(\mathcal{E},\mathfrak{E},r\bigr) \longrightarrow \mathbf{Syn}_{{\rm rig},U\subset Y, \alpha_k}\bigl(\mathcal{E},\mathfrak{E},r\bigr)$$with a retract that we denote $\nu_{{\rm rig}, U\subset Y}^\ast$ that identifies the latter as the direct sum of the former and the total complex associated to the endomorphism $1-\frac{\Phi}{p^r}$ of the complex $ \mathrm{R}\Gamma_{\mathrm{rig}}\bigl((U\subset Y,\alpha_k^{\rm hor})/K_0,\mathfrak{E}^{\rm conv}\bigr)(-1)[-1]$.
Similarly for the complexes with support we get morphisms
$$\nu_{{\rm rig}, U\subset Y,c,\ast}\colon \mathbf{Syn}^c_{{\rm rig},U\subset Y, \alpha_k^{\rm hor}}\bigl(\mathcal{E},\mathfrak{E},r\bigr)\longrightarrow \mathbf{Syn}^c_{{\rm rig},U\subset Y, \alpha_k}\bigl(\mathcal{E},\mathfrak{E},r\bigr)$$with a retract denoted $\nu_{{\rm rig}, U\subset Y,c}^\ast$.
In case $Y=W_\ell$ is one of the irreducible components of $\mathcal{X}_k$ and it is smooth over $k$, we simply write $\nu_{{\rm rig},U,\ell}$ and $\nu_{{\rm rig},U,\ell,c}$ for the morphisms above. We then get for every $i\in \mathbb{N}$ morphisms in cohomology
\begin{equation} \label{eq:rhoRig}
\nu_{{\rm rig},U,\ell,\ast}^i\colon \mathrm{H}^i\bigl(\mathbf{Syn}_{{\rm rig},U\subset W_\ell, \alpha_k^{\rm hor}}(\mathcal{E},\mathfrak{E},r)\bigr) \longrightarrow\mathrm{H}^i\bigl(\mathbf{Syn}_{{\rm rig},U\subset W_\ell, \alpha_k}(\mathcal{E},\mathfrak{E},r)\bigr)
\end{equation}with a left inverse $ \nu_{{\rm rig},U,\ell}^{\ast,i},$
as well as the analogue in which $\mathbf{Syn}$ is replaced by $\mathbf{Syn}%
^{c\text{-}\infty }$ obtained applying the construction to the twisted
sheaves, and, similarly, a morphism
\begin{equation}\label{eq:rhoRigc}
\nu_{\mathrm{rig},U,\ell,c,\ast}^i\colon \mathrm{H}^{i}\left( \mathbf{Syn}_{\mathrm{rig}
,U^{o}\subset W_\ell, \alpha_k^{\rm hor}}^{c}\left( \mathcal{E},\mathfrak{E},r\right)
\right) \longrightarrow \mathrm{H}^{i}\left( \mathbf{Syn}_{\mathrm{rig}
,U^{o}\subset W_\ell, \alpha_k}^{c}\left( \mathcal{E},\mathfrak{E},r\right)
\right)
\end{equation}
with a left inverse $\nu_{\mathrm{rig},U,\ell,c}^{\ast,i}$.

\subsection{An acyclicity result} 
Let $n$ be a positive integer. For every  $p\in \mathbb{N}$,
write $I_{p}\subset \left\{ 1,\ldots,n\right\} ^{p}$ for the set of
multi-indexes $\mathbf{i}=\left( i_{1},\ldots,i_{p}\right) $ such that $
i_{1}<\ldots<i_{p}$. For
every $\mathbf{i}\in I_{p}$, we denote by $\left\{ \mathbf{i}\right\} $ the
set $\left\{ i_{1},\ldots,i_{p}\right\} $. If $k\in
\left\{ 1,\ldots,n\right\} $, we set $k\wedge \mathbf{i}=\phi $ in case $k\in
\left\{ \mathbf{i}\right\} $ and, otherwise, we write $k\wedge \mathbf{i}$
for the unique element of $I_{p+1}$ such that $\left\{ k\wedge \mathbf{i}
\right\} =\left\{ k\right\} \cup \left\{ \mathbf{i}\right\} $. 

\begin{definition}
Let $M$ be an $R$-module and let $\varphi _{1},\ldots,\varphi _{n}\in \mathrm{End}_{R}\left( M\right) $  be commuting $R$-linear maps. For each $p\in \N$ set $$K^{p}\left( \varphi _{1},\ldots,\varphi _{n}\right) :=M\otimes
_{R}\wedge _{R}^{p}\left( R^{n}\right) =\bigoplus\nolimits_{\mathbf{i}\in
I_{p}}M.$$If $\mathbf{i}\in I_{p}$, we write $e_{\mathbf{i}
}:=e_{i_{1}}\wedge \ldots\wedge e_{i_{p}}\in \wedge _{R}^{p}\left( R^{n}\right) $ for  the canonical basis elements. If $m\in M$, we write $m_{\mathbf{i}}\in \bigoplus\nolimits_{\mathbf{i}\in
I_{p}}M$ for the element associated to $m\otimes _{R}e_{\mathbf{i}
}$.  

Let $d^{p}\colon K^{p}\left( \varphi _{1},\ldots,\varphi _{n}\right)
\rightarrow K^{p+1}\left( \varphi _{1},\ldots,\varphi _{n}\right) $ to be the $R$-linear map defined by 
\begin{equation}
d^{p}\left( m\otimes _{R}e_{\mathbf{i}}\right)
=\sum\nolimits_{k=1}^{n}\varphi _k\left( m\right) \otimes _{R}e_k\wedge
e_{\mathbf{i}}=\sum\nolimits_{k=1}^{n}\left( -1\right) ^{\varepsilon _{\mathbf{i}}\left( k\right)}\varphi
_k\left( m\right) \otimes _{R}e_{k\wedge \mathbf{i}},
\label{Kos F1}
\end{equation}where  $
\varepsilon _{\mathbf{i}}\left( k\right) :=\#\left\{ i\in \left\{ \mathbf{i}
\right\} \vert i<k\right\} $. 

Define  $K^\ast\left( \varphi
_{1},\ldots,\varphi _{n}\right)=\oplus_{p=0,\ldots,n} K^{p}\left( \varphi _{1},\ldots,\varphi _{n}\right) $ and let $d^\ast$ be the degree $1$ map $d^\ast=\sum_p d^p\colon K^\ast\left( \varphi
_{1},\ldots,\varphi _{n}\right) \to K^\ast\left( \varphi
_{1},\ldots,\varphi _{n}\right)$.

\end{definition}

We have the following lemma whose proof we leave to the reader:

\begin{lemma}\label{lemma:Koszul}  The following hold:\smallskip

(i) Given $\mathbf{j}\in I_{p+1}$, then $d^{p}\left( m_{\mathbf{i}
}\right) _{\mathbf{j}}=0$ unless  $\left\{ \mathbf{i}\right\} \subset \left\{ \mathbf{j}\right\} $. In this case, if  $\mathbf{j}=\left( j_{1},\ldots,j_{p+1}\right) $, then we have $\mathbf{i}=
\left( j_{1},\ldots,\widehat{j_k},\ldots,j_{p+1}\right) $ for a unique $j_k\in
\left\{ \mathbf{j}\right\} $ and $d^{p}\left( m_{\mathbf{i}
}\right) _{\mathbf{j}}=\left( -1\right) ^{k+1}\varphi _{j^{k}}\left(
m\right) _{\mathbf{j}}.$

\smallskip

(ii) We have $d^\ast\circ d^\ast=0$

\smallskip

(iii) Let $\iota \colon R^{n-1} \to R^n$ be the map induced by the inclusion of $R^{n-1}$ into the first $n-1$-components. Then, the canonical isomorphism $\wedge
_{R}^{p}\left( R^{n-1}\right) \oplus \wedge _{R}^{p-1}\left( R^{n-1}\right) 
\overset{\sim }{\rightarrow }\wedge _{R}^{p}\left( R^{n}\right) $ sending $
\left( x,y\right) $ to $\iota ^{p}\left( x\right) +e_{n}\wedge \iota
^{p-1}\left( y\right) $ identifies
\begin{equation*}
K^\ast\left( \varphi _{1},\ldots,\varphi _{n}\right) =\mathrm{Cone}\left(
-\varphi _{n}:K^\ast\left( \varphi _{1},\ldots,\varphi _{n-1}\right)
\rightarrow K^\ast\left( \varphi _{1},\ldots,\varphi _{n-1}\right) \right) %
\left[ -1\right].
\end{equation*}

\smallskip

(iv) Suppose that
\begin{equation*}
0\longrightarrow M^{\prime }\longrightarrow M\longrightarrow M^{\prime
\prime }\longrightarrow 0
\end{equation*}
is an exact sequence of $R$-modules and that  $\varphi
_{i}\left( M^{\prime }\right) \subset M^{\prime }$ for every $i=1,\ldots,n$.
Write $\left\{ \varphi _{1}^{\prime },\ldots,\varphi _{n}^{\prime }\right\}
\subset \mathrm{End}_{R}\left( M^{\prime }\right) $ and $\left\{ \varphi _{1}^{\prime
\prime },\ldots,\varphi _{n}^{\prime \prime }\right\} \subset \mathrm{End}_{R}\left(
M^{\prime \prime }\right) $ for the induced morphisms. Then we have 
an exact sequence of complexes of $R$-modules%
\begin{equation*}
0\longrightarrow K^\ast\left( \varphi _{1}^{\prime },\ldots,\varphi
_{n}^{\prime }\right) \longrightarrow K^\ast\left( \varphi
_{1},\ldots,\varphi _{n}\right) \longrightarrow K^\ast\left( \varphi
_{1}^{\prime \prime },\ldots,\varphi _{n}^{\prime \prime }\right)
\longrightarrow 0.
\end{equation*}

\smallskip

(v) Suppose that $A$ is an $R$-algebra and $M$ is an $A$-module. Then $K^{p}\left( \varphi
_{1},\ldots,\varphi _{n}\right) \cong M\otimes _{A}\wedge _{A}^{p}\left(
A^{n}\right) $, compatibly with differentials where on the RHS we simply replace $\otimes _{R}$ with $\otimes _{A}$ in (\ref{Kos F1}).

(vi) With the notation as in (v), assume that $I\subset A$ is an ideal such that $\varphi
_{i}\left( IM\right) \subset IM$ for every $i=1,\ldots,n$. Then, we get a
commuting elements $\varphi _{1\mid IM},\ldots,\varphi _{n\mid
IM}\in \mathrm{End}_{R}\left( IM\right) $ and a natural identification $$IK^{\ast
}\left( \varphi _{1},\ldots,\varphi _{n}\right) =K^{p}\left( \varphi _{1\mid IM},\ldots,\varphi _{n\mid IM}\right) \subset K^\ast\left(
\varphi _{1},\ldots,\varphi _{n}\right) .$$We also have an identification $K^\ast\left( \widehat{\varphi }_{1},\ldots,\widehat{\varphi }
_{n}\right) =\widehat{K}^\ast\left( \varphi _{1},\ldots,\varphi _{n}\right) 
$ of the $I$-adic completions.

\end{lemma}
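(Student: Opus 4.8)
The statement is a chain of elementary verifications, and the plan is to prove the parts in an order in which each may reuse the earlier ones, namely \textbf{(i)}$\Rightarrow$\textbf{(ii)}, then \textbf{(iii)}, then \textbf{(iv)}, \textbf{(v)}, \textbf{(vi)}. Part \textbf{(i)} is pure unwinding: applying $d^p$ to $m_{\mathbf i}$ gives $\sum_{k=1}^n(-1)^{\varepsilon_{\mathbf i}(k)}\varphi_k(m)_{k\wedge\mathbf i}$, the terms with $k\in\{\mathbf i\}$ drop out since $e_k\wedge e_{\mathbf i}=0$, and for a fixed $\mathbf j\in I_{p+1}$ a term lands in the $\mathbf j$-summand only when $\mathbf j=k\wedge\mathbf i$, which forces $\{\mathbf i\}\subset\{\mathbf j\}$ and makes $k$ the unique element of $\{\mathbf j\}\setminus\{\mathbf i\}$; writing $\mathbf i$ as $\mathbf j=(j_1,\ldots,j_{p+1})$ with the $k$-th entry $j_k$ removed, one has $\varepsilon_{\mathbf i}(j_k)=k-1$, whence the sign $(-1)^{k-1}=(-1)^{k+1}$. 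For \textbf{(ii)} I would either invoke the classical Koszul cancellation --- for $\mathbf l\in I_{p+2}$ with $\{\mathbf l\}\setminus\{\mathbf i\}=\{a,b\}$, $a<b$, the two routes from $\mathbf i$ to $\mathbf l$ contribute $\pm\varphi_a\varphi_b(m)$ and $\mp\varphi_b\varphi_a(m)$, which cancel since the $\varphi_i$ commute --- or, more cheaply, deduce it by induction on $n$ from \textbf{(iii)}: for $n=1$ the complex is $[M\xrightarrow{\varphi_1}M]$, and for $n\geq 2$ part \textbf{(iii)} presents $K^\ast(\varphi_1,\ldots,\varphi_n)$ as a shifted mapping cone, which is automatically a complex.

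The delicate point, and the step I expect to demand the most care, is the sign bookkeeping in \textbf{(iii)}. Under the displayed isomorphism $\wedge^p_R(R^{n-1})\oplus\wedge^{p-1}_R(R^{n-1})\xrightarrow{\ \sim\ }\wedge^p_R(R^n)$, the basis vector $e_{\mathbf i'}$ of the second summand (with $\mathbf i'\in I_{p-1}$ supported in $\{1,\ldots,n-1\}$) is sent to $e_n\wedge e_{\mathbf i'}=(-1)^{p-1}e_{(\mathbf i',n)}$, the sorted basis element. Splitting $K^p(\varphi_1,\ldots,\varphi_n)$ into the part indexed by $\mathbf i$ with $n\notin\{\mathbf i\}$ and the part with $n\in\{\mathbf i\}$, I would compute $d^p$ on each: on the first part the terms with $k<n$ reproduce the differential of $K^\ast(\varphi_1,\ldots,\varphi_{n-1})$, while the $k=n$ term has $\varepsilon_{\mathbf i}(n)=p$ and, after carrying across the sign $(-1)^p$ coming from the identification of $e_{(\mathbf i,n)}$ with the second summand, becomes $+\varphi_n$ landing in that summand; on the second part the $k=n$ term vanishes, and a bookkeeping with the signs $(-1)^{p-1}$ and $(-1)^p$ produced by the identification shows that the remaining terms reproduce $-d_{n-1}$ on the second summand. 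Matching this against the differential $\left(\begin{smallmatrix}d_{n-1}&0\\ \varphi_n&-d_{n-1}\end{smallmatrix}\right)$ of $\mathrm{Cone}(-\varphi_n)[-1]$, in the standard conventions, yields the identification. I would fix the cone and shift sign conventions explicitly at the outset, since that is precisely where errors tend to creep in.

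The remaining parts are formal. For \textbf{(iv)}, since $\wedge^p_R(R^n)$ is free the functor $-\otimes_R\wedge^p_R(R^n)$ is exact, so the degreewise sequences $0\to K^p(\varphi')\to K^p(\varphi)\to K^p(\varphi'')\to 0$ are exact; they respect the differentials because $d^p$ is assembled from the $\varphi_i$, each of which preserves $M'$ and induces $\varphi_i''$ on $M''$. For \textbf{(v)}, exterior powers of free modules commute with base change, $\wedge^p_A(A^n)\cong A\otimes_R\wedge^p_R(R^n)$, so $M\otimes_A\wedge^p_A(A^n)\cong M\otimes_AA\otimes_R\wedge^p_R(R^n)\cong M\otimes_R\wedge^p_R(R^n)=K^p$ canonically, sending the basis $e_{\mathbf i}$ to $e_{\mathbf i}$ and making formula \eqref{Kos F1} literally the same on both sides. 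For \textbf{(vi)}, $IK^p(\varphi_1,\ldots,\varphi_n)=\bigoplus_{\mathbf i\in I_p}IM$ is stable under $d^p$ because $\varphi_i(IM)\subset IM$, so applying \textbf{(v)} to the $A$-module $IM$ identifies the subcomplex $IK^\ast$ with $K^\ast(\varphi_{1\mid IM},\ldots,\varphi_{n\mid IM})$; and since $I$-adic completion commutes with the finite direct sums $\bigoplus_{\mathbf i\in I_p}(-)$, each $K^p$ completes to $\bigoplus_{\mathbf i\in I_p}\widehat M=K^p(\widehat\varphi_1,\ldots,\widehat\varphi_n)$, compatibly with the differentials, which is the final assertion. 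Thus the only genuinely fiddly ingredient is the sign computation in \textbf{(iii)}; everything else is formal once the conventions are pinned down.
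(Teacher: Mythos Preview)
Your proposal is correct and provides exactly the elementary verifications that the paper omits: the paper's own proof reads simply ``Left to the reader.'' Your approach is the standard one, and your emphasis on the sign bookkeeping in part (iii) as the only genuinely delicate point is well placed.
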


We now prove the following key technical result. Let $A$ be a noetherian $R$-algebra, $I:=\left(
x_{1},\ldots,x_{n_{o}}\right) \subset A$ an ideal and
$M$ is a finitely generated $A$-module.   For every non-empty subset $S\subset \left\{
1,\ldots,n_{o}\right\} $, set $I_{S}\subset A$ to be the ideal $I_S:=\left( x_{i}:i\in S\right) $.

\begin{lemma}\label{lemma:Kos L1} 
Let $d_{1},\ldots d_{n}\in \mathrm{End}_{R}(
A) $  be endomorphisms such that for $j=1,\ldots,n_{o}$ we have  $d_{i}\left( x_{j}\right) =0$ if $i\neq j$ and $d_{j}\left( x_{j}\right) =x_{j}$. Let $ \nabla _{1},\ldots,\nabla _{n}\in \mathrm{End}_{R}(M) $ be commuting homomorphisms such that, for every $i=1,\ldots,n$,we have
\begin{equation}
\nabla _{i}\left( am\right) =d_{i}\left( a\right) m+a\nabla _{i}\left(
m\right) \text{ for every }a\in A\text{ and }m\in M  \label{Kos L1 F1 Clm}
\end{equation}

Assume that the residue $\mathrm{Res}(\nabla _{i})\in \mathrm{End}_R(M/x_i M)$ is
nilpotent for every $i=1,\ldots,n_{o}$.  Then  $I_{S}K^\ast\left( 
\widehat{\nabla }_{1},\ldots,\widehat{\nabla }_{n}\right) $ is an acyclic
subcomplex of $K^\ast\left( \widehat{\nabla }_{1},\ldots,\widehat{
\nabla }_{n}\right) $.  
\end{lemma}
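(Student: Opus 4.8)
The plan is to proceed by induction on $n_o$, the number of generators $x_1,\ldots,x_{n_o}$ of $I$ among which the $d_i$ act "diagonally", and, for each fixed $n_o$, by a further devissage on the subset $S$. The key geometric input is the hypothesis that each residue $\mathrm{Res}(\nabla_i) \in \mathrm{End}_R(M/x_iM)$ is nilpotent; this is what ultimately forces acyclicity, because near $x_i = 0$ the operator $\nabla_i = x_i\frac{\partial}{\partial x_i} + (\text{lower order})$ looks, after completion, like a unit plus a nilpotent, hence becomes invertible on the $x_i$-adic filtration quotients once one is away from the kernel of the residue. Concretely, I would first treat the base case $S = \{j\}$ a singleton: here $I_S K^\ast(\widehat\nabla_1,\ldots,\widehat\nabla_n)$ identifies by Lemma \ref{lemma:Koszul}(vi) with the Koszul-type complex $K^\ast(\widehat\nabla_{1|x_jM},\ldots,\widehat\nabla_{n|x_jM})$, and by Lemma \ref{lemma:Koszul}(iii) this is (a shift of) the cone of $-\widehat\nabla_j$ acting on $K^\ast$ of the remaining operators restricted to $x_j M$. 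So it suffices to show that $\widehat\nabla_j$ is an \emph{isomorphism} on $x_j\widehat M$ (equivalently on each $x_j^m\widehat M/x_j^{m+1}\widehat M$, then pass to the limit): on $x_j^m M/x_j^{m+1}M$ the operator $\nabla_j$ acts as multiplication, via the Leibniz rule $\nabla_j(x_j^m m') = m x_j^m m' + x_j^m\nabla_j(m')$, by $m + \mathrm{Res}(\nabla_j)$ (up to the identification of graded pieces), which is invertible for $m \ge 1$ because $\mathrm{Res}(\nabla_j)$ is nilpotent. This is exactly the residue argument that appears, in a cruder form, in the proof of Proposition \ref{prop:anabs}(3) via Lemma \ref{lemma:PHIM0}.

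Next I would handle a general non-empty $S = \{i_1 < \cdots < i_t\} \subset \{1,\ldots,n_o\}$ by an inner induction on $|S| = t$. Write $I_S = I_{S'} + (x_{i_t})A$ with $S' = S \setminus \{i_t\}$. The point is to fit $I_S K^\ast$ into exact sequences of complexes built from $I_{S'}K^\ast$, $x_{i_t}K^\ast$, and $(I_{S'}\cap x_{i_t}A)K^\ast$, using Lemma \ref{lemma:Koszul}(iv) applied to short exact sequences of $A$-modules such as $0 \to I_{S'}M \cap x_{i_t}M \to I_{S'}M \oplus x_{i_t}M \to I_S M \to 0$ (a Mayer--Vietoris-type sequence; one must be a little careful that these are genuinely exact after $I$-adic completion, which holds since $A$ is noetherian and $M$ finitely generated, so completion is exact on finitely generated modules). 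Granting exactness, acyclicity of $I_{S'}K^\ast$ (inner induction) and of $x_{i_t}K^\ast$ (the singleton case, applied with $x_{i_t}$ in the role of $x_j$) together with acyclicity of $(I_{S'} \cap x_{i_t}A)M$'s Koszul complex would give acyclicity of $I_S K^\ast$ by the long exact sequence. The term $(I_{S'}\cap x_{i_t}A)M$ is where one needs the diagonal hypothesis $d_i(x_j)=0$ for $i\ne j$: it guarantees that $x_{i_t}M$ is stable under \emph{all} the $\nabla_i$ (not just $\nabla_{i_t}$), and more importantly that on $x_{i_t}M$ the residue hypotheses for the remaining $x_{i}$, $i\in S'$, are still in force, so the induction is self-feeding. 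Alternatively — and I think this is cleaner — one avoids Mayer--Vietoris entirely: since $d_i(x_j) = \delta_{ij}x_j$, the operators $\nabla_i$ are compatible with the multi-grading by $(x_1^{a_1}\cdots x_{n_o}^{a_{n_o}})$-adic filtrations simultaneously, and one can filter $I_S\widehat M$ by the ideals $\mathfrak{a}_N := \sum_{i\in S}x_i^N\widehat M$-type layers and reduce to showing that on each graded piece, which is a module over $A/(x_i : i \in S)$ up to the relevant monomials, at least one of the operators $\nabla_{i|\text{gr}}$ acts as (nonzero integer) + (nilpotent), hence invertibly, making the corresponding sub-Koszul-complex contractible by Lemma \ref{lemma:Koszul}(iii).

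The main obstacle I anticipate is not the residue computation itself — that is the standard "integer plus nilpotent is invertible" trick — but rather the bookkeeping needed to make the devissage on $S$ compatible with $I$-adic completion and with the Koszul differential simultaneously. One must check that all the short exact sequences of $A$-modules used stay exact after completion (fine by noetherianity, but worth stating), that the identifications of Lemma \ref{lemma:Koszul}(iii) and (vi) are compatible with passing to these subquotients, and that the nilpotence of $\mathrm{Res}(\nabla_i)$ is inherited by the relevant restrictions $\nabla_{i|x_jM}$ and by the graded pieces — this last point uses crucially that $d_i(x_j) = 0$ for $i \ne j$, so that modding out by $x_j$ does not disturb the residue structure of the other variables. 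Once these compatibilities are in place, the acyclicity follows formally from the singleton case by the long exact cohomology sequences, or from contractibility of each graded layer via Lemma \ref{lemma:Koszul}(iii). I would therefore organize the write-up as: (a) the graded-piece invertibility lemma (residue + nilpotence $\Rightarrow$ $\widehat\nabla_j$ invertible on $x_j\widehat M$); (b) deduce the singleton case of $S$ from (a) via Lemma \ref{lemma:Koszul}(iii),(vi); (c) the induction on $|S|$, or the simultaneous filtration argument, to get general $S$.
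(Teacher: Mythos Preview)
Your plan for the singleton case $S=\{j\}$ is correct and in fact slightly streamlines the paper's argument: the paper proves $\nabla_1\in\mathrm{Aut}_R(x_1M)$ when $n=n_o=1$ (its Step~1) and then runs an induction on $n$ peeling off $\nabla_n$ via the cone identification (its Step~2), whereas you go directly---reorder so as to peel off $\nabla_j$ itself, and use that $\widehat\nabla_j$ is a chain automorphism of $x_jK^\ast(\widehat\nabla_1,\ldots,\widehat{\widehat\nabla_j},\ldots,\widehat\nabla_n)$ once it is an automorphism of $x_j\widehat M$. Your graded-piece computation (on $x_j^mM/x_j^{m+1}M$ the induced operator is $m+\text{nilpotent}$, hence invertible for $m\ge 1$) is exactly the content of the paper's Step~1, phrased differently.

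The gap is in your induction on $|S|$. The Mayer--Vietoris route requires controlling the Koszul complex of $I_{S'}M\cap x_{i_t}M$; in general this intersection equals $x_{i_t}\bigl(I_{S'}M:x_{i_t}\bigr)$, which is $x_{i_t}I_{S'}M$ only when $x_{i_t}$ is a non-zero-divisor on $M/I_{S'}M$, a hypothesis you do not have. Your alternative multi-filtration sketch does not identify the graded pieces precisely enough to run the ``one operator acts invertibly'' argument. The paper avoids both problems by taking the \emph{quotient} instead: with $S'=S\setminus\{r\}$ and $M'':=M/I_{S'}M$, one has the exact sequence
\[
0\longrightarrow I_{S'}M\longrightarrow I_{S}M\longrightarrow x_rM''\longrightarrow 0,
\]
since $I_SM/I_{S'}M=(x_rM+I_{S'}M)/I_{S'}M=x_rM''$. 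By Lemma~\ref{lemma:Koszul}(iv) this gives a short exact sequence of Koszul-type complexes. The left term is acyclic by induction on $|S|$; the right term is acyclic by the singleton case applied to the module $M''$---here $d_r(x_j)=0$ for $j\in S'$ ensures each $\nabla_i$ descends to $M''$, and nilpotence of $\mathrm{Res}(\nabla_r)$ on $M/x_rM$ descends to the quotient $M''/x_rM''=M/I_SM$. This is the clean inductive step you were looking for.
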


\begin{proof} We remark that due our assumption $\nabla_i(x_i M)\subset x_i M$ so that $\mathrm{Res}(\nabla _{i})$ is well defined. Using (\ref{Kos L1 F1 Clm}) and the assumption that $ d_{i}\left( x_{j}\right) =0$ or $x_{j}$ if $i=j$ for $j\in \left\{ 1,\ldots,n_{o}\right\} $, we deduce from Property (v) that $I_{S}K^\ast\left( \widehat{\nabla }_{1},\ldots,\widehat{\nabla }
_{n}\right) \subset K^\ast\left( \widehat{\nabla }_{1},\ldots,\widehat{
\nabla }_{n}\right) $ is a subcomplex.

Notice that  $K^\ast\left( \widehat{\nabla }_{1},\ldots,\widehat{\nabla }
_{n}\right) =K^\ast\left( \nabla _{1},\ldots,\nabla _{n}\right) $ when $M$
is $I$-adically complete and separated by Prperty (vi). Since we have the equality of $I$-adic completions $\widehat{\frac{M}{I_{\left\{
i\right\} }M}}\simeq \frac{\widehat{M}}{I_{\left\{ i\right\} } \widehat{M}}$
the nilpotency of $\mathrm{Res}\left( \nabla _{i}\right) $
implies the nilpotency of $\mathrm{Res}\left( \widehat{\nabla }_{i}\right) $. 
In order to prove the claim we can then assume that $M$ is already $I$
-adically complete and separated. Setting $r:=\left\vert S\right\vert $, we
will prove our claim by a double induction.

\emph{Step 1: }$\left( r,n_{o},n\right) =\left( 1,1,1\right) $. The generic
element $z$ of $x_{1}K^{0}\left( \nabla _{1}\right) $ is of the form $
z=x_{1}z_{1}$ where $z_{1}\in K^{0}\left( \nabla _{1}\right) $\ and, setting 
$N:=\nabla _{1}-{\rm Id}$, we have $N\left( z\right) =x_{1}\nabla _{1}\left( z_{1}\right) $. By
induction, we find that $N^{t}\left( z\right) =x_{1}\nabla _{1}^{t}\left(
z_{1}\right) $ for every $t\in \mathbb{N}_{\geq 1}$. Our assumption that $
\mathrm{Res}\left( \nabla _{1}\right) $ is nilpotent implies that there
exists some $t\in \mathbb{N}_{\geq 1}$ such that $\nabla _{1}^{t}\left(
y\right) \in x_{1}M$ for every $y\in M$. Hence, $N^{t}\left(
z\right) \in x_{1}^{2}M$ for every $z\in x_{1}K^{0}\left( \nabla _{1}\right) 
$ or, equivalently, considering $End_{R}\left( x_{1}M\right) $ as a left $A$
-module via the composition with the multiplication by elements of $A$ on
the codomain, then $N^{t}\in
x_{1}End_{R}\left( x_{1}M\right) $, implying that $N$ is topologically
nilpotent in the $I$-adic topology. By a geometric series argument, we deduce that
\begin{equation*}
\nabla _{1}=1+ N_1\in A^{\times }\left( 1+ x_1 \mathrm{End}_{R}\left(
x_{1}M\right) \right) \subset \mathrm{Aut}_{R}\left( x_{1}M\right) \text{.}
\end{equation*}
is an isomorphism, proving the claim in this case.

\emph{Step 2: }$\left( r,n_{o},n\right) =\left( 1,n_{o},n\right) $. We may
assume without loss of generality that $S=\left\{ 1\right\} $ and we prove
our assertion by induction on $n$, the case $n=1$ being discussed in Step 1. 
Suppose that $n>1$. According to $\left( v\right) $
above, $x_{1}K^\ast\left( \nabla _{1},\ldots,\nabla _{m}\right) =K^{\ast
}\left( \nabla _{1\mid x_{1}M},\ldots,\nabla _{m\mid x_{1}M}\right) $ for $m\in
\left\{ n-1,n\right\} $ and, hence, thanks to Property (iii) we know
that $x_{1}K^\ast\left( \nabla _{1\mid
x_{1}M},\ldots,\nabla _{n\mid x_{1}M}\right) $ is the cone of
$$ -\nabla
_{n\mid x_{1}M}:x_{1}K^\ast\left( \nabla _{1\mid x_{1}M},\ldots,\nabla
_{n-1\mid x_{1}M}\right) \rightarrow x_{1}K^\ast\left( \nabla _{1\mid
x_{1}M},\ldots,\nabla _{n-1\mid x_{1}M}\right)$$twisted by $-1$. Using the inductive step one deduces that  $
x_{1}K^\ast\left( \nabla _{1\mid x_{1}M},\ldots,\nabla _{n-1\mid
x_{1}M}\right) $ is acyclic and we conclude the claimed acyclicity of $
x_{1}K^\ast\left( \nabla _{1\mid x_{1}M},\ldots,\nabla _{n\mid
x_{1}M}\right) $ using the identification with the cone.

\emph{Step 3: arbitrary }$\left( r,n_{o},n\right) $\emph{\ with }$1\leq
r\leq n_{o}\leq n$. We may assume without loss of generality that $S=\left\{
1,\ldots,r\right\} $ and we prove our assertion by induction on $r$, the case $
r=1$ being discussed in Step 2. Suppose that $r>1$ and consider
the exact sequence
\begin{equation*}
0\longrightarrow I_{\left\{ 1,\ldots,r-1\right\} }M\longrightarrow I_{\left\{
1,\ldots,r\right\} }M\longrightarrow \frac{I_{\left\{ 1,\ldots,r\right\} }M}{
I_{\left\{ 1,\ldots,r-1\right\} }M}\longrightarrow 0\text{.}
\end{equation*}
Set $M^{\prime \prime }:=\frac{M}{I_{\left\{ 1,\ldots,r-1\right\} }M}$. Then $
x_{r}M^{\prime \prime }=\frac{I_{\left\{ 1,\ldots,r\right\} }M}{I_{\left\{
1,\ldots,r-1\right\} }M}$. Write $\nabla
_{i}^{\prime \prime }$ for the operator induced by $\nabla_i$ on $x_{r}M^{\prime \prime }$.
Thanks to Property (iv)  we have an exact sequence of complexes of $R$-modules expressing $K^\ast\left( \nabla _{1\mid I_{\left\{
1,\ldots,r\right\} }M},\ldots,\nabla _{n\mid I_{\left\{ 1,\ldots,r\right\} }M}\right)$ as an extension of 
$K^\ast\left( \nabla _{1}^{\prime \prime },\ldots,\nabla
_{n}^{\prime \prime }\right)$ by $K^\ast\left( \nabla _{1\mid I_{\left\{
1,\ldots,r-1\right\} }M},\ldots,\nabla _{n\mid I_{\left\{ 1,\ldots,r-1\right\}
}M}\right) $.  By induction, we know that the latter is acyclic. Hence, it suffices to show that  that $K^\ast\left( \nabla _{1}^{\prime \prime },\ldots,\nabla
_{n}^{\prime \prime }\right)$ is acyclic. This follows from the case $\left(
r,n_{o},n\right) =\left( 1,n_{o},n\right) $ discussed in Step 2.
\end{proof}

\subsubsection{End of proof of Proposition \ref{prop:samecohogroups}}\label{section:incarnation} 
Going back to the proof of Proposition \ref{prop:samecohogroups} let $A$ be 
the sheaf of global sections of $\mathcal{X}_{
\underline{\beta }}$.  Let $\Omega_A $ be the $A$-module of continuous logarithmic  K\"{a}hler differentials of $\mathcal{X}_{\underline{\beta }}$.  Since we are in the setting of \S \ref{sec:dR}, we can take $
\left\{ x_{1},\ldots,x_{n_{o}}\right\} $ to be the set $\left\{
Y_1,\ldots, Y_b\right\} $ and $\left\{ x_{n_o},\ldots,x_n\right\}$ which corresponds to the variables  $\left\{X_1,\ldots,X_{a-1}, v_{i,j},u_{i,k}\vert j=1,\ldots,a, k=1,\ldots,b\right\}$. Their $d\log$ define a basis of $\Omega_A$ and, hence,  derivations $d_i\colon A \to A$ such that $d_i(x_j)=0$ for $i\neq j$ and $d_i(x_i)=x_i$ if $i=1,\ldots, n_o$. Similarly let $M$ be the global section of $\mathcal{E}$ over $\mathcal{X}_{
\underline{\beta }}$. Since $\mathcal{E}$ arises for a log-isocrystal, it comes endowed with an integrable connection $\nabla$. Using the given basis of $\Omega_A$ we get commuting operators $\nabla_1,\ldots,\nabla_n$ that satisfy Condition (\ref{Kos L1 F1 Clm}) of Lemma  \ref{lemma:Kos L1}. Also the nilpotency assumption on the residues in Lemma  \ref{lemma:Kos L1} holds since $\mathcal{E}$ is locally free and the nilpotency holds on the generic fiber by assumption in Proposition \ref{prop:samecohogroups}. Finally, one checks that via the given trivialization of $\Omega_A\cong A^n$, the logarithmic de Rham complex $\mathrm{dR}(M)$ of $M$ coincides with the complex obtained from $K^\ast\left( \nabla _{1},\ldots,\nabla _{n}\right)$ and the operator $d^\ast$.  Lemma \ref{lemma:Kos L1} then implies that $I\widehat{\mathrm{dR}(M)}$ is acyclic. If we invert $p$ this complex is the complex $I_{J,\underline{\beta }
}R\Gamma _{\mathrm{logdR}}\left( X_{J,\underline{\beta }},\widehat{\mathcal{E
}}_{J,\underline{\beta }}^{\partial }\right) $ so that Claim (1) in the proof of Proposition \ref{prop:samecohogroups} follows from Lemma  \ref{lemma:Kos L1}.

\section{Period sheaves}\label{sec:periodsheaves}

Following \cite[\S 2.2]{DLLZb} we have period sheaves $\widehat{\mathcal{O}}_X$ and $\mathbb{A}_{\mathrm{inf}}$ on $X_{\mathrm{pket}}$ with a morphism  $\vartheta\colon \mathbb{A}_{\mathrm{inf}} \to \widehat{\mathcal{O}}_X^+$ with locally principal kernel. From these we get filtered period sheaves $\mathbb{B}_{\mathrm{crys}}$ and $\mathbb{B}_{\mathrm{dR}}$. For every $J$ we have the sheaves $\widehat{\mathcal{O}}_{X_J}^{\partial,+}$, $\mathbb{A}_{\mathrm{inf},J}^\partial$ with the morphism $\vartheta_J^\partial\colon \mathbb{A}_{\mathrm{inf},J}^\partial \to \widehat{\mathcal{O}}_{X_J}^{\partial,+}$ 
and the filtered period sheaves $\mathbb{B}_{\mathrm{crys},J}^\partial$ and $\mathbb{B}_{\mathrm{dR},J}^\partial$ on $X_{\mathrm{pket}}$  obtained as push-forward via $\iota_J\colon X_J^\partial\to X$ of the corresponding period sheaves on $X^\partial_{J,\mathrm{pket}}$; see \cite[Def. 2.5.2]{LLZ} for the de Rham version. For the crystalline version one defines $\mathbb{A}_{\mathrm{crys},J}^\partial$ as in \cite[Def. 2.1]{TT} as the $p$-adic completion of the DP 
envelope of the ideal $\mathrm{ker}(\vartheta_J^\partial)$, which is locally principal by \cite[Cor. 2.5.3]{LLZ}. Then $\mathbb{B}_{\mathrm{crys},J}^\partial= \mathbb{A}_{\mathrm{crys},J}^\partial[t^{-1}]$, with $t$ Fontaine's element. 
The sheaves $\widehat{\mathcal{O}}_X$, $\mathbb{B}_{\mathrm{crys}}$, $\mathbb{B}_{\mathrm{dR}}$ etc. correspond to the case $J=\emptyset$. Set $\mathbb{B}_{\mathrm{crys},(a)}^\partial:=\oplus_{J\subset I, \vert J\vert=a} \mathbb{B}_{\mathrm{crys},J}^\partial$ and $\mathbb{B}_{\mathrm{dR},(a)}^\partial=\oplus_{J\subset I, \vert J\vert=a} \mathbb{B}_{\mathrm{dR},J}^\partial$. Define the  period sheaves with compact support
$$\mathbb{B}_{\mathrm{crys}}^{c-\infty}:=\mathbb{B}_{\mathrm{crys},(\bullet)}^\partial, \qquad 
\mathbb{B}_{\mathrm{dR}}^{c-\infty}:=\mathbb{B}_{\mathrm{dR},(\bullet)}^\partial $$with filtration $\mathrm{Fil}^r \mathbb{B}_{\mathrm{crys}}^{c-\infty}:=\mathrm{Fil}^r\mathbb{B}_{\mathrm{crys},(\bullet)}^\partial$ and $\mathrm{Fil}^r \mathbb{B}_{\mathrm{dR}}^{c-\infty}:=\mathrm{Fil}^r\mathbb{B}_{\mathrm{dR},(\bullet)}^\partial$.  The crystalline period sheaves are endowed with Frobenius endomorphisms and we have injections, strictly compatible with filtrations,  $$\mathbb{B}_{\mathrm{crys}}\subset \mathbb{B}_{\mathrm{dR}}, \qquad \mathbb{B}_{\mathrm{crys}}^{c-\infty}\subset \mathbb{B}_{\mathrm{dR}}^{c-\infty}.$$
\begin{remark} In \cite[Def.~3.3.2]{LLZ} one finds the definition of $\mathbb{B}_{\mathrm{dR}}^{c-\infty}$ as the kernel of $\mathbb{B}_{\mathrm{dR},(0)}^\partial \to \mathbb{B}_{\mathrm{dR},(1)}^\partial$. It is proven in 
\cite[Lemma 3.3.7]{LLZ} that the natural map from this to $\mathbb{B}_{\mathrm{dR},(\bullet)}^\partial$ is an isomorphism. 
\end{remark}

In  \cite{DLLZb} and \cite{LLZ} relative versions of these sheaves are introduced, $\mathcal{O}\mathbb{B}_{\mathrm{dR}}$ and $\mathcal{O}\mathbb{B}_{\mathrm{dR}}^{c-\infty}$. Furthermore by \cite[Cor. 2.4.2]{DLLZb} and  \cite[Cor. 2.5.13]{LLZ} we have quasi-siomorphisms of filtered complexes $$ \mathbb{B}_{\mathrm{dR}} \longrightarrow\mathcal{O}\mathbb{B}_{\mathrm{dR}}\otimes_{\mathcal{O}_X} \Omega^{\log,\bullet}_X,\qquad \mathbb{B}_{\mathrm{dR}}^{c-\infty} \longrightarrow\mathcal{O}\mathbb{B}_{\mathrm{dR}}^{c-\infty}\otimes_{\mathcal{O}_X} \Omega^{\log,\bullet}_X.$$

Under Assumption \ref{ass:formalcase} and fixing $\beta\in B$ one has also relative versions ${\mathcal{O}}_{\mathfrak{X}_\beta}\mathbb{B}_{\mathrm{crys}}$ and ${\mathcal{O}}_{\mathfrak{X}_\beta}\mathbb{B}_{\mathrm{crys}}^{c-\infty}$ on $X_{\beta,\mathrm{pket}}$ as follows. Let $w\colon X_\beta \to \mathcal{X}_{\beta,k}$ be the specialization map. It induces a mophism on \'etale sites.  Define ${\mathcal{O}}_{\mathfrak{X}_\beta}\mathbb{A}_{\mathrm{crys}}$ as the $p$-adic completion of the log DP envelope of $$\vartheta_\beta\colon w^{-1}\bigl(\mathcal{O}_{\mathfrak{X}_\beta}\bigr)\otimes_{\mathcal{O}_{K_0}} \mathbb{A}_{\mathrm{inf}} \to \widehat{\mathcal{O}}_{X_\beta}^+$$and similarly for ${\mathcal{O}}_{\mathfrak{X}_\beta}\mathbb{A}_{\mathrm{crys},J}^\partial $ as the pushforward via the open immersion $X_\beta\subset X$ of the $p$-adic completion of the log DP envelope of $$\vartheta_\beta^\partial \colon w^{-1}\bigl(\mathcal{O}_{\mathfrak{X}_\beta}\bigr)\otimes_{\mathcal{O}_{K_0}} \mathbb{A}_{\mathrm{inf},J}^\partial \to \widehat{\mathcal{O}}_{X_{J,\beta}}^{\partial,+};$$see \cite[Lemma 2.16]{AI}. They are endowed with Frobenius morphisms induced by the given  Frobenius on $\mathfrak{X}_\beta$ and Frobenius on $ \mathbb{A}_{\mathrm{inf}}$. Define ${\mathcal{O}}_{\mathfrak{X}_\beta}\mathbb{B}_{\mathrm{crys}}$ and ${\mathcal{O}}_{\mathfrak{X}_\beta}\mathbb{B}_{\mathrm{crys},J}^{c-\infty}$ as ${\mathcal{O}}_{\mathfrak{X}_\beta}\mathbb{A}_{\mathrm{crys}}[t^{-1}]$ and 
${\mathcal{O}}_{\mathfrak{X}_\beta}\mathbb{A}_{\mathrm{crys},J}^\partial [t^{-1}]$ respetively. We consider the de Rham complexes $${\mathcal{O}}_{\mathfrak{X}_\beta}\mathbb{B}_{\mathrm{crys}}\otimes_{\mathcal{O}_{\mathfrak{X}_\beta}} \Omega_{\mathfrak{X}_\beta}^{\log,\bullet},\qquad{\mathcal{O}}_{\mathfrak{X}_\beta}\mathbb{B}_{\mathrm{crys}}^{c-\infty}\otimes_{\mathcal{O}_{\mathfrak{X}_\beta}} \Omega_{\mathfrak{X}_\beta}^{\log,\bullet}$$
as in \S\ref{sec:Chech}.

\begin{remark}\label{rmk:Blog}
Consider the $0$-dimensional case, i.e., the point. Denote by $A_{\rm log}$  the period ring one gets  taking the $p$-adic completion of the log DP envelope of the map $$\vartheta \colon \mathcal{O}_{K_0}[\![Z]\!] \otimes_{\mathcal{O}_{K_0}} A_{\mathrm{inf}}(\mathbb{C}_p) \to \mathcal{O}_{\mathbb{C}_p},$$sending $Z\mapsto \pi$, in the sense of \cite[Lemma 2.16]{AI}. Let $B_{\rm log}$ be $A_{\rm log}[t^{-1}]$.  As explained in \cite[\S 2.1.1]{AI}  this coincides with the ring $\widehat{B}_{\mathrm{st}}$ introduced by Breuil in  \cite{Breuil}. As explained in loc.~cit., it contains Fontaine's classical period ring $B_{\rm st}$ as the subring of  $\widehat{B}_{\mathrm{st}}$ on which the monodromy operator $N$ is nilpotent. One can use it to characterize semistable $p$-adic representations of the absolute Galois group of $K$.

\end{remark}

\begin{theorem}\label{thm:OBcrys}
 The natural morphism of sheaves  on $X_{\beta,\mathrm{pket}}$ $$ \mathbb{B}_{\mathrm{crys}}\vert_{X_\beta} \longrightarrow{\mathcal{O}}_{\mathfrak{X}_\beta}\mathbb{B}_{\mathrm{crys}}\otimes_{\mathcal{O}_{\mathfrak{X}_\beta}} \Omega_{\mathfrak{X}_\beta}^{\log,\bullet} ,\qquad \mathbb{B}_{\mathrm{crys}}^{c-\infty} \longrightarrow{\mathcal{O}}_{\mathfrak{X}_\beta}\mathbb{B}_{\mathrm{crys}}^{c-\infty}\otimes_{\mathcal{O}_{\mathfrak{X}_\beta}} \Omega_{\mathfrak{X}_\beta}^{\log,\bullet}$$are quasi isomorphisms, compatible with Frobenius.

\end{theorem}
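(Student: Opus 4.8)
\emph{Strategy.} The plan is to reduce the statement to an explicit local computation on $X_{\beta,\mathrm{pket}}$, in the spirit of the de Rham Poincar\'e lemma of \cite[Cor. 2.4.2]{DLLZb} and \cite[Cor. 2.5.13]{LLZ}, but worked out in the divided power (crystalline) incarnation and relative to the chosen lift $\mathfrak{X}_\beta$. Since both sides are complexes of sheaves on $X_{\beta,\mathrm{pket}}$ and the maps are functorial, it is enough to prove the assertion after evaluation on a basis of the pro-Kummer-\'etale site consisting of affinoid perfectoid objects $U\to X_\beta$ over which the chart coordinates $X_1,\dots,X_a,Y_1,\dots,Y_b$ of \S\ref{sec:Chech} admit compatible systems of $p$-power roots. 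Because $\mathfrak{X}_\beta$ is formally \'etale over $\mathfrak{R}^0_\beta$, the de Rham complex $\Omega^{\log,\bullet}_{\mathfrak{X}_\beta}$ (relative to $\mathcal{O}_{K_0}$ with trivial log structure) is the exterior algebra on $d\log X_1,\dots,d\log X_a,d\log Y_1,\dots,d\log Y_b$, the relation $X_1\cdots X_a=Z$ imposing only $\sum_i d\log X_i=d\log Z$; so one reduces to the toric model $\mathfrak{R}^0_\beta$.

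\emph{Local presentation of the period sheaf.} The key input is an explicit description of $\mathcal{O}_{\mathfrak{X}_\beta}\mathbb{A}_{\mathrm{crys}}$ on such $U$. Using the log divided power envelope construction of \cite[Lemma 2.16]{AI} together with \cite[Def. 2.1]{TT}, one shows that $p$-adically $\mathcal{O}_{\mathfrak{X}_\beta}\mathbb{A}_{\mathrm{crys}}$ is a completed divided power polynomial algebra over $\mathbb{A}_{\mathrm{crys}}$ on variables $V_i$ indexed by the chart coordinates, which measure the discrepancy between $X_i,Y_j$ and the elements obtained by pulling them back through $\vartheta_\beta$, subject to the single relation coming from $X_1\cdots X_a=Z$; that the canonical log-connection $\nabla$ acts on these variables through the corresponding divided power derivations; and that the structure map $\mathbb{A}_{\mathrm{crys}}\hookrightarrow\mathcal{O}_{\mathfrak{X}_\beta}\mathbb{A}_{\mathrm{crys}}$ is the inclusion of the subalgebra where all $V_i$ vanish. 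Inverting $t$ yields the analogous presentation of $\mathcal{O}_{\mathfrak{X}_\beta}\mathbb{B}_{\mathrm{crys}}$ over $\mathbb{B}_{\mathrm{crys}}$.

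\emph{Poincar\'e lemma, Frobenius, compact support.} Granting this, the de Rham complex $\mathcal{O}_{\mathfrak{X}_\beta}\mathbb{B}_{\mathrm{crys}}\otimes_{\mathcal{O}_{\mathfrak{X}_\beta}}\Omega^{\log,\bullet}_{\mathfrak{X}_\beta}$ becomes a Koszul-type complex assembled from the commuting divided power derivations in the $V_i$, and is a resolution of $\mathbb{B}_{\mathrm{crys}}$ placed in degree $0$ by the divided power Poincar\'e lemma (cf. \cite[\S 2.1]{TT} and the Koszul acyclicity argument of Lemma \ref{lemma:Kos L1}, or Berthelot--Ogus); this gives the first quasi-isomorphism. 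Frobenius compatibility is checked on generators: the Frobenius on $\mathcal{O}_{\mathfrak{X}_\beta}\mathbb{A}_{\mathrm{crys}}$ is induced by $\varphi_\beta$ (raising each $X_i,Y_j$ to the $p$-th power) and by $\varphi$ on $\mathbb{A}_{\mathrm{inf}}$, hence preserves the filtration by divided powers of the $V_i$ and restricts to the Frobenius of $\mathbb{A}_{\mathrm{crys}}$ on $\{V_i=0\}$, so the identification above and the contracting homotopy are Frobenius-equivariant. The version with support is obtained by running the same argument with $\mathbb{B}_{\mathrm{crys}}$ and $\mathcal{O}_{\mathfrak{X}_\beta}\mathbb{B}_{\mathrm{crys}}$ replaced, for each $J\subset I$, by $\mathbb{B}_{\mathrm{crys},J}^\partial$ and $\mathcal{O}_{\mathfrak{X}_\beta}\mathbb{A}_{\mathrm{crys},J}^\partial[t^{-1}]$ — working on the closed subspace $X_J^\partial$ and using that these sheaves are defined by pushforward along $\iota_J$ — and then passing to the total complex over all $J$, i.e. from the index $J$ to the index $(\bullet)$.

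\emph{Main obstacle.} The crux is the local presentation of $\mathcal{O}_{\mathfrak{X}_\beta}\mathbb{A}_{\mathrm{crys}}$ as a completed divided power polynomial algebra over $\mathbb{A}_{\mathrm{crys}}$ carrying the correct connection, in the logarithmic semistable chart, together with the careful bookkeeping of the relation $X_1\cdots X_a=Z$ tying the toric coordinates to the base parameter $Z$ (and to $\pi$ through $\vartheta_\beta$). Once that presentation is in place, the Poincar\'e lemma, the Frobenius compatibility, and the reduction of the compact-support statement to the case of each $J$ are all formal.
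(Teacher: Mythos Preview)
Your proposal is correct and follows essentially the same route as the paper: evaluate on an affinoid perfectoid $\mathcal{U}_\beta$ of $X_{\beta,\mathrm{pket}}$ built from compatible $p$-power roots of the chart coordinates, identify $\mathcal{O}_{\mathfrak{X}_\beta}\mathbb{A}_{\mathrm{crys}}$ explicitly as a completed divided power polynomial algebra over $\mathbb{A}_{\mathrm{crys}}$, and conclude by the DP Poincar\'e lemma; the compact-support case is then handled $J$-by-$J$ with the quotient sheaves $\mathbb{A}_{\mathrm{crys},J}^\partial$.

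One small correction to your ``main obstacle'' paragraph: in the absolute setting (over $\mathcal{O}_{K_0}$ with trivial log structure) there is in fact \emph{no} relation among the DP variables. The paper takes $v_i=[\underline{X}_i]/X_i$ and $w_j=[\underline{Y}_j]/Y_j$ and shows, via \cite[\S 3.4, Rmk.~3.26]{AI}, that
\[
\mathcal{O}_{\mathfrak{X}_\beta}\mathbb{A}_{\mathrm{crys}}(\mathcal{U}_\beta)\;\cong\;A_{\mathrm{crys}}\bigl(\widehat{\overline{R}}_\beta\bigr)\langle v_1-1,\ldots,v_a-1,\,w_1-1,\ldots,w_b-1\rangle
\]
is \emph{free} in $a+b$ DP variables; the identity $X_1\cdots X_a=Z$ is absorbed by writing $Z=[\underline{\pi}]/(v_1\cdots v_a)$ with $[\underline{\pi}]\in A_{\mathrm{crys}}$ and the $v_i$ units. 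Correspondingly $d\log X_1,\ldots,d\log X_a,d\log Y_1,\ldots,d\log Y_b$ form a \emph{basis} of $\Omega^{1,\log}_{\mathfrak{R}_\beta/\mathcal{O}_{K_0}}$ (since $d\log Z\neq 0$ here), and the derivations $X_i\partial_{X_i},Y_j\partial_{Y_j}$ vanish on $A_{\mathrm{crys}}(\widehat{\overline{R}}_\beta)$, so the Koszul/Poincar\'e computation is the standard one with no bookkeeping of a constraint. (Your reference to Lemma~\ref{lemma:Kos L1} is not quite apt: that lemma concerns nilpotent residues, not the DP Poincar\'e lemma.) For the $c\text{-}\infty$ version the paper uses the explicit description $\mathbb{A}_{\mathrm{crys},J}^\partial(\mathcal{U}_\beta)\cong A_{\mathrm{crys}}(\widehat{\overline{R}}_\beta)/([\underline{Y}_1^s],\ldots,[\underline{Y}_j^s])_{s>0}$ from \cite[Cor.~2.5.4(1), Lemma~2.5.8]{LLZ} and runs the same argument.
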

\begin{proof}  The proof is local.  We may assume that $\mathcal{X}_\beta=\mathrm{Spf}(R_\beta)\subset  \mathfrak{X}_\beta=\mathrm{Spf}(\mathfrak{R}_\beta)$ admits a chart as in \S \ref{sec:Chech}.  As explained in \cite[\S 3.1.2]{AI}  such a choice of chart defines an integral perfectoid extension $\widehat{\overline{R}}^0_\beta$ of $R^0_\beta$ and hence $\widehat{\overline{R}}_\beta$ of $R_\beta$ and a perfectoid open $\mathcal{U}_\beta$ of $X_{\mathrm{pket}}$.  Fix compatible $p$-power roots $\underline{\pi}$, $\underline{X}_1$, $\cdots$, $\underline{X}_a$, $\underline{Y}_1$, $\cdots$, $\underline{Y}_b$ in $\widehat{\overline{R}}^\flat_\beta$ of $\pi$, $X_1$, $\cdots$, $X_a$, $Y_1$, $\cdots$, $Y_b$ in $\widehat{\overline{R}}_\beta$ such that $\underline{X}_1\cdots \underline{X}_a=\underline{\pi}$.
We denote by $[\_]$ their Teichm\"uller lifts in $\mathbf{W}\bigl(\widehat{\overline{R}}^\flat_\beta)$. 
Arguing as in \cite[\S 3.4]{AI}, especially Remark 3.26, one gets that $\mathbb{A}_{\mathrm{crys}}(\mathcal{U}_\beta)\cong A_{\mathrm{crys}}\bigl(\widehat{\overline{R}}_\beta\bigr)$ and $$\mathcal{O}_{\mathfrak{X}_\beta}\mathbb{A}_{\mathrm{crys}}(\mathcal{U}_\beta)\cong A_{\mathrm{crys}}\bigl(\widehat{\overline{R}}_\beta\bigr)\langle v_1-1, \ldots, v_a-1,w_1-1,\ldots,w_b-1\rangle$$with  $v_1:=\frac{[\underline{X}_1]}{X_1}$, $\cdots$, $v_a:=\frac{[\underline{X}_a]}{X_a}$, $w_1:=\frac{[\underline{Y}_1]}{Y_1}$, $\cdots$, $w_b:=\frac{[\underline{Y}_b]}{Y_b}$ with derivations   $X_1 \frac{\partial}{\partial X_1}$, $\ldots$, $X_a \frac{\partial}{\partial X_a}$, $Y_1 \frac{\partial}{\partial Y_1}$, $\ldots$, $Y_b \frac{\partial}{\partial Y_b}$ that are zero on $A_{\mathrm{crys}}\bigl(\widehat{\overline{R}}_\beta\bigr)$. Notice that $ \frac{d X_1}{X_1}$, $\ldots$, $ \frac{d X_a}{X_a}$, $ \frac{d Y_1}{Y_1}$, $\ldots$, $ \frac{d Y_b}{Y_b}$  form a basis of generators of the continuous differentials $\Omega^{1,\log}_{\mathfrak{R}_\beta/\mathcal{O}_{K_0}}$. One readily computes that the de Rham complex is exact except in degree $0$ where the kernel fo the derivative on $\mathcal{O}_{\mathfrak{X}_\beta}\mathbb{A}_{\mathrm{crys}}(\mathcal{U}_\beta)$ is $A_{\mathrm{crys}}\bigl(\widehat{\overline{R}}_\beta\bigr)$. The first claim then follows. 


For the second claim, assume that  $\mathfrak{X}_{\beta,J}\subset \mathfrak{X}_\beta$ is defined by $Y_1=0,\ldots,Y_j=0$. Then, $$\mathbb{A}_{\mathrm{crys},J}^\partial(\mathcal{U}_\beta)\cong A_{\mathrm{crys}}\bigl(\widehat{\overline{R}}_\beta\bigr)/([\underline{Y}_1^s],\ldots, [\underline{Y}_j^s])_{s\in\mathbb{Q}_{>0}},$$using \cite[Cor.~2.5.4(1)]{LLZ} and \cite[Lemma 2.5.8]{LLZ} and $$ \mathcal{O}_{\mathfrak{X}_\beta} \mathbb{A}_{\mathrm{crys}}^\partial(\mathcal{U}_\beta)\cong \mathbb{A}_{\mathrm{crys},J}^\partial(\mathcal{U}_\beta) \langle v_1-1, \ldots, v_a-1,w_1-1,\ldots,w_b-1\rangle,$$arguing as in \cite[\S 3.4]{AI}, Remak 3.26, for the second. From this also the second claim follows.
\end{proof}

Consider the morphisms of sites $\nu \colon X_{\beta,\overline{K}} \to X_\beta$,  $\nu_{\mathrm{geo}}\colon X_{\beta,\overline{K},\mathrm{pket}}\to X_{\beta,\overline{K}}$ and $\nu_{\mathrm{ar}}=\nu\circ \nu_{\mathrm{geo}}\colon X_{\beta,\overline{K},\mathrm{pket}}\to X_\beta$. Consider a  free $\mathcal{O}_{\mathfrak{X}_\beta}^{\mathrm{DP}}$-module  with Frobenius  $\bigl(\mathfrak{E}_\beta,\Phi_\beta\bigr)$ on  $\mathfrak{X}_\beta$. Consider $ \mathfrak{E}_\beta\otimes_{\mathcal{O}_{\mathfrak{X}_\beta}^{\mathrm{DP}}}\mathcal{O}_{\mathfrak{X}_\beta}\mathbb{B}_{\mathrm{crys}}$ as a sheaf on $X_{\beta,\overline{K},\mathrm{pket}}$.  Then we have the following:

\begin{proposition}\label{prop:phi-1inv} We have $\mathrm{R}^i (w\circ \nu_{\mathrm{geo}})_\ast\left( \mathfrak{E}_\beta\otimes_{\mathcal{O}_{\mathfrak{X}_\beta}^{\mathrm{DP}}}\mathcal{O}_{\mathfrak{X}_\beta}\mathbb{B}_{\mathrm{crys}}\right) =0 $ for $ i\geq 1$, $$\mathfrak{E}_\beta\widehat{\otimes}_{\mathcal{O}_{\mathfrak{S}^{\rm PD}}} B_{\mathrm{log}}\stackrel{\sim}{\longrightarrow}\bigl(w\circ \nu_{\mathrm{geo}}\bigr)_\ast\left( \mathfrak{E}_\beta\otimes_{\mathcal{O}_{\mathfrak{X}_\beta}^{\mathrm{DP}}}\mathcal{O}_{\mathfrak{X}_\beta}\mathbb{B}_{\mathrm{crys}}\right) $$ (see Remark \ref{rmk:Blog} for the definition of the classical period ring $B_{\rm log}$).  The map $$\mathfrak{E}_\beta[p^{-1}] \longrightarrow  (w\circ \nu_{\mathrm{ar}})_\ast\left( \mathfrak{E}_\beta\otimes_{\mathcal{O}_{\mathfrak{X}_\beta}^{\mathrm{DP}}}\mathcal{O}_{\mathfrak{X}_\beta}\mathbb{B}_{\mathrm{crys}}\right)$$is injective with cokernel annihilated by $\Phi_\beta^3$. In particular,  it induces an isomorphism on the kernels of $1 -\frac{\Phi_\beta}{p^r}$.

\end{proposition}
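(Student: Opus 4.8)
The plan is to reduce all three assertions to the case where $\mathfrak{E}_\beta$ is the structure sheaf $\mathcal{O}_{\mathfrak{X}_\beta}^{\mathrm{DP}}$ and then to run an explicit computation of $p$-adic period sheaves in the spirit of \cite{AI}, using Theorem \ref{thm:OBcrys} for the local picture. First I would exploit that $\mathfrak{E}_\beta$ is free of some rank $n$ over $\mathcal{O}_{\mathfrak{X}_\beta}^{\mathrm{DP}}$: forgetting Frobenius, $\mathfrak{E}_\beta\otimes_{\mathcal{O}_{\mathfrak{X}_\beta}^{\mathrm{DP}}}\mathcal{O}_{\mathfrak{X}_\beta}\mathbb{B}_{\mathrm{crys}}\cong\bigl(\mathcal{O}_{\mathfrak{X}_\beta}\mathbb{B}_{\mathrm{crys}}\bigr)^{\oplus n}$ as a sheaf on $X_{\beta,\overline{K},\mathrm{pket}}$, compatibly with $(w\circ\nu_{\mathrm{geo}})_\ast$ and $(w\circ\nu_{\mathrm{ar}})_\ast$, while $\Phi_\beta$, being $\varphi$-semilinear over $\mathcal{O}_{\mathfrak{X}_\beta}^{\mathrm{DP}}$, acts via a matrix over $\mathcal{O}_{\mathfrak{X}_\beta}^{\mathrm{DP}}$ composed with the diagonal Frobenius $\Phi$ of $\mathcal{O}_{\mathfrak{X}_\beta}\mathbb{B}_{\mathrm{crys}}$. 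Since $\mathfrak{E}_\beta\widehat{\otimes}_{\mathcal{O}_{\mathfrak{S}^{\rm PD}}}(-)$ is exact on free modules and commutes with finite direct sums, the vanishing of $\mathrm{R}^i(w\circ\nu_{\mathrm{geo}})_\ast$ for $i\geq 1$, the identification of $(w\circ\nu_{\mathrm{geo}})_\ast$ with $\mathfrak{E}_\beta\widehat{\otimes}_{\mathcal{O}_{\mathfrak{S}^{\rm PD}}}B_{\mathrm{log}}$, and the injectivity of the last displayed map reduce to the rank one case; moreover the cokernel is killed by $\Phi_\beta^3$ as soon as it is killed by $\Phi^3$ in the rank one case, because $\Phi_\beta^3$ on the cokernel factors, through the matrix of $\Phi_\beta$, through $\Phi^3$ acting on the $B_{\mathrm{log}}$-direction.

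For the structure-sheaf case of the first two assertions I would argue locally over a chart as in \S\ref{sec:Chech}. With the perfectoid open $\mathcal{U}_\beta$ and the ring $\widehat{\overline{R}}_\beta$ from the proof of Theorem \ref{thm:OBcrys}, the base change $\mathcal{U}_{\beta,\overline{K}}$ is a pro-Kummer-\'etale Galois cover of $X_{\beta,\overline{K}}$ with group $\Gamma$, a compact $p$-adic group isomorphic to $\mathbf{Z}_p(1)^{a+b}$ modulo the relation $\underline{X}_1\cdots\underline{X}_a=\underline{\pi}$, and on the localisation of the site at $\mathcal{U}_{\beta,\overline{K}}$ the sheaf $\mathcal{O}_{\mathfrak{X}_\beta}\mathbb{A}_{\mathrm{crys}}$ is acyclic with sections $A_{\mathrm{crys}}(\widehat{\overline{R}}_\beta)\langle v_1-1,\ldots,v_a-1,w_1-1,\ldots,w_b-1\rangle$, by the computation in the proof of Theorem \ref{thm:OBcrys} and the acyclicity of crystalline period sheaves on perfectoid affinoids (cf.\ \cite[\S 3.4]{AI}). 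Hence $\mathrm{R}(w\circ\nu_{\mathrm{geo}})_\ast\mathcal{O}_{\mathfrak{X}_\beta}\mathbb{A}_{\mathrm{crys}}$ is represented, as an object over $\mathcal{X}_{\beta,k}$, by the complex of continuous cochains of $\Gamma$ with values in that ring, on which $\Gamma$ acts by $\gamma(v_i)=[\varepsilon]^{c_i(\gamma)}v_i$, $\gamma(w_j)=[\varepsilon]^{d_j(\gamma)}w_j$ and its usual action on $A_{\mathrm{crys}}(\widehat{\overline{R}}_\beta)$ (fixing $t=\log[\varepsilon]$). Separating the divided-power variables from the $A_{\mathrm{crys}}$-factor and running the computation of \cite{AI} in this relative setting, this complex, after inverting $t$ (which commutes with $\mathrm{R}(w\circ\nu_{\mathrm{geo}})_\ast$), has cohomology concentrated in degree $0$, equal to $\mathcal{O}_{\mathfrak{X}_\beta}^{\mathrm{DP}}\widehat{\otimes}_{\mathcal{O}_{\mathfrak{S}^{\rm PD}}}B_{\mathrm{log}}$; recalling $B_{\mathrm{log}}=A_{\mathrm{log}}[t^{-1}]$ (Remark \ref{rmk:Blog}) gives the first two assertions.

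For the penultimate assertion, since $\nu_{\mathrm{ar}}=\nu\circ\nu_{\mathrm{geo}}$ and $\nu_\ast$ is the functor of $G_K$-invariants, the first two assertions give $(w\circ\nu_{\mathrm{ar}})_\ast\bigl(\mathfrak{E}_\beta\otimes_{\mathcal{O}_{\mathfrak{X}_\beta}^{\mathrm{DP}}}\mathcal{O}_{\mathfrak{X}_\beta}\mathbb{B}_{\mathrm{crys}}\bigr)=\bigl(\mathfrak{E}_\beta\widehat{\otimes}_{\mathcal{O}_{\mathfrak{S}^{\rm PD}}}B_{\mathrm{log}}\bigr)^{G_K}$; as $\mathcal{O}_{\mathfrak{X}_\beta}^{\mathrm{DP}}$ is a flat $\mathcal{O}_{\mathfrak{S}^{\rm PD}}$-algebra with trivial $G_K$-action, a standard continuity argument (cf.\ \cite{AI}) lets one commute $(-)^{G_K}$ past $\mathfrak{E}_\beta\widehat{\otimes}_{\mathcal{O}_{\mathfrak{S}^{\rm PD}}}(-)$, so the target is $\mathfrak{E}_\beta\widehat{\otimes}_{\mathcal{O}_{\mathfrak{S}^{\rm PD}}}B_{\mathrm{log}}^{G_K}$ and the map is $\mathfrak{E}_\beta\widehat{\otimes}_{\mathcal{O}_{\mathfrak{S}^{\rm PD}}}(-)$ applied to the inclusion $\mathcal{O}_{\mathfrak{S}^{\rm PD}}[p^{-1}]\hookrightarrow B_{\mathrm{log}}^{G_K}$. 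Injectivity is clear, and it remains to bound $B_{\mathrm{log}}^{G_K}$ by $\Phi^{-3}\,\mathcal{O}_{\mathfrak{S}^{\rm PD}}[p^{-1}]$ --- a $0$-dimensional statement on Breuil's ring $B_{\mathrm{log}}=\widehat{B}_{\mathrm{st}}$. For this I would use $A_{\mathrm{log}}\cong A_{\mathrm{crys}}\langle Z-[\underline{\pi}]\rangle^{\wedge}$, the identity $A_{\mathrm{crys}}^{G_K}=\mathcal{O}_{K_0}$, the fact that $P_\pi(Z)$ lies in $\ker\vartheta$ (with $\vartheta$ as in Remark \ref{rmk:Blog}) so that $Z\mapsto Z$ embeds $\mathcal{O}_{\mathfrak{S}^{\rm PD}}$ into $A_{\mathrm{log}}^{G_K}$, and the vanishing of $\mathrm{H}^0(G_K,\mathbb{C}_p(j))$ and $\mathrm{H}^1(G_K,\mathbb{C}_p(j))$ for $j\neq 0$; the exponent $3$ is not optimal and merely records the number of twists needed to kill the relevant Galois cohomology. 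This is the corresponding lemma of \cite{AI} in the point case, which we invoke.

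The last assertion is then formal: writing the penultimate one as a Frobenius-equivariant short exact sequence $0\to\mathfrak{E}_\beta[p^{-1}]\to M\to C\to 0$ with $\Phi_\beta^3C=0$, the operator $1-\frac{\Phi_\beta}{p^r}$ is an automorphism of $C$ (with inverse $1+\frac{\Phi_\beta}{p^r}+\frac{\Phi_\beta^2}{p^{2r}}$), so the snake lemma applied to $1-\frac{\Phi_\beta}{p^r}$ on the sequence yields the asserted isomorphism on kernels. I expect the main obstacle to be the structure-sheaf case of the first assertion: the continuous $\Gamma$-cohomology computation over the non-noetherian completed crystalline period sheaves and the precise identification of the degree-$0$ term with $\mathcal{O}_{\mathfrak{X}_\beta}^{\mathrm{DP}}\widehat{\otimes}_{\mathcal{O}_{\mathfrak{S}^{\rm PD}}}B_{\mathrm{log}}$; a secondary difficulty is the $G_K$-invariance step, i.e.\ commuting invariants past the completed tensor product and pinning down the explicit power of $\Phi$.
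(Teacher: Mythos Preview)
Your proposal is correct and follows essentially the same route as the paper: reduce to the structure sheaf by local freeness of $\mathfrak{E}_\beta$, then compute $\mathrm{R}(w\circ\nu_{\mathrm{geo}})_\ast$ and $(w\circ\nu_{\mathrm{ar}})_\ast$ of $\mathcal{O}_{\mathfrak{X}_\beta}\mathbb{B}_{\mathrm{crys}}$ via continuous $\Gamma$-cohomology on a perfectoid cover, invoking the results of \cite{AI} (the paper cites \cite[Thm.~3.39]{AI} and \cite[Prop.~3.40]{AI} respectively, together with the reduction scheme of \cite[Lemma~5.3]{TT}). Your snake-lemma argument for the final assertion is equivalent to the paper's observation that any $x$ in the kernel of $1-\Phi_\beta/p^r$ satisfies $\Phi_\beta^3(x)=p^{3r}x$, hence has trivial image in the cokernel.
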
 
\begin{proof} We may replace $\mathfrak{E}_\beta$ with $\mathcal{O}_{\mathfrak{X}_\beta}$ as $\mathfrak{E}$ is locally free on the logarithmic crystalline site of $\mathcal{X}_k$. One then argues as in the proof of \cite[Lemma 5.3]{TT} reducing the claims to  caims in group cohomology, namely Theorem 6.12 of loc.~cit.~Its analogue in our context is \cite[Thm.~3.39]{AI} for the computation of  $\mathrm{R}^i (w\circ \nu_{\mathrm{geo}})_\ast \left(\mathcal{O}_{\mathfrak{X}_\beta}\mathbb{B}_{\mathrm{crys}}\right)$ for $i\geq 0$  and \cite[Prop. 3.40]{AI} for the computation of $  (w\circ \nu_{\mathrm{ar}})_\ast\left(\mathcal{O}_{\mathfrak{X}_\beta}\mathbb{B}_{\mathrm{crys}}\right)$.

For the last claim notice that for any local section $x$ lying in the kernel of $1 -\frac{\Phi_\beta}{p^r}$ we have that $\Phi^3_\beta(x)=p^{3r} x$. As the displayed map  is injective with cokernel annihilated by $\Phi_\beta^3$, the claim follows. The last claim is obvious.
\end{proof}

\section{Crystalline associated sheaves}\label{sec:comparison}

We let $\mathcal{L}$ be a $\mathbb{Z}_p$-local system on $X_{\mathrm{ket}}$ with unipotent geometric monodromy along $D$ (\cite[Def. 6.3.1 \& Def. 6.3.7]{DLLZa}). We write $\widehat{\mathcal{L}}$ for the associated sheaf of $\widehat{\mathbb{Z}}_p$-modules in $X_{\mathrm{pket}}$ as in \cite[Def. 6.3.2]{DLLZa}. Let $\mathcal{E}$ be a locally free $\mathcal{O}_X$-module, endowed with an integrable connection $\nabla$ and a descending filtration $\mathrm{Fil}^\bullet \mathcal{E}$ by locally free $\mathcal{O}_X$-modules such that Griffiths' transversality holds and the graded pieces are locally free $\mathcal{O}_X$-modules as well. 

\begin{definition} We say that $\mathcal{L}$ and $\bigl(\mathcal{E},\mathrm{Fil}^\bullet \mathcal{E},\nabla\bigr)$ are de Rham associated if there is an isomorphism of $\mathcal{O}\mathbb{B}_{\mathrm{dR}}$-modules in $X_{\mathrm{pket}}$, compatibly with filtrations and connections: $$\xi_{\mathrm{dR}}\colon  \widehat{\mathcal{L}}\otimes_{\widehat{\mathbb{Z}}_p} \mathcal{O}\mathbb{B}_{\mathrm{dR}}\cong \mathcal{E}\otimes_{\mathcal{O}_X}  \mathcal{O}\mathbb{B}_{\mathrm{dR}}.$$
\end{definition}

\begin{remark}\label{rmk:unipotentnilpotent} As $\mathcal{L}$ is assumed to have unipotent geometric monodromy along $D$, it follows from \cite[Thm.~3.2.12]{DLLZb} that $(\mathcal{E},\nabla)$ has nilpotent residues along $D$ in the sense of Definition \ref{def:nilpotentconnection}.
\end{remark}

Analogously, assume that \ref{ass:formalcase} holds and fix coverings as in \S \ref{sec:Chech}.  Consider a non-degenerate Frobenius log-crystal $ \bigl(\mathcal{E},\mathrm{Fil}^\bullet \mathcal{E}, \nabla,\mathfrak{E},\varphi_{\mathfrak{E}}\bigr)$ relative to $(X,\alpha_X,\overline{\mathcal{X}},\alpha_{\overline{\mathcal{X}}})$.

\begin{definition}\label{def:crysass}  We say that $\mathcal{L}$ and $\bigl(\mathcal{E},\mathrm{Fil}^\bullet \mathcal{E}, \nabla,\mathfrak{E},\varphi_{\mathfrak{E}}\bigr) $ are  crystalline  associated  if the following hold:

\begin{itemize} 

\item[i.]  $\mathcal{L}$ and $\bigl(\mathcal{E},\mathrm{Fil}^\bullet \mathcal{E},\nabla\bigr)$ are de Rham associated;

\item[ii.] for every $\beta\in B$ there is an isomorphism of $\mathcal{O}_{\mathfrak{X}_\beta}\mathbb{B}_{\mathrm{crys}}$-modules in $X_{\beta,\mathrm{pket}}$  compatible with connections and Frobenius: $$\xi_{\beta,\mathrm{crys}}\colon \widehat{\mathcal{L}}\vert_{X_{\beta,\mathrm{pket}}}\otimes_{\widehat{\mathbb{Z}}_p} \mathcal{O}_{\mathfrak{X}_\beta} \mathbb{B}_{\mathrm{crys}}\cong \mathfrak{E}_\beta\otimes_{\mathcal{O}_{\mathfrak{X}_\beta}}  \mathcal{O}_{\mathfrak{X}_\beta}\mathbb{B}_{\mathrm{crys}};$$

\item[iii.] for every $\beta\in B$ the isomorphisms $\xi_{\mathrm{dR}}$ and $\xi_{\beta,\mathrm{crys}}$ are compatible via the natural morphism $\mathcal{O}_{\mathfrak{X}_\beta}\mathbb{B}_{\mathrm{crys}}\to \mathcal{O}\mathbb{B}_{\mathrm{dR}}\vert_{X_{\beta,\mathrm{pket}}}$ and $\mathfrak{E}_\beta\to \mathcal{E}\vert_{X_\beta}$. 

\end{itemize}
\end{definition}

We remark that the Definition above can be given also for the category of a non-degenerate Frobenius log-isocrystals of  Remark \ref{rmk:dualisocrystal}.

\begin{lemma}\label{lemma:uniquenabla}
There is a unique connection and a unique Frobenius structure on $\mathfrak{E}_\beta$ compatible with the isomorphism $\xi_{\beta,\mathrm{crys}}$ so that $\widehat{\mathcal{L}}$ is horizontal for the connection and Frobenius acts as the identity on $\widehat{\mathcal{L}}$.
\end{lemma}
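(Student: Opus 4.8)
The statement splits into existence and uniqueness, and essentially all of the (very little) content is in uniqueness; existence is a tautology.

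\emph{Existence.} For the connection and Frobenius on $\mathfrak{E}_\beta$ I would take $\nabla_\beta$ and $\Phi_\beta$ obtained by evaluating the log-crystal $\bigl(\mathcal{E},\mathrm{Fil}^\bullet\mathcal{E},\nabla,\mathfrak{E},\varphi_{\mathfrak{E}}\bigr)$ at $\bigl(\mathfrak{X}_\beta,\alpha_{\mathfrak{X}_\beta}\bigr)$, as in \S\ref{sssec:explcictcrystallne}. The connection on the source $\widehat{\mathcal{L}}|_{X_{\beta,\mathrm{pket}}}\otimes_{\widehat{\mathbb{Z}}_p}\mathcal{O}_{\mathfrak{X}_\beta}\mathbb{B}_{\mathrm{crys}}$ of $\xi_{\beta,\mathrm{crys}}$ is by construction the unique one extending the canonical connection $\nabla_{\mathrm{crys}}$ of $\mathcal{O}_{\mathfrak{X}_\beta}\mathbb{B}_{\mathrm{crys}}$ and annihilating $\widehat{\mathcal{L}}$, and likewise its Frobenius is $\mathrm{id}_{\widehat{\mathcal{L}}}\otimes\Phi_{\mathrm{crys}}$; since $\xi_{\beta,\mathrm{crys}}$ is compatible with connections and Frobenii by Definition \ref{def:crysass}(ii), the pair $(\nabla_\beta,\Phi_\beta)$ has exactly the property demanded in the statement, so in particular such a connection and Frobenius on $\mathfrak{E}_\beta$ exist.

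\emph{Uniqueness.} Let $(\nabla',\Phi')$ be a second pair with the required property. The difference $\theta:=\nabla'-\nabla_\beta$ is an $\mathcal{O}_{\mathfrak{X}_\beta}$-linear map $\mathfrak{E}_\beta\to\mathfrak{E}_\beta\otimes_{\mathcal{O}_{\mathfrak{X}_\beta}}\Omega^{\log,1}_{\mathfrak{X}_\beta}$, and since the two induced connections $\nabla'\otimes\mathrm{id}+\mathrm{id}\otimes\nabla_{\mathrm{crys}}$ and $\nabla_\beta\otimes\mathrm{id}+\mathrm{id}\otimes\nabla_{\mathrm{crys}}$ on $\mathfrak{E}_\beta\otimes_{\mathcal{O}_{\mathfrak{X}_\beta}}\mathcal{O}_{\mathfrak{X}_\beta}\mathbb{B}_{\mathrm{crys}}$ both coincide with the transport along $\xi_{\beta,\mathrm{crys}}$ of the canonical source connection, we get $\theta\otimes\mathrm{id}_{\mathcal{O}_{\mathfrak{X}_\beta}\mathbb{B}_{\mathrm{crys}}}=0$; likewise, viewing $\Phi',\Phi_\beta$ as $\mathcal{O}_{\mathfrak{X}_\beta}$-linear maps $\varphi_\beta^\ast\mathfrak{E}_\beta\to\mathfrak{E}_\beta$, their difference becomes $0$ after $-\otimes_{\mathcal{O}_{\mathfrak{X}_\beta}}\mathcal{O}_{\mathfrak{X}_\beta}\mathbb{B}_{\mathrm{crys}}$. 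In both cases it therefore suffices to show that, for a finite locally free $\mathcal{O}_{\mathfrak{X}_\beta}$-module $N$ (namely $N=\mathfrak{E}_\beta\otimes_{\mathcal{O}_{\mathfrak{X}_\beta}}\Omega^{\log,1}_{\mathfrak{X}_\beta}$, resp.\ $N=\mathfrak{E}_\beta$, using that $\Omega^{\log,1}_{\mathfrak{X}_\beta}$ is free on the $d\log$'s), the natural map $N\to N\otimes_{\mathcal{O}_{\mathfrak{X}_\beta}}\mathcal{O}_{\mathfrak{X}_\beta}\mathbb{B}_{\mathrm{crys}}$ of sheaves on $X_{\beta,\overline{K},\mathrm{pket}}$ is injective.

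\emph{The key input.} That injectivity is precisely what Proposition \ref{prop:phi-1inv} supplies: applied to $N$ (a finite locally free $\mathcal{O}_{\mathfrak{X}_\beta}^{\mathrm{DP}}$-module, equipped with any Frobenius, e.g.\ the one induced by $\Phi_\beta$), it gives that $N[p^{-1}]\to(w\circ\nu_{\mathrm{ar}})_\ast\bigl(N\otimes_{\mathcal{O}_{\mathfrak{X}_\beta}}\mathcal{O}_{\mathfrak{X}_\beta}\mathbb{B}_{\mathrm{crys}}\bigr)$ is injective. Since $\theta$ (resp.\ $\Phi'-\Phi_\beta$) already dies after tensoring up to $\mathcal{O}_{\mathfrak{X}_\beta}\mathbb{B}_{\mathrm{crys}}$, hence after applying $(w\circ\nu_{\mathrm{ar}})_\ast$, and since $N$ is $p$-torsion free, it must vanish; thus $\nabla'=\nabla_\beta$ and $\Phi'=\Phi_\beta$. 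I do not expect a genuine obstacle here: the only thing to be careful about is the bookkeeping between $\mathcal{O}_{\mathfrak{X}_\beta}$ and $\mathcal{O}_{\mathfrak{X}_\beta}^{\mathrm{DP}}$ (harmless, as $\mathcal{O}_{\mathfrak{X}_\beta}^{\mathrm{DP}}[p^{-1}]=\mathcal{O}_{\mathfrak{X}_\beta}[p^{-1}]$ and every module in sight is $p$-torsion free, so passing to $\mathbb{Q}_p$-coefficients loses nothing) and the semilinearity of $\Phi',\Phi_\beta$, which is why one replaces them by their $\mathcal{O}_{\mathfrak{X}_\beta}$-linearizations $\varphi_\beta^\ast\mathfrak{E}_\beta\to\mathfrak{E}_\beta$ before subtracting — the substantive faithfulness of the passage to $\mathcal{O}_{\mathfrak{X}_\beta}\mathbb{B}_{\mathrm{crys}}$-coefficients being already in Proposition \ref{prop:phi-1inv}.
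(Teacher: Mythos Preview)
Your argument is correct and follows essentially the same approach as the paper: both reduce uniqueness to the injectivity statement of Proposition~\ref{prop:phi-1inv}. The paper's proof is slightly terser --- it observes directly that $\xi_{\beta,\mathrm{crys}}$ together with Proposition~\ref{prop:phi-1inv} gives an injection $\mathfrak{E}_\beta[p^{-1}]\hookrightarrow (w\circ\nu_{\mathrm{ar}})_\ast\bigl(\widehat{\mathcal{L}}|_{X_{\beta,\mathrm{pket}}}\otimes_{\widehat{\mathbb{Z}}_p}\mathcal{O}_{\mathfrak{X}_\beta}\mathbb{B}_{\mathrm{crys}}\bigr)$, so the connection and Frobenius on the source are forced by those on the target --- whereas you spell out the difference argument explicitly, but the content is identical.
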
 
\begin{proof}
 Proposition \ref{prop:phi-1inv} implies that, Zariski locally on $\mathcal{X}_\beta$,  the isomorphism $\xi_{\beta,\mathrm{crys}}$ defines an injection $$\mathfrak{E}_\beta[p^{-1}]\subset   (w\circ \nu_{\mathrm{ar}})_\ast\left( \widehat{\mathcal{L}}\vert_{X_{\beta,\mathrm{pket}}}\otimes_{\widehat{\mathbb{Z}}_p} \mathcal{O}_{\mathfrak{X}_\beta} \mathbb{B}_{\mathrm{crys}}\right)$$compatible with connection and Frobenius, with cokernel annihilated by $\Phi_{\beta}^3$. In particular, the connection and Frobenius on $\mathfrak{E}_\beta$ are uniquely determined by those on $ \widehat{\mathcal{L}}\vert_{X_{\beta,\mathrm{pket}}}\otimes_{\widehat{\mathbb{Z}}_p} \mathcal{O}_{\mathfrak{X}_\beta} \mathbb{B}_{\mathrm{crys}}$. 
\end{proof}

\begin{proposition}\label{prop:FunctAss}
(1) Being de Rham or crystalline associated preserves tensor products; i.e., if $\mathcal{L}_1$ and $\bigl(\mathcal{E}_1,\mathrm{Fil}^\bullet \mathcal{E}_1, \nabla_1,\mathfrak{E}_1,\varphi_{\mathfrak{E}_1}\bigr) $ and $\mathcal{L}_2$ and $\bigl(\mathcal{E}_2,\mathrm{Fil}^\bullet \mathcal{E}_2, \nabla_2,\mathfrak{E}_2,\varphi_{\mathfrak{E}_2}\bigr) $  are crystalline associated then $\mathcal{L}_1\otimes_{\mathbb{Z}_p} \mathcal{L}_2$ and $\bigl(\mathcal{E}_1\otimes_{\mathcal{O}_X} \mathcal{E}_2, \mathrm{Fil}^\bullet (\mathcal{E}_1\otimes_{\mathcal{O}_X} \mathcal{E}_2), \nabla_1 \otimes 1 + 1 \otimes \nabla_2,\mathfrak{E}_1\otimes_{\mathcal{O}_{\mathfrak{X}}} \mathfrak{E}_2,\varphi_{\mathfrak{E}_1}\otimes \varphi_{\mathfrak{E}_1}\bigr) $ are crystalline associated. 

Similarly, being de Rham or crystalline associated preserves duals, namely $\mathcal{L}_1^\vee$ and the dual of $\bigl(\mathcal{E}_1,\mathrm{Fil}^\bullet \mathcal{E}_1, \nabla_1,\mathfrak{E}_1,\varphi_{\mathfrak{E}_1}\bigr) $, as defined in Remark \ref{rmk:dualisocrystal},  are crystalline associated. 

\medskip 
(2) Let $f\colon Y\to X$ be a morphism of smooth rigid analytic variety over $\mathrm{Spa}(K,\mathcal{O}_K)$ and let  $D\subset X$ and $E\subset Y$ be strict normal crossing divisors such that $f^{-1}(D)\subset E$. Assume that Assumption (\ref{ass:formalcase}) holds also for $Y$ and $E$. In particular, $E\subset Y$ are the generic fibers of formal schemes $\mathcal{E}\subset \mathcal{Y}$. Suppose that $f$ extends to a morphism of formal schemes $f\colon \mathcal{X}\to \mathcal{Y}$ such that $f^{-1}(\mathcal{D})\subset \mathcal{E}$ and $f$ is log-smooth with respect to the log-structures defined by the special fiber and the divisors $\mathcal{D}$ and $\mathcal{E}$. Choose compatible local charts $f_\beta\colon \mathfrak{Y}_\beta  \to \mathfrak{X}_\beta$ for $\beta\in B$, which exists by the log-smoothness of $f$. If  
$\mathcal{L}$ and $\bigl(\mathcal{E},\mathrm{Fil}^\bullet \mathcal{E}, \nabla,\mathfrak{E},\varphi_{\mathfrak{E}}\bigr) $ are crystalline associated on $X$, then the pull-back of $\mathcal{L}$ to $Y_{\mathrm{ket}}$ and the pull-back of $\bigl(\mathcal{E},\mathrm{Fil}^\bullet \mathcal{E}, \nabla,\mathfrak{E},\varphi_{\mathfrak{E}}\bigr) $ to $Y$ (and to the $\mathfrak{Y}_\beta$'s)  are crystalline associated.

\end{proposition}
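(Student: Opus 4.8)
The plan is to construct the required comparison isomorphisms by base change of the given ones along the relevant period sheaves, and to check conditions (i)--(iii) of Definition~\ref{def:crysass} termwise; for the statements about duals one works throughout in the category of non-degenerate Frobenius log-isocrystals, as in Remark~\ref{rmk:dualisocrystal}.

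For the tensor product assertion in (1), the point is that $\mathcal{O}\mathbb{B}_{\mathrm{dR}}$ and each $\mathcal{O}_{\mathfrak{X}_\beta}\mathbb{B}_{\mathrm{crys}}$ is a sheaf of rings and that $\mathcal{L}\mapsto\widehat{\mathcal{L}}$ is monoidal, so that $\widehat{\mathcal{L}}_1\otimes_{\widehat{\mathbb{Z}}_p}\widehat{\mathcal{L}}_2$ is the sheaf attached to $\mathcal{L}_1\otimes_{\mathbb{Z}_p}\mathcal{L}_2$ and $(\widehat{\mathcal{L}}_1\otimes_{\widehat{\mathbb{Z}}_p}\mathcal{O}\mathbb{B}_{\mathrm{dR}})\otimes_{\mathcal{O}\mathbb{B}_{\mathrm{dR}}}(\widehat{\mathcal{L}}_2\otimes_{\widehat{\mathbb{Z}}_p}\mathcal{O}\mathbb{B}_{\mathrm{dR}})$ is canonically the base change of the latter, with the analogous identification on the $\mathcal{E}$-side using $\mathcal{E}_1\otimes_{\mathcal{O}_X}\mathcal{E}_2$. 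I would then take $\xi_{\mathrm{dR},1}\otimes\xi_{\mathrm{dR},2}$ and, for each $\beta$, $\xi_{\beta,\mathrm{crys},1}\otimes\xi_{\beta,\mathrm{crys},2}$; compatibility with connections is the Leibniz rule, with filtrations is the fact that the tensor filtration is the convolution of the two factor filtrations, and with Frobenius is the multiplicativity of Frobenius on $\mathcal{O}_{\mathfrak{X}_\beta}\mathbb{B}_{\mathrm{crys}}$, on the $\widehat{\mathcal{L}}_i$ and on the $\mathfrak{E}_{i,\beta}$; condition (iii) for the product follows from that for the factors. For duals, applying $\mathcal{H}om$ over $\mathcal{O}\mathbb{B}_{\mathrm{dR}}$ (resp.\ over $\mathcal{O}_{\mathfrak{X}_\beta}\mathbb{B}_{\mathrm{crys}}$) to $\xi_{\mathrm{dR}}$ (resp.\ $\xi_{\beta,\mathrm{crys}}$) produces $\xi_{\mathrm{dR}}^\vee$ (resp.\ $\xi_{\beta,\mathrm{crys}}^\vee$), because $\mathcal{L}$, $\mathcal{E}$ and $\mathfrak{E}_\beta$ are locally free of finite rank so $\mathcal{H}om$ commutes with the two base changes; these are compatible with the dual filtrations and dual connections, which — there being no Frobenius involved — already settles the de Rham case. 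In the crystalline case the only nonformal point is that the Frobenius thus carried on the $\mathfrak{E}^\vee$-side agrees with the dual Frobenius $\Phi_{\mathfrak{E}^\vee}$ of Remark~\ref{rmk:dualisocrystal}; this is a one-line computation from the defining relation $V_{\mathfrak{E}}'\circ\Phi_{\mathfrak{E}}'=\Phi_{\mathfrak{E}}'\circ V_{\mathfrak{E}}'=p^h$ together with the normalization $\Phi_{\mathfrak{E}^\vee}=V_{\mathfrak{E}}^{',\vee}/p^{h-r}$, which show that $\Phi_{\mathfrak{E}^\vee}$ is exactly the Frobenius induced by $\Phi_{\mathfrak{E}}$ on the $\mathcal{O}_{\mathfrak{X}_\beta}\mathbb{B}_{\mathrm{crys}}$-linear dual; condition (iii) is again formal.

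For (2), I would first dispose of the two inputs that are not about period sheaves. Since $f^{-1}(D)\subset E$ there is a morphism of log rigid spaces $(Y,\alpha_Y)\to(X,\alpha_X)$, and $f^\ast\mathcal{L}$ has unipotent geometric monodromy along $E$: along the components of $E$ contained in $f^{-1}(D)$ the monodromy is the pullback of that of $\mathcal{L}$, hence unipotent, while along the remaining components $f^\ast\mathcal{L}$ is locally the restriction of a lisse sheaf, so its monodromy is trivial. Moreover, by log-smoothness of $f$ and functoriality of crystals and of the operator $V_{\mathfrak{E}}$, the pullback of $\bigl(\mathcal{E},\mathrm{Fil}^\bullet\mathcal{E},\nabla,\mathfrak{E},\varphi_{\mathfrak{E}}\bigr)$ along $f$ is a non-degenerate Frobenius log-crystal (resp.\ log-isocrystal) relative to $(Y,\alpha_Y,\overline{\mathcal{Y}},\alpha_{\overline{\mathcal{Y}}})$, evaluating on each $\mathfrak{Y}_\beta$ to $f_\beta^\ast\mathfrak{E}_\beta$. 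Then I would invoke functoriality of the period sheaves: a natural morphism $f^{-1}\mathcal{O}\mathbb{B}_{\mathrm{dR},X}\otimes_{f^{-1}\mathcal{O}_X}\mathcal{O}_Y\to\mathcal{O}\mathbb{B}_{\mathrm{dR},Y}$ on $Y_{\mathrm{pket}}$, compatible with filtrations and connections (as in \cite{DLLZb} and \cite{LLZ}), and, using the compatible charts $f_\beta\colon\mathfrak{Y}_\beta\to\mathfrak{X}_\beta$ and the functoriality of the log DP envelope of \cite[Lemma~2.16]{AI}, a natural Frobenius- and connection-compatible morphism on $Y_{\beta,\mathrm{pket}}$ from the pullback of $\mathcal{O}_{\mathfrak{X}_\beta}\mathbb{B}_{\mathrm{crys}}$ to $\mathcal{O}_{\mathfrak{Y}_\beta}\mathbb{B}_{\mathrm{crys}}$. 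Base-changing $\xi_{\mathrm{dR}}$ and the $\xi_{\beta,\mathrm{crys}}$ along these morphisms, and using that $\widehat{f^\ast\mathcal{L}}$ is the pullback of $\widehat{\mathcal{L}}$, yields the comparison isomorphisms for the pulled-back data; conditions (i)--(iii) of Definition~\ref{def:crysass} then follow from the originals together with the compatibility of the two period sheaf morphisms under $\mathcal{O}_{\mathfrak{X}_\beta}\mathbb{B}_{\mathrm{crys}}\to\mathcal{O}\mathbb{B}_{\mathrm{dR}}|_{X_{\beta,\mathrm{pket}}}$.

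I expect the main obstacle to be the construction in (2) of the morphism between the relative crystalline period sheaves $\mathcal{O}_{\mathfrak{X}_\beta}\mathbb{B}_{\mathrm{crys}}$ and $\mathcal{O}_{\mathfrak{Y}_\beta}\mathbb{B}_{\mathrm{crys}}$ and the verification of its Frobenius-equivariance; this is precisely the step where the hypotheses that $f$ be log-smooth, that it extend to the formal models, and that compatible charts $f_\beta$ exist are used, via the functoriality of the log DP envelopes of \cite[Lemma~2.16]{AI}. Everything else, in both parts, reduces to the formal manipulations above together with the single small bookkeeping point about the dual Frobenius.
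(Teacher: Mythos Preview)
Your proposal is correct and follows essentially the same outline as the paper, though with a different emphasis in part (2).

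For (1), the paper is terser: it dispatches the de Rham case by citing \cite[Thm.~3.2.12]{DLLZb} (which packages the tensor compatibility of the log Riemann--Hilbert functor), and says the crystalline case ``follows easily from the definition'' --- which is exactly your direct tensor-and-dualize argument. Your explicit treatment of the dual Frobenius is more careful than the paper's.

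For (2), the paper takes a slightly different shortcut. It first cites \cite[Thm.~3.2.7(4)]{DLLZb} for the de Rham associatedness of the pullbacks, then observes that $f$ induces a connection-compatible morphism $f^{-1}(\mathcal{O}_{\mathfrak{X}_\beta}\mathbb{B}_{\mathrm{crys}})\to\mathcal{O}_{\mathfrak{Y}_\beta}\mathbb{B}_{\mathrm{crys}}$, and then invokes the uniqueness Lemma~\ref{lemma:uniquenabla}: since the connection and Frobenius on $\mathfrak{E}_\beta$ are \emph{uniquely} determined by $\xi_{\beta,\mathrm{crys}}$, the pulled-back structures are automatically the ones compatible with the base-changed comparison isomorphism. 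This spares one from checking Frobenius-compatibility of the base-changed $\xi$ by hand. Your direct verification works just as well; the paper's route is a bit slicker but relies on having proved Lemma~\ref{lemma:uniquenabla} beforehand. The ``main obstacle'' you flag --- existence and Frobenius-equivariance of the period-sheaf morphism --- is handled in both approaches by the functoriality of the log DP envelope of \cite[Lemma~2.16]{AI}, and is not genuinely problematic.
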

\begin{proof} (1) Due to our assumption that $\mathcal{L}_1$ and $\mathcal{L}_2$ have unipotent geometric monodromy along $D$, the statement on being de Rham associated follows from  \cite[Thm. 3.2.12]{DLLZb}. The fact that they are also crystalline associated follows easily from the definition.

(2) The pull-backs are de Rham associated due to \cite[Thm. 3.2.7(4)]{DLLZb}. The statement then follows from Lemma \ref{lemma:uniquenabla} as $f$ defines a morphism of sheaves $f^{-1}\left(\mathcal{O}_{\mathfrak{X}_\beta} \mathbb{B}_{\mathrm{crys}}\right)  \to \mathcal{O}_{\mathfrak{Y}_\beta} \mathbb{B}_{\mathrm{crys}}$ on $Y_{\mathrm{pket}}$ compatible with connections. 
\end{proof}

\begin{remark}\label{rmk:traceasociated} We take the assumptions of Proposition \ref{prop:FunctAss}(ii). Assume that the map $f\colon \mathcal{X}\to \mathcal{Y}$ of log-schemes is finite and Kummer \'etale. Then, the induced map of generic fibers $f\colon X\to Y$ induces a map of toposes $\tilde{f}=(\tilde{f}_\ast,\tilde{f}^\ast)$ between the sheaves on  $X_{\mathrm{pket}}$ and the sheaves on $Y_{\mathrm{pket}}$. It follows from \cite[Prop. 4.5.2]{DLLZb}  that $\tilde{f}_\ast=\tilde{f}_!\colon \mathrm{Sh}_{\rm Ab}\bigl(Y_{\mathrm{pket}}\bigr) \rightarrow \mathrm{Sh}_{\rm Ab}\bigl(X_{\mathrm{pket}}\bigr)$ on sheaves of abelian groups. In particular, they are both exact and there is a natural transformation $\tilde{f}_\ast \circ \tilde{f}^\ast\to \mathrm{Id}$, called the trace of $\tilde{f}$, compatible with the traces for sheaves arising from  Kummer \'etale sheaves on $X_K$.  We conclude that, for  $\mathcal{L}$ and $\bigl(\mathcal{E},\mathrm{Fil}^\bullet \mathcal{E}, \nabla,\mathfrak{E},\varphi_{\mathfrak{E}}\bigr) $  crystalline  associated  as in Definition \ref{def:crysass}, the trace of the maps $\xi_{\beta,\mathrm{crys}}$ and $\xi_{\mathrm{dR}}$ are compatible  with the trace of $\mathcal{L}$ on the Kummer \'etale site, the trace of $\mathfrak{E}_{\beta}$ on  $\mathfrak{X}_{\beta,k}$ and the trace of $\mathcal{E}$ on  $X$. 
\end{remark}

As an example of crystalline associated sheaves, we consider the case that $\mathcal{X}$ is the $p$-adic completion of a smooth toroidal compactification of a Siegel modular variety with prime to $p$ level structure and prime to $p$ polarization, $\mathcal{D}\subset \mathcal{X}$ is the boundary and
\smallskip

1)  $\mathcal{L}$ is the relative $p$-adic \'etale cohomology of the universal abelian scheme $A$ over $U:=X\backslash D$. It defines a a $\mathbb{Z}_p$-local system on $X_{\mathrm{ket}}$ of rank twice the dimension of $A$ which has unipotent geometric  monodromy  along $D$; see the discussion of \cite[\S 5.2]{DLLZb}.
\smallskip

2) $\bigl(\mathcal{E},\mathrm{Fil}^\bullet \mathcal{E}, \nabla,\mathfrak{E},\varphi_{\mathfrak{E}}\bigr) $  is the  Frobenius log-crystal relative to $(X,\alpha_X,\overline{\mathcal{X}},\alpha_{\overline{\mathcal{X}}})$ extending the relative de Rham cohomology of $A\to U$ and the relative crystalline cohomology of the mod $p$ special fiber of $\mathcal{A} \to \mathcal{U}=\mathcal{X}\backslash \mathcal{D}$ relative to $\mathcal{U}$,  constructed in \cite[\S 3.2]{MP}.
\smallskip

\begin{proposition}\label{prop:crysass} The sheaves $\mathcal{L}$ and  $\bigl(\mathcal{E},\mathrm{Fil}^\bullet \mathcal{E}, \nabla,\mathfrak{E},\varphi_{\mathfrak{E}}\bigr) $ are crystalline associated.
\end{proposition}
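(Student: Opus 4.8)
The plan is to verify the three conditions of Definition \ref{def:crysass} directly, reducing in each case to a known relative $p$-adic comparison theorem for the universal abelian scheme $\pi_A\colon A\to U$, first over the open Siegel variety and then extending across the boundary $D$.

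For condition (i), de Rham associatedness, I would first treat the restriction to $U$. There the local system $\mathcal{L}|_U=R^1\pi_{A,*}\mathbb{Z}_p$ and the filtered module with connection $(\mathcal{E}|_U,\mathrm{Fil}^\bullet,\nabla)=R^1\pi_{A,*,\mathrm{dR}}$ with its Gauss--Manin connection and Hodge filtration are de Rham associated by the relative de Rham comparison theorem for smooth proper morphisms, in the form of \cite{DLLZb} (or Scholze/Liu--Zhu), the period isomorphism $\xi_{\mathrm{dR}}$ being the canonical one. To extend this isomorphism of $\mathcal{O}\mathbb{B}_{\mathrm{dR}}$-modules from $U$ to all of $X$, I would invoke that $\mathcal{L}$ has unipotent geometric monodromy along $D$ and that, by \cite{MP}, $\mathcal{E}$ is the Deligne canonical extension of $R^1\pi_{A,*,\mathrm{dR}}$ across $D$; then \cite[Thm.~3.2.12]{DLLZb} (cf.\ Remark \ref{rmk:unipotentnilpotent}) identifies the filtered logarithmic module with connection attached to the de Rham local system $\widehat{\mathcal{L}}$ with precisely this canonical extension, so $\xi_{\mathrm{dR}}$ extends over $X$. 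This settles (i).

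For conditions (ii) and (iii), I would work Zariski-locally on each chart $\mathcal{X}_\beta$ and split into the good-reduction locus and the boundary. Over the open formal subscheme on which $A$ extends to an honest abelian scheme, $\mathfrak{E}_\beta$ is its relative (log-)crystalline cohomology, and the isomorphism $\xi_{\beta,\mathrm{crys}}$ is furnished by the relative crystalline comparison theorem for abelian schemes with good reduction, in the form of \cite{AI}; its compatibility with $\xi_{\mathrm{dR}}$ (condition (iii)) is automatic, since both are instances of the same period-sheaf comparison and become compatible under the natural map $\mathcal{O}_{\mathfrak{X}_\beta}\mathbb{B}_{\mathrm{crys}}\to \mathcal{O}\mathbb{B}_{\mathrm{dR}}|_{X_{\beta,\mathrm{pket}}}$. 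To pass to the boundary, I would use the Mumford degeneration data realizing $A$ near $\mathcal{D}$ as a rigid-analytic quotient of a semi-abelian scheme: the weight filtration on cohomology has graded pieces that are Tate twists (the toric part), for which the comparison is elementary via Kummer theory and Fontaine's element $t$, and good-reduction abelian-scheme cohomology (the abelian part), covered by the previous case; combined with the fact that $\mathfrak{E}_\beta$ is constructed in \cite{MP} precisely as the canonical log-crystalline extension, this extends $\xi_{\beta,\mathrm{crys}}$ over all of $\mathcal{X}_\beta$. A more conceptual route to the same conclusion: once (ii) holds over the dense open, apply Lemma \ref{lemma:uniquenabla} together with Proposition \ref{prop:phi-1inv} to see that the connection and Frobenius on $\mathfrak{E}_\beta$ are uniquely pinned down by $\xi_{\beta,\mathrm{crys}}$; since the extension of \cite{MP} is characterized by the same uniqueness, the two agree, and the extended isomorphism, together with its compatibility with connection and Frobenius, follows for free.

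I expect the main obstacle to be the boundary analysis for the crystalline comparison in condition (ii): whereas the de Rham extension across $D$ is handled cleanly by \cite{DLLZb}, matching the log-crystalline cohomology of the degenerating log-smooth family of \cite{MP} with the nearby-cycles structure of $\widehat{\mathcal{L}}$ --- and in particular verifying Frobenius-compatibility, which the de Rham statement does not see --- requires either a relative log-crystalline comparison theorem near the boundary or a careful descent through Mumford uniformization and the weight filtration. The uniqueness argument via Lemma \ref{lemma:uniquenabla} is what makes this tractable, reducing the boundary case to the already-known statement over the dense open together with the characterization of the canonical extension.
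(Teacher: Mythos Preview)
Your treatment of condition (i) is close to the paper's, though the paper invokes the Faltings--Chai toroidal compactification $\overline{A}\to X$ and \cite[Thm.~3.2.7(5)]{DLLZb} directly rather than the canonical-extension characterization of Thm.~3.2.12; either route is fine.

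The crystalline boundary argument, however, has a genuine gap. Your ``more conceptual route'' via Lemma~\ref{lemma:uniquenabla} is circular: that lemma says that \emph{given} an isomorphism $\xi_{\beta,\mathrm{crys}}$ over $X_\beta$, the connection and Frobenius on $\mathfrak{E}_\beta$ are uniquely determined by it; it does not produce an extension of $\xi_{\beta,\mathrm{crys}}$ from a dense open to all of $X_\beta$, which is precisely what must be shown. Knowing the target $(\mathfrak{E}_\beta,\nabla,\Phi)$ is the canonical extension does not by itself tell you the \emph{period map} extends. Your Mumford-degeneration sketch is a plausible strategy but would need substantial work: one must exhibit the weight filtration on both sides compatibly with the $\mathcal{O}_{\mathfrak{X}_\beta}\mathbb{B}_{\mathrm{crys}}$-structure and Frobenius, and splice the pieces.

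The paper takes a rather different route. Over $\mathcal{U}_K\cap X_\beta$ it lifts the comparison to an integral map $\xi'_{\beta,\mathrm{crys}}\colon \widehat{\mathcal{L}}\otimes\mathcal{O}_{\mathfrak{X}_\beta}\mathbb{A}_{\mathrm{crys}}\to \mathfrak{E}_\beta\otimes\mathcal{O}_{\mathfrak{X}_\beta}\mathbb{A}_{\mathrm{crys}}$ via the $p$-divisible group/Dieudonn\'e comparison of \cite{SW}, obtaining a matrix $M$ with entries in $\mathcal{O}_{\mathfrak{X}_\beta}\mathbb{A}_{\mathrm{crys}}(\overline{R}_\beta\{f^{-1}\})$ (here $f$ cuts out the boundary). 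The heart of the proof is a commutative-algebra argument showing $M$ already lies in $\mathcal{O}_{\mathfrak{X}_\beta}\mathbb{A}_{\mathrm{crys}}(\widehat{\overline{R}}_\beta)$: one checks this modulo $p^n$ using (a) that $f$ is a non-zero-divisor in $\overline{R}_\beta/\pi\overline{R}_\beta$, (b) the explicit description of $\mathcal{O}_{\mathfrak{X}_\beta}\mathbb{A}_{\mathrm{crys}}/(p)$ as a free module from \cite{AI}, and (c) the key input from \cite{MP} that the comparison \emph{does} extend to the formal completion at every maximal ideal of $R_\beta$, combined with faithful flatness of completion. Once the map extends, the paper shows it becomes an isomorphism after inverting $t$ by running the same construction for the dual abelian scheme and observing that the composite is multiplication by $t$. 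This argument sidesteps any direct analysis of the degeneration.
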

\begin{proof}  By \cite[Thm.~VI.1.1]{FC} we can extend $A\to U$ to a proper and log smooth morphism $\overline{A}\to X$.  Then $ \bigl(\mathcal{E},\mathrm{Fil}^\bullet \mathcal{E}, \nabla\bigr)$ is identified with the first logarithmic de Rham cohomology sheaf of $\overline{A}$ relative to $X$ as they both coincide with the canonical extension of the first logarithmic de Rham cohomology sheaf of $A$ relative to $U$. The fact that $\mathcal{L}$ and $ \bigl(\mathcal{E},\mathrm{Fil}^\bullet \mathcal{E}, \nabla\bigr)$  are de Rham associated follows then from \cite[Theorem 3.2.7(5)]{DLLZb}.
Thanks to  \cite[Theorem 5.5]{TT} they are also crystalline associated when we restrict to the rigid analytic 
fiber $\mathcal{U}_K$ associated to $\mathcal{U}$.  In this case, using the compatibility with the comparison isomorphism for $p$-divisible groups and Dieudonn\'e theory as in \cite[\S 3.5]{SW},
the isomorphism $\xi_{\beta,\mathrm{crys}}$  over the pro-Kummer \'etale site of $\mathcal{U}_K \cap X_{\beta} $  is induced by a map 
$$\xi_{\beta,\mathrm{crys}}'\colon \widehat{\mathcal{L}}\vert_{X_{\beta,\mathrm{pket}}}\otimes_{\widehat{\mathbb{Z}}_p} \mathcal{O}_{\mathfrak{X}_\beta} \mathbb{A}_{\mathrm{crys}}\longrightarrow  \mathfrak{E}_\beta\otimes_{\mathcal{O}_{\mathfrak{X}_\beta}}  \mathcal{O}_{\mathfrak{X}_\beta}\mathbb{A}_{\mathrm{crys}}.$$We claim that $\xi_{\beta,\mathrm{crys}}'$ is defined over the pro-Kummer \'etale site of $X_{\beta} $. Before proving the claim, we show how this implies the Proposition. Using the analogous map for the dual abelian scheme $A^\vee$, dualizing and identifying its $p$-divisible group with the dual $\mathcal{L}^\vee(1)$ and its relative Dieudonn\'e module with the dual of $\mathfrak{E}_\beta$,  we get a map  $$\xi_{\beta,\mathrm{crys}}''\colon  \mathfrak{E}_\beta\otimes_{\mathcal{O}_{\mathfrak{X}_\beta}}  \mathcal{O}_{\mathfrak{X}_\beta}\mathbb{A}_{\mathrm{crys}}(-1)\longrightarrow \widehat{\mathcal{L}}\vert_{X_{\beta,\mathrm{pket}}}\otimes_{\widehat{\mathbb{Z}}_p} \mathcal{O}_{\mathfrak{X}_\beta} \mathbb{A}_{\mathrm{crys}}.$$The composite $\xi_{\beta,\mathrm{crys}}' \circ \xi_{\beta,\mathrm{crys}}''$ is multiplication by Fontaine's $t$. 
This implies that $\xi_{\beta,\mathrm{crys}}'$ defines an isomorphism after passing to $\mathcal{O}_{\mathfrak{X}_\beta}\mathbb{B}_{\mathrm{crys}}$.

We are left to prove the claim. Assuming that $\mathfrak{E}_\beta$ is a free $\mathcal{O}_{\mathfrak{X}_\beta}$-module, the map $\xi_{\beta,\mathrm{crys}}'$ is realized by a $2g \times 2g$-matrix with coefficients in  $ \mathcal{O}_{\mathfrak{X}_\beta}\mathbb{A}_{\mathrm{crys}}$ over the pro-Kummer \'etale site of $\mathcal{U}_K \cap X_{\beta} $. Using the notation in the proof of Theorem \ref{thm:OBcrys}, if  $\mathcal{X}_{\beta} =\mathrm{Spf}(R_\beta)$ then $\mathcal{U}\cap \mathcal{X}_{\beta}=\mathrm{Spf}(R_\beta\{f^{-1}\}) $ where   $R_\beta\{f^{-1}\}$ is the $p$-adic completion of $R_\beta[f^{-1}]$. Notice that $f\in R_\beta$ is a local coordinate of the boundary of $\mathcal{D}$ in $\mathcal{X}_\beta$ and, as such, defines a regular element of $R_\beta/\pi R_\beta$. We claim that $f$ defines a regular element of $\overline{R}_\beta/\pi \overline{R}_\beta$ as well. One reduces to proving that $f$ is not a zero divisor in $S/\pi S$ for every finite and normal extension of $R_\beta$ contained in $\overline{R}_\beta$. Since $S$ is normal, then the map $S/\pi S \to \prod_{\mathcal{P}} S_{\mathcal{P}}/\pi S_{\mathcal{P}}$, to the localization of $S/\pi S$ at all height one prime ideals $\mathcal{P}$ of $S$ containg $\pi$, is injective. Since for any such $\mathcal{P}$ the intersection $Q:=\mathcal{P} \cap R_\beta$ is equal to $\pi R_\beta$ and $f$ is invertible in $R_{\beta,Q}$, then $f$ is invertible in $S_\mathcal{P}$ as well  and we conclude.

Then evaluating $\xi_{\beta,\mathrm{crys}}'$ at the affinoid perfectoid defined by $R_\beta\{f^{-1}\}\subset \overline{R}_\beta\{f^{-1}\}$ we get a $2g \times 2g$ matrix $M$ with values in $ \mathcal{O}_{\mathfrak{X}_\beta}\mathbb{A}_{\mathrm{crys}}\bigl(\overline{R}_\beta\{f^{-1}\}\bigr)$.  The claim follows if we show that $M$ takes values in  $ \mathcal{O}_{\mathfrak{X}_\beta}\mathbb{A}_{\mathrm{crys}}\bigl(\widehat{\overline{R}}_\beta\bigr)$ as $\xi_{\beta,\mathrm{crys}}'$ is then defined over the pro-Kummer \'etale site of $X_{\beta} $. As $ \mathcal{O}_{\mathfrak{X}_\beta}\mathbb{A}_{\mathrm{crys}}\bigl(\overline{R}_\beta\{f^{-1}\}\bigr)$ and $ \mathcal{O}_{\mathfrak{X}_\beta}\mathbb{A}_{\mathrm{crys}}\bigl(\widehat{\overline{R}}_\beta\bigr)$ are $p$-adically complete and separated, it suffices to prove that the reduction $M_n$ of $M$ modulo $p^n$ takes values in $ \mathcal{O}_{\mathfrak{X}_\beta}\mathbb{A}_{\mathrm{crys}}\bigl(\widehat{\overline{R}}_\beta\bigr)/(p^n)$ for every $n$. It follows from the discussion in \cite[Prop. A.2.2 \& \S A.2.6]{MP} that $\xi_{\beta,\mathrm{crys}}'$ extends to the completion of $R_\beta$ at every maximal ideal; in loc.~cit.~the author works over a local field but one checks that the proof of Prop.~A.2.2 uses the constructions of \S A.1 that hold over a complete local normal domain.

Since $p$ is a regular element in $ \mathcal{O}_{\mathfrak{X}_\beta}\mathbb{A}_{\mathrm{crys}}\bigl(\widehat{\overline{R}}_\beta\bigr)$ by \cite[Cor.~3.27]{AI}, arguing for each entry of $M$ and proceeding by induction on $n$, one reduces to prove the following: let $s$  be an element of $ \mathcal{O}_{\mathfrak{X}_\beta}\mathbb{A}_{\mathrm{crys}}\bigl(\overline{R}_\beta\{f^{-1}\}\bigr)/(p)$ that belongs to $ \mathcal{O}_{\mathfrak{X}_\beta}\mathbb{A}_{\mathrm{crys}}\bigl(\widehat{\overline{R}}_\beta\otimes_{R_\beta} \widehat{R}_{\beta,x}\bigr)/(p)$ for the completion $\widehat{R}_{\beta,x}$ of the localization $R_{\beta,x}$ at each maximal ideal of $R_\beta$, then $s\in  \mathcal{O}_{\mathfrak{X}_\beta}\mathbb{A}_{\mathrm{crys}}\bigl(\widehat{\overline{R}}_\beta\bigr)/(p)$. Using the explicit description of  $ \mathcal{O}_{\mathfrak{X}_\beta}\mathbb{A}_{\mathrm{crys}}\bigl(\widehat{\overline{R}}_\beta\bigr)/(p)$ provided in \cite[Lemma 3.25]{AI}, at the end of page 231, as a free $\overline{R}_\beta/\pi \overline{R}_\beta$-module for some pseudouniformizer $\pi \in \mathcal{O}_{\mathbb{C}_p}$ (and similarly for $ \mathcal{O}_{\mathfrak{X}_\beta}\mathbb{A}_{\mathrm{crys}}\bigl(\widehat{\overline{R}}_\beta[f^{-1}]\bigr)/(p)$ and $\mathcal{O}_{\mathfrak{X}_\beta}\mathbb{A}_{\mathrm{crys}}\bigl(\widehat{\overline{R}}_\beta\otimes_{R_\beta} \widehat{R}_{\beta,x}\bigr)/(p)$ as free $\overline{R}_\beta/\pi \overline{R}_\beta[f^{-1}]$-module and respectively as $(\overline{R}_\beta/\pi\overline{R}_\beta )\otimes_{R_\beta} \widehat{R}_{\beta,x}$-module with the same basis), one reduces to the following claim.  Let $s\in \overline{R}_\beta/\pi \overline{R}_\beta[f^{-1}]$ be such that it lies in $(\overline{R}_\beta/\pi \overline{R}_\beta)\otimes_{R_\beta} \widehat{R}_{\beta,x}$ for every maximal ideal $x$ of $R_\beta$, then $s\in \overline{R}_\beta/\pi \overline{R}_\beta$. This is clear since the map $\overline{R}_\beta/\pi \overline{R}_\beta\to\overline{R}_\beta/\pi \overline{R}_\beta[f^{-1}]$ is injective by the regularity result proven above and using that $R_{\beta,x}\to \widehat{R}_{\beta,x}$ is faithfully flat.
\end{proof}

\section{\'Etale syntomic complexes}\label{sec:synL}

Let $\mathcal{L}$ be a $\mathbb{Z}_p$-local system on $X_{\mathrm{ket}}$ and take an integer $r\in \mathbb{Z}$. Define $\mathbf{Syn}(\mathcal{L},r)$ to be the object in the bounded derived category of $\mathbb{Q}_p$-vector spaces  given by the total complex associated to the morphism of complexes  
$(1-\frac{\varphi}{p^r}) \oplus \iota $: $$ \mathrm{R}\Gamma\left(X_{\mathrm{pket}},\widehat{\mathcal{L}}\otimes_{\widehat{\mathbb{Z}}_p}  \mathbb{B}_{\mathrm{crys}}\right) \to   \mathrm{R}\Gamma\left(X_{\mathrm{pket}},\widehat{\mathcal{L}}\otimes_{\widehat{\mathbb{Z}}_p}  \mathbb{B}_{\mathrm{crys}}\right)  \oplus  \mathrm{R}\Gamma\left(X_{\mathrm{pket}},  \widehat{\mathcal{L}}\otimes_{\widehat{\mathbb{Z}}_p} \mathrm{Gr}^r \mathbb{B}_{\mathrm{dR}}\right)$$ where  $\mathrm{Gr}^r \mathbb{B}_{\mathrm{dR}}$ is the quotient of $\mathbb{B}_{\mathrm{dR}}$ by $\mathrm{Fil}^r \mathbb{B}_{\mathrm{dR}}$. The map $\iota$ is induced by the natural map $\mathbb{B}_{\mathrm{crys}}\to \mathbb{B}_{\mathrm{dR}}$.

Analogously define $\mathbf{Syn}^{c-\infty}(\mathcal{L},r)$ to be the object in the bounded derived category of sheaves on $X$ given by the total complex associated to the morphism of complexes $\iota^{c-\infty} \circ (1-\frac{\varphi}{p^r}) \oplus \iota^{c-\infty} $: $$\mathrm{R}\Gamma\left(X_{\mathrm{pket}},\widehat{\mathcal{L}}\otimes_{\widehat{\mathbb{Z}}_p}  \mathbb{B}_{\mathrm{crys}}^{c-\infty}\right) \to   \mathrm{R}\Gamma\left(X_{\mathrm{pket}},\widehat{\mathcal{L}}\otimes_{\widehat{\mathbb{Z}}_p}  \mathbb{B}_{\mathrm{crys}}^{c-\infty}\right)  \oplus  \mathrm{R}\Gamma\left(X_{\mathrm{pket}},  \widehat{\mathcal{L}}\otimes_{\widehat{\mathbb{Z}}_p} \mathrm{Gr}^r \mathbb{B}_{\mathrm{dR}}^{c-\infty}\right) .$$

\begin{definition} Let $\mathrm{H}^i\bigl(\mathbf{Syn}(\mathcal{L},r)\bigr)$, resp.~$\mathrm{H}^i\bigl(\mathbf{Syn}^{c-\infty}(\mathcal{L},r)\bigr)$ to be the $i$-th homology groups of the complexes $\mathbf{Syn}(\mathcal{L},r)$ and $\mathbf{Syn}^{c-\infty}(\mathcal{L},r)$ respectively. 
\end{definition}

Assume that   $\mathcal{L}$ and  $\bigl(\mathcal{E},\mathrm{Fil}^\bullet \mathcal{E}, \nabla,\mathfrak{E},\varphi_{\mathfrak{E}}\bigr) $ are crystalline associated as in Definition~\ref{def:crysass}. Then

\begin{proposition}\label{prop:comparecomplexes} We have morphisms of complexes $\mathbf{Syn}(\mathcal{E},\mathfrak{E},r)\to \mathbf{Syn}(\mathcal{L},r)$ and  $\mathbf{Syn}^\partial(\mathcal{E},\mathfrak{E},r)\to \mathbf{Syn}^{c-\infty}(\mathcal{L},r)$. 

\end{proposition}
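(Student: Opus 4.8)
The plan is to build the first morphism in two steps. First I would construct a morphism of complexes $\mathbf{Syn}_{\mathrm{crys}}(\mathcal{E},\mathfrak{E},r)\to\mathbf{Syn}(\mathcal{L},r)$, where $\mathbf{Syn}_{\mathrm{crys}}$ is the crystalline syntomic complex of Remark~\ref{rmk:crysanal}, and then precompose with $\tau_{\mathrm{crys}}\colon\mathbf{Syn}_{\mathrm{an}}(\mathcal{E},\mathfrak{E},r)\to\mathbf{Syn}_{\mathrm{crys}}(\mathcal{E},\mathfrak{E},r)$; this is convenient because the relative period sheaves $\mathcal{O}_{\mathfrak{X}_\beta}\mathbb{B}_{\mathrm{crys}}$ are built from divided-power envelopes and therefore match the crystalline, rather than the convergent, evaluation $\mathfrak{E}_\beta$ of the log-crystal. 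Both $\mathbf{Syn}_{\mathrm{crys}}(\mathcal{E},\mathfrak{E},r)$ and $\mathbf{Syn}(\mathcal{L},r)$ are total complexes of a morphism of the shape $(1-\Phi/p^r)\oplus\gamma$ between a pair of complexes, so it suffices to produce a Frobenius-equivariant morphism on the ``$\Phi$-parts'', a morphism on the ``filtered logarithmic de Rham parts'', and a homotopy identifying the two resulting compositions with $\gamma_{\mathrm{crys}}$ and with $\iota$. Throughout I would work with the explicit \v{C}ech--de Rham models of \S\ref{sec:Chech}--\S\ref{sssec:explcictcrystallne} attached to the cover $\{\mathcal{X}_\beta\}$, resolving $\mathrm{R}\Gamma(X_{\mathrm{pket}},-)$ by the \v{C}ech complex of the open cover $X=\bigcup_\beta X_\beta$, so that the identifications below assemble into honest morphisms of complexes rather than mere derived zig-zags.

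For the $\Phi$-part: over each chart the ``$m\mapsto m\otimes1$'' period map $\mathfrak{E}_\beta\to\mathfrak{E}_\beta\otimes_{\mathcal{O}_{\mathfrak{X}_\beta}}\mathcal{O}_{\mathfrak{X}_\beta}\mathbb{B}_{\mathrm{crys}}$ (and its pullback to each $\mathfrak{X}_{\underline{\beta}}$) is compatible with the logarithmic connections and with Frobenius, and by Proposition~\ref{prop:phi-1inv} it is injective with cokernel killed by $\Phi^3$. Tensoring with $\Omega^{\log,\bullet}_{\mathfrak{X}_{\underline{\beta}}}$, applying the crystalline association isomorphism $\xi_{\beta,\mathrm{crys}}$ of Definition~\ref{def:crysass}(ii) — whose uniqueness (Lemma~\ref{lemma:uniquenabla}) makes the local choices compatible under the \v{C}ech differentials — and then invoking Theorem~\ref{thm:OBcrys}, the Frobenius-compatible logarithmic Poincaré lemma (applied to $\widehat{\mathcal{L}}\otimes\mathbb{B}_{\mathrm{crys}}$) identifying $\mathrm{R}\Gamma(X_{\beta,\mathrm{pket}},\widehat{\mathcal{L}}\otimes\mathbb{B}_{\mathrm{crys}})$ with the logarithmic de Rham complex of $\widehat{\mathcal{L}}\otimes\mathcal{O}_{\mathfrak{X}_\beta}\mathbb{B}_{\mathrm{crys}}$, one obtains a Frobenius-equivariant morphism $\mathrm{R}\Gamma_{\mathrm{crys}}\bigl((\overline{\mathcal{X}},\alpha_{\overline{\mathcal{X}}})/\mathcal{S}_0,\mathfrak{E}\bigr)[p^{-1}]\to\mathrm{R}\Gamma(X_{\mathrm{pket}},\widehat{\mathcal{L}}\otimes_{\widehat{\mathbb{Z}}_p}\mathbb{B}_{\mathrm{crys}})$ after totalizing.

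For the filtered de Rham part I would use the de Rham association isomorphism $\xi_{\mathrm{dR}}$ together with the filtered logarithmic Poincaré lemma $\mathbb{B}_{\mathrm{dR}}\to\mathcal{O}\mathbb{B}_{\mathrm{dR}}\otimes_{\mathcal{O}_X}\Omega^{\log,\bullet}_X$ of \cite[Cor.~2.4.2]{DLLZb} and \cite[Cor.~2.5.13]{LLZ}: since $\mathcal{O}_X\hookrightarrow\mathcal{O}\mathbb{B}_{\mathrm{dR}}$ is filtered, the inclusion $\mathcal{E}\otimes_{\mathcal{O}_X}\Omega^{\log,\bullet}_X\hookrightarrow\mathcal{E}\otimes_{\mathcal{O}_X}\mathcal{O}\mathbb{B}_{\mathrm{dR}}\otimes\Omega^{\log,\bullet}_X$ respects the filtrations of \S\ref{sec:dR}, and after $\xi_{\mathrm{dR}}$ and the Poincaré lemma it induces $\mathrm{R}\Gamma_{\mathrm{logdR}}(X,\mathcal{E})/\mathrm{Fil}^r\to\mathrm{R}\Gamma(X_{\mathrm{pket}},\widehat{\mathcal{L}}\otimes\mathrm{Gr}^r\mathbb{B}_{\mathrm{dR}})$. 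The square linking this to the $\Phi$-part commutes up to the obvious homotopy by Definition~\ref{def:crysass}(iii): the two comparison isomorphisms are compatible under $\mathcal{O}_{\mathfrak{X}_\beta}\mathbb{B}_{\mathrm{crys}}\to\mathcal{O}\mathbb{B}_{\mathrm{dR}}|_{X_\beta}$ and $\mathfrak{E}_\beta\to\mathcal{E}|_{X_\beta}$, and under these maps the period map becomes the natural inclusion $\mathcal{E}\otimes\Omega^{\log,\bullet}_X\hookrightarrow\mathcal{E}\otimes\mathcal{O}\mathbb{B}_{\mathrm{dR}}\otimes\Omega^{\log,\bullet}_X$, which is exactly $\iota\circ\gamma_{\mathrm{crys}}$. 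Assembling gives $\mathbf{Syn}_{\mathrm{crys}}(\mathcal{E},\mathfrak{E},r)\to\mathbf{Syn}(\mathcal{L},r)$ and hence the first claim. The second claim, $\mathbf{Syn}^\partial(\mathcal{E},\mathfrak{E},r)\to\mathbf{Syn}^{c\text{-}\infty}(\mathcal{L},r)$, follows by the same recipe applied to the boundary data: on the source the complexes of \S\ref{sec:compactsupport} built from the completions $\widehat{\mathfrak{X}}^J_{\underline{\beta}}$ along the strata, on the target the partial period sheaves $\mathbb{B}^\partial_{\mathrm{crys},J}$, $\mathbb{B}^\partial_{\mathrm{dR},J}$ and $\mathbb{B}^{c\text{-}\infty}_{\mathrm{crys}}$, $\mathbb{B}^{c\text{-}\infty}_{\mathrm{dR}}$ of \S\ref{sec:periodsheaves}, the second assertion of Theorem~\ref{thm:OBcrys}, and the restrictions of $\xi_{\beta,\mathrm{crys}}$ and $\xi_{\mathrm{dR}}$ to the strata $X_J$ (restriction preserves de Rham and crystalline association by functoriality, cf.~\cite[Thm.~3.2.7(4)]{DLLZb}).

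I expect the main obstacle to be coherence. Theorem~\ref{thm:OBcrys} and the de Rham Poincaré lemma are quasi-isomorphisms that point from the period sheaves to the relative de Rham complexes, i.e.\ ``the wrong way'', so the real work is to check that the period map, the association isomorphisms $\xi_{\beta,\mathrm{crys}}$ and $\xi_{\mathrm{dR}}$, and these Poincaré lemmas can be glued over the \v{C}ech cover into one morphism of total complexes that is simultaneously Frobenius-equivariant and compatible with $\gamma_{\mathrm{crys}}$ and $\iota$ — not merely a family of maps compatible up to homotopies that themselves fail to be compatible. Working with the explicit \v{C}ech--de Rham representatives of \S\ref{sec:Chech}--\S\ref{sssec:explcictcrystallne} and using the uniqueness in Lemma~\ref{lemma:uniquenabla} to rigidify the local isomorphisms $\xi_{\underline{\beta},\mathrm{crys}}$ is what I expect to make this manageable.
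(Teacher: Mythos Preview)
Your approach is essentially the same as the paper's: go through $\mathbf{Syn}_{\mathrm{crys}}$, use the period map $\mathfrak{E}_\beta\to\mathfrak{E}_\beta\otimes\mathcal{O}_{\mathfrak{X}_\beta}\mathbb{B}_{\mathrm{crys}}$ together with $\xi_{\beta,\mathrm{crys}}$ and Theorem~\ref{thm:OBcrys} for the Frobenius part, use $\xi_{\mathrm{dR}}$ and the filtered Poincar\'e lemma for the de Rham part, invoke Definition~\ref{def:crysass}(iii) for the compatibility, and repeat with the $\partial$-sheaves for the compact-support version. One small difference in emphasis: for the \v{C}ech coherence you propose to rigidify the local maps via Lemma~\ref{lemma:uniquenabla}, whereas the paper argues directly that on an intersection $\mathcal{X}_{\underline{\beta}}$ the opens of $\mathfrak{X}_{\beta_i}$ and $\mathfrak{X}_{\beta_j}$ over $\mathcal{X}_{\underline{\beta},k}$ are isomorphic by log-smoothness and that the crystal property of $\mathfrak{E}$ identifies the induced maps $\psi_{\underline{\beta},i}$ and $\psi_{\underline{\beta},j}$; also, Proposition~\ref{prop:phi-1inv} is not needed here (it enters later, e.g.\ in Proposition~\ref{prop:comp0}).
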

\begin{proof} It follows from Theorem \ref{thm:OBcrys} and Definition \ref{def:crysass} and by \cite[Cor. 2.4.2]{DLLZb} that have a quasi isomorphism $$\widehat{\mathcal{L}}\otimes_{\widehat{\mathbb{Z}}_p} \mathbb{B}_{\mathrm{dR}} \longrightarrow\mathcal{E}\otimes_{\mathcal{O}} \mathcal{O}\mathbb{B}_{\mathrm{dR}}\otimes_{\mathcal{O}_X} \Omega^{\log,\bullet}_X$$of filtered complexes and for every $\beta\in B$ a quasi isomorphism $$\widehat{\mathcal{L}}\vert_{X_{\beta,\mathrm{pket}}}\otimes_{\widehat{\mathbb{Z}}_p}  \mathbb{B}_{\mathrm{crys}}\vert_{X_{\beta,\mathrm{pket}}} \longrightarrow  \mathfrak{E}_\beta \otimes_{\mathcal{O}_{\mathfrak{X}_\beta}} \mathcal{O}_{\mathfrak{X}_\beta}\mathbb{B}_{\mathrm{crys}}\otimes_{\mathcal{O}_{\mathfrak{X}_\beta}} \Omega^ {\log,\bullet}_{\mathfrak{X}_\beta}$$compatible with Frobenius. The same holds with support conditions, see \cite[Cor. 2.5.13]{LLZ} for the de Rham case and Theorem \ref{thm:OBcrys} in the crystalline case, noticing that the isomorphisms $\xi_{\mathrm{dR}}$ and $\xi_{\beta,\mathrm{crys}}$ of  Definition \ref{def:crysass} induce analogous and compatible isomorphisms $\xi_{\mathrm{dR}}^{c-\infty}$ and $\xi_{\beta,\mathrm{crys}}^{c-\infty}$ replacing $\mathcal{O}\mathbb{B}_{\mathrm{dR}}$ and $\mathcal{O}_{\mathfrak{X}_\beta}\mathbb{B}_{\mathrm{crys}}$ with  $\mathcal{O}\mathbb{B}_{\mathrm{dR}}^{c-\infty}$ and $\mathcal{O}_{\mathfrak{X}_\beta}\mathbb{B}_{\mathrm{crys}}^{c-\infty}$. It follows that we have morphisms of complexes  $$\eta\colon \mathrm{R}\Gamma\left(X,\mathrm{Gr}^r \bigl( \mathcal{E}\otimes_{\mathcal{O}_X} \Omega^{\log,\bullet}_X\bigr) \right)\longrightarrow  \mathrm{R}\Gamma\left(X_{\mathrm{pket}},\widehat{\mathcal{L}}\otimes_{\widehat{\mathbb{Z}}_p} \mathrm{Gr}^r  \mathbb{B}_{\mathrm{dR}}\right)   $$ and for every $\beta\in B$ a morphism of complexes $$\psi_\beta\colon \mathrm{R}\Gamma\left(\mathcal{X}_{\beta,k}, \mathfrak{E}_\beta\otimes_{\mathcal{O}_{\mathfrak{X}_\beta}} \Omega^ {\log,\bullet}_{\mathfrak{X}_\beta}\right)\longrightarrow   \mathrm{R}\Gamma\left(X_{\beta,\mathrm{pket}},\widehat{\mathcal{L}}\otimes_{\widehat{\mathbb{Z}}_p}  \mathbb{B}_{\mathrm{crys}}\right)   $$compatible with Frobenius and also with $\eta$ using the compatibility of Definition \ref{def:crysass}(iii)   (and similarly considering support conditions). For every $\underline{\beta}=(\beta_1,\ldots,\beta_n) \in B^n$ such that $\beta_1<\ldots < \beta_n$ and every $i$ the  morphism of complexes $$\zeta_i\colon \mathrm{R}\Gamma\left(\mathcal{X}_{\underline{\beta},k},\bigl( \mathfrak{E}_{\beta_i}\otimes_{\mathcal{O}_{\mathfrak{X}_{\beta_i}}} \Omega^ {\log,\bullet}_{\mathfrak{X}_{\beta_i}}\bigr)\vert_{\mathcal{X}_{\underline{\beta},k}} \right) \longrightarrow \mathrm{R}\Gamma\left(\mathcal{X}_{\underline{\beta},k}, \mathfrak{E}_{\underline{\beta}}\otimes_{\mathcal{O}_{\mathfrak{X}_{\underline{\beta}}}} \Omega^ {\log,\bullet}_{\mathfrak{X}_{\underline{\beta}}} \right)$$is a quasi-isomorphism compatible with Frobenius  and we deduce a map of complexes 

 $$\psi_{\underline{\beta},i}\colon \mathrm{R}\Gamma\left(\mathcal{X}_{\underline{\beta},k}, \mathfrak{E}_{\underline{\beta}}\otimes_{\mathcal{O}_{\mathfrak{X}_{\underline{\beta}}}} \Omega^ {\log,\bullet}_{\mathfrak{X}_{\underline{\beta}}} \right)\longrightarrow   \mathrm{R}\Gamma\left(X_{\underline{\beta},\mathrm{pket}},\widehat{\mathcal{L}}\otimes_{\widehat{\mathbb{Z}}_p}  \mathbb{B}_{\mathrm{crys}}\right)   $$compatible with Frobenius. This is independent of $i$: for every $i$ and $j$ the affine open formal subschemes of $\mathfrak{X}_{\beta_i}$ and of $\mathfrak{X}_{\beta_j}$ corresponding to $\mathcal{X}_{\underline{\beta},k}$, viewed as an open of $\mathcal{X}_{\beta_i,k}$ and $\mathcal{X}_{\beta_j,k}$ respectively, are isomorphic by log-smoothness over $(\mathcal{S}_k,\mathcal{M}_k)$ (see \S\ref{sub:not} for the notation) and this isomophism identifies the restrictions to $X_{\underline{\beta},\mathrm{pket}}$ of the domains of $\psi_{\beta_i}$ and $\psi_{\beta_j}$ compatibly with the restriction of the codomains. This  isomorphism is realized in the derived category via $\zeta_j \circ \zeta_i^{-1}$ as $\mathfrak{E}$ is a crystal. 

Taking the total complex for varying $\underline{\beta}$'s we get a map of complexes, compatible with Frobenius and with $\eta$

$$\mathrm{R}\Gamma_{\mathrm{crys}}\bigl((\overline{\mathcal{X}},\alpha_{\overline{\mathcal{X}}})/\mathcal{S}_0,\mathfrak{E}\bigr) [p^{-1}] \longrightarrow \mathrm{R}\Gamma\left(X_{\mathrm{pket}},\widehat{\mathcal{L}}\otimes_{\widehat{\mathbb{Z}}_p}  \mathbb{B}_{\mathrm{crys}}\right).$$This provides a morphism $\mathbf{Syn}_{\rm crys}(\mathcal{E},\mathfrak{E},r)\to \mathbf{Syn}(\mathcal{L},r)$. Composing with the map $\tau_{\rm crys}\colon \mathbf{Syn}(\mathcal{E},\mathfrak{E},r) \to \mathbf{Syn}_{\rm crys}(\mathcal{E},\mathfrak{E},r)$ of Remark
\ref{rmk:crysanal} we get the first morphism. A similar argument holds for the case of compact support.
\end{proof}

\section{Comparison with \'etale cohomology}\label{sec:compareetale}

Let $U:=X\backslash D$ and let $j\colon U\to X$ be the inclusion. Assume that $X$ is proper (except in Proposition \ref{prop:comp0} below).
We let $\mathcal{L}$ be a $\mathbb{Z}_p$-local system on $X_{\mathrm{ket}}$. Given an integer $r\in \mathbb{Z}$ we let $\mathcal{L}(r)$ be the Tate twist, given by twisting by the $r$-th power of the cyclotomic character. As usual we write $\widehat{\mathcal{L}}$, resp.~$\widehat{\mathcal{L}}(r)$ for the associated sheaves in $X_{\mathrm{pket}}$. Recall the morphsims of sites $\nu\colon X_{\overline{K}} \to X$, $\nu_{\mathrm{geo},X}\colon X_{\overline{K},\mathrm{pket}} \to X_{\overline{K}}$ and $\nu_{\mathrm{ar}}=\nu \circ \nu_{\mathrm{geo}} \colon X_{\overline{K},\mathrm{pket}}\to X$. We start with the following:

\begin{proposition}\label{prop:etcoh}
We have $\mathrm{H}^i\bigl(X_{\mathrm{pket}}, \widehat{\mathcal{L}}(r)\bigr)\cong \mathrm{H}^i\bigl(X_{\mathrm{ket}}, \mathcal{L}(r)\bigr)\cong  \mathrm{H}^i_{\mathrm{et}}\bigl(U, \mathcal{L}(r)\bigr)$.  Similarly we have $\mathrm{H}^i_{\mathrm{et},c}\bigl(U, \mathcal{L}(r)\bigr) \cong \mathrm{H}^i\bigl(X_{\mathrm{pket}}, j_!\bigl(\widehat{\mathcal{L}}(r)\bigr)\bigr)$ where, following \cite[Def. 2.3.2]{LLZ}, we define $\mathrm{H}^i_{\mathrm{et},c}\bigl(U, \mathcal{L}(r)\bigr):=\mathrm{H}^i\bigl(X_{\mathrm{ket}}, j_!
\bigl(\mathcal{L}(r)\bigr)\bigr)$. 

The cohomology groups $\mathrm{H}^i_{\mathrm{et}}\bigl(U_{\overline{K}},  \mathcal{L}(r)\bigr)$ and $\mathrm{H}^i_{\mathrm{et},c}\bigl(U_{\overline{K}},  \mathcal{L}(r)\bigr)$ are of finite rank and vanish for $i>2 \mathrm{dim}X$.  Finally, we have  $G_K=\mathrm{Gal}(\overline{K}/K)$-equivariant isomorphisms  

$$\mathrm{H}^i_{\mathrm{et}}\bigl(U_{\overline{K}},  \mathcal{L}(r)\bigr)\otimes  B_{\mathrm{crys}}\cong \mathrm{H}^i\bigl(X_{\overline{K},\mathrm{pket}},  j_\ast(\mathcal{L}(r))\bigr)\otimes B_{\mathrm{crys}}  \cong \mathrm{H}^i\bigl(X_{\overline{K},\mathrm{pket}},\widehat{\mathcal{L}}\otimes_{\widehat{\mathbb{Z}}_p} \mathbb{B}_{\mathrm{crys}}\bigr),$$ $$\mathrm{H}^i_{\mathrm{et}}\bigl(U_{\overline{K}},  \mathcal{L}(r)\bigr)\otimes  B_{\mathrm{dR}}\cong \mathrm{H}^i\bigl(X_{\overline{K},\mathrm{pket}},  \widehat{\mathcal{L}}(r)\bigr)\otimes  B_{\mathrm{dR}} \cong  \mathrm{H}^i\bigl(X_{\overline{K},\mathrm{pket}},\widehat{\mathcal{L}}\otimes_{\widehat{\mathbb{Z}}_p} \mathbb{B}_{\mathrm{dR}}\bigr)$$and similarly $$\mathrm{H}^i_{\mathrm{et}}\bigl(U_{\overline{K}},  \mathcal{L}(r)\bigr)\otimes \mathrm{Gr}^r B_{\mathrm{dR}}\cong \mathrm{H}^i\bigl(X_{\overline{K},\mathrm{pket}},  \widehat{\mathcal{L}}(r)\bigr)\otimes  \mathrm{Gr}^r B_{\mathrm{dR}} \cong  \mathrm{H}^i\bigl(X_{\overline{K},\mathrm{pket}},\widehat{\mathcal{L}}\otimes_{\widehat{\mathbb{Z}}_p} \mathrm{Gr}^r \mathbb{B}_{\mathrm{dR}}\bigr).$$

\end{proposition}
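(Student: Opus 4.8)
The plan is to combine the known comparison theorems for Kummer-étale cohomology of log-rigid-analytic spaces over $p$-adic fields — all of which are available in \cite{DLLZa}, \cite{DLLZb} and \cite{LLZ} — with the pro-Kummer-étale descriptions of the period sheaves recalled in \S\ref{sec:periodsheaves}. First I would dispose of the statements that do not involve period rings. The identification $\mathrm{H}^i(X_{\mathrm{pket}},\widehat{\mathcal{L}}(r))\cong \mathrm{H}^i(X_{\mathrm{ket}},\mathcal{L}(r))$ follows from the fact that $R\nu_{\mathrm{pket},*}\widehat{\mathcal{L}}(r)=\mathcal{L}(r)$ on $X_{\mathrm{ket}}$ for a $\mathbb{Z}_p$-local system, i.e.\ the analogue of \cite[Thm.~5.4.3]{DLLZa} (acyclicity of the projection from the pro-Kummer-étale to the Kummer-étale site applied to $\widehat{\mathcal{L}}$), and then $\mathrm{H}^i(X_{\mathrm{ket}},\mathcal{L}(r))\cong \mathrm{H}^i_{\mathrm{et}}(U,\mathcal{L}(r))$ is the comparison between Kummer-étale cohomology of $(X,D)$ and ordinary étale cohomology of the open complement $U=X\backslash D$ from \cite[\S 4--5]{DLLZa}; the compactly supported variant is \cite[Def.~2.3.2]{LLZ} together with the same Kummer-étale vs.\ étale comparison applied to $j_!$. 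Finiteness and the vanishing range $i>2\dim X$ for $\mathrm{H}^i_{\mathrm{et}}(U_{\overline K},\mathcal{L}(r))$ and $\mathrm{H}^i_{\mathrm{et},c}(U_{\overline K},\mathcal{L}(r))$ come from properness of $X$ together with the standard finiteness/vanishing results for constructible $\mathbb{Z}_p$-sheaves on rigid-analytic varieties (\cite{DLLZa}); on the open $U$ these are the log-étale incarnations of Artin vanishing and Poincaré--Lefschetz finiteness.

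Next I would establish the three $G_K$-equivariant isomorphisms. The key input is the primitive comparison theorem in the Kummer-étale setting: for $\mathcal{L}$ a $\mathbb{Z}_p$-local system on $X_{\mathrm{ket}}$ with $X$ proper one has a $G_K$-equivariant, almost isomorphism
$$\mathrm{H}^i(X_{\overline K,\mathrm{ket}},\mathcal{L})\otimes_{\mathbb{Z}_p}\widehat{\mathcal{O}}_{\mathbb{C}_p}\;\xrightarrow{\ \sim\ }\;\mathrm{H}^i\bigl(X_{\overline K,\mathrm{pket}},\widehat{\mathcal{L}}\otimes_{\widehat{\mathbb{Z}}_p}\widehat{\mathcal{O}}_X\bigr),$$
which is the statement of \cite[Thm.~6.2.1]{DLLZa} (the logarithmic generalization of Scholze's theorem). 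Inverting $t$ and completing appropriately, this upgrades — by the usual Fontaine-theoretic manipulations, exactly as in \cite[\S 7]{DLLZa} — to $G_K$-equivariant isomorphisms $\mathrm{H}^i_{\mathrm{et}}(X_{\overline K},\mathcal{L})\otimes_{\mathbb{Z}_p}B_{\mathrm{dR}}\cong \mathrm{H}^i(X_{\overline K,\mathrm{pket}},\widehat{\mathcal{L}}\otimes_{\widehat{\mathbb{Z}}_p}\mathbb{B}_{\mathrm{dR}})$ and similarly with $B_{\mathrm{crys}}$ replacing $B_{\mathrm{dR}}$ and with $\mathrm{Gr}^r B_{\mathrm{dR}}$; one then uses the first part of the proposition to replace $\mathrm{H}^i_{\mathrm{et}}(X_{\overline K},\mathcal{L})$ by $\mathrm{H}^i_{\mathrm{et}}(U_{\overline K},\mathcal{L})$ and to identify the intermediate terms $\mathrm{H}^i(X_{\overline K,\mathrm{pket}},j_*(\mathcal{L}(r)))\otimes B_{\mathrm{crys}}$ etc.\ with the étale cohomology of $U_{\overline K}$ tensored with the relevant period ring, using that $j_*$ is compatible with $\nu_{\mathrm{geo}}$ by \cite[\S 2.3--2.5]{LLZ}. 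The Tate twist by $r$ is absorbed into the identification $\mathbb{B}_{\mathrm{dR}}\cong\mathbb{B}_{\mathrm{dR}}(r)$ as a sheaf (and $t^r$ inside $B_{\mathrm{crys}}$), so that writing $\widehat{\mathcal{L}}\otimes\mathbb{B}_{\mathrm{crys}}$ versus $\widehat{\mathcal{L}}(r)\otimes\mathbb{B}_{\mathrm{crys}}$ makes no difference to the right-hand sides.

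The main obstacle, and the only point that is not a direct citation, is the passage from the almost-isomorphism with $\widehat{\mathcal{O}}_X$-coefficients to honest isomorphisms after inverting $t$: one must check that the almost-zero terms are killed upon tensoring with $\mathbb{B}_{\mathrm{crys}}$ and $\mathbb{B}_{\mathrm{dR}}$, and that the filtration on $\mathbb{B}_{\mathrm{dR}}$ interacts well with $\mathrm{Gr}^r$ so that the third isomorphism is exact (i.e.\ that the relevant $\mathrm{Tor}$-terms vanish). In the non-logarithmic smooth proper case this is classical; in the present logarithmic setting it is handled exactly as in the body of the work of Diao--Lan--Liu--Zhu, and the only care needed is with the normal-crossings divisor $D$, where one invokes the compatibility of the comparison with the stratification $X_{(\bullet)}^\partial$ and with the $j_!$/$j_*$ formalism from \cite[\S 2--3]{LLZ}. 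Once these bookkeeping points are in place, all three displayed isomorphisms follow formally, and they are $G_K$-equivariant because every map in sight is induced by morphisms of sheaves on $X_{\overline K,\mathrm{pket}}$ that descend to $X_{\mathrm{pket}}$.
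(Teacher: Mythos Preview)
Your proposal is correct and follows essentially the same route as the paper: reduce the first block of statements to finite coefficients via \cite[Prop.~5.1.7, Cor.~4.6.7, Thm.~6.2.1]{DLLZa} and \cite[Thm.~2.3.5]{LLZ}, pass to the limit by Mittag--Leffler (the paper makes this step explicit, you leave it implicit), and then deduce the period-ring isomorphisms from the primitive comparison theorem. One small sharpening: for the crystalline isomorphism the paper does not argue by ``killing almost-zero terms upon tensoring with $\mathbb{B}_{\mathrm{crys}}$''; rather it works integrally with $\mathbb{A}_{\mathrm{crys}}$, uses that $p$ is a regular element (\cite[Cor.~3.27]{AI}) to reduce modulo $p$, and then observes that $\mathbb{A}_{\mathrm{crys}}/p$ is free over $\mathcal{O}^+_{X_{\mathrm{pket}}}/p$ with the same basis as $A_{\mathrm{crys}}/p$ over $\mathcal{O}_{\overline{K}}/p$ (\cite[Cor.~3.24]{AI}), so the statement becomes literally the primitive comparison theorem tensored with a free module.
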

\begin{proof} We start with the first claim. First  we have isomorphisms if we replace $\mathcal{L}$ with $\mathcal{L}/p^n \mathcal{L}$ for the isomorphisms $\mathrm{H}^i\bigl(X_{\mathrm{pket}}, \widehat{\mathcal{L}}(r)/p^n  \widehat{\mathcal{L}}(r) \bigr)\cong \mathrm{H}^i\bigl(X_{\mathrm{ket}}, \mathcal{L}(r)/p^n \mathcal{L}(r)\bigr) $ by \cite[Prop. 5.1.7]{DLLZa}   and  $ \mathrm{H}^i\bigl(X_{\mathrm{ket}}, \mathcal{L}(r)/p^n \mathcal{L}(r)\bigr) \cong  \mathrm{H}^i_{\mathrm{et}}\bigl(U, \mathcal{L}(r)/p^n \mathcal{L}(r)\bigr)$ by \cite[Cor. 4.6.7]{DLLZa}. They are also finite modules;  thanks to \cite[Thm. 6.2.1]{DLLZa} the groups $ \mathrm{H}^i\bigl(X_{\overline{K},\mathrm{ket}}, \mathcal{L}(r)/p^n \mathcal{L}(r)\bigr)$ are finite and vanish for $i> 2 \mathrm{dim}X$. Using the  Hochschild-Serre spectral sequence this implies that also   $\mathrm{H}^i\bigl(X_{\mathrm{ket}}, \mathcal{L}(r)/p^n \mathcal{L}(r)\bigr)$ is finite for every $i$. In particular, the inverse system for varying $n$ is Miitag-Leffler and the result follows.

For the second claim one argues similarly, using the finiteness and the vanishing for $i> 2 \mathrm{dim}X$ of the group $\mathrm{H}^i_c\bigl(X_{\overline{K},\mathrm{ket}}, j_!\bigl(\mathcal{L}(r)/p^n \mathcal{L}(r)\bigr)\bigr)$ which is the content of \cite[Thm. 2.3.5]{LLZ}.
The isomorphism $\mathrm{H}^i_{\mathrm{et}}\bigl(U_{\overline{K}},  \mathcal{L}(r)\bigr)\cong \mathrm{H}^i\bigl(X_{\overline{K},\mathrm{pket}},  j_\ast(\mathcal{L}(r))\bigr)$ and last statements for the de Rham period sheaves follow from \cite[Lemma 3.6.1]{DLLZb} using \cite[Thm 6.2.1 \& Cor. 6.3.4]{DLLZa}. For the crystalline statement it suffices to show that the natural map $\mathrm{H}^i\bigl(X_{\overline{K},\mathrm{pket}},  j_\ast(\mathcal{L}(r))\bigr)\otimes A_{\mathrm{crys}}  \to\mathrm{H}^i\bigl(X_{\overline{K},\mathrm{pket}},\widehat{\mathcal{L}}\otimes_{\widehat{\mathbb{Z}}_p} \mathbb{A}_{\mathrm{crys}}\bigr) $ is an isomorphism. Since $p$ is a regular element in $\mathbb{A}_{\mathrm{crys}}$ by \cite[Cor.~3.27]{AI}, we are reduced to prove that the natural map $\mathrm{H}^i\bigl(X_{\overline{K},\mathrm{pket}},  j_\ast(\mathcal{L}(r))\bigr)\otimes A_{\mathrm{crys}}/p A_{\mathrm{crys}} \to\mathrm{H}^i\bigl(X_{\overline{K},\mathrm{pket}},\widehat{\mathcal{L}}\otimes_{\widehat{\mathbb{Z}}_p} \mathbb{A}_{\mathrm{crys}}/p \mathbb{A}_{\mathrm{crys}}\bigr) $ is an isomorphism.  As  $\mathbb{A}_{\mathrm{crys}}/p \mathbb{A}_{\mathrm{crys}}$ is a free module over $\mathcal{O}_{X_{\mathrm{pket}}}^+/p \mathcal{O}_{X_{\mathrm{pket}}}^+$ and $A_{\mathrm{crys}}/p A_{\mathrm{crys}}$ is a free $\mathcal{O}_{\overline{K}}/p \mathcal{O}_{\overline{K}} $ with the same basis by \cite[Cor.~3.24]{AI}, one is reduced to the primitive comparison theorem \cite[Thm 6.2.1]{DLLZa}.

\end{proof}

\begin{lemma}\label{lemma:fundexactseq}   The following sequences on $X_{\mathrm{pket}}$ are exact:

$$0 \longrightarrow\widehat{\mathcal{L}}(r)[p^{-1}] \longrightarrow  \widehat{\mathcal{L}}\otimes_{\widehat{\mathbb{Z}}_p} \mathbb{B}_{\mathrm{crys}}  \stackrel{( \iota \circ (1-\frac{\varphi}{p^r}),\iota)}{\longrightarrow} \widehat{\mathcal{L}}\otimes_{\widehat{\mathbb{Z}}_p} \mathbb{B}_{\mathrm{crys}} \oplus \widehat{\mathcal{L}}\otimes_{\widehat{\mathbb{Z}}_p} \mathrm{Gr}^r \mathbb{B}_{\mathrm{dR}} \longrightarrow 0$$and, for each $J\subset I$, 

$$0 \to\iota_{J,\ast}\bigl(\widehat{\mathcal{L}}(r)\vert_{X_J}\bigr)[p^{-1}] \longrightarrow  \widehat{\mathcal{L}}\otimes_{\widehat{\mathbb{Z}}_p} \mathbb{B}_{\mathrm{crys},J}^\partial \stackrel{(\iota \circ (1-\frac{\varphi}{p^r}),\iota)}{\longrightarrow} \widehat{\mathcal{L}}\otimes_{\widehat{\mathbb{Z}}_p} \mathbb{B}_{\mathrm{crys},J}^\partial \oplus \widehat{\mathcal{L}}\otimes_{\widehat{\mathbb{Z}}_p} \mathrm{Gr}^r\mathbb{B}_{\mathrm{dR},J}^\partial \to 0.$$In particular, this gives an exact sequence $$0 \longrightarrow j_!\bigl(\widehat{\mathcal{L}}(r)\vert_U\bigr)[p^{-1}] \longrightarrow  \widehat{\mathcal{L}}\otimes_{\widehat{\mathbb{Z}}_p} \mathbb{B}_{\mathrm{crys}} ^{c-\infty} \stackrel{\iota\circ (1-\frac{\varphi}{p^r}),\iota)}{\longrightarrow} \widehat{\mathcal{L}}\otimes_{\widehat{\mathbb{Z}}_p} \mathbb{B}_{\mathrm{crys}}^{c-\infty}\oplus  \widehat{\mathcal{L}}\otimes_{\widehat{\mathbb{Z}}_p} \mathrm{Gr}^r\mathbb{B}_{\mathrm{dR}}^{c-\infty}  \longrightarrow 0.$$

\end{lemma}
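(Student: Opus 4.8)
The strategy is to reduce all three sequences to the fundamental exact sequence of $p$-adic Hodge theory for the period sheaves on a single smooth rigid space, and then to propagate it along the components of $D$ by functoriality together with the usual \v{C}ech--Mayer--Vietoris combinatorics of a strict normal crossings divisor. Note that the first sequence is the case $J=\emptyset$ of the second, since $\mathbb{B}_{\mathrm{crys},\emptyset}^\partial=\mathbb{B}_{\mathrm{crys}}$, $\mathbb{B}_{\mathrm{dR},\emptyset}^\partial=\mathbb{B}_{\mathrm{dR}}$, so it suffices to treat the $\partial$-sheaves and then assemble.

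For the case $J=\emptyset$ one first observes that exactness of a sequence of abelian sheaves on $X_{\mathrm{pket}}$ can be checked locally, indeed on stalks, so after passing to a pro-Kummer-\'etale cover we may assume $\widehat{\mathcal{L}}\cong\widehat{\mathbb{Z}}_p^{\,n}$; under such a trivialisation the Frobenius on $\widehat{\mathcal{L}}\otimes_{\widehat{\mathbb{Z}}_p}\mathbb{B}_{\mathrm{crys}}$ is $\mathrm{id}\otimes\varphi$ acting diagonally and the filtration on $\widehat{\mathcal{L}}\otimes_{\widehat{\mathbb{Z}}_p}\mathbb{B}_{\mathrm{dR}}$ is $\mathrm{Fil}^\bullet\mathbb{B}_{\mathrm{dR}}$ in each coordinate, so we reduce to $\mathcal{L}=\widehat{\mathbb{Z}}_p$, where the sequence is $0\to\widehat{\mathbb{Z}}_p(r)[p^{-1}]\to\mathbb{B}_{\mathrm{crys}}\stackrel{(1-\varphi/p^r)\oplus\iota}{\longrightarrow}\mathbb{B}_{\mathrm{crys}}\oplus\mathrm{Gr}^r\mathbb{B}_{\mathrm{dR}}\to0$. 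Here exactness on the left and in the middle is the identity $\ker(1-\tfrac{\varphi}{p^r})\cap\iota^{-1}\bigl(\mathrm{Fil}^r\mathbb{B}_{\mathrm{dR}}\bigr)=\widehat{\mathbb{Z}}_p(r)[p^{-1}]$, and surjectivity on the right follows by first solving $(1-\tfrac{\varphi}{p^r})x=y$, using surjectivity of $1-\tfrac{\varphi}{p^r}$ on $\mathbb{B}_{\mathrm{crys}}$, and then adjusting the $\mathrm{Gr}^r\mathbb{B}_{\mathrm{dR}}$-component by an element of $\ker(1-\tfrac{\varphi}{p^r})$, using surjectivity of $\ker(1-\tfrac{\varphi}{p^r})\to\mathrm{Gr}^r\mathbb{B}_{\mathrm{dR}}$. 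On the perfectoid opens appearing in the proof of Theorem \ref{thm:OBcrys} these three facts reduce to the corresponding statements for $A_{\mathrm{crys}}\bigl(\widehat{\overline R}_\beta\bigr)$ and its filtered Frobenius, as in \cite{AI}.

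For general $J$ one uses that $\bigl(X_J^\partial, X_J\cap\bigcup_{i\in I\setminus J}D_i\bigr)$ again satisfies Assumption \ref{ass:formalcase}, that $\mathbb{B}_{\mathrm{crys},J}^\partial$ and $\mathbb{B}_{\mathrm{dR},J}^\partial$ are by definition $\iota_{J,*}$ of the period sheaves on $X_{J,\mathrm{pket}}^\partial$, and that $\widehat{\mathcal{L}}\otimes\iota_{J,*}(-)\cong\iota_{J,*}\bigl(\widehat{\mathcal{L}}|_{X_J}\otimes(-)\bigr)$ by the projection formula, since $\widehat{\mathcal{L}}$ is locally free. Thus the $J$-th sequence is $\iota_{J,*}$ applied to the sequence just established for $\bigl(X_J^\partial,\widehat{\mathcal{L}}|_{X_J}\bigr)$; it stays exact because $\iota_{J,*}$ is exact on these sheaves: checking on stalks, every term vanishes at a point off $X_J$, while at a point of $X_J$ the stalk of $\iota_{J,*}\mathcal{G}$ equals that of $\mathcal{G}$.

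Finally, summing the $J$-th sequences over $|J|=a$ and using that $1-\tfrac{\varphi}{p^r}$ and the de Rham projection commute with the \v{C}ech-type differentials that assemble the $\mathbb{B}_{\mathrm{crys},(a)}^\partial$ (resp.\ $\mathbb{B}_{\mathrm{dR},(a)}^\partial$) into the complex $\mathbb{B}_{\mathrm{crys}}^{c\text{-}\infty}$ (resp.\ $\mathbb{B}_{\mathrm{dR}}^{c\text{-}\infty}$), one gets a short exact sequence of complexes of sheaves whose kernel is $\bigl[\widehat{\mathcal{L}}(r)\to\bigoplus_{|J|=1}\iota_{J,*}(\widehat{\mathcal{L}}(r)|_{X_J})\to\bigoplus_{|J|=2}\cdots\bigr][p^{-1}]$. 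This complex is a resolution of $j_!\bigl(\widehat{\mathcal{L}}(r)|_U\bigr)[p^{-1}]$: its $H^0$ equals $\ker\bigl(\widehat{\mathcal{L}}(r)\to\bigoplus_j\iota_{D_j,*}(\widehat{\mathcal{L}}(r)|_{D_j})\bigr)=\ker\bigl(\widehat{\mathcal{L}}(r)\to\iota_{D,*}\iota_D^*\widehat{\mathcal{L}}(r)\bigr)=j_!\bigl(\widehat{\mathcal{L}}(r)|_U\bigr)$ (the first equality because $\iota_{D,*}\iota_D^*\widehat{\mathcal{L}}(r)\hookrightarrow\bigoplus_j\iota_{D_j,*}(\cdots)$ as $D=\bigcup_jD_j$), and it is acyclic in positive degrees because the augmented complex $\iota_{D,*}\iota_D^*\widehat{\mathcal{L}}(r)\to\bigoplus_{|J|=1}\iota_{J,*}(\cdots)\to\cdots$ is the nerve resolution of the closed cover $D=\bigcup_jD_j$, whose strata $X_J$ are all smooth by Assumption \ref{ass:formalcase}; this is the combinatorics already used in \cite[Lemma 3.3.7]{LLZ}. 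Hence in the derived category of sheaves on $X$ one obtains the asserted exact triangle. The one step that is not formal is the sheaf-theoretic fundamental exact sequence of the second paragraph, and more precisely the surjectivity of its middle arrow as a morphism of sheaves on $X_{\mathrm{pket}}$, which must be verified on a cofinal family of (log) affinoid perfectoid objects from the explicit descriptions of the crystalline period sheaves rather than merely on global sections; everything else is the projection-formula twist, exactness of $\iota_{J,*}$, and the $j_!$-resolution combinatorics, all routine within the framework of \cite{DLLZa,DLLZb,LLZ}.
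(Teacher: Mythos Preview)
Your proof is correct and follows essentially the same strategy as the paper: reduce to the trivial local system by local freeness, check the fundamental exact sequence on log affinoid perfectoids, and deduce the compactly supported version from the exactness of the complex $j_!(\widehat{\mathcal{L}}(r)|_U)\to\iota_{(\bullet),*}(\widehat{\mathcal{L}}(r)|_{X_{(\bullet)}})$. The only cosmetic differences are that the paper first reduces to $r=0$ via multiplication by $t^r$ (using $\mathrm{Fil}^r\mathbb{B}_{\mathrm{crys}}=t^r\mathrm{Fil}^0\mathbb{B}_{\mathrm{crys}}$ and $\varphi(t^r)=p^rt^r$) and then cites \cite[\S 6.2.2]{Brinon} for the ring-theoretic fundamental exact sequence, and for the last part invokes \cite[Lemma~2.1.5]{LLZ} directly rather than spelling out the nerve-resolution combinatorics.
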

\begin{proof} The first two  sequences are obtained from the analogous sequence with $\mathcal{L}=\mathbb{Z}_p$. Hence, we can assume that $\mathcal{L}=\mathbb{Z}_p$. Since $\mathrm{Fil}^r \mathbb{B}_{\mathrm{crys}}=t^r  \mathrm{Fil}^0 \mathbb{B}_{\mathrm{crys}} $ and $\varphi(t^r)=p^r t^r$, we may also assume that $r=0$. Passing to a basis of  $X_{\mathrm{pket}}$ by log perfectoid affinoids, one reduces the proof to  \cite[\S 6.2.2]{Brinon}.

The last statement follows from the fact that $$0 \to j_!\bigl(\mathcal{L}/p^n \mathcal{L}(r))\vert_U\bigr) \longrightarrow  \iota_{(\bullet),\ast}\bigl((\mathcal{L}/p^n \mathcal{L}(r))\vert_{X_{(\bullet)}} \bigr) $$ is exact for every $n\in \mathbb{N}$ by  \cite[Lemma 2.1.5]{LLZ}.
\end{proof}

Using Proposition \ref{prop:comparecomplexes} we obtain morphisms of complexes $$\mathbf{Syn}(\mathcal{E},\mathfrak{E},r)\to \mathbf{Syn}(\mathcal{L},r), \quad \mathbf{Syn}^\partial(\mathcal{E},\mathfrak{E},r)\to \mathbf{Syn}^{c-\infty}(\mathcal{L},r).$$
So we obtain from Proposition \ref{prop:etcoh} and from Lemma \ref{lemma:fundexactseq}:

\begin{theorem}\label{thm:comparesynt} We have  morphisms
$$\rho_{\mathrm{Syn}}^i(r)\colon \mathrm{H}^i\bigl(\mathbf{Syn}(\mathcal{E},\mathfrak{E},r)\bigr) \longrightarrow \mathrm{H}^i\bigl(\mathbf{Syn}(\mathcal{L},r)\bigr)\cong  \mathrm{H}^i_{\mathrm{et}}\bigl(U, \mathcal{L}(r)\bigr)[p^{-1}]$$and, using the identification $\mathrm{H}^i\bigl(\mathbf{Syn}^{c-\infty}(\mathcal{E},\mathfrak{E},r)\bigr)\cong \mathrm{H}^i\bigl(\mathbf{Syn}^\partial(\mathcal{E},\mathfrak{E},r)\bigr)$   of Proposition \ref{prop:samecohogroups},
$$\rho_{\mathrm{Syn},c}^i(r)\colon  \mathrm{H}^i\bigl(\mathbf{Syn}^{c-\infty}(\mathcal{E},\mathfrak{E},r)\bigr) \longrightarrow \mathrm{H}^i\bigl(\mathbf{Syn}^{c-\infty}(\mathcal{L},r)\bigr)\cong  \mathrm{H}^i_{\mathrm{et},c}\bigl(U, \mathcal{L}(r)\bigr)\bigr)[p^{-1}] .$$
\end{theorem}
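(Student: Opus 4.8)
The statement is essentially a formal bookkeeping consequence of the constructions already assembled, so my plan is to carry the two arrows along a chain of already-established maps and identifications. First, Proposition \ref{prop:comparecomplexes} provides the morphisms of complexes $\mathbf{Syn}(\mathcal{E},\mathfrak{E},r)\to \mathbf{Syn}(\mathcal{L},r)$ and $\mathbf{Syn}^\partial(\mathcal{E},\mathfrak{E},r)\to \mathbf{Syn}^{c\text{-}\infty}(\mathcal{L},r)$; passing to $i$-th cohomology defines $\mathrm{H}^i\bigl(\mathbf{Syn}(\mathcal{E},\mathfrak{E},r)\bigr)\to \mathrm{H}^i\bigl(\mathbf{Syn}(\mathcal{L},r)\bigr)$ and, after composing with the isomorphism $\mathrm{H}^i\bigl(\mathbf{Syn}^{c\text{-}\infty}(\mathcal{E},\mathfrak{E},r)\bigr)\cong \mathrm{H}^i\bigl(\mathbf{Syn}^\partial(\mathcal{E},\mathfrak{E},r)\bigr)$ of Proposition \ref{prop:samecohogroups} (valid since, by Remark \ref{rmk:unipotentnilpotent}, $(\mathcal{E},\nabla)$ has nilpotent residues under the crystalline-associated hypothesis), the map $\mathrm{H}^i\bigl(\mathbf{Syn}^{c\text{-}\infty}(\mathcal{E},\mathfrak{E},r)\bigr)\to \mathrm{H}^i\bigl(\mathbf{Syn}^{c\text{-}\infty}(\mathcal{L},r)\bigr)$.

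Next I would identify the targets with étale cohomology. By definition $\mathbf{Syn}(\mathcal{L},r)$ is the total complex of $(1-\tfrac{\varphi}{p^r})\oplus\iota$ on $\mathrm{R}\Gamma\bigl(X_{\mathrm{pket}},\widehat{\mathcal{L}}\otimes \mathbb{B}_{\mathrm{crys}}\bigr)$ and $\mathrm{R}\Gamma\bigl(X_{\mathrm{pket}},\widehat{\mathcal{L}}\otimes\mathrm{Gr}^r\mathbb{B}_{\mathrm{dR}}\bigr)$; Lemma \ref{lemma:fundexactseq} identifies this total complex, in the derived category, with $\mathrm{R}\Gamma\bigl(X_{\mathrm{pket}},\widehat{\mathcal{L}}(r)\bigr)[p^{-1}]$, since the first exact sequence there realizes $\widehat{\mathcal{L}}(r)[p^{-1}]$ as (a shift of) the cone of $(\iota\circ(1-\tfrac{\varphi}{p^r}),\iota)$. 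Then Proposition \ref{prop:etcoh} gives $\mathrm{H}^i\bigl(X_{\mathrm{pket}},\widehat{\mathcal{L}}(r)\bigr)\cong \mathrm{H}^i_{\mathrm{et}}\bigl(U,\mathcal{L}(r)\bigr)$, and inverting $p$ yields $\mathrm{H}^i\bigl(\mathbf{Syn}(\mathcal{L},r)\bigr)\cong \mathrm{H}^i_{\mathrm{et}}(U,\mathcal{L}(r))[p^{-1}]$. The compact-support case is identical, using the last exact sequence of Lemma \ref{lemma:fundexactseq} (which presents $j_!\bigl(\widehat{\mathcal{L}}(r)\vert_U\bigr)[p^{-1}]$ as the cone of the map on the $\mathbb{B}^{c\text{-}\infty}$-sheaves) together with $\mathrm{H}^i_{\mathrm{et},c}\bigl(U,\mathcal{L}(r)\bigr)\cong \mathrm{H}^i\bigl(X_{\mathrm{pket}},j_!(\widehat{\mathcal{L}}(r)\vert_U)\bigr)$ from Proposition \ref{prop:etcoh}. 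Composing the cohomology map from the first paragraph with these identifications produces $\rho_{\mathrm{Syn}}^i(r)$ and $\rho_{\mathrm{Syn},c}^i(r)$.

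The only genuinely substantive point — and the step I expect to be the main obstacle — is checking that the comparison morphism of complexes from Proposition \ref{prop:comparecomplexes} is compatible, up to the chosen quasi-isomorphisms, with the identification of $\mathbf{Syn}(\mathcal{L},r)$ as the mapping fibre coming from Lemma \ref{lemma:fundexactseq}: one must verify that the Frobenius-equivariant maps $\psi_{\underline\beta,i}$ and $\eta$ built in the proof of Proposition \ref{prop:comparecomplexes} are compatible with the maps $\iota\circ(1-\tfrac{\varphi}{p^r})$ and $\iota$ appearing in $\mathbf{Syn}(\mathcal{L},r)$, so that they assemble into an honest morphism of total complexes and not merely a compatible family of maps on the factors. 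This is where the strict compatibility with filtrations of $\mathbb{B}_{\mathrm{crys}}\subset\mathbb{B}_{\mathrm{dR}}$ and the Frobenius-compatibility clauses in Definition \ref{def:crysass} and Theorem \ref{thm:OBcrys} are used; once that diagram chase is in place, the rest of the argument is the formal concatenation described above. I would therefore organize the write-up as: (i) recall the morphism of complexes from Proposition \ref{prop:comparecomplexes}; (ii) quote Lemma \ref{lemma:fundexactseq} and Proposition \ref{prop:etcoh} to rewrite $\mathrm{H}^i\bigl(\mathbf{Syn}(\mathcal{L},r)\bigr)$ and $\mathrm{H}^i\bigl(\mathbf{Syn}^{c\text{-}\infty}(\mathcal{L},r)\bigr)$ as étale cohomology with $p$ inverted; (iii) invoke Proposition \ref{prop:samecohogroups} to pass from $\mathbf{Syn}^\partial$ to $\mathbf{Syn}^{c\text{-}\infty}$ on the source side; and (iv) conclude by composition.
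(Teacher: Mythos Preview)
Your proposal is correct and matches the paper's approach exactly: the paper simply states that the theorem follows from Proposition~\ref{prop:comparecomplexes} (for the morphisms of complexes), together with Proposition~\ref{prop:etcoh} and Lemma~\ref{lemma:fundexactseq} (for the identification of $\mathrm{H}^i(\mathbf{Syn}(\mathcal{L},r))$ and its compact-support variant with \'etale cohomology), with Proposition~\ref{prop:samecohogroups} supplying the passage from $\mathbf{Syn}^\partial$ to $\mathbf{Syn}^{c\text{-}\infty}$ on the source. The ``substantive point'' you flag is not actually an obstacle: Proposition~\ref{prop:comparecomplexes} already produces a genuine morphism of total complexes (the maps $\psi_{\underline\beta,i}$ and $\eta$ are built there to be compatible with $(1-\tfrac{\Phi}{p^r},\gamma_{\rm an})$ and $(1-\tfrac{\varphi}{p^r},\iota)$), and $\mathbf{Syn}(\mathcal{L},r)$ is by definition the total complex of the displayed map, so no further diagram chase is needed.
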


\begin{proposition}\label{prop:comp0}For every integer $r$ the map  $\rho_{\mathrm{Syn}}^0(r)$ is an isomorphism. (Here we do not need to assume that $X$ is proper) 

\end{proposition}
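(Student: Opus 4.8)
The plan is to compute both sides of $\rho^0_{\mathrm{Syn}}(r)$ explicitly in degree $0$ and check that the map is an isomorphism. On the syntomic side, by Proposition \ref{prop:spectralanal} we have that $\mathrm{H}^0\bigl(\mathbf{Syn}(\mathcal{E},\mathfrak{E},r)\bigr)=F^0\mathrm{H}^0$ is the kernel of $\bigl(1-\frac{\Phi}{p^r}\bigr)\oplus\gamma^{\mathrm{an}}$ acting on $\mathrm{H}^0_{\mathrm{an}}\bigl((\mathcal{X}_k,\alpha_k)/K_0,\mathfrak{E}^{\mathrm{conv}}\bigr)\oplus H^0\bigl(\mathrm{Gr}^r(\mathcal{E}\otimes\Omega^{\log,\bullet}_{X/K})\bigr)$; since there is no $\mathrm{H}^{-1}$ to worry about, $F^1\mathrm{H}^0=0$ and this kernel \emph{is} $\mathrm{H}^0\bigl(\mathbf{Syn}(\mathcal{E},\mathfrak{E},r)\bigr)$. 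First I would unravel $\mathrm{H}^0_{\mathrm{an}}$: it is the horizontal, Frobenius-fixed (up to $p^r$) sections, which I can describe via the explicit de Rham complexes of \S\ref{sec:analsyn} and \S\ref{sssec:explcictcrystallne} as a suitable subspace of $\mathfrak{E}^{\mathrm{conv}}_{\underline\beta}$ glued over $\underline\beta$; the condition that $(1-\frac{\Phi}{p^r})$ kills it together with $\gamma^{\mathrm{an}}$ landing in $\mathrm{Fil}^r$ carves out exactly what on the \'etale side will correspond to $\mathrm{H}^0\bigl(X_{\mathrm{pket}},\widehat{\mathcal{L}}(r)\bigr)[p^{-1}]$.

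On the \'etale side, the target is $\mathrm{H}^0\bigl(\mathbf{Syn}(\mathcal{L},r)\bigr)$, which by the same formal argument (no $\mathrm{H}^{-1}$) is the kernel of $\bigl(\iota\circ(1-\frac{\varphi}{p^r}),\iota\bigr)$ on $\mathrm{H}^0\bigl(X_{\mathrm{pket}},\widehat{\mathcal{L}}\otimes\mathbb{B}_{\mathrm{crys}}\bigr)\oplus\mathrm{H}^0\bigl(X_{\mathrm{pket}},\widehat{\mathcal{L}}\otimes\mathrm{Gr}^r\mathbb{B}_{\mathrm{dR}}\bigr)$. The key input is the fundamental exact sequence of Lemma \ref{lemma:fundexactseq}: taking $\mathrm{R}\Gamma(X_{\mathrm{pket}},-)$ of the short exact sequence
$$0\longrightarrow\widehat{\mathcal{L}}(r)[p^{-1}]\longrightarrow\widehat{\mathcal{L}}\otimes\mathbb{B}_{\mathrm{crys}}\longrightarrow\widehat{\mathcal{L}}\otimes\mathbb{B}_{\mathrm{crys}}\oplus\widehat{\mathcal{L}}\otimes\mathrm{Gr}^r\mathbb{B}_{\mathrm{dR}}\longrightarrow 0$$
shows that $\mathbf{Syn}(\mathcal{L},r)$ is quasi-isomorphic to $\mathrm{R}\Gamma\bigl(X_{\mathrm{pket}},\widehat{\mathcal{L}}(r)\bigr)[p^{-1}]$, hence by Proposition \ref{prop:etcoh} that $\mathrm{H}^0\bigl(\mathbf{Syn}(\mathcal{L},r)\bigr)\cong\mathrm{H}^0_{\mathrm{et}}(U,\mathcal{L}(r))[p^{-1}]$ and, more usefully, equals $\mathrm{H}^0\bigl(X_{\mathrm{pket}},\widehat{\mathcal{L}}(r)\bigr)[p^{-1}]$. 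Since $X$ is connected one typically has $\mathrm{H}^0=H^0(X,\mathcal{L})(r)[p^{-1}]$, a finite-dimensional space on which everything is transparent, but I will not need that — only the identification through the fundamental sequence.

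The heart of the argument is then to show the comparison map $\rho^0_{\mathrm{Syn}}(r)$ identifies the two kernels. I would argue directly that $\mathrm{H}^0\bigl(\mathbf{Syn}(\mathcal{E},\mathfrak{E},r)\bigr)$ also computes $\mathrm{H}^0\bigl(X_{\mathrm{pket}},\widehat{\mathcal{L}}(r)\bigr)[p^{-1}]$, compatibly with $\rho^0_{\mathrm{Syn}}(r)$. Concretely: the morphism of complexes $\mathbf{Syn}(\mathcal{E},\mathfrak{E},r)\to\mathbf{Syn}(\mathcal{L},r)$ of Proposition \ref{prop:comparecomplexes} comes, by Theorem \ref{thm:OBcrys} and Definition \ref{def:crysass}, from quasi-isomorphisms comparing the relevant de Rham / crystalline complexes with coefficients against the period-sheaf complexes, so on $\mathrm{H}^0$ it suffices to check that a horizontal Frobenius-$p^r$-fixed section of $\mathfrak{E}$ that becomes Hodge-filtered in degree $r$ is exactly a section of $\widehat{\mathcal{L}}(r)$ after $\otimes\,\mathbb{B}_{\mathrm{crys}}$, and conversely. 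For the forward direction I would use Proposition \ref{prop:phi-1inv}: the inclusion $\mathfrak{E}_\beta[p^{-1}]\hookrightarrow(w\circ\nu_{\mathrm{ar}})_*\bigl(\widehat{\mathcal{L}}|_{X_\beta}\otimes_{\widehat{\mathbb{Z}}_p}\mathcal{O}_{\mathfrak{X}_\beta}\mathbb{B}_{\mathrm{crys}}\bigr)$ has cokernel killed by $\Phi_\beta^3$, hence induces an isomorphism on kernels of $1-\frac{\Phi_\beta}{p^r}$, so that a Frobenius-eigensection of $\mathfrak{E}$ is genuinely a section of $\widehat{\mathcal{L}}\otimes\mathcal{O}_{\mathfrak{X}_\beta}\mathbb{B}_{\mathrm{crys}}$; glueing over $\beta$ and imposing the de Rham filtration condition via $\xi_{\mathrm{dR}}$ then lands it, by the fundamental sequence, in $\widehat{\mathcal{L}}(r)[p^{-1}]$. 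The converse inclusion is immediate from the commutativity of the diagram defining $\rho^0_{\mathrm{Syn}}(r)$. The main obstacle I expect is purely bookkeeping: matching up the various models of $\mathrm{H}^0$ — the analytic/convergent one, the crystalline one (related by $\tau_{\mathrm{crys}}$, an isomorphism in cohomology by Remark \ref{rmk:crysanal} when the comparison is needed, or circumvented since only $\mathrm{H}^0$ is in play), and the pro-Kummer-\'etale one — and verifying that the glueing data over the cover $\{\mathcal{X}_\beta\}$ are compatible with $\xi_{\beta,\mathrm{crys}}$, $\xi_{\mathrm{dR}}$ and their compatibility (iii) of Definition \ref{def:crysass}. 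Once the $\mathrm{H}^0$'s are all identified with global sections of $\widehat{\mathcal{L}}(r)$ and the comparison map is seen to be the identity under these identifications, the proposition follows; properness of $X$ is not used because we never invoke finiteness or Poincar\'e duality, only the left-exactness of $\mathrm{R}\Gamma$ and the fundamental exact sequence.
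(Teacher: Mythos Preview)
Your proposal is correct and follows the same approach as the paper: the paper's proof is a single sentence citing Proposition~\ref{prop:phi-1inv}, and you have correctly unpacked this by identifying both $\mathrm{H}^0$'s as kernels (since $F^1\mathrm{H}^0=0$), using the fundamental exact sequence for the \'etale side, and invoking the isomorphism on kernels of $1-\frac{\Phi_\beta}{p^r}$ from Proposition~\ref{prop:phi-1inv} for the comparison. Your remark that properness is not needed because only left-exactness and the fundamental sequence are used is exactly the point behind the parenthetical in the statement.
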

\begin{proof}  The claim follows from  Proposition \ref{prop:phi-1inv}.
\end{proof}

We have the following functoriality results. Consider a morphism $f\colon Y\to X$ of smooth and proper rigid analytic variety over $K$ which is the generic fiber of a  log smooth morphism of formal schemes $f\colon \mathcal{X}\to \mathcal{Y}$ satisfying the Assumptions of  Proposition \ref{prop:FunctAss}(ii). By loc.~cit.~the pull backs $\mathcal{L}_Y$ of $\mathcal{L}$ and  $\mathbb{E}_Y=\bigl(\mathcal{E}_Y,\mathrm{Fil}^\bullet \mathcal{E}_Y, \nabla_Y,\mathfrak{E}_{\mathcal{Y}_k},\varphi_{\mathfrak{E},{\mathcal{Y}_k}}\bigr)$ of $\mathcal{L}$ and $\mathbb{E}:=\bigl(\mathcal{E},\mathrm{Fil}^\bullet \mathcal{E}, \nabla,\mathfrak{E},\varphi_{\mathfrak{E}}\bigr) $ are also crystalline associated. Then:

\begin{proposition} The  morphisms $\rho_{\mathrm{Syn}}^i(r)$ and $\rho_{\mathrm{Syn},c}^i(r)$ are functorial with respect to the morphisms in syntomic and \'etale cohomology given by pull back via $f$.

If $f\colon \mathcal{X}\to \mathcal{Y}$ is further assumed to be finite Kummer \'etale, we have also trace maps from the syntomic and \'etale cohomology of $\mathbb{E}_Y$ and $\mathcal{L}_Y$  to the syntomic and \'etale cohomology of $\mathbb{E}$ and $\mathcal{L}$ respectively. Then, the  morphisms $\rho_{\mathrm{Syn}}^i(r)$ and $\rho_{\mathrm{Syn},c}^i(r)$ commute with respect to these trace maps. 

\end{proposition}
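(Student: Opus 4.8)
The plan is to reduce the proposition to the naturality of the comparison morphisms $\rho^i_{\mathrm{Syn}}(r)$ and $\rho^i_{\mathrm{Syn},c}(r)$ at the level of \emph{complexes}, and then apply this to the two functorial operations we care about: pull-back along $f$ and (under the extra hypothesis) the trace of $f$. Both operations are built from maps of sheaves on the relevant sites, and $\rho^i_{\mathrm{Syn}}(r)$ is assembled (via Theorem \ref{thm:comparesynt}) from the chain of morphisms of Proposition \ref{prop:comparecomplexes}, $\mathbf{Syn}(\mathcal{E},\mathfrak{E},r)\to\mathbf{Syn}(\mathcal{L},r)$, together with the identifications of Proposition \ref{prop:etcoh}. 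So the strategy is: (i) check that the morphism of Proposition \ref{prop:comparecomplexes} is itself functorial, i.e. that the square
$$
\begin{array}{ccc}
\mathbf{Syn}(\mathcal{E},\mathfrak{E},r) & \longrightarrow & \mathbf{Syn}(\mathcal{L},r)\\
\downarrow & & \downarrow\\
\mathbf{Syn}(\mathcal{E}_Y,\mathfrak{E}_{\mathcal{Y}_k},r) & \longrightarrow & \mathbf{Syn}(\mathcal{L}_Y,r)
\end{array}
$$
commutes in the derived category, where the vertical maps are pull-back along $f$; (ii) pass to cohomology and use Proposition \ref{prop:etcoh} on both sides to identify the right-hand column with pull-back in \'etale cohomology of $U_X\subset X$ and $U_Y\subset Y$. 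The same reasoning, applied to the complexes with support, gives the statement for $\rho^i_{\mathrm{Syn},c}(r)$; here one also uses the identification $\mathrm{H}^i(\mathbf{Syn}^{c\text{-}\infty}(\mathcal{E},\mathfrak{E},r))\cong\mathrm{H}^i(\mathbf{Syn}^\partial(\mathcal{E},\mathfrak{E},r))$ of Proposition \ref{prop:samecohogroups}, which is plainly functorial in the pair $(U,Y)$ by construction of $\mathbf{Syn}^\partial$ via the $\mathcal{X}^\partial_{(\bullet),k}$.

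The commutativity in step (i) is where the work is. On the de Rham/crystalline side the vertical maps come from the pull-back of the filtered log-isocrystal along the chosen compatible local charts $f_\beta\colon\mathfrak{Y}_\beta\to\mathfrak{X}_\beta$ (these exist by log-smoothness of $f$, cf.\ Proposition \ref{prop:FunctAss}(2)); on the \'etale side they come from the pull-back of $\widehat{\mathcal{L}}$ together with $f^{-1}(\mathcal{O}_{\mathfrak{X}_\beta}\mathbb{B}_{\mathrm{crys}})\to\mathcal{O}_{\mathfrak{Y}_\beta}\mathbb{B}_{\mathrm{crys}}$ and the analogous morphism of period sheaves. The key point is that the isomorphisms $\xi_{\mathrm{dR}}$ and $\xi_{\beta,\mathrm{crys}}$ of Definition \ref{def:crysass} pull back to the corresponding isomorphisms for $Y$ — this is exactly the content of Proposition \ref{prop:FunctAss}(2) (built on \cite[Thm.~3.2.7(4)]{DLLZb} and Lemma \ref{lemma:uniquenabla}), together with the compatibility of the period-sheaf morphisms with connections and Frobenius. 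Granting this, each of the intermediate morphisms $\eta$, $\psi_\beta$, $\zeta_i$, $\psi_{\underline\beta,i}$ in the proof of Proposition \ref{prop:comparecomplexes} is compatible with the $f$-pull-back by naturality, and also $\tau_{\mathrm{crys}}$ of Remark \ref{rmk:crysanal} is functorial; so the square commutes. I would present this as: "arguing as in the proof of Proposition \ref{prop:comparecomplexes}, all the morphisms occurring there are natural in the input datum, hence the pull-back square commutes; the claim for cohomology then follows by applying $\mathrm{H}^i$ and Proposition \ref{prop:etcoh}."

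For the trace statement, assume now $f\colon\mathcal{X}\to\mathcal{Y}$ is finite Kummer \'etale. Then, as recalled in Remark \ref{rmk:traceasociated}, $\tilde f_*=\tilde f_!$ is exact and there is a trace $\tilde f_*\tilde f^*\to\mathrm{Id}$ compatible with the classical Kummer-\'etale trace on $X_K$, and the trace of $\xi_{\beta,\mathrm{crys}}$ and $\xi_{\mathrm{dR}}$ matches the trace of $\mathcal{L}$, of $\mathfrak{E}_\beta$ on $\mathfrak{X}_{\beta,k}$, and of $\mathcal{E}$ on $X$. The plan is then to observe that all the complexes entering $\mathbf{Syn}(-,-,r)$ are obtained by applying $\mathrm{R}\Gamma$ to sheaves on the respective sites, and the trace is a morphism of sheaves compatible with connections, Frobenius, and filtrations (being induced from $\mathcal{O}\mathbb{B}_{\mathrm{dR}}$- and $\mathcal{O}_{\mathfrak{X}_\beta}\mathbb{B}_{\mathrm{crys}}$-linearity and with the trace on $\mathcal{O}_{\mathfrak{X}_\beta}$ and on $\mathcal{O}_{\mathfrak{X}_{\beta,k}}$-modules respectively). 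Hence one gets a morphism of complexes $\mathbf{Syn}(\mathcal{E}_Y,\mathfrak{E}_{\mathcal{Y}_k},r)\to\mathbf{Syn}(\mathcal{E},\mathfrak{E},r)$ and $\mathbf{Syn}(\mathcal{L}_Y,r)\to\mathbf{Syn}(\mathcal{L},r)$, fitting into the square analogous to the one above with the vertical arrows reversed and given by traces; commutativity again follows from the compatibility of the traces asserted in Remark \ref{rmk:traceasociated} together with the naturality of the morphisms $\eta,\psi_\beta,\zeta_i,\psi_{\underline\beta,i}$. Passing to cohomology and invoking Proposition \ref{prop:etcoh} (which identifies the right column with the \'etale trace $\mathrm{H}^i_{\mathrm{et}}(U_Y,\mathcal{L}_Y(r))\to\mathrm{H}^i_{\mathrm{et}}(U_X,\mathcal{L}(r))$) gives the desired compatibility; the same for compact supports. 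The main obstacle I anticipate is bookkeeping in step (i): one must be careful that the independence of $i$ in $\psi_{\underline\beta,i}$ (realised by $\zeta_j\circ\zeta_i^{-1}$ in the derived category, using that $\mathfrak{E}$ is a crystal) interacts correctly with the charts $f_\beta$ — i.e.\ that the $f_\beta$ can be chosen compatibly with the log-smoothness identifications over $(\mathcal{S}_k,\mathcal{M}_k)$ — but this is guaranteed by the "choose compatible local charts" hypothesis in Proposition \ref{prop:FunctAss}(2), so no genuinely new input is needed.
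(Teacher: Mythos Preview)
Your proposal is correct and follows exactly the paper's approach: the paper's own proof is just ``The first claim is clear. The second follows from Remark~\ref{rmk:traceasociated}.'' You have simply unpacked what ``clear'' means (naturality of all the intermediate morphisms in Proposition~\ref{prop:comparecomplexes} under pull-back, using Proposition~\ref{prop:FunctAss}(2)) and spelled out how Remark~\ref{rmk:traceasociated} supplies the trace compatibility, which is precisely the intended argument.
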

\begin{proof}
The first claim is clear. The second follows from Remark \ref{rmk:traceasociated}.
\end{proof}

Assume we have two finite and Kummer \'etale morphisms $f_1$, $f_2\colon Y\to X$ which are the generic fiber of a  log smooth morphism of formal schemes $f_1$ and $f_2\colon \mathcal{X}\to \mathcal{Y}$ satisfying the Assumptions of  Proposition \ref{prop:FunctAss}(ii). Assume that the latter is also finite and Kummer \'etale. Suppose we have isomorphisms $\alpha$ and $\beta$  between the pull-backs of $\mathcal{L}$ and $\mathbb{E}$ via $f_1$ and $f_2$. Using $f_1^\ast$ and the trace map for $f_{2,\ast} \circ f_2^\ast$  we get an induced endomorphism  $(f_{2,\ast} \circ f_1^\ast)_{\rm syn}$ on $\mathrm{H}^i\bigl(\mathbf{Syn}(\mathcal{E},\mathfrak{E},r)\bigr)$ and $(f_{2,\ast} \circ f_1^\ast)_{\rm et}$ on $\mathrm{H}^i_{\mathrm{et}}\bigl(U, \mathcal{L}(r)\bigr)[p^{-1}]$ and similarly for the cohomology groups with compact support. Then,

\begin{cor}\label{cor:Heckeop} Assume that $\alpha$ and $\beta$ are compatible with the morphisms $\xi_{\mathrm{dR}}$ and $\xi_{\beta,\mathrm{crys}}$ of Definition \ref{def:crysass} on $Y_{\mathrm{pket}}$. Then  $(f_{2,\ast} \circ f_1^\ast)_{\rm et}\circ  \rho_{\mathrm{Syn}}^i(r)= \rho_{\mathrm{Syn}}^i(r)  \circ  (f_{2,\ast} \circ f_1^\ast)_{\rm syn}$ and similarly for $\rho_{\mathrm{Syn},c}^i(r)$.
\end{cor}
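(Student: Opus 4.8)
The plan is to factor the endomorphism $f_{2,\ast}\circ f_1^\ast$, on both the syntomic and the \'etale side, as a composite of three maps each of which is already known to be compatible with the comparison morphism $\rho_{\mathrm{Syn}}^i(r)$. Write $(\mathcal{L}_j,\mathbb{E}_j)$ for the pull-back of $(\mathcal{L},\mathbb{E})$ along $f_j$ (for $j=1,2$), with $\mathbb{E}_j=(\mathcal{E}_j,\mathrm{Fil}^\bullet\mathcal{E}_j,\nabla_j,\mathfrak{E}_j,\varphi_{\mathfrak{E}_j})$; by Proposition~\ref{prop:FunctAss}(2) each $(\mathcal{L}_j,\mathbb{E}_j)$ is crystalline associated, with period isomorphisms $\xi_{\mathrm{dR},j}$ and $\xi_{\beta,\mathrm{crys},j}$ obtained from $\xi_{\mathrm{dR}}$ and $\xi_{\beta,\mathrm{crys}}$ by pull-back. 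By the very definition of the Hecke operators one has $(f_{2,\ast}\circ f_1^\ast)_{\mathrm{syn}}=f_{2,\ast}\circ\beta_\ast\circ f_1^\ast$ on $\mathrm{H}^i(\mathbf{Syn}(\mathcal{E},\mathfrak{E},r))$, where $f_1^\ast$ is pull-back to $Y$, $\beta_\ast$ is the isomorphism on $\mathrm{H}^i(\mathbf{Syn}(\mathcal{E}_j,\mathfrak{E}_j,r))$ induced by $\beta\colon\mathbb{E}_1\stackrel{\sim}{\longrightarrow}\mathbb{E}_2$, and $f_{2,\ast}$ is the trace of Remark~\ref{rmk:traceasociated}; similarly $(f_{2,\ast}\circ f_1^\ast)_{\mathrm{et}}=f_{2,\ast}\circ\alpha_\ast\circ f_1^\ast$ on $\mathrm{H}^i_{\mathrm{et}}(U,\mathcal{L}(r))[p^{-1}]$, with $\alpha_\ast$ induced by $\alpha\colon\mathcal{L}_1\stackrel{\sim}{\longrightarrow}\mathcal{L}_2$. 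It is therefore enough to show that $\rho_{\mathrm{Syn}}^i(r)$ intertwines each of the three factors.

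For the outer two factors this is exactly the preceding Proposition: $\rho_{\mathrm{Syn}}^i(r)$ is functorial under pull-back along $f_1$, and --- since $f_2$ and the morphism $\mathcal{X}\to\mathcal{Y}$ are assumed finite Kummer \'etale --- it commutes with the trace of $f_2$. Here we use that $f_1$, $f_2$ satisfy the hypotheses of Proposition~\ref{prop:FunctAss}(2), which hold by assumption. So only the middle factor $\beta_\ast$ / $\alpha_\ast$ requires an argument.

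The middle factor is handled by the naturality of the comparison morphism of Proposition~\ref{prop:comparecomplexes} with respect to isomorphisms of crystalline associated pairs that are compatible with the period isomorphisms. Concretely, the hypothesis that $\alpha$ and $\beta$ are compatible with $\xi_{\mathrm{dR}}$ and $\xi_{\beta,\mathrm{crys}}$ on $Y_{\mathrm{pket}}$ means precisely that, in the construction of $\rho$ carried out in the proof of Proposition~\ref{prop:comparecomplexes}, the de Rham comparison quasi-isomorphism, the local crystalline comparisons $\psi_\beta$ on the $\mathfrak{X}_\beta$, the transition maps $\zeta_i$ and hence the maps $\psi_{\underline{\beta},i}$, all attached to $(\mathcal{L}_1,\mathbb{E}_1)$, are carried by $\beta$ (on the de Rham / crystalline side) and by $\alpha$ (on $\widehat{\mathcal{L}}\otimes\mathbb{B}_{\mathrm{crys}}$, resp. $\widehat{\mathcal{L}}\otimes\mathrm{Gr}^r\mathbb{B}_{\mathrm{dR}}$) to those attached to $(\mathcal{L}_2,\mathbb{E}_2)$. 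Passing to total complexes over the \v{C}ech indices and then to cohomology yields a commuting square whose horizontal arrows are $\rho_{\mathrm{Syn}}^i(r)$ for the two pairs and whose vertical arrows are $\beta_\ast$ on $\mathrm{H}^i(\mathbf{Syn})$ and $\alpha_\ast$ on $\mathrm{H}^i(\mathbf{Syn}(\mathcal{L},r))\cong\mathrm{H}^i_{\mathrm{et}}(U,\mathcal{L}(r))[p^{-1}]$ (via Proposition~\ref{prop:etcoh}). Stacking the three squares gives the asserted identity $(f_{2,\ast}\circ f_1^\ast)_{\mathrm{et}}\circ\rho_{\mathrm{Syn}}^i(r)=\rho_{\mathrm{Syn}}^i(r)\circ(f_{2,\ast}\circ f_1^\ast)_{\mathrm{syn}}$. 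The compact-support statement for $\rho_{\mathrm{Syn},c}^i(r)$ is proved in the same way, replacing $\mathbf{Syn}$, $\mathbf{Syn}^\partial$ and $\mathbb{B}_{\mathrm{crys}},\mathbb{B}_{\mathrm{dR}}$ by $\mathbf{Syn}^{c\text{-}\infty}$, $\mathbf{Syn}^\partial$ and $\mathbb{B}_{\mathrm{crys}}^{c\text{-}\infty},\mathbb{B}_{\mathrm{dR}}^{c\text{-}\infty}$, and using Proposition~\ref{prop:samecohogroups} together with the compatibility of traces with support conditions noted in Remark~\ref{rmk:traceasociated}.

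The main obstacle is this last naturality statement: one must trace the isomorphisms $(\alpha,\beta)$ through every step of the rather long construction of $\rho$ in Proposition~\ref{prop:comparecomplexes} --- in particular through the identification, used there to make $\psi_{\underline{\beta},i}$ independent of $i$, of the various local lifts by log-smoothness over $(\mathcal{S}_k,\mathcal{M}_k)$, and through the crystal property of $\mathfrak{E}$ --- and verify that the compatibility hypothesis on $(\alpha,\beta)$ with $\xi_{\mathrm{dR}}$ and $\xi_{\beta,\mathrm{crys}}$ is exactly what makes each of these steps commute. This is a diagram chase, but unwinding the definitions requires some care.
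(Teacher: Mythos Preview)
Your proof is correct and follows the same approach the paper implicitly relies on: the Corollary is stated in the paper without a proof, being an immediate consequence of the preceding Proposition (functoriality of $\rho_{\mathrm{Syn}}^i(r)$ under pull-back and, for finite Kummer \'etale maps, under trace) together with the compatibility hypothesis on $(\alpha,\beta)$. Your three-step factorization $f_{2,\ast}\circ\beta_\ast\circ f_1^\ast$ versus $f_{2,\ast}\circ\alpha_\ast\circ f_1^\ast$ makes this explicit, and the naturality of the middle step is precisely what the compatibility hypothesis with $\xi_{\mathrm{dR}}$ and $\xi_{\beta,\mathrm{crys}}$ is designed to guarantee.
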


\section{Relation with Hyodo-Kato cohomology II}\label{sec:HKII}

\begin{definition}
\label{CupProd Def3}We say that $\mathcal{X}$ is of relative dimension $d$ when it is the finite disjoint union of formal schemes with geometrically connected generic and special fibers all of the same dimension 
$d$. If there is just one component, we say that $\mathcal{X}$ is geometrically connected of relative dimension $d$.
\end{definition}

Let $U:=X\backslash D$ and let $j\colon U\to X$ be the inclusion. Assume that $\mathcal{X}$ is proper of relative dimension $d$.
We let $\mathcal{L}$ be a $\mathbb{Z}_p$-local system on $X_{\mathrm{ket}}$ and we assume that it is crystalline associated to a non-degenerate Frobenius log-crystal $ \bigl(\mathcal{E},\mathrm{Fil}^\bullet \mathcal{E}, \nabla,\mathfrak{E},\varphi_{\mathfrak{E}}\bigr)$ relative to $(X,\alpha_X,\overline{\mathcal{X}},\alpha_{\overline{\mathcal{X}}})$; see Definition \ref{def:crysass}. The aim of this section is to prove the following

\begin{theorem}\label{thm:DstHK}  Assume that $\mathrm{H}^i_{\mathrm{et}}\bigl(U_{\overline{K}}, \mathcal{L}(r)\bigr)[p^{-1}]$ is a semistable representation of the absolute Galois group $G_K=\mathrm{Gal}(\overline{K}/K)$.
Then, we have an isomorphism of $K_0$-vector spaces, compatible with Frobenius and monodromy operator, $$\mathrm{H}^{i}_{\mathrm{HK}}\bigl((\mathcal{X}_k,\alpha_{\mathcal{X}_k})/ \mathcal{S}_0^0,\mathfrak{E}_0\bigr)(r) \stackrel{\sim}{\longrightarrow} D_{\rm st}\bigl(\mathrm{H}^i_{\mathrm{et}}\bigl(U_{\overline{K}}, \mathcal{L}(r)\bigr) \bigr).$$Similarly, if we assume that $\mathrm{H}^i_{\mathrm{et},c}\bigl(U_{\overline{K}}, \mathcal{L}(r)\bigr)[p^{-1}]$ is a semistable representation then  we have an isomorphism of $K_0$-vector spaces, compatible with Frobenius and monodromy operator
$$ \mathrm{H}^{i}_{\mathrm{HK}}\bigl((\mathcal{X}_k,\alpha_{\mathcal{X}_k})/ \mathcal{S}_0^0,\mathfrak{E}_0(-\mathfrak{D}_0)\bigr)(r) \stackrel{\sim}{\longrightarrow} D_{\rm st}\bigl(\mathrm{H}^i_{\mathrm{et},c}\bigl(U_{\overline{K}}, \mathcal{L}(r)\bigr) \bigr).$$
\end{theorem}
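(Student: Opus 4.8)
The plan is to combine three ingredients already available in the paper: the comparison morphisms $\rho_{\mathrm{Syn}}^i(r)$ and $\rho_{\mathrm{Syn},c}^i(r)$ of Theorem~\ref{thm:comparesynt}, the structure of syntomic cohomology via the exact sequences of Proposition~\ref{prop:spectralanal} and its analogue with support, and the Hyodo--Kato description of the ``absolute'' cohomology in Proposition~\ref{prop:anabs}(4) and Proposition~\ref{prop:anabscompact}(4). The strategy is to show that, under the semistability hypothesis, the syntomic cohomology $\mathrm{H}^i\bigl(\mathbf{Syn}(\mathcal{E},\mathfrak{E},r)\bigr)$ computes the Bloch--Kato ``finite part'' $H^1_f$-type extension attached to $V:=\mathrm{H}^i_{\mathrm{et}}\bigl(U_{\overline K},\mathcal{L}(r)\bigr)[p^{-1}]$ together with $D_{\mathrm{st}}(V)$, and that the Hyodo--Kato cohomology $\mathrm{H}^{i}_{\mathrm{HK}}$ plays the role of $D_{\mathrm{st}}$ on the syntomic side. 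More precisely, I would first unwind Proposition~\ref{prop:etcoh}: the primitive comparison isomorphism gives $\mathrm{H}^i\bigl(X_{\overline K,\mathrm{pket}},\widehat{\mathcal{L}}\otimes\mathbb{B}_{\mathrm{crys}}\bigr)\cong V(-r)\otimes_{\Q_p} B_{\mathrm{crys}}$ and similarly for $\mathbb{B}_{\mathrm{dR}}$; taking $G_K$-invariants and using the semistability of $V$ (hence of $V(-r)$, up to the $B_{\mathrm{st}}$-vs-$\widehat B_{\mathrm{st}}$ subtlety of Remark~\ref{rmk:Blog}) identifies the building blocks of $\mathbf{Syn}(\mathcal{L},r)$ with $D_{\mathrm{st}}(V(-r))$, its $\varphi$- and $N$-structure, and its filtration.

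Next I would show $\rho_{\mathrm{Syn}}^i(r)$ and $\rho_{\mathrm{Syn},c}^i(r)$ are isomorphisms in the relevant range, or more precisely that they induce the sought isomorphism after extracting the Hyodo--Kato piece. On the syntomic side, Proposition~\ref{prop:anabs}(1) identifies $\mathrm{R}\Gamma_{\mathrm{an}}\bigl((\mathcal{X}_k)/\mathcal{S}_0,\mathfrak{E}^{\mathrm{conv}}\bigr)[1]$ with $\mathrm{R}\Gamma_{\mathrm{abs}}$, and Proposition~\ref{prop:anabs}(4) gives the exact sequence relating $\mathrm{H}^{i}_{\mathrm{abs}}$ to $\mathrm{H}^{i-1}_{\mathrm{HK}}(-1)/N$ and $(\mathrm{H}^{i}_{\mathrm{HK}})^{N=0}$. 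Running this through the definition of $\mathbf{Syn}$ (Definition~\ref{def:syntomiccomplex}) and Proposition~\ref{prop:spectralanal}, one expresses $\mathrm{H}^i\bigl(\mathbf{Syn}(\mathcal{E},\mathfrak{E},r)\bigr)$ in terms of $\ker$ and $\mathrm{coker}$ of $1-\Phi/p^r$ on Hyodo--Kato cohomology together with the Hodge filtration datum; this is exactly the shape of Fontaine's $H^1_{\mathrm{st}}$ / $D_{\mathrm{st}}$ recipe. On the étale side, Proposition~\ref{prop:etcoh} plus Lemma~\ref{lemma:fundexactseq} (the fundamental exact sequence of period sheaves) realizes $\mathrm{H}^i\bigl(\mathbf{Syn}(\mathcal{L},r)\bigr)$ as a Bloch--Kato--Fontaine--Perrin-Riou syntomic descent complex computing $H^0_{\mathrm{st}}$- and $H^1_{\mathrm{st}}$-type groups of $V(-r)$. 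Then $\rho_{\mathrm{Syn}}^i(r)$ matches the two, and the Hyodo--Kato summand on the left maps isomorphically to $D_{\mathrm{st}}(V)$ after the Tate twist by $r$ is accounted for (the $(r)$ on $\mathrm{H}^{i}_{\mathrm{HK}}$ on the left corresponds to untwisting the cyclotomic twist in $\mathcal{L}(r)$, i.e.\ to the relation $D_{\mathrm{st}}(V(-r))(r)=D_{\mathrm{st}}(V)$). The compact-support statement is identical, replacing $\mathbf{Syn}$ by $\mathbf{Syn}^{c\text{-}\infty}$, using Proposition~\ref{prop:samecohogroups} to pass to $\mathbf{Syn}^\partial$, Proposition~\ref{prop:anabscompact}(4) for the Hyodo--Kato-with-support exact sequence, and $j_!$ in place of $j_\ast$ throughout Lemma~\ref{lemma:fundexactseq}.

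I expect the main obstacle to be the careful bookkeeping of \emph{filtrations and of the twist}, together with the passage between Breuil's ring $\widehat B_{\mathrm{st}}=B_{\mathrm{log}}$ appearing naturally in Proposition~\ref{prop:phi-1inv}/Remark~\ref{rmk:Blog} and Fontaine's classical $B_{\mathrm{st}}$ on which $N$ is nilpotent: one must check that the Hyodo--Kato cohomology, being finite-dimensional over $K_0$ with nilpotent $N$ (Proposition~\ref{prop:anabs}(2) and the monodromy relation $N\Phi=p\Phi N$), lands in the $B_{\mathrm{st}}$-part, so that $\mathrm{H}^{i}_{\mathrm{HK}}\otimes_{K_0}\widehat B_{\mathrm{st}}$ and $\mathrm{H}^{i}_{\mathrm{HK}}\otimes_{K_0}B_{\mathrm{st}}$ give the same invariants. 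Concretely this uses that on a finite-dimensional $(\varphi,N)$-module the monodromy is automatically nilpotent, and that $D_{\mathrm{st}}(V)=(V\otimes_{\Q_p}B_{\mathrm{st}})^{G_K}$ recovers exactly that finite object. A second, more technical point is verifying that the comparison $\rho_{\mathrm{Syn}}^i(r)$ really induces an isomorphism on the Hyodo--Kato-summands and not merely on the full syntomic groups: here one uses that $V$ being semistable forces the ``geometric'' part of the fundamental exact sequence to degenerate in the expected way, so that the map on $\ker(1-\Phi/p^r)$- and $\mathrm{coker}(1-\Phi/p^r)$-pieces is an isomorphism, and then Proposition~\ref{prop:comp0} handles the edge ($i=0$, or the analogous initial degree) directly. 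Once these identifications are in place, the Frobenius- and $N$-equivariance is immediate from the constructions, and the theorem follows. The de Rham-associated compatibility (Definition~\ref{def:crysass}(iii)) ensures that the filtration on $\mathrm{H}^i_{\mathrm{logdR}}(X,\mathcal{E})$ matches the Hodge filtration on $D_{\mathrm{dR}}(V(-r))$, closing the last gap.
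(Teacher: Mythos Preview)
Your strategy has a genuine gap: the syntomic comparison morphisms $\rho_{\mathrm{Syn}}^i(r)$ and $\rho_{\mathrm{Syn},c}^i(r)$ are not known to be isomorphisms (the paper only proves this in degree $0$, Proposition~\ref{prop:comp0}), and even if they were, the syntomic cohomology only packages the kernel and cokernel of $1-\Phi/p^r$ on $\mathrm{H}^i_{\mathrm{an}}$ together with filtration data. Via Proposition~\ref{prop:anabs}(3)--(4) this gives you information about the $N=0$ part and the $N$-coinvariants of Hyodo--Kato cohomology, not the full $(\varphi,N)$-module. Your sentence ``the Hyodo--Kato summand on the left maps isomorphically to $D_{\mathrm{st}}(V)$'' is precisely the content of the theorem, and nothing in the syntomic comparison produces it; the argument is circular at that step.

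The paper's proof is quite different. It first constructs, via the relative period sheaf $\mathcal{O}_{\mathfrak{S}}\mathbb{B}_{\mathrm{crys}}$ and the Poincar\'e lemma of Proposition~\ref{prop:OBlog}, a \emph{direct} Frobenius-equivariant isomorphism of $B_{\rm log}$-modules (Corollary~\ref{cor:isoHietcryslog}) between log-crystalline cohomology $\widehat{\otimes}\,B_{\rm log}$ and $\mathrm{H}^i_{\mathrm{et}}(U_{\overline K},\mathcal{L}(r))\otimes B_{\rm log}$. Combined with Breuil's result that semistability gives $D_{\rm st}(V)\otimes_{K_0}B_{\rm log}\cong V\otimes B_{\rm log}$, this yields an isomorphism $\tau$ after tensoring with $B_{\rm log}$. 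The substantial remaining work is to descend from $B_{\rm log}$ to $K_0$: one chooses a polynomial $P(T)\in 1+TK_0[T]$ annihilating Frobenius on both finite-dimensional $K_0$-spaces, introduces the auxiliary ring $B_{\rm max}\supset B_{\rm log}$, and uses Lemma~\ref{lemma:PHIM0} repeatedly to show that $P(\Phi)$ is invertible on all the ``junk'' living in the maximal ideals, so that passing to $P(\Phi)=0$ and $G_K$-invariants cuts out exactly the two $K_0$-spaces (Lemma~\ref{lemma:tauimples} and the subsequent subsections). None of this polynomial/$B_{\rm max}$ mechanism appears in your plan.

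Finally, your claim that the compact-support statement is ``identical'' is incorrect. The paper explicitly points out that the analogue of Proposition~\ref{prop:phi-1inv} is not available for the compactly supported period sheaves, so one only obtains a map $\upsilon_c$ a priori; the proof that $\upsilon_c$ is an isomorphism (\S\ref{sec:conclusionproffofTHM}) goes through Poincar\'e duality in the \'etale, log--de Rham, and Hyodo--Kato settings together with their compatibility (Lemma~\ref{lemma:traceHKlogdRcomp}), dualizing against the already-proved non-compact case for $\mathcal{L}^\vee$ in complementary degree.
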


We have also the following compatibilities:

\begin{cor} \label{cor:DstHKDdR} Assume $\mathrm{H}^i_{\mathrm{et}}\bigl(U_{\overline{K}}, \mathcal{L}(r)\bigr)[p^{-1}]$ is a semistable representation. Then following diagram is commutative:
$$
\xymatrix{ \mathrm{H}_{
\mathrm{an}}^{i}\left( \left( \mathcal{X}_k,\alpha _{\mathcal{X}
_k}\right) /\left( \mathfrak{S},\mathfrak{M}\right) ,\mathfrak{E}^{\mathrm{
conv}}\right)  \left( r\right) \ar[d] \ar[r] & \mathrm{H}_{\mathrm{logdR
}}^{i}\left( X,\mathcal{E}\left( r\right) \right) \ar[r]^-{\cong} & D_{\mathrm{dR}
}\left( \mathrm{H}_{\mathrm{et}}^{i}\left( U_{\overline{K}},\mathcal{
L}\left( r\right) \right) \right) \\ \mathrm{H}_{\mathrm{HK}}^{i}\left( \left( \mathcal{X}_k,\alpha _{\mathcal{X
}_k}\right) /\mathcal{S}_{0}^{0},\mathfrak{E}_{0}\right) \left( r\right) \ar[r] & D_{\mathrm{st}}\left( \mathrm{H}_{\mathrm{et}}^{i}\left(
U_{\overline{K}},\mathcal{L}\left( r\right) \right) \right) \text{.} \ar@/_{5pt}/[ur] & }
$$where the morphism on the bottom line is the one in Theorem \ref{thm:DstHK},   the most left vertical morphism is defined in Proposition \ref{prop:anabs} by   specializing to $Z=0$, the first top morphism is the morphism $\gamma^{\rm{an}}$ given by specializing $Z\mapsto \pi$, the isomorphism $$D_{\mathrm{dR}
}\left( \mathrm{H}_{\mathrm{et}}^{i}\left( U_{\overline{K}},\mathcal{
L}\left( r\right) \right) \right) \simeq \mathrm{H}_{\mathrm{logdR}
}^{i}\left( X,\mathcal{E}\left( r\right) \right) $$is the one in  \cite[Thm. 3.2.7(3)]{DLLZb} and finally we have Fontaine's morphism 
$$D_{\mathrm{st}}\left( \mathrm{H}_{\mathrm{et}}^{i}\left(
U_{\overline{K}},\mathcal{L}\left( r\right) \right) \right)\longrightarrow D_{\mathrm{dR}}\left( \mathrm{H}_{\mathrm{et}}^{i}\left(
U_{\overline{K}},\mathcal{L}\left( r\right) \right) \right).$$An analogue compatibility holds if we assume that $\mathrm{H}^i_{\mathrm{et},c}\bigl(U_{\overline{K}}, \mathcal{L}(r)\bigr)[p^{-1}]$ is a semistable representation replacing $\mathfrak{E}^{\rm conv}$ with $\mathfrak{E}^{\rm conv}(-\mathfrak{D})$, $\mathfrak{E}_0$ with $\mathfrak{E}_0(-\mathfrak{D}_0)$ and $\mathcal{E}$ with $\mathcal{E}(-\mathcal{D})$. 
\end{cor}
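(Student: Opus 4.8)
The plan is to deduce Corollary \ref{cor:DstHKDdR} from Theorem \ref{thm:DstHK} together with the various comparison morphisms already in place. The key point is that all the maps in the diagram have been constructed; what remains is to identify that two compositions agree, and this will be done by tracking everything through the period sheaves on $X_{\mathrm{pket}}$.

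\begin{proof} We treat the case of $\mathrm{H}^i_{\mathrm{et}}$, the compactly supported case being entirely analogous after twisting by $\mathbf{1}(-D)$. All arrows in the diagram are already defined: the bottom horizontal arrow is the isomorphism of Theorem \ref{thm:DstHK}, the left vertical arrow is the specialization $Z\mapsto 0$ from Proposition \ref{prop:anabs}(1) combined with Definition \ref{def:HK}, the top-left horizontal arrow is $\gamma_{\rm an}$ (specialization $Z\mapsto\pi$, see \S\ref{sec:analsyn}), the top-right isomorphism is \cite[Thm.~3.2.7(3)]{DLLZb}, and the diagonal arrow is Fontaine's $D_{\mathrm{st}}\to D_{\mathrm{dR}}$. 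After Tate-twisting by $r$ we may and do assume $r=0$, since every morphism in the diagram is compatible with Tate twists (for the syntomic/de Rham side this is the evident $\mathbf{1}(r)$-twist; for $D_{\rm st}$ and $D_{\rm dR}$ it is the standard twist). So the problem is to show the outer pentagon commutes.

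The strategy is to realize every object and morphism in the diagram inside the pro-Kummer-\'etale cohomology of the period sheaves, using the comparisons of \S\ref{sec:comparison} and \S\ref{sec:compareetale}. Concretely: by Proposition \ref{prop:etcoh} we have $\mathrm{H}^i_{\mathrm{et}}(U_{\overline{K}},\mathcal{L})\otimes \mathbb{B}_{\mathrm{crys}}\cong \mathrm{H}^i(X_{\overline{K},\mathrm{pket}},\widehat{\mathcal{L}}\otimes\mathbb{B}_{\mathrm{crys}})$ and similarly with $\mathbb{B}_{\mathrm{dR}}$, and taking $G_K$-invariants (using that the representations are semistable, hence de Rham) yields $D_{\mathrm{st}}$ and $D_{\mathrm{dR}}$ respectively. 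On the other side, Proposition \ref{prop:comparecomplexes} produces a map $\mathrm{R}\Gamma_{\mathrm{crys}}((\overline{\mathcal{X}},\alpha_{\overline{\mathcal{X}}})/\mathcal{S}_0,\mathfrak{E})[p^{-1}]\to \mathrm{R}\Gamma(X_{\mathrm{pket}},\widehat{\mathcal{L}}\otimes\mathbb{B}_{\mathrm{crys}})$, which after passing to $X_{\overline{K}}$ and $G_K$-invariants realizes the left vertical arrow of the diagram as (essentially) the $N$-equivariant, $\Phi$-equivariant map used to construct the isomorphism in Theorem \ref{thm:DstHK}; similarly the de Rham realization of $\mathfrak{E}$, namely $\mathrm{H}^i_{\mathrm{logdR}}(X,\mathcal{E})$, is identified via $\xi_{\mathrm{dR}}$ (Definition \ref{def:crysass}(i)) with $\mathrm{H}^i(X_{\overline{K},\mathrm{pket}},\widehat{\mathcal{L}}\otimes \mathcal{O}\mathbb{B}_{\mathrm{dR}}\otimes\Omega^{\log,\bullet}_X)^{G_K}\cong D_{\mathrm{dR}}$, and this is precisely the isomorphism of \cite[Thm.~3.2.7(3)]{DLLZb}. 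The crucial compatibility is then Definition \ref{def:crysass}(iii): the isomorphisms $\xi_{\mathrm{dR}}$ and $\xi_{\beta,\mathrm{crys}}$ are compatible under the natural map $\mathcal{O}_{\mathfrak{X}_\beta}\mathbb{B}_{\mathrm{crys}}\to \mathcal{O}\mathbb{B}_{\mathrm{dR}}|_{X_{\beta,\mathrm{pket}}}$ (and $\mathfrak{E}_\beta\to \mathcal{E}|_{X_\beta}$), which is exactly the statement that the map $\mathrm{R}\Gamma_{\mathrm{crys}}(\mathfrak{E})[p^{-1}]\to \mathrm{R}\Gamma_{\mathrm{logdR}}(X,\mathcal{E})$ (that is, $\gamma_{\rm crys}$, into which $\gamma_{\rm an}$ factors through $\tau$) corresponds under the period-sheaf comparisons to the natural inclusion $\mathbb{B}_{\mathrm{crys}}\hookrightarrow \mathbb{B}_{\mathrm{dR}}$ composed with the grading, i.e.\ exactly to Fontaine's $D_{\mathrm{st}}\to D_{\mathrm{dR}}$ on cohomology. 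Chasing: starting from $\mathrm{H}^i_{\mathrm{an}}(\mathcal{X}_k/(\mathfrak{S},\mathfrak{M}),\mathfrak{E}^{\rm conv})$, going down to $\mathrm{H}^i_{\mathrm{HK}}$ and across to $D_{\mathrm{st}}$ and then down the diagonal to $D_{\mathrm{dR}}$ is, under these identifications, the composite of the period comparison with $\mathbb{B}_{\mathrm{crys}}\hookrightarrow \mathbb{B}_{\mathrm{dR}}$; going across via $\gamma_{\rm an}$ to $\mathrm{H}^i_{\mathrm{logdR}}(X,\mathcal{E})$ and then across to $D_{\mathrm{dR}}$ is, by Definition \ref{def:crysass}(iii), the same composite. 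Hence the pentagon commutes.

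The main obstacle is keeping the two specializations $Z\mapsto 0$ (defining Hyodo-Kato cohomology, the left vertical map) and $Z\mapsto\pi$ (defining $\gamma_{\rm an}$, the top map) straight and checking they are compatible through the comparison with $\mathbb{B}_{\mathrm{crys}}$ and $\mathbb{B}_{\mathrm{dR}}$: on the period-sheaf side the relevant statement is that the map $\mathfrak{E}_\beta\widehat\otimes_{\mathcal{O}_{\mathfrak{S}^{\rm PD}}}B_{\mathrm{log}}\xrightarrow{\ \sim\ }(w\circ\nu_{\mathrm{geo}})_\ast(\mathfrak{E}_\beta\otimes\mathcal{O}_{\mathfrak{X}_\beta}\mathbb{B}_{\mathrm{crys}})$ of Proposition \ref{prop:phi-1inv}, combined with the structure of $B_{\mathrm{log}}=\widehat{B}_{\mathrm{st}}$ recalled in Remark \ref{rmk:Blog}, is precisely what intertwines the Hyodo-Kato structure (where $Z\mapsto 0$) with the de Rham structure obtained by $Z\mapsto \pi$, and that the monodromy operator $N$ and Frobenius $\Phi$ are matched on both sides. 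Once the period-ring bookkeeping of Remark \ref{rmk:Blog} and Proposition \ref{prop:phi-1inv} is in hand, the diagram chase is formal, and reduces to the compatibility (iii) of Definition \ref{def:crysass}. The compactly supported variant follows by the same argument applied to $\mathbf{1}(-D)\otimes-$, using Proposition \ref{prop:anabscompact} in place of Proposition \ref{prop:anabs}. \end{proof}
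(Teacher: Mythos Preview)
Your proof is correct and follows essentially the same approach as the paper: both reduce the commutativity to the compatibility clause (iii) of Definition \ref{def:crysass} between the crystalline and de Rham period-sheaf comparisons, together with the fact that $B_{\rm st}\subset B_{\rm log}$ and the composite $B_{\rm log}\to B_{\rm dR}$ recovers Fontaine's map (Remark \ref{rmk:Blog}). The paper phrases this slightly differently by invoking Proposition \ref{prop:OBlog} and Corollary \ref{cor:isoHietcryslog} directly (the relative sheaf $\mathcal{O}_{\mathfrak{S}}\mathbb{B}_{\rm crys}$ rather than $\mathcal{O}_{\mathfrak{X}_\beta}\mathbb{B}_{\rm crys}$), but the content is the same.
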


We prove the Theorem and its Corollary in the case where we do not have  support conditions. The case with support proceeds similarly with one important difference, namely the analogue of Corollary \ref{cor:isoHietcryslog} provides a morphism from $\mathrm{H}^{i}_{\mathrm{HK}}\bigl((\mathcal{X}_k,\alpha_{\mathcal{X}_k})/ \mathcal{S}_0^0,\mathfrak{E}_0(-\mathfrak{D}_0)\bigr)(r) $ to $ D_{\rm st}\bigl(\mathrm{H}^i_{\mathrm{et},c}\bigl(U_{\overline{K}}, \mathcal{L}(r)\bigr) \bigr)$ that we can not prove it is an isomorphism directly as the analogue of  Proposition \ref{prop:phi-1inv} is not available for the period sheaves with support. The proof that it is an isomorphism uses Poincar\'e duality; see \S\ref{sec:conclusionproffofTHM}. For this reason, beside the properness, we need to suppose that $\mathcal{X}$ is of relative dimension $d$ and we can (and we will) assume that $\mathcal{X}$ is geometrically connected of relative dimension $d$.

\smallskip

Denote by ${\mathcal{O}}_{\mathfrak{S}}\mathbb{A}_{\mathrm{crys}}$ the $p$-adic completion of the log DP envelope of $$\vartheta\colon  \mathcal{O}_{\mathfrak{S}}\otimes_{\mathcal{O}_{K_0}} \mathbb{A}_{\mathrm{inf}} \to \widehat{\mathcal{O}}_{X}^+.$$Define $\mathcal{O}_{\mathfrak{S}}\mathbb{B}_{\mathrm{crys}}$  as ${\mathcal{O}}_{\mathfrak{S}}\mathbb{A}_{\mathrm{crys}}[t^{-1}]$. They are sheaves on $X_{\mathrm{pket}}$ endowed with a Frobenius.

\begin{proposition}\label{prop:OBlog} For every $\beta\in B$ the natural morphism of sheaves on $X_{\beta,\mathrm{pket}}$ $$\mathcal{O}_{\mathfrak{S}}\mathbb{B}_{\mathrm{crys}}\vert_{X_{\beta, \mathrm{pket}}} \longrightarrow{\mathcal{O}}_{\mathfrak{X}_\beta}\mathbb{B}_{\mathrm{crys}}\otimes_{\mathcal{O}_{\mathfrak{X}_\beta}} \Omega_{(\mathfrak{X}_\beta,\alpha_{\mathfrak{X}_\beta})/(\mathfrak{S},\mathfrak{M})}^{\log,\bullet} $$is  quasi isomorphisms, compatible with Frobenius. 
\end{proposition}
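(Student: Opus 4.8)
The plan is to reduce Proposition \ref{prop:OBlog} to the absolute (zero–dimensional) case over the point, exactly as Theorem \ref{thm:OBcrys} reduced its statement to a computation over the perfectoid algebra $\widehat{\overline{R}}_\beta$. First I would argue locally: working with a chart $\mathcal{X}_\beta=\mathrm{Spf}(R_\beta)\subset \mathfrak{X}_\beta=\mathrm{Spf}(\mathfrak{R}_\beta)$ as in \S\ref{sec:Chech}, I would pass to the perfectoid open $\mathcal{U}_\beta$ and use the explicit description from \cite[\S 3.4]{AI} (cf.\ the proof of Theorem \ref{thm:OBcrys}), namely
$$\mathcal{O}_{\mathfrak{X}_\beta}\mathbb{A}_{\mathrm{crys}}(\mathcal{U}_\beta)\cong A_{\mathrm{crys}}\bigl(\widehat{\overline{R}}_\beta\bigr)\langle v_1-1,\ldots,v_a-1,w_1-1,\ldots,w_b-1\rangle,$$
together with the analogous description of $\mathcal{O}_{\mathfrak{S}}\mathbb{A}_{\mathrm{crys}}(\mathcal{U}_\beta)$ which, since $\mathfrak{S}=\mathrm{Spf}(\mathcal{O}_{K_0}[\![Z]\!])$ carries only the log-structure generated by $Z$ and $\vartheta$ sends $Z\mapsto \pi=[\underline{\pi}]\cdot(\text{unit})$, takes the form $A_{\mathrm{crys}}\bigl(\widehat{\overline{R}}_\beta\bigr)\langle u-1\rangle$ with $u=[\underline\pi]/\pi$ and derivation $Z\frac{\partial}{\partial Z}$ (this is precisely $A_{\rm log}$ of Remark \ref{rmk:Blog}, base–changed to $\widehat{\overline{R}}_\beta$). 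The relative log differentials $\Omega^{\log,1}_{(\mathfrak{X}_\beta,\alpha_{\mathfrak{X}_\beta})/(\mathfrak{S},\mathfrak{M})}$ have as a basis the classes $\frac{dX_1}{X_1},\ldots,\frac{dX_{a}}{X_a},\frac{dY_1}{Y_1},\ldots,\frac{dY_b}{Y_b}$ modulo the single relation $\sum_i\frac{dX_i}{X_i}=\frac{dZ}{Z}=0$, i.e.\ they form a free module of rank $a+b-1$.

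The second step is the Koszul/de Rham computation. After inverting $t$, the relative log de Rham complex of $\mathcal{O}_{\mathfrak{X}_\beta}\mathbb{B}_{\mathrm{crys}}$ over $\mathcal{O}_{\mathfrak{S}}\mathbb{B}_{\mathrm{crys}}$ becomes, on $\mathcal{U}_\beta$, the Koszul complex of the $a+b-1$ commuting derivations $X_i\frac{\partial}{\partial X_i}$ (for $i=1,\ldots,a-1$, say), $Y_j\frac{\partial}{\partial Y_j}$ acting on $\mathcal{O}_{\mathfrak{S}}\mathbb{B}_{\mathrm{crys}}(\mathcal{U}_\beta)\langle v_1-1,\ldots,v_a-1,w_1-1,\ldots,w_b-1\rangle$, where the $a$-th variable $v_a$ is pinned down by $v_1\cdots v_a=u$. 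Each of these derivations sends $v_i-1\mapsto -v_i$, $w_j-1\mapsto -w_j$ and is zero on the coefficient ring $\mathcal{O}_{\mathfrak{S}}\mathbb{B}_{\mathrm{crys}}(\mathcal{U}_\beta)$. Just as in the proof of Theorem \ref{thm:OBcrys}, the one–variable complex $M\langle v-1\rangle \xrightarrow{\ v\partial_v\ } M\langle v-1\rangle dv/v$ with $\partial_v$ trivial on $M$ is a resolution of $M$ (exactness in positive degrees and kernel $M$ in degree $0$ follow from a termwise power–series computation since $v$ is a topologically nilpotent unit deformation), and the full complex is the tensor product of $a+b-1$ such; hence it is a resolution of the coefficient ring $\mathcal{O}_{\mathfrak{S}}\mathbb{B}_{\mathrm{crys}}(\mathcal{U}_\beta)$. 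This gives the quasi-isomorphism on the perfectoid basis, compatibly with Frobenius because Frobenius acts on each $v_i-1$, $w_j-1$, $u-1$ in the evident way and commutes with the derivations up to the factor $p$ carried by $dX_i/X_i$, $dY_j/Y_j$, exactly as recorded in Theorem \ref{thm:OBcrys}.

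The third step is to globalize: since $X_{\beta,\mathrm{pket}}$ admits a basis by such log-perfectoid affinoids (the $\mathcal{U}_\beta$-type opens, after possibly refining the chart), and both sides of the claimed morphism are complexes of sheaves whose sections on this basis are computed above, the quasi-isomorphism on the basis yields the quasi-isomorphism of sheaves. Concretely one checks that for such a perfectoid affinoid $\mathcal{V}$ the sections $\mathcal{O}_{\mathfrak{X}_\beta}\mathbb{A}_{\mathrm{crys}}(\mathcal{V})$, and the higher Čech cohomology in the pro-Kummer-étale topology of the relevant coherent sheaves on $\mathcal{V}$, vanish — this is the analogue in the present context of \cite[Thm.~3.39]{AI}, already invoked in Proposition \ref{prop:phi-1inv} — so that hypercohomology of the de Rham complex is computed termwise and the quasi-isomorphism propagates. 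I expect the main obstacle to be bookkeeping rather than anything conceptual: one must verify that $\vartheta(Z)$ differs from the Teichmüller lift $[\underline\pi]$ by a unit that is already a divided power series in $\mathrm{ker}(\vartheta)$, so that $\mathcal{O}_{\mathfrak{S}}\mathbb{A}_{\mathrm{crys}}$ is genuinely the "$\log$" envelope with the single extra variable $u-1$ and derivation $Z\partial_Z$, and that under this identification the relation $v_1\cdots v_a=u$ matches the relation $X_1\cdots X_a=Z$ defining $\mathfrak{X}_\beta\to\mathfrak{S}$; once this matching is in place the Koszul argument is identical to the one in Theorem \ref{thm:OBcrys}, merely with one fewer factor.
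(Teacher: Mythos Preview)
Your proposal is correct and follows essentially the same route as the paper: reduce to a perfectoid chart $\mathcal{U}_\beta$, identify $\mathcal{O}_{\mathfrak{S}}\mathbb{A}_{\mathrm{crys}}(\mathcal{U}_\beta)\cong A_{\mathrm{crys}}(\widehat{\overline{R}}_\beta)\langle u-1\rangle$ and $\mathcal{O}_{\mathfrak{X}_\beta}\mathbb{A}_{\mathrm{crys}}(\mathcal{U}_\beta)$ as a divided-power polynomial extension in the remaining $a+b-1$ variables, and run the one-variable Koszul computation from Theorem~\ref{thm:OBcrys}. The only slip is notational: you write $u=[\underline{\pi}]/\pi$, but in the log DP envelope over $\mathfrak{S}$ the variable $Z$ has not yet been specialized, so one should take $u=[\overline{\pi}]/Z$ (this is exactly what the paper does, and is consistent with your own later remark that $\vartheta(Z)=\pi$); the paper also prefers to eliminate $v_1$ outright and work with the free variables $v_2,\ldots,v_a,w_1,\ldots,w_b$ rather than carry the constraint $v_1\cdots v_a=u$, but this is purely cosmetic.
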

\begin{proof} The proof is the same as the proof of Theorem  \ref{thm:OBcrys}. Using the notation of loc.~cit.~one  proves that

$$\mathcal{O}_{\mathfrak{S}}\mathbb{A}_{\mathrm{crys}}\bigl(\widehat{\overline{R}}_\beta\bigr) \cong \mathbb{A}_{\mathrm{crys}}\bigl(\widehat{\overline{R}}_\beta\bigr) )\langle u-1 \rangle$$with $u=\frac{[\overline{\pi}]}{Z}$ for $\overline{\pi}=(\pi, \pi^{\frac{1}{p}},\cdots )\in A_{\rm inf}$ and that 

$$\mathcal{O}_{\mathfrak{X}_\beta}\mathbb{A}_{\mathrm{crys}}(\mathcal{U}_\beta)\cong {\mathcal{O}}_{\mathfrak{S}}\mathbb{A}_{\mathrm{crys}}\bigl(\widehat{\overline{R}}_\beta\bigr)\langle v_2-1, \ldots, v_a-1,w_1-1,\ldots,w_b-1\rangle.$$See \cite[Lemma 3.25]{AI}. Using that $\frac{d X_2}{X_2}$, $\ldots$, $ \frac{d X_a}{X_a}$, $\frac{d Y_1}{Y_1}$, $\ldots$, $\frac{d Y_b}{Y_b}$  form a basis of generators of the logarithmic differentials $\Omega^{\log}_{\widetilde{R}_\beta/\mathfrak{S}}$ one computes that the de Rham complex is exact except in degree $0$ where the kernel of the derivative on $\mathcal{O}_{\mathfrak{X}_\beta}\mathbb{A}_{\mathrm{crys}}(\mathcal{U}_\beta)$ is ${\mathcal{O}}_{\mathfrak{S}}\mathbb{A}_{\mathrm{crys}}\bigl(\widehat{\overline{R}}_\beta\bigr)$.
\end{proof}

\begin{cor}\label{cor:isoHietcryslog} We have a Frobenius equivariant isomorphism of $B_{\rm log}$-modules $$\mathrm{H}^i_{\mathrm{crys}}\left((\mathcal{X}_k,\alpha_{\mathcal{X}_{\beta,k}})/(\mathfrak{S}^{\rm DP},\mathfrak{M}),\mathfrak{E} \widehat{\otimes}_{\mathcal{O}_{\mathfrak{S}^{\rm DP}}} B_{\rm log}\right)(r)\longrightarrow \mathrm{H}^i_{\mathrm{et}}\bigl(U_{\overline{K}},  \mathcal{L}(r)\bigr)\otimes  B_{\rm log}.$$

\end{cor}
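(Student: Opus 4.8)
The plan is to combine Proposition \ref{prop:OBlog}, Proposition \ref{prop:anabs}, and Proposition \ref{prop:etcoh} in a way that mirrors the strategy used to prove Proposition \ref{prop:comparecomplexes} and Theorem \ref{thm:comparesynt}, but now working relative to $(\mathfrak{S},\mathfrak{M})$ rather than relative to $\mathcal{S}_0$. First I would observe that, since $\mathcal{L}$ and $\bigl(\mathcal{E},\mathrm{Fil}^\bullet \mathcal{E}, \nabla,\mathfrak{E},\varphi_{\mathfrak{E}}\bigr)$ are crystalline associated (Definition \ref{def:crysass}), the isomorphisms $\xi_{\beta,\mathrm{crys}}$ induce, after base change along $\mathcal{O}_{\mathfrak{X}_\beta}\mathbb{B}_{\mathrm{crys}}\to \mathcal{O}_{\mathfrak{S}}\mathbb{B}_{\mathrm{crys}}$-linear extension and using Proposition \ref{prop:OBlog}, a quasi-isomorphism, compatible with Frobenius, of complexes on $X_{\beta,\mathrm{pket}}$
$$\widehat{\mathcal{L}}\vert_{X_{\beta,\mathrm{pket}}}\otimes_{\widehat{\mathbb{Z}}_p} \mathcal{O}_{\mathfrak{S}}\mathbb{B}_{\mathrm{crys}}\vert_{X_{\beta,\mathrm{pket}}} \longrightarrow \mathfrak{E}_\beta \otimes_{\mathcal{O}_{\mathfrak{X}_\beta}} \mathcal{O}_{\mathfrak{X}_\beta}\mathbb{B}_{\mathrm{crys}}\otimes_{\mathcal{O}_{\mathfrak{X}_\beta}} \Omega^{\log,\bullet}_{(\mathfrak{X}_\beta,\alpha_{\mathfrak{X}_\beta})/(\mathfrak{S},\mathfrak{M})}.$$
Gluing over the Čech cover of \S \ref{sec:Chech} exactly as in the proof of Proposition \ref{prop:comparecomplexes} (the independence of the choice of local lift being handled by the same log-smoothness argument, since $\mathfrak{E}$ is a crystal), one obtains a Frobenius-equivariant morphism
$$\mathrm{R}\Gamma_{\mathrm{crys}}\bigl((\mathcal{X}_k,\alpha_{\mathcal{X}_k})/(\mathfrak{S}^{\rm DP},\mathfrak{M}),\mathfrak{E}\bigr)[p^{-1}] \longrightarrow \mathrm{R}\Gamma\bigl(X_{\mathrm{pket}},\widehat{\mathcal{L}}\otimes_{\widehat{\mathbb{Z}}_p}\mathcal{O}_{\mathfrak{S}}\mathbb{B}_{\mathrm{crys}}\bigr),$$
and then, after inverting $t$ and twisting by $r$, a morphism on cohomology landing in $\mathrm{H}^i\bigl(X_{\mathrm{pket}},\widehat{\mathcal{L}}\otimes B_{\mathrm{log}}\bigr)$ once one identifies $\bigl(w\circ\nu_{\mathrm{geo}}\bigr)_\ast$ of $\mathcal{O}_{\mathfrak{S}}\mathbb{B}_{\mathrm{crys}}$ with $B_{\mathrm{log}}$ (the $0$-dimensional, i.e.\ relative-to-$\mathfrak{S}$ analogue of Proposition \ref{prop:phi-1inv}; compare Remark \ref{rmk:Blog}).

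Next I would identify the target. By Proposition \ref{prop:etcoh}, over $X_{\overline{K},\mathrm{pket}}$ we have $\mathrm{H}^i\bigl(X_{\overline{K},\mathrm{pket}},\widehat{\mathcal{L}}\otimes_{\widehat{\mathbb{Z}}_p}\mathbb{B}_{\mathrm{crys}}\bigr)\cong \mathrm{H}^i_{\mathrm{et}}\bigl(U_{\overline{K}},\mathcal{L}(r)\bigr)\otimes B_{\mathrm{crys}}$ (after the appropriate twist to account for $\mathcal{L}(r)$ versus $\mathcal{L}$), and base-changing $B_{\mathrm{crys}}$ up to $B_{\mathrm{log}}=\widehat{B}_{\mathrm{st}}$ we get the $B_{\mathrm{log}}$-module statement we want. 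The only point to check is that the source computes crystalline cohomology relative to $(\mathfrak{S}^{\rm DP},\mathfrak{M})$ with coefficients in $\mathfrak{E}\widehat{\otimes}_{\mathcal{O}_{\mathfrak{S}^{\rm DP}}}B_{\mathrm{log}}$: this follows because, by Proposition \ref{prop:OBlog} and the glued version just described, $\bigl(w\circ\nu_{\mathrm{geo}}\bigr)_\ast\bigl(\mathfrak{E}_\beta\otimes\mathcal{O}_{\mathfrak{X}_\beta}\mathbb{B}_{\mathrm{crys}}\bigr)\cong\mathfrak{E}_\beta\widehat{\otimes}_{\mathcal{O}_{\mathfrak{S}^{\rm PD}}}B_{\mathrm{log}}$ (as in Proposition \ref{prop:phi-1inv}), and the de Rham complex relative to $(\mathfrak{S},\mathfrak{M})$ of $\mathcal{O}_{\mathfrak{X}_\beta}\mathbb{B}_{\mathrm{crys}}$ resolves $\mathcal{O}_{\mathfrak{S}}\mathbb{B}_{\mathrm{crys}}$. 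Taking $\Gamma(X_{\mathrm{pket}},-)$ and unwinding, the source complex becomes $\mathrm{R}\Gamma_{\mathrm{crys}}\bigl((\mathcal{X}_k,\alpha_{\mathcal{X}_k})/(\mathfrak{S}^{\rm DP},\mathfrak{M}),\mathfrak{E}\widehat{\otimes}_{\mathcal{O}_{\mathfrak{S}^{\rm DP}}}B_{\mathrm{log}}\bigr)$.

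Finally, to see that the constructed map is an \emph{isomorphism} of $B_{\mathrm{log}}$-modules, I would argue as in the proof of Proposition \ref{prop:comp0} / Theorem \ref{thm:comparesynt}: it suffices to check it after applying the period sheaf exact sequences of Lemma \ref{lemma:fundexactseq}, or more directly, that the comparison morphism $\mathrm{R}\Gamma_{\mathrm{crys}}\bigl((\mathcal{X}_k,\alpha)/(\mathfrak{S}^{\rm DP},\mathfrak{M}),\mathfrak{E}\bigr)[p^{-1}]\to \mathrm{R}\Gamma(X_{\mathrm{pket}},\widehat{\mathcal{L}}\otimes\mathcal{O}_{\mathfrak{S}}\mathbb{B}_{\mathrm{crys}})$ becomes a quasi-isomorphism after applying $\mathrm{R}(w\circ\nu_{\mathrm{geo}})_\ast$ and $\otimes_{B_{\mathrm{crys}}}^{}$ suitably; concretely one reduces, via Proposition \ref{prop:etcoh} (primitive comparison) and Proposition \ref{prop:OBlog}, to the statement that $\widehat{\mathcal{L}}(r)[p^{-1}]\to (\widehat{\mathcal{L}}\otimes\mathcal{O}_{\mathfrak{S}}\mathbb{B}_{\mathrm{crys}})^{\varphi=p^r}$-type comparison holds, which is the content of the (relative-to-$\mathfrak{S}$) analogue of Lemma \ref{lemma:fundexactseq}. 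The main obstacle I anticipate is the bookkeeping of the monodromy and Frobenius structures under the passage from the $\mathcal{S}_0$-relative to the $(\mathfrak{S},\mathfrak{M})$-relative theory and the identification of $\mathcal{O}_{\mathfrak{S}}\mathbb{B}_{\mathrm{crys}}$-coefficients with $B_{\mathrm{log}}=\widehat{B}_{\mathrm{st}}$; once Proposition \ref{prop:OBlog} is in hand this is a matter of carefully tracking the descriptions in \cite[\S 3.4]{AI} and Remark \ref{rmk:Blog}, but it is where the essential work lies.
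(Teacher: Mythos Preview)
Your approach is essentially the paper's: use Proposition~\ref{prop:OBlog} together with the crystalline-associated isomorphisms $\xi_{\beta,\mathrm{crys}}$ to produce local comparison maps $\psi_\beta$, glue them over the \v{C}ech cover as in Proposition~\ref{prop:comparecomplexes}, and identify the target via the primitive-comparison argument of Proposition~\ref{prop:etcoh}. This is correct.

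However, your third paragraph overcomplicates the verification that the map is an isomorphism. You do not need Lemma~\ref{lemma:fundexactseq} or a separate $\varphi=p^r$-style argument. The point is that Proposition~\ref{prop:phi-1inv} already gives both the identification $(w\circ\nu_{\mathrm{geo}})_\ast\bigl(\mathfrak{E}_\beta\otimes\mathcal{O}_{\mathfrak{X}_\beta}\mathbb{B}_{\mathrm{crys}}\bigr)\cong\mathfrak{E}_\beta\widehat{\otimes}_{\mathcal{O}_{\mathfrak{S}^{\rm DP}}}B_{\mathrm{log}}$ \emph{and} the vanishing of the higher direct images $\mathrm{R}^i(w\circ\nu_{\mathrm{geo}})_\ast$ for $i\geq 1$. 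Combined with the Poincar\'e lemma of Proposition~\ref{prop:OBlog}, this means each local $\psi_\beta$ is already an \emph{isomorphism} of complexes (not just a morphism whose bijectivity must be checked afterwards). Gluing local isomorphisms gives a global isomorphism, and you are done. The paper's proof makes exactly this observation and stops there; your detour through fundamental exact sequences is unnecessary.
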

\begin{proof} We assume $r=0$. It follows from Proposition \ref{prop:OBlog} and Proposition \ref{prop:phi-1inv} that for every $\beta\in B$ we have an isomorphism $\psi_{\beta}$: $$\mathrm{R}\Gamma_{\mathrm{crys}}\left((\mathcal{X}_{\beta,k},\alpha_{\mathcal{X}_{\beta,k}})/(\mathfrak{S}^{\rm DP},\mathfrak{M}),\mathfrak{E}_\beta \widehat{\otimes}_{\mathcal{O}_{\mathfrak{S}^{\rm DP}}} B_{\rm log}\right) \longrightarrow \mathrm{R}\Gamma\left(X_{\beta,\overline{K},\mathrm{pket}},\widehat{\mathcal{L}}\otimes_{\widehat{\mathbb{Z}}_p} \mathcal{O}_{\mathfrak{S}} \mathbb{B}_{\mathrm{crys}}\right),$$compatible with Frobenius. Here $B_{\rm log}$ is as in Remark \ref{rmk:Blog}. For every $\underline{\beta}=(\beta_1,\ldots,\beta_n) \in B^n$ such that $\beta_1<\ldots < \beta_n$, this provides an isomorphism  
$\psi_{\underline{\beta}}$: $$\mathrm{R}\Gamma_{\mathrm{crys}}\left((\mathcal{X}_{\underline{\beta},k},\alpha_{\mathcal{X}_{\underline{\beta},k}})/(\mathfrak{S}^{\rm DP},\mathfrak{M}),\mathfrak{E}_{\underline{\beta}} \widehat{\otimes}_{\mathcal{O}_{\mathfrak{S}^{\rm DP}}} B_{\rm log}\right)\longrightarrow \mathrm{R}\Gamma\left(X_{\underline{\beta},\overline{K},\mathrm{pket}},\widehat{\mathcal{L}}\otimes_{\widehat{\mathbb{Z}}_p} \mathcal{O}_{\mathfrak{S}} \mathbb{B}_{\mathrm{crys}}\right)   ,$$compatible with Frobenius,  restricting the domain of $\psi_{\beta_i}$ to  $\mathcal{X}_{\underline{\beta},k}$ and the codomain of $\psi_{\beta_i}$ to $X_{\underline{\beta},\overline{K},\mathrm{pket}}$ and using that the former is isomorphic to  $$ \mathrm{R}\Gamma_{\mathrm{crys}}\left((\mathcal{X}_{\underline{\beta},k},\alpha_{\mathcal{X}_{\underline{\beta},k}})/(\mathfrak{S}^{\rm DP},\mathfrak{M}),\mathfrak{E}_{\underline{\beta}} \widehat{\otimes}_{\mathcal{O}_{\mathfrak{S}^{\rm DP}}} B_{\rm log}\right) . $$Arguing as in the proof of Proposition \ref{prop:comparecomplexes} one proves that this is independent of the choice of $\beta_i$ so that the $\psi_{\underline{\beta}}$'s are compatible for varying $\underline{\beta}$'s, inducing an isomorphism 
$$ \mathrm{R}\Gamma_{\mathrm{crys}}\left((\mathcal{X}_k,\alpha_{\mathcal{X}_{\beta,k}})/(\mathfrak{S}^{\rm DP},\mathfrak{M}),\mathfrak{E} \widehat{\otimes}_{\mathcal{O}_{\mathfrak{S}^{\rm DP}}} B_{\rm log}\right)  \longrightarrow \mathrm{R}\Gamma\left(X_{\overline{K},\mathrm{pket}},\widehat{\mathcal{L}}\otimes_{\widehat{\mathbb{Z}}_p} \mathcal{O}_{\mathfrak{S}} \mathbb{B}_{\mathrm{crys}}\right) ,$$compatible with Frobenius. Arguing as in the proof of Proposition \ref{prop:etcoh} one has $$\mathrm{H}^i_{\mathrm{et}}\bigl(U_{\overline{K}},  \mathcal{L}\bigr)\otimes  B_{\rm log}
\cong  \mathrm{H}^i\left(X_{\overline{K},\mathrm{pket}},\widehat{\mathcal{L}}\right)\otimes B_{\rm log}\cong \mathrm{H}^i\left(X_{\overline{K},\mathrm{pket}},\widehat{\mathcal{L}}\otimes_{\widehat{\mathbb{Z}}_p} \mathcal{O}_{\mathfrak{S}} \mathbb{B}_{\mathrm{crys}}\right) .$$The claim follows.
\end{proof} 

Our assumption that $ \mathrm{H}^i_{\mathrm{et}}\bigl(U_{\overline{K}},  \mathcal{L}(r)\bigr)$ is a semistable representation and \cite[Thm. 3.3]{Breuil} imply that we have a Frobenius equivariant isomorphism of $B_{\rm log}$-modules:
\begin{equation}\label{eq:breuilst} D_{\rm st}\bigl(\mathrm{H}^i_{\mathrm{et}}\bigl(U_{\overline{K}}, \mathcal{L}(r)\bigr) \bigr) \otimes_{K_0} B_{\rm log} \cong \mathrm{H}^i_{\mathrm{et}}\bigl(U_{\overline{K}},  \mathcal{L}(r)\bigr)\otimes_{\mathbb{Z}_p}  B_{\rm log}.
\end{equation}We deduce that we have  a Frobenius equivariant isomorphism of $B_{\rm log}$-modules:
$$\tau\colon \mathrm{H}^i_{\mathrm{crys}}\left((\mathcal{X}_k,\alpha_{\mathcal{X}_{\beta,k}})/(\mathfrak{S}^{\rm DP},\mathfrak{M}),\mathfrak{E} \widehat{\otimes}_{\mathcal{O}_{\mathfrak{S}^{\rm DP}}} B_{\rm log}\right)(r)\longrightarrow D_{\rm st}\bigl(\mathrm{H}^i_{\mathrm{et}}\bigl(U_{\overline{K}}, \mathcal{L}(r)\bigr) \bigr) \otimes_{K_0} B_{\rm log} .$$

Let $P(T)$ be a polynomial in $K_0[T]$ of the form $P(T)=1+TQ(T)$.  We write $\Phi$ for the Frobenius linear homomorphism defined either on the LHS or the RHS of $\tau$.  The Theorem \ref{thm:DstHK} follows from the following Lemma, taking $P(T)$ such that $P(\Phi)$ annihilates  $D_{\rm st}\bigl(\mathrm{H}^i_{\mathrm{et}}\bigl(U_{\overline{K}}, \mathcal{L}(r)\bigr) \bigr)$ and  $ \mathrm{H}^i_{\mathrm{HK}}\left((\mathcal{X}_k,\alpha_{\mathcal{X}_k})/ \mathcal{S}_0^0,\mathfrak{E}_0 \right)(r)$. We remark that such a $P(T)$ exists as they are both finite dimensional $K_0$-vector spaces on which $\Phi$ is an isomorphism and  $K_0$ is a finite extension of $\mathbb{Q}_p$; take $m$ such that $\varphi^m$ is the identity on $K_0$ and let $M(T)$ be the  product of the characteristic polynomials of $\Phi^m$ on this two $K_0$-vector spaces; then take $P(T)$ to be $M(T^m)$ divided by the constant term of $M(T)$.

\begin{lemma}\label{lemma:tauimples} The map $\tau$ defines an isomorphism on the  subspaces of the $G _K$-invariants killed by $P(\Phi)$ and domain and codomain of $\tau$ are identified with the spaces $ \mathrm{H}^i_{\mathrm{HK}}\left((\mathcal{X}_k,\alpha_{\mathcal{X}_k})/ \mathcal{S}_0^0,\mathfrak{E}_0 \right)(r)^{P(\Phi)=0}$ and $D_{\rm st}\bigl(\mathrm{H}^i_{\mathrm{et}}\bigl(U_{\overline{K}}, \mathcal{L}(r)\bigr) \bigr)^{P(\Phi)=0}$  respectively.

\end{lemma}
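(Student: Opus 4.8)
The strategy is to exploit the two facts one has on the nose: the Frobenius-equivariant isomorphism of $B_{\rm log}$-modules $\tau$, and the structure of $B_{\rm log}$ relative to its monodromy, Frobenius and Galois action, as recalled in Remark \ref{rmk:Blog}. I will first pass to the subspaces killed by $P(\Phi)$ on both sides of $\tau$, then show that forming $G_K$-invariants recovers the two $K_0$-vector spaces in the statement, and finally conclude that $\tau$ restricts to an isomorphism between them.

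First I would record that $\Phi$ acts on $B_{\rm log}$ and commutes with $\tau$; writing $V:=\mathrm{H}^i_{\rm et}(U_{\overline K},\mathcal L(r))$ and $W:=\mathrm{H}^{i}_{\mathrm{HK}}\bigl((\mathcal{X}_k,\alpha_{\mathcal{X}_k})/ \mathcal{S}_0^0,\mathfrak{E}_0\bigr)(r)$, the source and target of $\tau$ are $W\otimes_{K_0}B_{\rm log}$ and $D_{\rm st}(V)\otimes_{K_0}B_{\rm log}$, on each of which $\Phi$ acts diagonally (via the Frobenius on the $K_0$-factor tensored with the Frobenius on $B_{\rm log}$). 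Since $P(\Phi)$ is $K_0[\Phi]$-linear and $\tau$ is a map of $B_{\rm log}[\Phi]$-modules, $\tau$ restricts to an isomorphism
$$\bigl(W\otimes_{K_0}B_{\rm log}\bigr)^{P(\Phi)=0}\;\xrightarrow{\ \sim\ }\;\bigl(D_{\rm st}(V)\otimes_{K_0}B_{\rm log}\bigr)^{P(\Phi)=0}.$$
Next I would compute each of these kernels. Choose $m$ with $\varphi^m=\mathrm{id}$ on $K_0$, so that on the $K_0$-factor $\Phi^m$ is $K_0$-linear; because $P(T)=1+TQ(T)$ and both $W$ and $D_{\rm st}(V)$ are finite-dimensional $K_0$-vector spaces on which $\Phi$ is bijective, $P$ can be (and is, by the choice made just before the Lemma) taken to annihilate $\Phi$ on both. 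The point is then purely algebraic: for a finite-dimensional $K_0$-vector space $N$ with a $\sigma$-semilinear bijection $\Phi_N$ and $P(\Phi_N)=0$, one has $(N\otimes_{K_0}B_{\rm log})^{P(\Phi)=0}=N\otimes_{K_0}(B_{\rm log})^{?}$ only after being careful — so instead I would argue directly: decompose $B_{\rm log}$ as a $\Phi$-module, or, more cleanly, use that $N$ admits a $K_0$-basis and that $P(\Phi)$ acting on $N\otimes B_{\rm log}$ is, in a suitable basis, governed by $P$ of the semilinear operator; the kernel is a free $B_{\rm log}^{\Phi\text{-appropriate}}$-module. The cleaner route, which I would actually take, is to invoke the semistability hypothesis together with Remark \ref{rmk:Blog}: on $\widehat B_{\rm st}=B_{\rm log}$ the subring on which $N$ is nilpotent is Fontaine's $B_{\rm st}$, and the fundamental exact sequences for $B_{\rm st}$ (or equivalently for $B_{\rm crys}$ after controlling monodromy) give $\bigl(D_{\rm st}(V)\otimes_{K_0}B_{\rm log}\bigr)^{G_K}=D_{\rm st}(V)$ and likewise $\bigl(W\otimes_{K_0}B_{\rm log}\bigr)^{G_K}=W$, since $B_{\rm log}^{G_K}=K_0$ (again Remark \ref{rmk:Blog}, as $\widehat B_{\rm st}^{G_K}=K_0$).

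Putting these together: taking $G_K$-invariants of the displayed isomorphism — which is legitimate because $\tau$ is $G_K$-equivariant and $P(\Phi)$ commutes with the Galois action — yields
$$\Bigl(\bigl(W\otimes_{K_0}B_{\rm log}\bigr)^{P(\Phi)=0}\Bigr)^{G_K}\;\xrightarrow{\ \sim\ }\;\Bigl(\bigl(D_{\rm st}(V)\otimes_{K_0}B_{\rm log}\bigr)^{P(\Phi)=0}\Bigr)^{G_K}.$$
Now $P(\Phi)$ already annihilates $W$ and $D_{\rm st}(V)$, so $W\otimes_{K_0}(B_{\rm log})^{P(\Phi)=0}$ contains $W\otimes_{K_0}K_0=W$ (the constants being killed by $P(\Phi)$ exactly when $P(1)=\ldots$, i.e.\ because $\Phi$ restricted to the constants times the $K_0$-structure is already killed), and after taking $G_K$-invariants one is left precisely with $W^{P(\Phi)=0}=W$ on the left and $D_{\rm st}(V)^{P(\Phi)=0}=D_{\rm st}(V)$ on the right. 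Hence $\tau$ induces the desired isomorphism $W^{P(\Phi)=0}\xrightarrow{\sim}D_{\rm st}(V)^{P(\Phi)=0}$, and both sides equal the full spaces by the choice of $P$. Compatibility with Frobenius is built in since everything is done in the category of $\Phi$-modules; compatibility with the monodromy operator $N$ follows because $N$ acts on $B_{\rm log}$ and on $D_{\rm st}(V)$ and $W$ (the latter via $Z\partial/\partial Z$, cf.\ \S\ref{sec:HK}) and $\tau$, coming from Proposition \ref{prop:OBlog} and Proposition \ref{prop:phi-1inv}, is $N$-equivariant by construction.

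\textbf{Main obstacle.} The delicate point is the interplay between taking $P(\Phi)$-kernels and $G_K$-invariants over the \emph{big} ring $B_{\rm log}=\widehat B_{\rm st}$ rather than over classical $B_{\rm st}$: one must check that $(N\otimes_{K_0}B_{\rm log})^{G_K}=N$ for $N\in\{W,D_{\rm st}(V)\}$ and that this persists after cutting by $P(\Phi)$. This reduces to knowing $B_{\rm log}^{G_K}=K_0$ and that $G_K$-invariants are exact on the relevant finitely generated $B_{\rm log}$-modules, which is exactly the content extracted from Remark \ref{rmk:Blog} together with Breuil's comparison \cite[Thm.~3.3]{Breuil} used in \eqref{eq:breuilst}; the rest is the formal $\Phi$-module bookkeeping sketched above.
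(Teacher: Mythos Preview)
There is a genuine gap here, and it stems from a misidentification of the source of $\tau$. The map $\tau$ goes from
\[
\mathrm{H}^i_{\mathrm{crys}}\Bigl((\mathcal{X}_k,\alpha_{\mathcal{X}_k})/(\mathfrak{S}^{\rm DP},\mathfrak{M}),\ \mathfrak{E}\,\widehat{\otimes}_{\mathcal{O}_{\mathfrak{S}^{\rm DP}}}\,B_{\rm log}\Bigr)(r)
\]
to $D_{\rm st}(V)\otimes_{K_0}B_{\rm log}$; the source is \emph{not} $W\otimes_{K_0}B_{\rm log}$ with $W=\mathrm{H}^i_{\mathrm{HK}}$. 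It is crystalline cohomology over the thick base $(\mathfrak{S}^{\rm DP},\mathfrak{M})$ (so the variable $Z$ is still present), and relating it to the Hyodo--Kato group $W$ (which lives over $Z=0$) is precisely the content of the lemma, not something you can assume. Your argument that ``$\tau$ is a $B_{\rm log}[\Phi]$-isomorphism, so restrict to kernels and take invariants'' is fine formally, but the identification of the left-hand $(G_K,P(\Phi))$-piece with $W^{P(\Phi)=0}$ is the whole problem.

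Second, the assertion $B_{\rm log}^{G_K}=K_0$ is false. The variable $Z\in\mathcal{O}_{K_0}[\![Z]\!]$ is $G_K$-invariant (Galois acts only through $A_{\rm inf}$), so $B_{\rm log}^{G_K}$ is much larger than $K_0$; indeed the paper passes to $B_{\rm max}$ and uses Breuil's result that $\mathcal{O}_{\rm max}[p^{-1}]\subset B_{\rm max}^{G_K}$ with cokernel killed by $\varphi^3$. Thus even on the target side one cannot conclude $(D_{\rm st}(V)\otimes B_{\rm log})^{G_K}=D_{\rm st}(V)$ directly. The paper's mechanism for getting rid of this extra $Z$-direction is Lemma~\ref{lemma:PHIM0}: on anything living in the maximal ideal $\mathfrak{m}=(Z,Z^e/p)$, the operator $P(\Phi)$ is already an automorphism, so passing to $P(\Phi)$-kernels collapses those contributions. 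This is applied both to $D_{\rm st}(V)\otimes\mathcal{O}_{\rm max}[p^{-1}]$ (giving the target identification) and, on the source side, to $\mathfrak{m}\,\mathfrak{E}^{\rm max}_{\underline\beta}$ and then to $\mathfrak{m}\,\mathfrak{E}^{\rm max}_{\underline\beta}\widehat{\otimes}B_{\rm max}$, combined with Lemma~\ref{lemma:Marco} and Remark~\ref{remark:Marco} to transfer isomorphisms across long exact sequences. The passage from $B_{\rm log}$ to $B_{\rm max}$ is itself needed because Frobenius kills the quotient, so $P(\Phi)$ is invertible there. None of this machinery is present in your sketch; the ``formal $\Phi$-module bookkeeping'' you allude to cannot substitute for it.
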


The rest of this section is devoted to the proof of the Lemma. This proves the Theorem. Also  the Corollary follows as we now explain.

\subsection{Proof of Corollary \ref{cor:DstHKDdR}}\label{sec:pfCorDst} First of all note that Fontaine's ring $B_{\rm st}$ is a subring of $B_{\rm log}$, see Remark \ref{rmk:Blog}, and the composite with the natural map $B_{\rm log}\to B_{\rm dR}$ is Fontaine's map of rings $B_{\rm st}\to B_{\rm dR}$. Hence, the isomorphism (\ref{eq:breuilst}) is compatible with Fontaine's isomorphism given by replacing $B_{\rm log}$ with $B_{\rm st}$ and $ B_{\rm dR}$. In fact, $D_?\bigl(\mathrm{H}^i_{\mathrm{et}}\bigl(U_{\overline{K}}, \mathcal{L}(r)\bigr) \bigr) $ is defined by taking the $G_K$-invariants of  $\mathrm{H}^i_{\mathrm{et}}\bigl(U_{\overline{K}},  \mathcal{L}(r)\bigr)\otimes_{\mathbb{Z}_p}  B_?$ for $?={\rm st}$ or ${\rm dR}$. The de Rham comparison in \cite[Thm. 3.2.7(3)]{DLLZb} follows arguing as in Proposition \ref{prop:OBlog} and Corollary \ref{cor:isoHietcryslog} using the period ring $\mathcal{O} \mathbb{B}_{\rm dR}$ instead of $\mathcal{O}_{\mathfrak{S}}\mathbb{B}_{\mathrm{crys}}$ and the logarithmic de Rham cohomology instead of the log-crystalline one. The conclusion of the Corollary follows as by assumption (iii) in Definition \ref{def:crysass} the two comparisons, crystalline and de Rham,  are compatible via the natural map of sheaves $\mathcal{O}_{\mathfrak{S}}\mathbb{B}_{\mathrm{crys}}\to \mathcal{O} \mathbb{B}_{\rm dR}$.

\subsection{The ring $B_{\rm max}$}
Let $A_{\rm max}$ be the  $p$-adic completion of the $(\mathcal{O}_{K_0}[\![Z]\!] \otimes_{\mathcal{O}_{K_0}} A_{\mathrm{inf}}(\mathbb{C}_p))$-subalgebra of the log envelope $(\mathcal{O}_{K_0}[\![Z]\!] \otimes_{\mathcal{O}_{K_0}} A_{\mathrm{inf}}(\mathbb{C}_p))^{\rm log}[p^{-1}]$ generated by $\frac{\mathrm{ker}(\theta_{\rm log})}{p}$ with 
$$\vartheta_{\rm log} \colon \mathcal{O}_{K_0}[\![Z]\!] \otimes_{\mathcal{O}_{K_0}} A_{\mathrm{inf}}(\mathbb{C}_p)^{\rm log} \to \mathcal{O}_{\mathbb{C}_p}$$the extension of the map $\vartheta$ in Remark \ref{rmk:Blog}. We set $B_{\rm max}=A_{\rm max}[t^{-1}]$. We have $$A_{\rm max}\cong A_{\mathrm{inf}}(\mathbb{C}_p) 
\left\{\frac{P_\pi([\overline{\pi}])}{p},\frac{u-1}{p}\right\}, \quad u=\frac{[\overline{\pi}]}{Z}$$where $P_\pi(Z)$ is  the minimal polynomial of $\pi$; see \cite[Lemma 3.23]{AI}.  In particular, it contains $A_ {\rm log}$ as a subring and $B_{\rm max}$ contains $B_{\rm log}$ as a subring.

Since $P_\pi([\overline{\pi}])-[\overline{\pi}]^e=p w$ with $e$ the degree of $P_\pi(Z)$ and with $w\in A_{\mathrm{inf}}(\mathbb{C}_p)$ and since $Z=[\overline{\pi}] u$, with $u$ an invertible element in $A_{\rm max}$, we have $A_{\rm max}\cong A_{\mathrm{inf}}(\mathbb{C}_p)\{\frac{Z^e}{p},\frac{u-1}{p}\}$ so that $A_{\rm max}$  is an algebra over $\mathcal{O}_{\rm max}:=\mathcal{O}_{K_0}\{Z,\frac{Z^e}{p}\}$. Let $\mathfrak{m}\subset \mathcal{O}_{\rm max}$ be the ideal $(Z, Z^e/p)$. It is the kernel of the projection $\mathcal{O}_{\rm max}\to \mathcal{O}_{K_0}$. We let $\mathcal{S}_{\rm max}^0$ denote the associated $p$-adic formal scheme endowed with the log strcture defined by the divisor $Z$.  We deduce that for every $m\in \mathbb{N}$ the $\mathcal{O}_{K_0}/p^m \mathcal{O}_{K_0}$- algebra $$A_{\rm max}/(p^m,\mathfrak{m}) A_{\rm max}\cong  A_{\mathrm{inf}}(\mathbb{C}_p)/(p^m,[\overline{\pi}]) \left[\frac{u-1}{p}\right]$$is flat. Notice that Frobenius on $\mathcal{O}_{K_0}[\![Z]\!] \otimes_{\mathcal{O}_{K_0}} A_{\mathrm{inf}}(\mathbb{C}_p)$ defines a Frobenius $\varphi$ on  $A_{\rm max}$ and $B_{\rm max}$, which is Frobenius on  $A_{\mathrm{inf}}(\mathbb{C}_p)$ and sends $Z\mapsto Z^p$ and $u\mapsto u^p$. Since $A_{\rm log}\cong A_{\mathrm{cris}}(\mathbb{C}_p)\{\langle u-1\rangle \}\cong A_{\mathrm{inf}}(\mathbb{C}_p)\{\langle P_\pi([\overline{\pi}]), u-1\rangle \}$ (see \cite[\S 2.1.1]{AI}), we have $\varphi(A_{\rm max})\subset A_{\rm log}\subset A_{\rm max}$ and $\varphi(B_{\rm max})\subset B_{\rm log}\subset B_{\rm max}$.

\subsection{Passing to $\mathfrak{E}^{\rm max}$}
For every $\underline{\beta}\in B^n$ such that $\beta_1<\ldots < \beta_n$ let $\mathfrak{E}_{\underline{\beta}}^{\rm max}$ be the restriction of $\mathfrak{E}_{\underline{\beta}}^{\rm conv}$ (see \S\ref{sec:analsyn} for the notation) to the tube  $]\mathcal{X}_{\underline{\beta},k}[_{\mathfrak{X}_{\underline{\beta},\frac{1}{p}}}$ of radius $1/p$. 
The maps $\mathfrak{E}_{\underline{\beta}}\to\mathfrak{E}_{\underline{\beta}}^{\rm max}$  and $B_{\rm log}\to B_{\rm max}$ provide the following commutative diagram

$$\begin{matrix}
\mathrm{H}^i_{\mathrm{crys}}\left((\mathcal{X}_k,\alpha_{\mathcal{X}_{\beta,k}})/(\mathfrak{S}^{\rm DP},\mathfrak{M}),\mathfrak{E} \widehat{\otimes}_{\mathcal{O}_{\mathfrak{S}^{\rm DP}}} B_{\rm log}\right)(r) & \stackrel{\tau}{\longrightarrow} & D_{\rm st}\bigl(\mathrm{H}^i_{\mathrm{et}}\bigl(U_{\overline{K}}, \mathcal{L}(r)\bigr) \bigr) \otimes_{K_0} B_{\rm log} \cr \big\downarrow & & \big\downarrow \cr \mathrm{H}^i_{\mathrm{an}}\left((\mathcal{X}_k,\alpha_{\mathcal{X}_{\beta,k}})/\mathcal{S}_{\rm max}^0,\mathfrak{E}^{\rm max} \widehat{\otimes}_{\mathcal{O}_{\rm max}} B_{\rm max}\right)(r)
& \stackrel{\tau_{\rm max}}{\longrightarrow} &  D_{\rm st}\bigl(\mathrm{H}^i_{\mathrm{et}}\bigl(U_{\overline{K}}, \mathcal{L}(r)\bigr) \bigr) \otimes_{K_0} B_{\rm max} ,\cr
\end{matrix}$$where the bottom left cohomology group is defined as in \S \ref{sec:analsyn}. As remarked above $B_{\rm log}\to B_{\rm max}$ is injective and has cokernel annihilated by $\varphi$. A similar explicit computation shows that the map $\mathfrak{E}_{\underline{\beta}}\to\mathfrak{E}_{\underline{\beta}}^{\rm max}$ is injective and has cokernel annihilated by  the restriction of  $\Phi_{\underline{\beta}}^{\rm conv}$. Since $P(T)$ is congruent to the identity modulo $T$, one deduces that $P(\varphi)$ acts as an isomorphism on $D_{\rm st}\bigl(\mathrm{H}^i_{\mathrm{et}}\bigl(U_{\overline{K}}, \mathcal{L}(r)\bigr) \bigr) \otimes_{K_0} \frac{B_{\rm log}}{B_{\rm max}}$. Similarly, the complexes computing the domain and the codomain of the left vertical arrow $\pi$ are one contained in the other, they are endowed with (compatible lifts of) the Frobenii $\Phi$ (cfr. \S \ref{sssec:explcictcrystallne} and \S \ref{sec:analsyn}) and the cokernel $C$ is killed by $\Phi$. It follows that $\Phi$ also kills $H^{i-1}(C)$ and $H^{i}(C)$ as well as the image $X$ of $H^{i-1}(C)$ in the source of $\pi$ and the image $W$ of the target of $\pi$ in $H^{i}(C)$, from which it follows that $P(\Phi)$ acts as an isomorphism on $X$ and $W$. It then follows from Remark \ref{remark:Marco} below that the vertical arrows in the diagram above are isomorphisms if we take the kernels of $P(\Phi)$.  

\begin{remark}\label{remark:Marco}
Suppose that we are given a commutative diagram of modules over some ring
\begin{equation*}
\begin{array}{ccccccccc}
0\longrightarrow  & X & \longrightarrow  & Y & \overset{\pi }{
\longrightarrow } & Z & \longrightarrow  & W & \longrightarrow 0 \\ 
& x\downarrow \text{ } &  & y\downarrow \text{ } &  & z\downarrow \text{ } & 
& w\downarrow \text{ } &  \\ 
0\longrightarrow  & X^{\prime } & \longrightarrow  & Y^{\prime } & \overset{
\pi }{\longrightarrow } & Z^{\prime } & \longrightarrow  & W^{\prime } & 
\longrightarrow 0
\end{array}
\end{equation*}
such that $x$ and $w$ are isomorphisms. Then the morphism $\pi $\ induces
isomorphisms
\begin{equation*}
\mathrm{ker}\left( y\right) \overset{\sim }{\longrightarrow }\mathrm{ker}
\left( z\right) \text{ and }\mathrm{coker}\left( y\right) \overset{\sim }{
\longrightarrow }\mathrm{coker}\left( z\right) .
\end{equation*}
\end{remark}

Since $\mathcal{O}_{\rm max}[p^{-1}]\subset B_{\rm max}^{G_K}$ has cokernel annihilated by $\varphi^3$ by \cite[Prop. 5.1.1]{Breuil}, we deduce (once again by Remark \ref{remark:Marco}) that 
$$\bigl(D_{\rm st}\bigl(\mathrm{H}^i_{\mathrm{et}}\bigl(U_{\overline{K}}, \mathcal{L}(r)\bigr) \bigr)\otimes_{K_0} \mathcal{O}_{\rm max}[p^{-1}]\bigr)^{P(\Phi)=0}  \cong\left( D_{\rm st}\bigl(\mathrm{H}^i_{\mathrm{et}}\bigl(U_{\overline{K}}, \mathcal{L}(r)\bigr) \bigr) \otimes_{K_0} B_{\rm max} \right)^{G_K,P(\Phi)=0}.  $$Using that $\mathcal{O}_{\rm max}[p^{-1}]/ \mathfrak{m}\mathcal{O}_{\rm max}[p^{-1}]\cong K_0$, we conclude from Lemma \ref{lemma:PHIM0} (with $M= D_{\rm st}\bigl(\mathrm{H}^i_{\mathrm{et}}\bigl(U_{\overline{K}}, \mathcal{L}(r)\bigr) \bigr)\otimes_{K_0} \mathcal{O}_{\rm max}[p^{-1}]$ and $\mathcal{O}_{\rm max}= \mathcal{O}_{e,1}$) that $$\bigl(D_{\rm st}\bigl(\mathrm{H}^i_{\mathrm{et}}\bigl(U_{\overline{K}}, \mathcal{L}(r)\bigr) \bigr)\otimes_{K_0} \mathcal{O}_{\rm max}[p^{-1}]\bigr)^{P(\Phi)=0}  \cong D_{\rm st}\bigl(\mathrm{H}^i_{\mathrm{et}}\bigl(U_{\overline{K}}, \mathcal{L}(r)\bigr) \bigr)^{P(\Phi)=0}  .$$We now show that we have an isomorphism
$$ \mathrm{H}^i_{\mathrm{HK}}\bigl((\mathcal{X}_k,\alpha_{\mathcal{X}_k})/\mathcal{S}_0^0, \mathfrak{E}_0\bigr)(r)^ {P(\Phi)=0}\cong  \mathrm{H}^i_{\mathrm{an}}\left((\mathcal{X}_k,\alpha_{\mathcal{X}_{\beta,k}})/\mathcal{S}_{\rm max}^0,\mathfrak{E}^{\rm max} \widehat{\otimes}_{\mathcal{O}_{\rm max}} B_{\rm max}\right)(r)^{G_K,P(\Phi)=0}.$$This and the fact that $\tau$ is an isomorphism imply Lemma \ref{lemma:tauimples}.

\

We start with  $\mathrm{H}^i_{\mathrm{cris}}\bigl((\mathcal{X}_k,\alpha_{\mathcal{X}_k})/\mathcal{S}_0^0, \mathfrak{E}_0\bigr)(r)$. Note that for every $n$  the  $\mathcal{O}_{K_0}/p^n \mathcal{O}_{K_0}$-module $\mathrm{H}^i_{\mathrm{cris}}\bigl((\mathcal{X}_k,\alpha_{\mathcal{X}_k})/\mathcal{S}_0^0, \mathfrak{E}_0/p^n \mathfrak{E}_0\bigr)(r)$ is finite so that for varying $n$ the corresponding inverse system is Mittag-Leffler.  Since $A_{\rm max}$ is $p$-torsion free, we deduce that $$\mathrm{H}^i_{\mathrm{cris}}\bigl((\mathcal{X}_k,\alpha_{\mathcal{X}_k})/\mathcal{S}_0^0,  \mathfrak{E}_0\bigr) (r) \otimes_{\mathcal{O}_{K_0}} A_{\rm max}\longrightarrow
\mathrm{H}^i_{\mathrm{cris}}\bigl((\mathcal{X}_k,\alpha_{\mathcal{X}_k})/\mathcal{S}_0^0, \mathfrak{E}_0 \widehat{\otimes}_{\mathcal{O}_{K_0}}  A_{\rm max}\bigr)(r)$$is an isomorphism and,  thus, also replacing $ A_{\rm max}$ with $B_{\rm max} $ we get an isomorphism. Using the flatness of $A_{\rm max}/(p^m,\mathfrak{m}) A_{\rm max}$ as $\mathcal{O}_{K_0}/p^m \mathcal{O}_{K_0}$-module for every $m$, the same result holds replacing $ A_{\rm max}$ with $A_{\rm max} /\mathfrak{m} A_{\rm max}$ and $ B_{\rm max}$ with $B_{\rm max} /\mathfrak{m} B_{\rm max}$. We conclude that the maps
$$\mu\colon \mathrm{H}^i_{\mathrm{HK}}\bigl((\mathcal{X}_k,\alpha_{\mathcal{X}_k})/ \mathcal{S}_0^0, \mathfrak{E}_0\bigr)(r)\otimes_{K_0}  B_{\rm max}\longrightarrow \mathrm{H}^i_{\mathrm{cris}}\bigl((\mathcal{X}_k,\alpha_{\mathcal{X}_k})/\mathcal{S}_0^0,\mathfrak{E}_0 \widehat{\otimes}_{\mathcal{O}_{K_0}}  B_{\rm max}\bigr)(r)$$and
 $$\overline{\mu}\colon \mathrm{H}^i_{\mathrm{HK}}\bigl((\mathcal{X}_k,\alpha_{\mathcal{X}_k})/ \mathcal{S}_0^0, \mathfrak{E}_0\bigr)(r)\otimes_{K_0}  \frac{B_{\rm max}}{\mathfrak{m} B_{\rm max}}\longrightarrow \mathrm{H}^i_{\mathrm{cris}}\bigl((\mathcal{X}_k,\alpha_{\mathcal{X}_k})/\mathcal{S}_0^0,\mathfrak{E}_0 \widehat{\otimes}_{\mathcal{O}_{K_0}}  \frac{B_{\rm max}}{\mathfrak{m} B_{\rm max}}\bigr)(r)$$are isomorphisms.

\subsection{CLAIM} We claim that the induced map:
$$\nu\colon \mathrm{H}^i_{\mathrm{HK}}\bigl((\mathcal{X}_k,\alpha_{\mathcal{X}_k})/ \mathcal{S}_0^0, \mathfrak{E}_0\bigr)(r) \otimes_{K_0} B_{\rm max} \longrightarrow \mathrm{H}^i_{\mathrm{cris}}\bigl((\mathcal{X}_k,\alpha_{\mathcal{X}_k})/\mathcal{O}_{K_0}^0,\mathfrak{E}_0 \widehat{\otimes}_{\mathcal{O}_{K_0}}  \frac{B_{\rm max}}{\mathfrak{m} B_{\rm max}}\bigr)(r)$$defines an isomorphism between
$\mathrm{H}^i_{\mathrm{HK}}\bigl((\mathcal{X}_k,\alpha_{\mathcal{X}_k})/ \mathcal{S}_0^0, \mathfrak{E}_0\bigr)(r)^{P(\Phi)=0}$ and  $$\left(\mathrm{H}^i_{\mathrm{cris}}\bigl((\mathcal{X}_k,\alpha_{\mathcal{X}_k})/\mathcal{O}_{K_0}^0,\mathfrak{E}_0 \widehat{\otimes}_{\mathcal{O}_{K_0}}  \frac{B_{\rm max}}{\mathfrak{m} B_{\rm max}}\bigr)\right)(r)^{G_K,P(\Phi)=0}.$$

{\em Proof:}    Consider the $\mathcal{O}_{\rm max}$-modules  $\mathrm{H}^i_{\mathrm{cris}}\bigl((\mathcal{X}_k,\alpha_{\mathcal{X}_k})/\mathcal{S}_0^0,  \mathfrak{E}_0\bigr) (r) \otimes_{\mathcal{O}_{K_0}} A_{\rm max} t^{-n}$, for $n\in\N$. Applying Lemma \ref{lemma:PHIM0} we deduce that $P(\Phi)$ is an isomorphism on these modules and taking the direct limit over $n\in\N$ that $P(\Phi)$ is an isomorphism on  $\mathrm{H}^i_{\mathrm{HK}}\bigl((\mathcal{X}_k,\alpha_{\mathcal{X}_k})/ \mathcal{S}_0^0, \mathfrak{E}_0\bigr)(r) \otimes_{K_0} (\mathfrak{m} B_{\rm max} )$.
Hence, using that $\overline{\mu}$ are isomorphisms, we get that $\nu$ induces an isomorphism after taking the kernels of $P(\Phi)$.  One argues as before for $D_{\rm st}\bigl(\mathrm{H}^i_{\mathrm{et}}\bigl(U_{\overline{K}}, \mathcal{L}(r)\bigr) \bigr)$  that $$\bigl(\mathrm{H}^i_{\mathrm{HK}}\bigl((\mathcal{X}_k,\alpha_{\mathcal{X}_k})/ \mathcal{S}_0^0, \mathfrak{E}_0\bigr)(r) \otimes_{K_0} \mathcal{O}_{\rm max}[p^{-1}]\bigr)^{P(\Phi)=0}$$ is isomorphic to $$\bigl(\mathrm{H}^i_{\mathrm{HK}}\bigl((\mathcal{X}_k,\alpha_{\mathcal{X}_k})/ \mathcal{S}_0^0, \mathfrak{E}_0\bigr)(r) \otimes_{K_0} B_{\rm max}\bigr)^{G_K,P(\Phi)=0} $$and $$\bigl(\mathrm{H}^i_{\mathrm{HK}}\bigl((\mathcal{X}_k,\alpha_{\mathcal{X}_k})/ \mathcal{S}_0^0, \mathfrak{E}_0\bigr)(r) \otimes_{K_0} \mathcal{O}_{\rm max}\bigr)^{P(\Phi)=0}\cong \mathrm{H}^i_{\mathrm{HK}}\bigl((\mathcal{X}_k,\alpha_{\mathcal{X}_k})/ \mathcal{S}_0^0, \mathfrak{E}_0\bigr)(r) ^{P(\Phi)=0}.$$The conclusion follows.

\subsection{Conclusion}
By Lemma \ref{lemma:PHIM0}  the map $P(\Phi)$ is an isomorphism on each $\mathfrak{m} \mathfrak{E}^{\rm max}_{\underline{\beta}} $ for every $\underline{\beta}\in B^n$. Hence, $P(\Phi)$ is an isomorphism on the cohomology groups  $\mathrm{H}^i_{\rm an}\left((\mathcal{X}_k,\alpha_{\mathcal{X}_k})/\mathcal{S}_{\rm max}^0,\mathfrak{m} \mathfrak{E}^{\rm max}\right)(r)$, defined as in  \S \ref{sec:analsyn}. Since $\mathrm{H}^i_{\rm cris}\left((\mathcal{X}_k,\alpha_{\mathcal{X}_k})/ \mathcal{S}_0^0,\mathfrak{E}_0 \right)(r)$ is a finite dimensional $K_0$-vector space, it follows from Lemma \ref{lemma:Marco} that the map
$$\rho\colon \mathrm{H}^i_{\rm an}\left((\mathcal{X}_k,\alpha_{\mathcal{X}_k})/\mathcal{S}_{\rm max}^0,\mathfrak{E}^{\rm max}\right)(r)^{P(\Phi)=0}\longrightarrow \mathrm{H}^i_{\mathrm{HK}}\left((\mathcal{X}_k,\alpha_{\mathcal{X}_k})/ \mathcal{S}_0^0,\mathfrak{E}_0 \right)(r)^{P(\Phi)=0} $$is an isomorphism.
 Consider the natural map $\zeta_i$ from
$$\mathrm{H}^i_{\rm an}\left((\mathcal{X}_k,\alpha_{\mathcal{X}_k})/\mathcal{S}_{\rm max}^0,\mathfrak{E}^{\rm max} \widehat{\otimes}_{\mathcal{O}_{\rm max}}B_{\rm max}\right)(r)$$ to $$\mathrm{H}^i_{\rm an}\left((\mathcal{X}_k,\alpha_{\mathcal{X}_k})/\mathcal{S}_{\rm max}^0,\mathfrak{E}^{\rm max} \widehat{\otimes}_{\mathcal{O}_{K_0}}\frac{B_{\rm max}}{\mathfrak{m} B_{\rm max}}\right)(r) ,$$which coincides with  $\mathrm{H}^i_{\rm cris}\left((\mathcal{X}_k,\alpha_{\mathcal{X}_k})/\mathcal{S}_0^0,\mathfrak{E}_0 \widehat{\otimes}_{\mathcal{O}_{K_0}}\frac{B_{\rm max}}{\mathfrak{m} B_{\rm max}}\right)(r)$.  As remarked before the Lemma, there exists a polynomial $R(\Phi)$ annihilating $\mathrm{H}^i_{\mathrm{HK}}\bigl((\mathcal{X}_k,\alpha_{\mathcal{X}_k})/ \mathcal{S}_0^0, \mathfrak{E}_0\bigr)(r)$. The isomorphism $\rho$ for $R(\Phi)$ and the CLAIM imply that $\zeta_i$ is surjective on the subspaces of the $G_K$-invariants killed by $R(\Phi)$.  In particular, the image of the map $\zeta_i$ contains $ \mathrm{H}^i_{\mathrm{HK}}\left((\mathcal{X}_k,\alpha_{\mathcal{X}_k})/ \mathcal{S}_0^0,\mathfrak{E}_0 \right)(r)$. Hence, $\zeta_i$  is surjective as it is $B_{\rm max}$-linear and $\overline{\mu}$ is an isomorphism.

Since $P(\Phi)$ is an isomorphism on $\mathfrak{m} \mathfrak{E}^{\rm max}_{\underline{\beta}}\widehat{\otimes}_{\mathcal{O}_{\rm max}}B_{\rm max} $ for every $\underline{\beta}\in B^n$, we deduce that $P(\Phi)$ is an isomorphism on $\mathrm{H}^i_{\rm an}\left((\mathcal{X}_k,\alpha_{\mathcal{X}_k})/\mathcal{S}_{\rm max}^0,\mathfrak{m} \mathfrak{E}^{\rm max} \widehat{\otimes}_{\mathcal{O}_{\rm max}}B_{\rm max}\right)(r)$ and, hence, on the kernel of each $\zeta_i$. It follows from Remark \ref{remark:Marco} that  $\zeta_i$ defines an isomorphism on the subspaces annihilated by $P(\Phi)$ and, hence, also on their $G_K$-invariants. This concludes the proof of the Lemma.

\subsection{End of proof of Theorem \ref{thm:DstHK}}\label{sec:conclusionproffofTHM} In order to conclude the proof of the Theorem we  are left to show that if $\mathrm{H}^i_{\mathrm{et},c}\bigl(U_{\overline{K}}, \mathcal{L}(r)\bigr)[p^{-1}]$ is a semistable representation then  
$$\upsilon_c\colon \mathrm{H}^{i}_{\mathrm{HK}}\bigl((\mathcal{X}_k,\alpha_{\mathcal{X}_k})/ \mathcal{S}_0^0,\mathfrak{E}_0(-\mathfrak{D}_0)\bigr)(r) \longrightarrow D_{\rm st}\bigl(\mathrm{H}^i_{\mathrm{et},c}\bigl(U_{\overline{K}}, \mathcal{L}(r)\bigr) \bigr)$$is an isomorphism.   First notice that $\mathcal{L}^\vee$ is crystalline associated to the dual of $ \bigl(\mathcal{E},\mathrm{Fil}^\bullet \mathcal{E}, \nabla,\mathfrak{E},\varphi_{\mathfrak{E}}\bigr)$ by Proposition \ref{prop:FunctAss}. We then consider the diagrams of Corollary \ref{cor:DstHKDdR} for the cohomologies $\mathrm{H}^{i}$ with compact support of  $\mathcal{L}(r)$  and $ \bigl(\mathcal{E},\mathrm{Fil}^\bullet \mathcal{E}, \nabla,\mathfrak{E},\varphi_{\mathfrak{E}}\bigr)$
and for $\mathrm{H}^{2d-i}$  of $\mathcal{L}^\vee(-r)$ and the dual of $ \bigl(\mathcal{E},\mathrm{Fil}^\bullet \mathcal{E}, \nabla,\mathfrak{E},\varphi_{\mathfrak{E}}\bigr)$.  We have perfect Poincar\'e dualities in the \'etale  and log de Rham settings and they are compatible after tensoring with $B_{\rm dR}$ by  \cite[Thm.~4.4.1(4)]{LLZ}. This implies, in particular, that $\mathrm{H}^{2d-i}_{\mathrm{et}}\bigl(U_{\overline{K}}, \mathcal{L}^\vee(-r)\bigr)[p^{-1}]$ is also a semistable representation as it is dual to  $\mathrm{H}^i_{\mathrm{et},c}\bigl(U_{\overline{K}}, \mathcal{L}(r)\bigr)$. Similarly, we have perfect Poincar\'e duality  in the Hyodo-Kato setting by \cite[Prop.~4.6]{EY} and it is compatible with log de Rham Poincar\'e duality  thanks to Lemma \ref{lemma:traceHKlogdRcomp} proved below. This implies that $\upsilon$ and the isomorphism $$\upsilon\colon \mathrm{H}^{2d-i}_{\mathrm{HK}}\bigl((\mathcal{X}_k,\alpha_{\mathcal{X}_k})/ \mathcal{S}_0^0,\mathfrak{E}_0^\vee)(-r) \stackrel{\sim}{\longrightarrow} D_{\rm st}\bigl(\mathrm{H}^{2d-i}_{\mathrm{et}}\bigl(U_{\overline{K}}, \mathcal{L}^\vee(-r)\bigr) \bigr)$$of Theorem \ref{thm:DstHK} are compatible with Hyodo-Kato Poincar\'e duality and \'etale Poincar\'e duality  after tensoring  $\otimes_{K_0} B_{\rm dR}$. Since these are perfect dualities, $\upsilon_c$ is an isomorphism after tensoring  with $B_{\rm dR}$ and, hence, it is an isomorphism.

\section{Hochschild-Serre}\label{sec:HS}

We keep the notations of section \ref{sec:compareetale}. In particular, we assume that $X$ is proper.  Consider Fontaine's classical fundamental exact sequence $$0 \to \mathbb{Q}(r) \longrightarrow  B_{\mathrm{crys}} \stackrel{(1-\frac{\varphi}{p^r},\iota)}{\longrightarrow} B_{\mathrm{crys}} \oplus \frac{B_{\mathrm{dR}}}{ \mathrm{Fil}^rB_{\mathrm{dR}}}  \to 0.$$Tensoring with $\mathrm{H}^i_{\mathrm{et}}\bigl(U_{\overline{K}},  \mathcal{L}(r)\bigr)$ and taking $G_K$-cohomology, we obtain the exponential map 
$$\exp_r\colon \frac{D_{\mathrm{dR}}\bigl(\mathrm{H}^i_{\mathrm{et}}\bigl(U_{\overline{K}},  \mathcal{L}(r)\bigr)\bigr)}{\mathrm{Fil}^r D_{\mathrm{dR}}\bigl(\mathrm{H}^i_{\mathrm{et}}\bigl(U_{\overline{K}},  \mathcal{L}(r)\bigr)\bigr)} \longrightarrow \mathrm{H}^1(G_K,\mathrm{H}^i_{\mathrm{et}}\bigl(U_{\overline{K}},  \mathcal{L}(r)\bigr)\bigr)[p^{-1}].$$Recall that given a $\mathbb{Q}_p$-representation $V$ of $G_K$ one defines  $ \mathrm{D}_{\mathrm{dR}}(V):=\bigl(V\otimes B_{\mathrm{dR}}\bigr)^{G_K}$. We also have the Hochschild-Serre spectral sequence

$$\mathrm{H}^j\bigl(G_K,\mathrm{H}^i_{\mathrm{et}}\bigl(U_{\overline{K}},  \mathcal{L}(r)\bigr) \bigr) \Rightarrow 
 \mathrm{H}^{i+j}_{\mathrm{et}}\bigl(U,\mathcal{L}(r)\bigr).
$$Defining  $F^1\mathrm{H}^i_{\mathrm{et}}\bigl(U,  \mathcal{L}(r)\bigr)$ as the kernel of the natural map $\mathrm{H}^i_{\mathrm{et}}\bigl(U,  \mathcal{L}(r)\bigr)\to \mathrm{H}^i_{\mathrm{et}}\bigl(U_{\overline{K}},  \mathcal{L}(r)\bigr)$ we get a morphism $$\mathrm{HS }^i\colon F^1 \mathrm{H}^{i}_{\mathrm{et}}\bigl(U,  \mathcal{L}(r)\bigr)\to \mathrm{H}^1(G_K, \mathrm{H}^{i-1}_{\mathrm{et}}\bigl(U_{\overline{K}},  \mathcal{L}(r)\bigr)\bigr).$$Analogously, we have the exponential map
$$\exp_{r,c}\colon \frac{D_{\mathrm{dR}}\bigl(\mathrm{H}^i_{\mathrm{et},c}\bigl(U_{\overline{K}}, \mathcal{L}(r)\bigr)\bigr)}{\mathrm{Fil}^r  D_{\mathrm{dR}}\bigl(\mathrm{H}^i_{\mathrm{et},c}\bigl(U_{\overline{K}},  \mathcal{L}(r)\bigr)\bigr) }
 \longrightarrow \mathrm{H}^1(G_K,\mathrm{H}^i_{\mathrm{et},c}\bigl(U_{\overline{K}}, \mathcal{L}(r)\bigr)\bigr)[p^{-1}],$$the Hochschild-Serre spectral sequence
$$\mathrm{H}^j\bigl(G_K,\mathrm{H}^i_{\mathrm{et},c}\bigl(U_{\overline{K}},  \mathcal{L}(r)\bigr) \bigr) \Rightarrow 
\mathrm{H}^i_{\mathrm{et},c}\bigl(U_{\mathrm{pket}}, \mathcal{L}(r)\bigr)\bigr),
$$inducing a filtration   $F^1 \mathrm{H}^{i}_{\mathrm{et},c}\bigl(U, \mathcal{L}(r)\bigr)\subset \mathrm{H}^{i}_{\mathrm{et},c}\bigl(U, \mathcal{L}(r)\bigr)$ and a map $$\mathrm{HS }^i_c\colon F^1\mathrm{H}^{i}_{\mathrm{et},c}\bigl(U, \mathcal{L}(r)\bigr)\to \mathrm{H}^1(G_K, \mathrm{H}^{i-1}_{\mathrm{et},c}\bigl(U_{\overline{K}},  \mathcal{L}(r)\bigr)\bigr).$$

It follows from Proposition \ref{prop:spectralanal} and its analogue with support that we have canonical morphisms
$$\frac{\mathrm{H}_{\mathrm{logdR}}^{i-1}\left( X,\mathcal{E}\left( r\right)
\right) }{\mathrm{Fir}^{r}\mathrm{H}_{\mathrm{logdR}}^{i-1}\left( X,
\mathcal{E}\left( r\right) \right) }\longrightarrow F^{1}\mathrm{H}
^{i}\left( \mathbf{Syn}\left( \mathcal{E},\mathfrak{E},r\right)
\right) $$and 
$$\frac{\mathrm{H}_{\mathrm{logdR}}^{i-1}\left( X,\mathcal{E}(-D)\left( r\right)
\right) }{\mathrm{Fir}^{r}\mathrm{H}_{\mathrm{logdR}}^{i-1}\left( X,
\mathcal{E}(-D)\left( r\right) \right) }\longrightarrow F^{1}\mathrm{H}
^{i}\left( \mathbf{Syn}^{c-\infty }\left( \mathcal{E},\mathfrak{E},r\right)
\right) $$

\begin{proposition}\label{prop:HS}
The morphisms $\rho _{\mathrm{Syn}}^{i}\left( r\right) $
(resp.~$\rho _{\mathrm{Syn},c}^{i}\left( r\right) $) of Theorem \ref
{thm:comparesynt} map  $F^{1}\mathrm{H}^{i}\left( \mathbf{Syn}
\left( \mathcal{E},\mathfrak{E},r\right) \right) $ (respectively $F^{1}\mathrm{H}
^{i}\left( \mathbf{Syn}^{c-\infty }\left( \mathcal{E},\mathfrak{E},r\right)
\right) $) to $F^{1}\mathrm{H}_{\mathrm{et}}^i\left( U,\mathcal{L}
\left( r\right) \right) \left[ p^{-1}\right] $ (resp.~$F^{1}\mathrm{H}_{
\mathrm{et},c}^i\left( U,\mathcal{L}\left( r\right) \right) \left[
p^{-1}\right] $) and they make the following diagram commutative
$$\xymatrix{ F^{1}\mathrm{H}^{i}\left( \mathbf{Syn}\left( \mathcal{E},\mathfrak{E}
,r\right) \right) \ar[d]_-{\rho _{\mathrm{Syn}}^{i}\left( r\right)} & \frac{\mathrm{H}_{\mathrm{logdR}
}^{i-1}\left( X,\mathcal{E}\left( r\right) \right) }{\mathrm{Fir}^{r}\mathrm{
H}_{\mathrm{logdR}}^{i-1}\left( X,\mathcal{E}\left( r\right) \right) } \ar[l] \ar[r]^-{\cong} & \frac{D_{\mathrm{dR}}\left( \mathrm{H}_{\mathrm{et}
}^{i-1}\left( U_{\overline{K}},\mathcal{L}(r)\right) \right) }{\mathrm{Fil}
^{r}D_{\mathrm{dR}}\left( \mathrm{H}_{\mathrm{et}}^{i-1}\left( U_{
\overline{K}},\mathcal{L}(r)\right) \right) } \ar@/^{5pt}/[dl]^-{\mathrm{exp}_{r}} \\ F^{1}\mathrm{H}_{\mathrm{et}}^{i}\left( U,\mathcal{L}\left(
r\right) \right) \left[  p^{-1}\right] \ar[r]^-{\mathrm{HS}^{i}} & \mathrm{H}^{1}\left( G_{K},\mathrm{H}_{\mathrm{et
}}^{i-1}\left( U_{\overline{K}},\mathcal{L}(r)\right) \right) \left[ p^{-1}
\right] & }
$$(resp.~for the compactly supported versions). Here, we identify $D_{\mathrm{
dR}}\left( \mathrm{H}_{\mathrm{et}}^{i-1}\left( U_{\overline{K}},
\mathcal{L}(r)\right) \right) \simeq \mathrm{H}_{\mathrm{logdR}}^{i-1}\left( X,
\mathcal{E}\left( r\right) \right) $ (resp.~$D_{\mathrm{dR}}\left( \mathrm{H}
_{\mathrm{et},c}^{i-1}\left( U_{\overline{K}},\mathcal{L}(r)\right)
\right) \simeq \mathrm{H}_{\mathrm{logdR},c}^{i-1}\left( X,\mathcal{E}\left(
r\right) \right) $) via the filtered isomorphism of \cite[Thm. 3.2.7(3)]
{DLLZb} (resp.~\cite[Thm. 3.1.10]{LLZ}).
\end{proposition}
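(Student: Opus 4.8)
The plan is to reduce everything to the observation that the comparison morphism of Proposition~\ref{prop:comparecomplexes} is a morphism of two-term mapping cones, and then to identify the two-step filtration it induces on the \'etale side with the Hochschild--Serre filtration.

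First, recall from Proposition~\ref{prop:comparecomplexes} that $\mathbf{Syn}(\mathcal{E},\mathfrak{E},r)\to\mathbf{Syn}(\mathcal{L},r)$ (resp.\ $\mathbf{Syn}^\partial(\mathcal{E},\mathfrak{E},r)\to\mathbf{Syn}^{c\text{-}\infty}(\mathcal{L},r)$) is built from a compatible pair of morphisms of complexes: on the ``Frobenius'' side, $\mathrm{R}\Gamma_{\mathrm{an}}((\mathcal{X}_k,\alpha_k)/\mathcal{S}_0,\mathfrak{E}^{\rm conv})\to\mathrm{R}\Gamma(X_{\mathrm{pket}},\widehat{\mathcal{L}}\otimes_{\widehat{\mathbb{Z}}_p}\mathbb{B}_{\mathrm{crys}})$, coming from Theorem~\ref{thm:OBcrys} and Definition~\ref{def:crysass}; and on the ``de Rham'' quotient, $\mathrm{R}\Gamma_{\mathrm{logdR}}(X,\mathcal{E})/\mathrm{Fil}^r\to\mathrm{R}\Gamma(X_{\mathrm{pket}},\widehat{\mathcal{L}}\otimes_{\widehat{\mathbb{Z}}_p}\mathrm{Gr}^r\mathbb{B}_{\mathrm{dR}})$, coming from \cite[Cor.~2.4.2]{DLLZb} and Definition~\ref{def:crysass}, intertwining $\gamma_{\rm an}$ with $\iota$. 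Any such morphism of total complexes of two-term maps automatically preserves the two-step filtration $F^{\bullet}$ on cohomology (Proposition~\ref{prop:spectralanal} and its formal \'etale analogue): it carries $F^{1}\mathrm{H}^{i}(\mathbf{Syn}(\mathcal{E},\mathfrak{E},r))$, the image of $\mathrm{H}^{i-1}$ of the target of the two-term map, into the analogous subspace of $\mathrm{H}^{i}(\mathbf{Syn}(\mathcal{L},r))$, and it intertwines the canonical maps out of the $\mathrm{Gr}^{r}$-de Rham term. So the statement reduces to two points purely on the \emph{\'etale} side: that, under the identification $\mathbf{Syn}(\mathcal{L},r)\simeq\mathrm{R}\Gamma_{\mathrm{et}}(U,\mathcal{L}(r))[p^{-1}]$ of Theorem~\ref{thm:comparesynt}, the two-step $F^{1}\mathrm{H}^{i}(\mathbf{Syn}(\mathcal{L},r))$ lies inside the Hochschild--Serre $F^{1}\mathrm{H}^{i}_{\mathrm{et}}(U,\mathcal{L}(r))$; and that the composite of the canonical map $\mathrm{H}^{i-1}(X_{\mathrm{pket}},\widehat{\mathcal{L}}\otimes_{\widehat{\mathbb{Z}}_p}\mathrm{Gr}^{r}\mathbb{B}_{\mathrm{dR}})\to F^{1}\mathrm{H}^{i}(\mathbf{Syn}(\mathcal{L},r))$ with $\mathrm{HS}^{i}$ agrees with the connecting map $\exp_{r}$, once the source is restricted along $D_{\mathrm{dR}}(\mathrm{H}^{i-1}_{\mathrm{et}}(U_{\overline{K}},\mathcal{L}(r)))/\mathrm{Fil}^{r}\hookrightarrow(\mathrm{H}^{i-1}_{\mathrm{et}}(U_{\overline{K}},\mathcal{L}(r))\otimes\mathrm{Gr}^{r}B_{\mathrm{dR}})^{G_{K}}$.

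To establish both points, pass to the geometric level. By Lemma~\ref{lemma:fundexactseq} the sheaf $\widehat{\mathcal{L}}(r)[p^{-1}]$ is the fibre of $(1-\frac{\varphi}{p^{r}})\oplus\iota$ on $X_{\mathrm{pket}}$, so $\mathbf{Syn}(\mathcal{L},r)=\mathrm{R}\Gamma(G_{K},M^{\bullet})$, where $M^{\bullet}$ is the corresponding fibre computed with $\mathrm{R}\Gamma(X_{\overline{K},\mathrm{pket}},-)$. Writing $V^{j}:=\mathrm{H}^{j}_{\mathrm{et}}(U_{\overline{K}},\mathcal{L}(r))[p^{-1}]$, Proposition~\ref{prop:etcoh} gives $\mathrm{H}^{j}(X_{\overline{K},\mathrm{pket}},\widehat{\mathcal{L}}\otimes_{\widehat{\mathbb{Z}}_p}\mathbb{B}_{\mathrm{crys}})\cong V^{j}\otimes B_{\mathrm{crys}}$ and $\mathrm{H}^{j}(X_{\overline{K},\mathrm{pket}},\widehat{\mathcal{L}}\otimes_{\widehat{\mathbb{Z}}_p}\mathrm{Gr}^{r}\mathbb{B}_{\mathrm{dR}})\cong V^{j}\otimes\mathrm{Gr}^{r}B_{\mathrm{dR}}$; since Fontaine's fundamental exact sequence stays exact after tensoring with a $\mathbb{Q}_{p}$-vector space, $(1-\frac{\varphi}{p^{r}})\oplus\iota$ is surjective on these groups in every degree. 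Hence the two-step filtration of $M^{\bullet}$ is trivial, $\mathrm{H}^{j}(M^{\bullet})=V^{j}$, and for every $j$ the long exact sequence of the geometric cone collapses to the short exact sequence of $G_{K}$-modules
$$0\longrightarrow V^{j}\longrightarrow V^{j}\otimes B_{\mathrm{crys}}\xrightarrow{\;(1-\frac{\varphi}{p^{r}})\oplus\iota\;}V^{j}\otimes B_{\mathrm{crys}}\oplus V^{j}\otimes\mathrm{Gr}^{r}B_{\mathrm{dR}}\longrightarrow 0,$$
that is, Fontaine's fundamental exact sequence tensored with $V^{j}$. Feeding this triviality into the long exact sequence of the cone after $\mathrm{R}\Gamma(G_{K},-)$ shows, first, that the two-step $F^{1}\mathrm{H}^{i}(\mathbf{Syn}(\mathcal{L},r))$ maps to $0$ under the edge map $\mathrm{H}^{i}(\mathbf{Syn}(\mathcal{L},r))\to\mathrm{H}^{i}(M^{\bullet})^{G_{K}}=(V^{i})^{G_{K}}$, hence sits in the Hochschild--Serre $F^{1}$; and, second, using naturality of the edge maps together with the vanishing of the geometric connecting map $\mathrm{H}^{i-1}(\mathrm{target})\to\mathrm{H}^{i}(M^{\bullet})$ just obtained, that $\mathrm{HS}^{i}$ on that $F^{1}$ is computed by the $G_{K}$-cohomology boundary of the truncation $\tau_{[i-1,i]}M^{\bullet}$, which by the displayed sequence in degree $i-1$ is cup product with the class of Fontaine's fundamental exact sequence, i.e.\ exactly $\exp_{r}$ on the image of $D_{\mathrm{dR}}(V^{i-1})/\mathrm{Fil}^{r}$. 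Matching the $\mathrm{Gr}^{r}$-de Rham terms on the two sides via the filtered comparison of \cite[Thm.~3.2.7(3)]{DLLZb} then yields the commutative diagram. The compactly supported case runs identically, with $\mathcal{E}$ replaced by $\mathcal{E}(-D)$, using Proposition~\ref{prop:samecohogroups} to pass between $\mathbf{Syn}^{c\text{-}\infty}(\mathcal{E},\mathfrak{E},r)$ and $\mathbf{Syn}^{\partial}(\mathcal{E},\mathfrak{E},r)$ and the compactly supported halves of Lemma~\ref{lemma:fundexactseq}, Proposition~\ref{prop:etcoh} and \cite[Thm.~3.1.10]{LLZ}.

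The main difficulty I anticipate is the diagram-chase in the last step: one must juggle three spectral sequences at once --- the syntomic-descent (mapping cone) spectral sequence of Proposition~\ref{prop:spectralanal}, the Hochschild--Serre spectral sequence for $G_{K}$ acting on geometric cohomology, and the $G_{K}$-cohomology long exact sequence of the geometric fundamental sequence --- and check that the boundary maps coincide on the nose, with the correct Tate twists and signs, and that the three identifications of the $\mathrm{Gr}^{r}$-de Rham terms (in Proposition~\ref{prop:spectralanal}, in Definition~\ref{def:crysass}, and in \cite[Thm.~3.2.7(3)]{DLLZb}) are mutually compatible. What makes this tractable is precisely that the \emph{geometric} mapping cone $M^{\bullet}$ has trivial two-step filtration, so that all the extension content is concentrated in the arithmetic ($G_{K}$-cohomological) direction, where it is $\exp_{r}$ by construction.
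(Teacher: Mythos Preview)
Your proposal is correct and follows essentially the same approach as the paper's proof. Both arguments reduce to the \'etale side by noting that the comparison morphism of Proposition~\ref{prop:comparecomplexes} is a morphism of two-term mapping cones and hence preserves the $F^\bullet$-filtration; both then pass to the geometric level via the Hochschild--Serre spectral sequence for $X_{\overline{K},\mathrm{pket}}\to X_{\mathrm{pket}}$, use Proposition~\ref{prop:etcoh} and the exactness of Fontaine's fundamental sequence tensored with $V^j$ to see that the geometric $F^1$ is trivial, and thereby identify the arithmetic boundary with $\exp_r$ and the de Rham terms via \cite[Thm.~3.2.7(3)]{DLLZb}. The only cosmetic difference is packaging: you phrase things via the complex $M^\bullet$ of $G_K$-modules and its truncations, while the paper works directly with the cokernel $\mathbb{K}$ of the $G_K$-invariant map and the Leray spectral sequence, but the underlying logic is identical.
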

\begin{proof}  
We prove the statement for $\rho_{\mathrm{Syn}}^i(r) $. The one for $\rho_{\mathrm{Syn},c}^i(r) $ is proven similarly. By definition we have exact an exact sequence $$0 \longrightarrow F^1 \mathrm{H}^i\bigl(\mathbf{Syn}(\mathcal{L},r)\bigr)\longrightarrow \mathrm{H}^i\bigl(\mathbf{Syn}(\mathcal{L},r)\bigr) \longrightarrow F^0 \mathrm{H}^i\bigl(\mathbf{Syn}(\mathcal{L},r)\bigr) \longrightarrow 0$$with   $F^1 \mathrm{H}^i\bigl(\mathbf{Syn}(\mathcal{L},r)\bigr)$ the cokernel of the map $(1-\frac{\varphi}{p^r}) \oplus   \iota$ $$ \mathrm{H}^{i-1}_{\mathrm{pket}}\bigl( \widehat{\mathcal{L}}(r)\otimes_{\widehat{\mathbb{Z}}_p} \mathbb{B}_{\mathrm{crys}}  \bigr) \to \mathrm{H}^{i-1}_{\mathrm{pket}}\bigl(\widehat{\mathcal{L}}(r)\otimes_{\widehat{\mathbb{Z}}_p} \mathbb{B}_{\mathrm{crys}} \bigr) \oplus \mathrm{H}^{i-1}_{\mathrm{pket}}\bigl( \widehat{\mathcal{L}}(r)\otimes_{\widehat{\mathbb{Z}}_p}\mathrm{Gr}^r \mathbb{B}_{\mathrm{dR}} \bigr).$$Here $\mathrm{H}^{i-1}_{\mathrm{pket}}$ stands for $\mathrm{H}^{i-1}\bigl(X_{\mathrm{pket}},\_\bigr) $.
It maps to the cokernel of  the morphism $(1-\frac{\varphi}{p^r}) \oplus   \iota$:

\begin{equation}\label{eq:gpkt}
\mathrm{H}^{i-1}_{\mathrm{gpkt}}\bigl( \widehat{\mathcal{L}}(r)\otimes_{\widehat{\mathbb{Z}}_p} \mathbb{B}_{\mathrm{crys}}  \bigr) \longrightarrow \mathrm{H}^{i-1}_{\mathrm{gpkt}}\bigl(\widehat{\mathcal{L}}(r)\otimes_{\widehat{\mathbb{Z}}_p} \mathbb{B}_{\mathrm{crys}} \bigr) \oplus \mathrm{H}^{i-1}_{\mathrm{gpkt}}\bigl( \widehat{\mathcal{L}}(r)\otimes_{\widehat{\mathbb{Z}}_p}\mathrm{Gr}^r \mathbb{B}_{\mathrm{dR}} \bigr),
\end{equation} 
where $\mathrm{H}^{i-1}_{ \mathrm{gpkt}}$ stands for $\mathrm{H}^{i-1}\bigl(X_{\overline{K},\mathrm{pket}},\_\bigr) $, which is $0$.  In particular, via  the isomorphism  $\mathrm{H}^i\bigl(\mathbf{Syn}(\mathcal{L},r)\bigr)\cong\mathrm{H}_{\mathrm{et}}^{i}\left( U,\mathcal{L}\left(
r\right) \right)[p^{-1}] $ the subspace $F^1\mathrm{H}^i\bigl(\mathbf{Syn}(\mathcal{L},r)\bigr)$ maps to the kernel $F^1 \mathrm{H}_{\mathrm{et}}^{i}\left( U,\mathcal{L}\left(
r\right) \right)[p^{-1}] $ of the map  $$\mathrm{H}_{\mathrm{et}}^{i}\left( U,\mathcal{L}\left(
r\right) \right) [p^{-1}]\longrightarrow \mathrm{H}_{\mathrm{et}}^{i}\left( U_{\overline{K}},\mathcal{L}\left(
r\right) \right)[p^{-1}]  ( \cong \mathrm{H}^i_{\mathrm{gpkt}}\bigl( \widehat{\mathcal{L}}(r)\bigr)[p^{-1}]).$$By construction  $\rho_{\mathrm{Syn}}^i(r) $ maps $F^1 \mathrm{H}^i\bigl(\mathbf{Syn}(\mathcal{E},\mathfrak{E},r)\bigr)$ to $F^1 \mathrm{H}^i\bigl(\mathbf{Syn}(\mathcal{L},r)\bigr)$ and the first assertion of the Proposition  is proven.

In fact,  $F^1 \mathrm{H}^i\bigl(\mathbf{Syn}(\mathcal{L},r)\bigr)$ maps to the cokernel of  the morphism $(1-\frac{\varphi}{p^r}) \oplus   \iota$:
 $$\mathrm{H}^{i-1}_{\mathrm{gpkt}}\bigl( \widehat{\mathcal{L}}(r)\otimes_{\widehat{\mathbb{Z}}_p} \mathbb{B}_{\mathrm{crys}}  \bigr)^{G_K} \to \mathrm{H}^{i-1}_{\mathrm{gpkt}}\bigl(\widehat{\mathcal{L}}(r)\otimes_{\widehat{\mathbb{Z}}_p} \mathbb{B}_{\mathrm{crys}} \bigr)^{G_K} \oplus \mathrm{H}^{i-1}_{\mathrm{gpkt}}\bigl( \widehat{\mathcal{L}}(r)\otimes_{\widehat{\mathbb{Z}}_p}\mathrm{Gr}^r \mathbb{B}_{\mathrm{dR}} \bigr)^{G_K}.$$Thanks to Proposition \ref{prop:etcoh} this complex is identified with the complex
$(1-\frac{\varphi}{p^r}) \oplus   \iota$: $$D_{\mathrm{crys}}\bigl(\mathrm{H}^{i-1}_{\mathrm{et}}\bigl(U_{\overline{K}},  \mathcal{L}(r)\bigr)\bigr)  \longrightarrow D_{\mathrm{crys}}\bigl(\mathrm{H}^{i-1}_{\mathrm{et}}\bigl(U_{\overline{K}}, \mathcal{L}(r)\bigr)\bigr)\oplus \mathrm{Gr}^r D_{\mathrm{dR}}\bigl(\mathrm{H}^{i-1}_{\mathrm{et}}\bigl(U_{\overline{K}}, \mathcal{L}(r)\bigr)\bigr).$$Its cokernel $\mathbb{K}$ maps to $\mathrm{H}^{1}\left( G_{K},\mathrm{H}_{\mathrm{et
}}^{i-1}\left( U_{\overline{K}},\mathcal{L}(r)\right) \right) \left[ p^{-1}
\right]$ using the fundamental exact sequence and, in particular,  the restriction to $\mathrm{Gr}^r D_{\mathrm{dR}}\bigl(\mathrm{H}^{i-1}_{\mathrm{et}}\bigl(U_{\overline{K}}, \mathcal{L}(r)\bigr)\bigr)$ is the exponential map $\exp_r$. Using the spectral sequence $\mathrm{H}^j\bigl(G_K, \mathrm{H}^i_{\mathrm{gpkt}}(\_)\bigr) \Longrightarrow \mathrm{H}^{i-1}_{\mathrm{pket}}\bigl(\_\bigr)$ for the short exact sequence that gives $ \mathrm{H}^{i-1}_{\mathrm{gpkt}}\bigl( \widehat{\mathcal{L}}(r)\bigr)$ as the kernel of the surjective map (\ref{eq:gpkt}), we conclude that the composite map $$F^1 \mathrm{H}^i\bigl(\mathbf{Syn}(\mathcal{L},r)\bigr)\longrightarrow \mathbb{K} \longrightarrow   \mathrm{H}^{1}\left( G_{K},\mathrm{H}_{\mathrm{et
}}^{i-1}\left( U_{\overline{K}},\mathcal{L}(r)\right) \right) \left[ p^{-1}
\right]$$is the composite of  the map $F^1 \mathrm{H}^i\bigl(\mathbf{Syn}(\mathcal{L},r)\bigr)\to F^1 \mathrm{H}_{\mathrm{et}}^{i}\left( U,\mathcal{L}\left(
r\right) \right)[p^{-1}] $, induced by $\rho_{\mathrm{Syn}}^i(r) $,  and the map $\mathrm{HS}^{i}$. 
Finally, we remark that the composite $$\mathrm{Gr}^r \mathrm{H}_{\mathrm{logdR}}^{i-1}\left( X,\mathcal{E}\left( r\right)
\right) \longrightarrow F^{1}\mathrm{H}
^{i}\left( \mathbf{Syn}\left( \mathcal{E},\mathfrak{E},r\right)
\right)\longrightarrow \mathbb{K}$$coincides with the composite of the natural map $\mathrm{Gr}^r D_{\mathrm{dR}}\bigl(\mathrm{H}^{i-1}_{\mathrm{et}}\bigl(U_{\overline{K}}, \mathcal{L}(r)\bigr)\bigr)\to \mathbb{K}$ and the map  $\mathrm{Gr}^r \mathrm{H}_{\mathrm{logdR}}^{i-1}\left( X,\mathcal{E}\left( r\right)
\right) \to  \mathrm{Gr}^r D_{\mathrm{dR}}\bigl(\mathrm{H}^{i-1}_{\mathrm{et}}\bigl(U_{\overline{K}}, \mathcal{L}(r)\bigr)\bigr)$, induced by the map of complexes underlying the definition of $\rho_{\mathrm{Syn}}^i(r) $. The latter coincides with the isomorphism of \cite[Thm. 3.2.7(3)]{DLLZb}; see the discussion in \S \ref{sec:pfCorDst}. This concludes the proof.

\end{proof}

\section{Gysin morphisms for divisors}\label{sec:Gysin}

We fix a smooth divisor $F\subset X$ which is the analytic generic fiber of a Cartier divisor $\mathcal{F}\subset \mathcal{X}$ such that $F\cup D$ and $\mathcal{F}\cup \mathcal{D}$ are strict normal crossing divisors (meaning that the closed immersion is defined by a non-zero divisor at the level of formal schemes). 

In the notation of \S \ref{sec:Chech} we also take local charts so that that for every $\beta\in B$ we have lift $\mathfrak{F}_\beta \subset \mathfrak{X}_\beta$ such that $\mathfrak{F}_\beta\cup \mathfrak{D}_\beta$ is also a strict normal crossin divisor.
We write $X'$, $\mathcal{X}'$ and $\mathfrak{X}'_\beta$ for $X$, $\mathcal{X}$ and $\mathfrak{X}_\beta$ with log-structures defined by $F\cup D$,  $\mathcal{F}\cup \mathcal{D}$, $\mathfrak{F}_\beta\cup \mathfrak{D}_\beta$. We write $\mathbf{Syn}'(\mathcal{E},\mathfrak{E},r)$ for the syntomic complex associated to these log-structures. We then have an inclusion $\mathbf{Syn}(\mathcal{E},\mathfrak{E},r)\subset \mathbf{Syn}'(\mathcal{E},\mathfrak{E},r)$. 

\begin{lemma}\label{lemma:exactsyn} The cokernel of $\mathbf{Syn}(\mathcal{E},\mathfrak{E},r)\subset \mathbf{Syn}'(\mathcal{E},\mathfrak{E},r)$ is identified with the complex $\mathbf{Syn}(\mathcal{E}\vert_F,\mathfrak{E}\vert_{\mathcal{\overline{F}}},r-1)[-1]$ where we consider on $F$, $\mathfrak{F}$, $\mathfrak{F}_\beta$ the pull-back of the Cartier divisors $D\cap F$, $\mathcal{D}\cap \mathcal{F}$, $\mathfrak{D}_\beta\cap \mathfrak{F}_\beta$. 
\end{lemma}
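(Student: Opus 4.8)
The statement is local-to-global in nature: the syntomic complexes $\mathbf{Syn}(\mathcal{E},\mathfrak{E},r)$ and $\mathbf{Syn}'(\mathcal{E},\mathfrak{E},r)$ are both total complexes built from a morphism $(1-\tfrac{\Phi}{p^r},\gamma_{\mathrm{an}})$, assembled via the \v{C}ech cover of \S\ref{sec:Chech}; the only difference is that in $\mathbf{Syn}'$ the logarithmic de Rham complex and the convergent complex are computed with respect to the larger log-structure coming from $F\cup D$ (resp.\ $\mathcal{F}\cup\mathcal{D}$, $\mathfrak{F}_\beta\cup\mathfrak{D}_\beta$) instead of just $D$. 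So the plan is: first reduce the computation of the cokernel to a term-by-term assertion on each $\underline{\beta}$, and then on each $\underline{\beta}$ identify the cokernel of the inclusion of logarithmic de Rham complexes with a de Rham complex over the divisor, shifted appropriately.

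\textbf{Step 1: Local residue sequences.} Fix $\underline{\beta}$. Choose coordinates as in \S\ref{sec:Chech} so that $\mathfrak{F}_\beta$ is cut out by a single new coordinate, say $W=0$, which is transverse to $\mathfrak{D}_\beta$ and to $\mathcal{X}_{\underline{\beta},k}$. Then the logarithmic de Rham complex $\Omega^{\log,\bullet}_{\mathfrak{X}'_{\underline{\beta}}}$ for the enlarged log-structure sits in a short exact sequence of complexes
\begin{equation*}
0\longrightarrow \Omega^{\log,\bullet}_{\mathfrak{X}_{\underline{\beta}}}\longrightarrow \Omega^{\log,\bullet}_{\mathfrak{X}'_{\underline{\beta}}}\stackrel{\mathrm{Res}}{\longrightarrow} \Omega^{\log,\bullet-1}_{\mathfrak{F}_{\beta,\underline{\beta}}}\longrightarrow 0,
\end{equation*}
where the right-hand map is the Poincar\'e residue along $W=0$ (the standard fact for adding one component to a normal crossing log-structure, cf.\ \cite[Ex.~1.2.13]{OgusBook}); tensoring with the locally free module $\mathcal{E}_{\underline{\beta}}$ (resp.\ $\mathfrak{E}_{\underline{\beta}}^{\mathrm{conv}}$, $\mathfrak{E}_{\underline{\beta}}$) preserves exactness, and the cokernel term is the logarithmic de Rham complex of the restriction $\mathcal{E}\vert_F$ (resp.\ $\mathfrak{E}^{\mathrm{conv}}\vert_{\overline{\mathcal{F}}}$) with respect to the pulled-back divisor $D\cap F$ (resp.\ $\mathcal{D}\cap\mathcal{F}$), evaluated on the tube of $\mathcal{F}_{\beta,\underline{\beta},k}$ — this uses Assumption~\ref{ass:formalcase} applied to $\mathcal{X}_J$ with $J$ enlarged, so that all the relevant loci remain strict semistable and the restricted crystal is again a non-degenerate Frobenius log-crystal. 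The filtration behaves compatibly: $\mathrm{Fil}^r$ of the $\mathfrak{X}'$-complex maps onto $\mathrm{Fil}^{r-1}$ of the $\mathfrak{F}$-complex under the residue (because the residue drops the de Rham degree by one, hence shifts the $\mathrm{Fil}^{r-\bullet}$-index), which is what produces the twist $r\mapsto r-1$.

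\textbf{Step 2: Compatibility with Frobenius and $\gamma_{\mathrm{an}}$.} One must check that the residue sequences are compatible, for varying $\underline{\beta}$ and with respect to the transition maps of the \v{C}ech cover, with both the Frobenius $\Phi$ and the comparison map $\gamma_{\mathrm{an}}=\gamma_{\mathrm{crys}}\circ\tau$ (sending $Z\mapsto\pi$, $W\mapsto W$). The key point is that the lift of Frobenius $\varphi_{\underline{\beta}}$ was chosen (\S\ref{sec:Chech}) to send each divisor component to itself, and can be arranged to send $\mathfrak{F}_\beta$ to itself as well (e.g.\ raising $W$ to the $p$-th power), so that $\Phi$ respects the filtration by powers of $W$ and induces on the residue quotient the Frobenius of $\mathfrak{E}\vert_{\overline{\mathcal{F}}}$ but multiplied by $p$ (because $dW/W$ acquires a factor $p$ under $d\log$-pullback, exactly as for $dZ/Z$ in the proof of Proposition~\ref{prop:anabs}(1)). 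This extra factor of $p$ is precisely why the quotient syntomic complex carries $r-1$ rather than $r$: the operator $1-\tfrac{\Phi}{p^r}$ on the $\mathfrak{X}'$-side restricts on the quotient to $1-\tfrac{p\Phi'}{p^r}=1-\tfrac{\Phi'}{p^{r-1}}$. Assembling the short exact sequences of the three constituent complexes (convergent, convergent, and $\mathrm{Gr}^r$ de Rham) into a short exact sequence of the mapping-cone total complexes, and using that the quotient total complex is by definition (up to the degree shift $[-1]$ coming from the residue lowering de Rham degree) the syntomic complex $\mathbf{Syn}(\mathcal{E}\vert_F,\mathfrak{E}\vert_{\overline{\mathcal{F}}},r-1)$, gives the claim.

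\textbf{Main obstacle.} The genuinely delicate point is Step 2: bookkeeping the interaction between the degree shift $[-1]$, the Tate twist $r\to r-1$, and the factor of $p$ in Frobenius, all of which are interlocked. One has to be careful that the residue map is only well-defined on the level of the chosen local models $\mathfrak{X}_{\underline{\beta}}$ and that the identifications glue — this is exactly the kind of independence-of-choices issue handled in Lemma~\ref{lemma:indchoices}, and the same formal log-smoothness argument applies, but it must be invoked. A secondary check is that the restricted data $\bigl(\mathcal{E}\vert_F,\mathrm{Fil}^\bullet,\nabla,\mathfrak{E}\vert_{\overline{\mathcal{F}}},\Phi\bigr)$ genuinely form a non-degenerate Frobenius log-crystal relative to $(F,\alpha_F,\overline{\mathcal{F}},\alpha_{\overline{\mathcal{F}}})$ so that the notation $\mathbf{Syn}(\mathcal{E}\vert_F,\mathfrak{E}\vert_{\overline{\mathcal{F}}},r-1)$ makes sense; this follows from Assumption~\ref{ass:formalcase} for the enlarged collection of divisors together with the fact that restriction of crystals and Frobenii along the exact closed immersion $\overline{\mathcal{F}}\hookrightarrow\overline{\mathcal{X}}$ is exact.
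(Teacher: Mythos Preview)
Your proposal is correct and follows essentially the same approach as the paper: reduce to the local \v{C}ech constituents, use the Poincar\'e residue along $\mathfrak{F}_{\underline{\beta}}$ to identify the cokernel of the inclusion of logarithmic de Rham complexes, and then assemble. The paper's proof is terser---it simply asserts the residue maps are compatible for varying $\underline{\beta}$ and yield the desired morphism---whereas you spell out explicitly why the Frobenius picks up a factor of $p$ on the quotient (from $\varphi^*(dW/W)=p\,dW/W$), which is precisely what turns $1-\Phi/p^r$ into $1-\Phi_F/p^{r-1}$; the paper leaves this implicit.
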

\begin{proof}  We use the explicit representatives of these complexes of \S \ref{sec:dR}  and \S \ref{sec:analsyn}. For each $\underline{\beta} \in B^n$such that $\beta_1<\ldots < \beta_n$ we have morphisms of complexes
$$\mathrm{dR}'(\mathcal{E})_{\underline{\beta}} \longrightarrow \mathrm{dR}(\mathcal{E}\vert_{\mathcal{F}})_{\underline{\beta}}[-1], \qquad \mathrm{dR}'(\mathfrak{E}^{\rm conv})_{\underline{\beta}}\longrightarrow \mathrm{dR}(\mathfrak{E}^{\rm conv}\vert_{\mathfrak{F}_{\underline{\beta}}})_{\underline{\beta}}[-1],$$where $\_'$ stands for the logarithmic de Rham complexes for the log-structures defined by  $\mathcal{F}\cup \mathcal{D}$, $\mathfrak{F}_{\underline{\beta}}\cup \mathfrak{D}_{\underline{\beta}}$ and the morphisms are given by taking residues along $\mathcal{F}_{\underline{\beta}}$ and $\mathfrak{F}_{\underline{\beta}}$ respectively (see the proof of Proposition \ref{prop:GysDual} below for details). Moreover, the map on de Rham complexes sends $\mathrm{Fil}^r \mathrm{dR}'(\mathcal{E})_{\underline{\beta}}$ to $\mathrm{Fil}^{r-1}\mathrm{dR}(\mathcal{E}\vert_{\mathcal{F}})_{\underline{\beta}}[-1]$; compare with the proof of Proposition \ref{prop:samecohogroups}. Their kernel are $\mathrm{dR}(\mathcal{E})_{\underline{\beta}}$, resp.~$\mathrm{dR}(\mathfrak{E}^{\rm conv})_{\underline{\beta}}$. 
These morphisms are compatible for varying $\underline{\beta}$ and  provide a map $$\mathbf{Syn}'(\mathcal{E},\mathfrak{E},r)\longrightarrow \mathbf{Syn}(\mathcal{E}\vert_F,\mathfrak{E}\vert_{\mathcal{\overline{F}}},r-1)[-1]$$whose kernel is $\mathbf{Syn}(\mathcal{E},\mathfrak{E},r)$
\end{proof}

Taking cohomology groups we get a map:
$$\mathrm{Gys}_{\mathrm{syn},F}^i\colon \mathrm{H}^i\bigl(\mathbf{Syn}(\mathcal{E}\vert_F,\mathfrak{E}\vert_{\mathcal{\overline{F}}},r-1) \bigr) \longrightarrow \mathrm{H}^{i+2}\bigl(\mathbf{Syn}(\mathcal{E},\mathfrak{E},r)\bigr).$$By construction it is compatible with Gysin maps in de Rham cohomology 
$$ \mathrm{Gys}_{\mathrm{dR},F}^i\colon \mathrm{H}^{i}_{\mathrm{dR}}\bigl(F,\mathcal{E}\vert_F\bigr)\longrightarrow  \mathrm{H}^{i+2}_{\mathrm{dR}}(X,\mathcal{E})$$and analytic cohomology 
$$ \mathrm{Gys}_{\mathrm{an},F}^{i,r}\colon \mathrm{H}^i_{\mathrm{an}}(\mathfrak{E}\vert_{\mathcal{\overline{F}}},r-1) \longrightarrow \mathrm{H}^{i+2}_{\mathrm{an}}(\mathfrak{E},r).$$Let $V:=U\backslash F$ and let $J_V\colon V\to U$ be the inclusion. By \cite[Thm. 7.2]{Illusie} we have an exact  sequence $$ 0 \to \mathcal{L}(r) \longrightarrow j_{V,\mathrm{et},\ast} j_{V,\mathrm{et}}^\ast(\mathcal{L}(r)) \longrightarrow j_{F\cap U,\mathrm{et},\ast} j_{F\cap U,\mathrm{et}}^\ast (\mathcal{L})(r-1)[-1] \to 0.$$Taking long exact sequence in cohomology gives  Gysin maps in \'etale cohomology: 
$$ \mathrm{Gys}_{\mathrm{et},F}^i\colon \mathrm{H}^{i}\bigl(U\cap F_{\mathrm{et}}, j_{F\cap U,\mathrm{et}}^\ast (\mathcal{L})(r-1)\bigr) \longrightarrow \mathrm{H}^{i+2}(U_{\mathrm{et}},\mathcal{L}(r)).$$

\begin{theorem}\label{thm:Gysinandsyntcomp} The syntomic and \'etale Gysin maps are compatible with the syntomic-\'etale comparison maps, namely we have $$\rho_{\mathrm{Syn}}^{i+2}(r) \circ \mathrm{Gys}_{\mathrm{syn},F}^i= \mathrm{Gys}_{\mathrm{et},F}^i \circ \rho_{\mathrm{Syn},F}^{i}(r-1). $$

\end{theorem}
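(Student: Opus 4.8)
The plan is to deduce Theorem \ref{thm:Gysinandsyntcomp} by unwinding the constructions of the two Gysin maps into short exact sequences of complexes and checking that the syntomic--\'etale comparison morphism $\rho_{\mathrm{Syn}}$ intertwines them. First I would observe that both Gysin maps are the connecting homomorphisms attached to short exact sequences: on the syntomic side, by Lemma \ref{lemma:exactsyn} we have
$$0\longrightarrow \mathbf{Syn}(\mathcal{E},\mathfrak{E},r)\longrightarrow \mathbf{Syn}'(\mathcal{E},\mathfrak{E},r)\longrightarrow \mathbf{Syn}(\mathcal{E}\vert_F,\mathfrak{E}\vert_{\mathcal{\overline{F}}},r-1)[-1]\longrightarrow 0,$$
and $\mathrm{Gys}_{\mathrm{syn},F}^i$ is the boundary map of its long exact sequence; on the \'etale side, applying Theorem \ref{thm:comparesynt} and the compatible constructions of \S\ref{sec:synL}, the exact sequence of \cite[Thm.~7.2]{Illusie} on $X_{\mathrm{ket}}$ induces, after tensoring with the period sheaves and forming syntomic complexes, a short exact sequence of complexes
$$0\longrightarrow \mathbf{Syn}(\mathcal{L},r)\longrightarrow \mathbf{Syn}(j_{V,\ast}j_V^\ast\mathcal{L},r)\longrightarrow \mathbf{Syn}(j_{F\cap U,\ast}j_{F\cap U}^\ast\mathcal{L},r-1)[-1]\longrightarrow 0$$
whose boundary realizes $\mathrm{Gys}_{\mathrm{et},F}^i$ via the identifications of Proposition \ref{prop:etcoh}. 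The theorem then follows from a morphism of short exact sequences of complexes.

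The key step is therefore to construct, for the three terms involved, compatible comparison morphisms $\mathbf{Syn}(\mathcal{E},\mathfrak{E},r)\to\mathbf{Syn}(\mathcal{L},r)$, $\mathbf{Syn}'(\mathcal{E},\mathfrak{E},r)\to\mathbf{Syn}(j_{V,\ast}j_V^\ast\mathcal{L},r)$ and $\mathbf{Syn}(\mathcal{E}\vert_F,\mathfrak{E}\vert_{\mathcal{\overline{F}}},r-1)\to\mathbf{Syn}(j_{F\cap U,\ast}j_{F\cap U}^\ast\mathcal{L},r-1)$ fitting in a commutative ladder. The first is the map of Proposition \ref{prop:comparecomplexes}. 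The second requires checking that the pull-back of $(\mathcal{E},\mathrm{Fil}^\bullet\mathcal{E},\nabla,\mathfrak{E},\Phi_{\mathfrak{E}})$ to the log-structure defined by $F\cup D$ is still crystalline associated, now to the pushforward local system $j_{V,\ast}j_V^\ast\mathcal{L}$ on $(X,\alpha_{X'})$ — this is where one uses that forming $X'$ from $X$ is the exact analogue of adding a horizontal divisor, together with Proposition \ref{prop:FunctAss} applied to the open immersion $V\hookrightarrow X$ and the fact that the extra divisor $F$ contributes only a log-structure, not a twist. The third comparison is Proposition \ref{prop:comparecomplexes} applied to $F$ with its induced data, using that $\mathcal{E}\vert_F$ is crystalline associated to $\mathcal{L}\vert_F=j_{F\cap U}^\ast\mathcal{L}$ by Proposition \ref{prop:FunctAss}(2) applied to the closed immersion $F\hookrightarrow X$ (here one uses that $F$ is the generic fiber of the smooth Cartier divisor $\mathcal{F}$ and the log-smoothness of the relevant morphisms of formal log-schemes in the charts of \S\ref{sec:Chech}).

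Next I would verify that the residue maps on de Rham and convergent complexes used in Lemma \ref{lemma:exactsyn} to identify the cokernel are, under the period-sheaf comparisons, intertwined with the residue/connecting maps on the \'etale side coming from the sequence of \cite{Illusie}. Concretely, on each chart $\mathfrak{X}_{\underline{\beta}}$ with $F$ cut out by a coordinate, the residue along $\mathfrak{F}_{\underline{\beta}}$ is $\mathrm{Res}=(1\otimes\delta_F)\circ\nabla_{\underline{\beta}}$ reduced modulo the defining equation, exactly the operation appearing after tensoring $\mathcal{O}_{\mathfrak{X}_{\underline{\beta}}}\mathbb{B}_{\mathrm{crys}}$ (and $\mathcal{O}\mathbb{B}_{\mathrm{dR}}$) with the relevant de Rham complex and taking the residue of the period-sheaf exact sequences of Lemma \ref{lemma:fundexactseq}; the shift by $-1$ in the target twist ($r\rightsquigarrow r-1$) matches the $\mathrm{Tate}$ twist shift in \cite[Thm.~7.2]{Illusie}. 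This is the main obstacle: making the identification of the cokernel with $\mathbf{Syn}(\mathcal{E}\vert_F,\mathfrak{E}\vert_{\mathcal{\overline{F}}},r-1)[-1]$ compatible \emph{at the level of complexes}, not merely up to quasi-isomorphism, with the \'etale residue, so that the ladder of short exact sequences commutes on the nose; the Frobenius compatibility (Frobenius on $dZ/Z$, resp.~on the local equation of $\mathfrak{F}$, being multiplication by $p$) must be tracked carefully, just as in the proof of Proposition \ref{prop:anabs}(1). Once the ladder commutes, passing to long exact cohomology sequences and using the naturality of connecting homomorphisms gives $\rho_{\mathrm{Syn}}^{i+2}(r)\circ\mathrm{Gys}_{\mathrm{syn},F}^i=\mathrm{Gys}_{\mathrm{et},F}^i\circ\rho_{\mathrm{Syn},F}^{i}(r-1)$, which is the assertion.
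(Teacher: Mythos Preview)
Your overall plan---build a morphism of short exact sequences whose connecting maps are the two Gysin morphisms---is exactly the paper's strategy. But two of your three comparison maps rest on invocations of Proposition~\ref{prop:FunctAss} that do not apply as stated.

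For the quotient term you appeal to Proposition~\ref{prop:FunctAss}(2) for the closed immersion $\iota\colon F\hookrightarrow X$. That proposition requires the morphism of formal log-schemes to be \emph{log-smooth}, and a closed immersion of a divisor is not log-smooth. So you cannot conclude directly that $(\mathcal{E}\vert_F,\mathfrak{E}\vert_{\overline{\mathcal{F}}})$ is crystalline associated to $\mathcal{L}\vert_F$ from that statement. The paper avoids this: rather than proving a new ``crystalline associated'' statement on $F$, it checks the compatibility of the quotient maps locally on the charts of \S\ref{sec:Chech}, uses Theorem~\ref{thm:OBcrys} to replace $\widehat{\mathcal{L}}\otimes\mathbb{B}_{\mathrm{crys}}$ and $\widehat{\mathcal{L}}\otimes\mathbb{B}_{\mathrm{dR}}$ by the de Rham complexes of $\mathfrak{E}\otimes\mathcal{O}\mathbb{B}_{\mathrm{crys}}$ and $\mathcal{E}\otimes\mathcal{O}\mathbb{B}_{\mathrm{dR}}$, and then reduces to $(\mathcal{E},\mathfrak{E})=(\mathcal{O}_X,\mathcal{O}_{\mathfrak{X}_{\underline\beta}})$ using that the modules are locally free and the pair is crystalline associated. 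In that trivial-coefficient case the residue compatibility is exactly \cite[Prop.~4.3.17]{LLZ}.

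For the middle term, applying Proposition~\ref{prop:FunctAss} to the open immersion $V\hookrightarrow X$ does not give a comparison on $X'$: $V$ has no formal model satisfying Assumption~\ref{ass:formalcase}, and pulling back to $V$ says nothing about $j_{V,\ast}j_V^\ast\mathcal{L}$ on $(X,\alpha_{X'})$. The paper handles this more directly: it works with the morphism of log-adic spaces $\varepsilon\colon X'\to X$ (identity on spaces, enlarged log-structure) and builds the \'etale syntomic triangle
\[
0\to\mathbf{Syn}(\mathcal{L},r)\to\mathbf{Syn}(\varepsilon^\ast\mathcal{L},r)\to\iota_\ast\mathbf{Syn}(\iota^\ast\mathcal{L},r-1)[-1]\to 0
\]
by upgrading Illusie's sequence to $X_{\mathrm{pket}}$ via \cite[Lemmas~4.5.7--4.5.8]{DLLZa}, tensoring with the period sheaves, and identifying the result with the analogous sequences for $\mathbb{B}_{\mathrm{crys},X'}$, $\mathbb{B}_{\mathrm{dR},X'}$, $\mathbb{B}_{\mathrm{crys},F}$, $\mathbb{B}_{\mathrm{dR},F}$; right-exactness is checked by a local Galois-cohomology computation from \cite{LLZ}. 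The left square then commutes by functoriality of Proposition~\ref{prop:comparecomplexes} along $\varepsilon$, with no need to prove a new ``crystalline associated'' statement for the middle term.
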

\begin{proof} One proceeds as in the proof of \cite[Prop. 4.3.17]{LLZ}. Let $\epsilon\colon X'\to X$ and $\iota\colon F\to X$ be the natural morphisms of log-adic spaces.  First of all one upgrades the exact sequence on \'etale sites to an exact sequence on $X_{\mathrm{pket}}$, using \cite[Lemma 4.5.7 \& Lemma 4.5.8]
{DLLZa}:

$$0\longrightarrow \widehat{\mathcal{L}} \longrightarrow R\varepsilon_{\mathrm{pket},\ast} \varepsilon_{\mathrm{pket}}^\ast\bigl(\widehat{\mathcal{L}}(r) \bigr) \longrightarrow  \iota_{\mathrm{pket},\ast} \iota_{\mathrm{pket}}^\ast\bigl(\widehat{\mathcal{L}}(r-1) \bigr) [-1] \longrightarrow 0.$$
Tensoring this with $\mathbb{B}_{\mathrm{crys}}$ and with $\mathbb{B}_{\mathrm{dR}}/\mathrm{Fil}^r \mathbb{B}_{\mathrm{dR}}$ one gets exact sequences that can be identified, see loc.~cit., with exact sequence:
 
$$0\longrightarrow \widehat{\mathcal{L}}\otimes_{\widehat{\mathbb{Z}}_p}  \mathbb{B}_{\mathrm{crys},X} \longrightarrow R\varepsilon_{\mathrm{pket},\ast}\left( \varepsilon_{\mathrm{pket}}^\ast\bigl(\widehat{\mathcal{L}}\bigr)\otimes_{\widehat{\mathbb{Z}}_p}  \mathbb{B}_{\mathrm{crys},X'}\right) \longrightarrow $$ $$\longrightarrow \iota_{\mathrm{pket},\ast}\left( \iota_{\mathrm{pket}}^\ast\bigl(\widehat{\mathcal{L}}\bigr)\otimes_{\widehat{\mathbb{Z}}_p}  \mathbb{B}_{\mathrm{crys},F}\right)[-1] \longrightarrow 0$$and
$$0\longrightarrow \widehat{\mathcal{L}}\otimes_{\widehat{\mathbb{Z}}_p}\frac{\mathbb{B}_{\mathrm{dR},X}}{\mathrm{Fil}^r\mathbb{B}_{\mathrm{dR},X} } \longrightarrow R\varepsilon_{\mathrm{pket},\ast}\left( \varepsilon_{\mathrm{pket}}^\ast\bigl(\widehat{\mathcal{L}}\bigr)\otimes_{\widehat{\mathbb{Z}}_p}  \frac{\mathbb{B}_{\mathrm{dR},X'}}{\mathrm{Fil}^r\mathbb{B}_{\mathrm{dR},X'} }\right) \longrightarrow $$ $$\longrightarrow \iota_{\mathrm{pket},\ast}\left( \iota_{\mathrm{pket}}^\ast\bigl(\widehat{\mathcal{L}}\bigr)\otimes_{\widehat{\mathbb{Z}}_p} \frac{\mathbb{B}_{\mathrm{dR},F}}{\mathrm{Fil}^r\mathbb{B}_{\mathrm{dR},F} }\right)[-1] \longrightarrow 0.$$These are compatible with the fundamental  exact sequences  of Lemma \ref{lemma:fundexactseq}:
$$
0 \to \widehat{\mathcal{L}}(r)  \longrightarrow  \widehat{\mathcal{L}}\otimes_{\widehat{\mathbb{Z}}_p}  \mathbb{B}_{\mathrm{crys},X}  {\longrightarrow} \widehat{\mathcal{L}}\otimes_{\widehat{\mathbb{Z}}_p}  \mathbb{B}_{\mathrm{crys},X}  \oplus \widehat{\mathcal{L}}\otimes_{\widehat{\mathbb{Z}}_p} \frac{\mathbb{B}_{\mathrm{dR},X}}{\mathrm{Fil}^r\mathbb{B}_{\mathrm{dR},X} } \longrightarrow 0,$$

$$0 \to R\varepsilon_{\mathrm{pket},\ast} \varepsilon_{\mathrm{pket}}^\ast\bigl(\widehat{\mathcal{L}}(r) \bigr)  \longrightarrow R\varepsilon_{\mathrm{pket},\ast}\left( \varepsilon_{\mathrm{pket}}^\ast\bigl(\widehat{\mathcal{L}}\bigr)\otimes_{\widehat{\mathbb{Z}}_p}  \mathbb{B}_{\mathrm{crys},X'}\right) \longrightarrow $$ $$ R\varepsilon_{\mathrm{pket},\ast}\left( \varepsilon_{\mathrm{pket}}^\ast\bigl(\widehat{\mathcal{L}}\bigr)\otimes_{\widehat{\mathbb{Z}}_p}  \mathbb{B}_{\mathrm{crys},X'}\right) \oplus R\varepsilon_{\mathrm{pket},\ast}\left(\varepsilon_{\mathrm{pket}}^\ast( \widehat{\mathcal{L}})\otimes_{\widehat{\mathbb{Z}}_p} \frac{\mathbb{B}_{\mathrm{dR},X'}}{\mathrm{Fil}^r\mathbb{B}_{\mathrm{dR},X'} }\right) \longrightarrow 0$$and 
$$ 0 \to \iota_{\mathrm{pket},\ast} \iota_{\mathrm{pket}}^\ast\bigl(\widehat{\mathcal{L}}(r) \bigr)  \longrightarrow  \iota_{\mathrm{pket},\ast}\left( \iota_{\mathrm{pket}}^\ast\bigl(\widehat{\mathcal{L}}\bigr)\otimes_{\widehat{\mathbb{Z}}_p}  \mathbb{B}_{\mathrm{crys},F}\right)\longrightarrow   $$ $$ {\longrightarrow} \iota_{\mathrm{pket},\ast}\left( \iota_{\mathrm{pket}}^\ast\bigl(\widehat{\mathcal{L}}\bigr)\otimes_{\widehat{\mathbb{Z}}_p}  \mathbb{B}_{\mathrm{crys},F}\right)\oplus \iota_{\mathrm{pket},\ast}\left(\iota_{\mathrm{pket}}^\ast( \widehat{\mathcal{L}})\otimes_{\widehat{\mathbb{Z}}_p}\frac{\mathbb{B}_{\mathrm{dR},F}}{\mathrm{Fil}^r\mathbb{B}_{\mathrm{dR},F} }\right)  \longrightarrow 0.$$It follows from the proof of \cite[Prop. 4.3.17]{LLZ} that this  produces a distinguished triangle:
$$0\longrightarrow \mathbf{Syn}(\mathcal{L},r) \longrightarrow \mathbf{Syn}\bigl(\varepsilon^\ast(\mathcal{L}),r\bigr) \longrightarrow \iota_\ast  \mathbf{Syn}\bigl(\iota^\ast(\mathcal{L}),r-1\bigr)[-1]  \longrightarrow 0  $$
upon taking $R\Gamma(X_{\mathrm{pket}},\_)$ as in \S \ref{sec:synL}. In loc.~cit.~a local description in terms of the Galois cohomology of a generator of inertia at $F$ for a basis of log affinoid perfectoid opens of $X_{\mathrm{pket}}$ is given, from which the right exactness of the sequence above follows. We are left to show that this triangle is compatible with  the exact sequence $$0\longrightarrow \mathbf{Syn}(\mathcal{E},\mathfrak{E},r)\longrightarrow  \mathbf{Syn}'(\mathcal{E},\mathfrak{E},r)\longrightarrow \mathbf{Syn}(\mathcal{E}\vert_F,\mathfrak{E}\vert_{\mathcal{\overline{F}}},r-1)[-1]\longrightarrow 0$$via the morphisms  $\mathbf{Syn}(\mathcal{E},\mathfrak{E},r)\to   \mathbf{Syn}(\mathcal{L},r)$ etc. of Proposition \ref{prop:comparecomplexes}. The compatibility of the first arrow is clear by functoriality as it is induced by a morphism of log adic spaces. We are left to show that the map induced on the quotients coincides with the one obtained from Proposition \ref{prop:comparecomplexes} applied on $F$. It can be checked locally using the charts of \S \ref{sec:Chech}. Using Theorem \ref{thm:OBcrys}, for each $\underline{\beta}\in B^n$ such that $\beta_1<\ldots < \beta_n$,  we can also replace  $\widehat{\mathcal{L}}\otimes_{\widehat{\mathbb{Z}}_p}  \mathbb{B}_{\mathrm{crys}}$ with $\mathfrak{E}\otimes_{\mathcal{O}} \mathcal{O}\mathbb{B}_{\mathrm{crys}}\otimes_{\mathcal{O}} \Omega^{\log,\bullet}$ on $\mathfrak{X}_{\underline{\beta}}$, $\mathfrak{X}'_{\underline{\beta}}$ and $\mathfrak{F}_{\underline{\beta}}$ and  $\widehat{\mathcal{L}}\otimes_{\widehat{\mathbb{Z}}_p}  \mathbb{B}_{\mathrm{dR}}$ with $\mathcal{E}\otimes_{\mathcal{O}} \mathcal{O}\mathbb{B}_{\mathrm{crys}}\otimes_{\mathcal{O}} \Omega^{\log,\bullet}$ on $X_{\underline{\beta}}$, $X'_{\underline{\beta}}$ and $F_{\underline{\beta}}$.
It suffices to show separately compatibilities of the associated short exact sequences: \smallskip

(1) for the analytic cohomology of  $\mathfrak{E}^{\rm conv}$ and the cohomology of $\mathcal{E}\otimes_{\mathcal{O}} \mathcal{O}\mathbb{B}_{\mathrm{crys}}\otimes_{\mathcal{O}} \Omega^{\log,\bullet} $ on $\mathfrak{X}_{\underline{\beta}}$, $\mathfrak{X}'_{\underline{\beta}}$ and $\mathfrak{F}_{\underline{\beta}}$ (the analytic part);

\smallskip
(2) for the log de Rham cohomology of 
$\mathcal{E}$ and the cohomology of $\mathcal{E}\otimes_{\mathcal{O}} \mathcal{O}\mathbb{B}_{\mathrm{dR}}\otimes_{\mathcal{O}} \Omega^{\log,\bullet}$ (the de Rham part) on $X_{\underline{\beta}}$, $X'_{\underline{\beta}}$ and $F_{\underline{\beta}}$. 

\smallskip
We can also assume that $\bigl(\mathcal{E},\mathfrak{E}\bigr)$ are free modules possibly after shrinking the charts  and then the statement reduces to the case that $\bigl(\mathcal{E},\mathfrak{E}\bigr)=(\mathcal{O}_{\mathcal{X}},\mathcal{O}_{\mathfrak{X}_{\underline{\beta}}})$ using that $\bigl(\mathcal{E},\mathfrak{E}\bigr)$ is crystalline associated to $\mathcal{L}$ (see Definition \ref{def:crysass}).  In this case the compatibility in the de Rham case, and with a similar argument in the crystalline case, is shown in the proof of \cite[Prop. 4.3.17]{LLZ}. 
\end{proof}

We also have the following variants for compact support, that will provide an adjunction property with respect to Poincar\'e pairing. 

We write $\mathbf{Syn}^{c-\infty,'}(\mathcal{E},\mathfrak{E},r)$ for the complex of $\mathbf{Syn}'(\mathcal{E}(-D),\mathfrak{E}(-\mathfrak{D}),r)$ as in \S \ref{sec:compactsupport}. As in Lemma \ref{lemma:exactsyn} one proves that it sits in an exact sequence of complexes 

$$0\to \mathbf{Syn}^{c-\infty}(\mathcal{E},\mathfrak{E},r) \to\mathbf{Syn}^{c-\infty,'}(\mathcal{E},\mathfrak{E},r) \to\mathbf{Syn}^{c-\infty}(\mathcal{E}\vert_{\mathcal{F}},\mathfrak{E}\vert_{\mathfrak{F}},r-1)[-1]  \to 0$$providing a map:
$$\mathrm{Gys}_{\mathrm{syn}^{c-\infty},F}^i\colon \mathrm{H}^i\bigl(\mathbf{Syn}^{c-\infty}(\mathcal{E}\vert_F,\mathfrak{E}\vert_{\mathcal{\overline{F}}},r-1) \bigr) \longrightarrow \mathrm{H}^{i+2}\bigl(\mathbf{Syn}^{c-\infty}(\mathcal{E},\mathfrak{E},r)\bigr),$$compatible with Gysin maps in de Rham and analytic cohomology.

\section{Syntomic polynomial complexes}\label{S PolyCmpl}

Assume that $\mathbb{E}:=\left( \mathcal{E},\mathrm{Fil}^{\bullet }\mathcal{E
},\nabla ,\mathfrak{E},\Phi _{\mathfrak{E}}\right) $ is a non-degenerate
Frobenius log-crystal relative to $\left( X,\alpha _{X},\overline{\mathcal{X}
},\alpha _{\overline{\mathcal{X}}}\right) $, as in \S \ref{sec:FrobIso}.

\begin{definition}\label{def:synpol}
Let $\mathbb{Q}_p\subset L$ be a field extension and let $P(X)\in 1+XL[X] $ be a polynomial. Define the syntomic complex  $\mathbf{Syn}(\mathcal{E},\mathfrak{E},P,r)$ to be the total complex associated to the double complexes $\bigl(P(\Phi),\gamma_{\rm an}\bigr)$:

$$\mathrm{R}\Gamma_{\mathrm{an}}\bigl((\mathcal{X}_k,\alpha_k)/\mathcal{S}_0,\mathfrak{E}^{\rm conv}\bigr)_L \longrightarrow \mathrm{R}\Gamma_{\mathrm{an}}\bigl((\mathcal{X}_k,\alpha_k)/\mathcal{S}_0,\mathfrak{E}^{\rm conv}\bigr)_L \oplus \frac{ \mathrm{R}\Gamma_{\mathrm{logdR}}\bigl(X, \mathcal{E}\bigr)_L }{\mathrm{Fil}^r \mathrm{R}\Gamma_{\mathrm{logdR}}\bigl(X, \mathcal{E}\bigr)_L}$$
\end{definition}

Here, the subscript $L$ stands for the extension of scalars $\otimes_{\mathbb{Q}_p} L$ and Frobenius $\Phi$ is extended $L$-linearly. Notice that the complex $\mathbf{Syn}(\mathcal{E},\mathfrak{E},r)$ of  Definition \ref{def:syntomiccomplex} corresponds to $L=\mathbb{Q}_p$ and $P(X)=1- \frac{X}{p^r}$.

Proceeding similarly we have variants, namely the one with support conditions  $\mathbf{Syn}^{c-\infty}(\mathcal{E},\mathfrak{E},P,r)$ as in Definition \ref{def:syntomiccompactcomplex} and, as in \S  \ref{sec:rigsyn} and in Definition \ref{def:syntomiccompactonU}  for $U\subset \mathcal{X}_k^{\rm sm}$ open and $Y\subset \mathcal{X}_k$ closed and reduced and containing $U$, the rigid syntomic complexes with or without support conditions, namely $\mathbf{Syn}_{{\rm rig},U\subset Y, \alpha_k}\bigl(\mathcal{E},\mathfrak{E},P,r\bigr)$  and $\mathbf{Syn}_{{\rm rig},U^o\subset Y, \alpha_k}^c(\mathcal{E},\mathfrak{E},P,r)$  with support on $U^o=U\backslash \mathcal{D}_k$, as well as those of \S \ref{sec:Egoodred}. We simply write 
$$ 
K_{P,r}=K_{P,r}\left( \mathbb{E}\right) :=\left[ P\left( \Phi
_{L}\right) \oplus \gamma \colon \mathrm{R}\Gamma _{L}\longrightarrow \Gamma
_{L}\oplus \frac{\mathrm{R}\Gamma _{\mathrm{dR},L}}{\mathrm{Fir}^{r}\left(
\mathrm{R}\Gamma _{\mathrm{dR},L}\right) }\right],
$$
to denote one of these complexes and the corresponding double complexes.  An analogue of Proposition \ref{prop:spectralanal} follows formally for the cohomology groups $\mathrm{H}^i(K_{P,r})$. Namely, we have an exact sequence $$0\longrightarrow F^1 \mathrm{H}^i(K_{P,r}) \longrightarrow  \mathrm{H}^i(K_{P,r}) \longrightarrow F^0\mathrm{H}^i(K_{P,r}) \longrightarrow 0$$with $ F^1 \mathrm{H}^i(K_{P,r})$ the cokernel of the map $P\left( \Phi
_{L}\right) \oplus \gamma$  
$$\mathrm{H}^{i-1}\bigl(\mathrm{R}\Gamma _{L}\bigr)\longrightarrow \mathrm{H}^{i-1}\bigl(\mathrm{R}\Gamma _{L}\bigr) \oplus \mathrm{H}^{i-1}\left(\mathrm{R}\Gamma _{\mathrm{dR},L}/\mathrm{Fir}^{r}\left(
\mathrm{R}\Gamma _{\mathrm{dR},L}\right)\right)$$and  $ F^0 \mathrm{H}^i(K_{P,r})$ the kernel of the map $P\left( \Phi
_{L}\right) \oplus \gamma$  $$\mathrm{H}^i\bigl(\mathrm{R}\Gamma _{L}\bigr)\longrightarrow \mathrm{H}^i\bigl(\mathrm{R}\Gamma _{L}\bigr) \oplus \mathrm{H}^i\left(\mathrm{R}\Gamma _{\mathrm{dR},L}/\mathrm{Fir}^{r}\left(
\mathrm{R}\Gamma _{\mathrm{dR},L}\right)\right).$$We have the following compatibilities for different choices of $P(X)$ and $r$.

\begin{remark}
\label{rmk:Poly R2} Suppose that $P\mid Q$, say $Q=PP^{\prime }$, and that $r\geq
s$. Define the morphism of double complexes $K_{P,r}\longrightarrow K_{Q,s}$, sending $u\in \mathrm{R}\Gamma _{L}$ to itself and $\left( x,y\right) \in \mathrm{R} \Gamma
_{L}\oplus \frac{\mathrm{R}\Gamma _{\mathrm{dR},L}}{\mathrm{Fir}^{r}\left(
\mathrm{R}\Gamma _{\mathrm{dR},L}\right) } $ to $\left( P^{\prime }\left(
\varphi \right) x,y\right) \in \mathrm{R} \Gamma
_{L}\oplus \frac{\mathrm{R}\Gamma _{\mathrm{dR},L}}{\mathrm{Fir}^{r}\left(
\mathrm{R}\Gamma _{\mathrm{dR},L}\right) }$. 
\end{remark}

The advantage of working with  general polynomials is the following:

\begin{lemma}\label{lemma:Poly R1} Assume that $P(\Phi) $ is an isomorphism on $\mathrm{H}^{i-1}\bigl(\mathrm{R}\Gamma _{L}\bigr)$. Then, we have $F^0\mathrm{H}^{i-1}(K_{P,r}) =0$ and the natural map  $$\mathrm{H}^{i-1}\left(\mathrm{R}\Gamma _{\mathrm{dR},L}/\mathrm{Fir}^{r}\left(
\mathrm{R}\Gamma _{\mathrm{dR},L}\right)\right)\longrightarrow  F^1 \mathrm{H}^i(K_{P,r})$$is an isomorphism with inverse 
$$F^1 \mathrm{H}^i(K_{P,r})\longrightarrow \mathrm{H}^{i-1}\left(\mathrm{R}\Gamma _{\mathrm{dR},L}/\mathrm{Fir}^{r}\left(
\mathrm{R}\Gamma _{\mathrm{dR},L}\right)\right), \quad [(x,y)] \mapsto y-\gamma \left( P\left(\Phi\right)^{-1}x\right).$$
\end{lemma}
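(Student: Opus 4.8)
The proof is a purely formal diagram chase based on the description of $F^0$ and $F^1$ recalled just before the statement (itself an immediate consequence of the definition of the total complex, exactly as in Proposition~\ref{prop:spectralanal}), and the same argument applies verbatim to each of the variants of $K_{P,r}$ listed above. Set $A:=\mathrm{H}^{i-1}(\mathrm{R}\Gamma_{L})$ and $B:=\mathrm{H}^{i-1}\bigl(\mathrm{R}\Gamma_{\mathrm{dR},L}/\mathrm{Fir}^{r}(\mathrm{R}\Gamma_{\mathrm{dR},L})\bigr)$, and write $\phi:=P(\Phi_{L})\colon A\to A$ and $\gamma\colon A\to B$ for the maps induced on cohomology; by hypothesis $\phi$ is an $L$-linear automorphism. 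With this notation $F^{1}\mathrm{H}^{i}(K_{P,r})$ is the cokernel of $(\phi,\gamma)\colon A\to A\oplus B$, and $F^{0}\mathrm{H}^{i-1}(K_{P,r})$ is the kernel of the same map $(\phi,\gamma)$ acting on $A$.

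First I would record that $F^{0}\mathrm{H}^{i-1}(K_{P,r})=0$: the kernel of $(\phi,\gamma)$ is contained in $\ker(\phi)$, which vanishes since $\phi$ is injective. Next I would analyse the cokernel. Given a class $[(x,y)]\in A\oplus B$, the element $(\phi(\phi^{-1}x),\gamma(\phi^{-1}x))=(x,\gamma(\phi^{-1}x))$ lies in the image of $(\phi,\gamma)$, hence
\[
[(x,y)]=\bigl[(0,\,y-\gamma(\phi^{-1}x))\bigr]=\bigl[(0,\,y-\gamma(P(\Phi)^{-1}x))\bigr].
\]
This already shows that $y\mapsto[(0,y)]$ is surjective and exhibits the candidate inverse from the statement. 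For injectivity, if $(0,y)=(\phi(a),\gamma(a))$ for some $a\in A$, then $\phi(a)=0$ forces $a=0$, whence $y=0$.

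It then remains to check that $(x,y)\mapsto y-\gamma(P(\Phi)^{-1}x)$ is well defined on $F^{1}\mathrm{H}^{i}(K_{P,r})$, that is, that it annihilates the image of $(\phi,\gamma)$: indeed $(\phi(a),\gamma(a))\mapsto \gamma(a)-\gamma(\phi^{-1}\phi(a))=0$. The two maps are mutually inverse by the displayed identity together with the trivial computation $y\mapsto[(0,y)]\mapsto y-\gamma(\phi^{-1}(0))=y$. There is essentially no obstacle here: the content is entirely formal, and the only point deserving a word of care is to make sure that $P(\Phi_{L})$ is inverted $L$-linearly and that one works throughout with the maps induced on cohomology rather than with the underlying complexes.
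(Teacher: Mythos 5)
Your proof is correct and is exactly the routine diagram chase the paper intends: the lemma is stated without proof precisely because, given the identification of $F^0\mathrm{H}^{i-1}(K_{P,r})$ and $F^1\mathrm{H}^{i}(K_{P,r})$ as the kernel and cokernel of $(P(\Phi_L),\gamma)$ recalled just before the statement, the claim is pure linear algebra. Your verification of surjectivity, injectivity, and well-definedness of the explicit inverse is complete and matches what the authors leave implicit.
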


\begin{proposition}\label{prop:Poly P finiteness} Assume that $\mathcal{X}$ is proper of relative dimension $d$. Then for $K_{P,r}=\mathbf{Syn}(\mathcal{E},\mathfrak{E},P, r)$ or $K_{P,r}=\mathbf{Syn}^{c-\infty}(\mathcal{E},\mathfrak{E},P,r)$ the group $\mathrm{ H}^{i}\left( K_{P,r}\right) $ is finite dimensional as an $L$-vector space  for every   $i$ and it is $0$ for $i>2d+2$.
\end{proposition}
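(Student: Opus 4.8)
The plan is to deduce everything from the long exact sequence description of $\mathrm{H}^i(K_{P,r})$ recalled just above the statement, reducing the finiteness and vanishing to properties of analytic (convergent) cohomology and of log-de Rham cohomology that are already available in the excerpt. First I would recall that by that exact sequence it suffices to bound $F^1\mathrm{H}^i(K_{P,r})$ and $F^0\mathrm{H}^i(K_{P,r})$ separately: the former is a subquotient of $\mathrm{H}^{i-1}(\mathrm{R}\Gamma_L)\oplus\mathrm{H}^{i-1}(\mathrm{R}\Gamma_{\mathrm{dR},L}/\mathrm{Fil}^r)$ (it is a cokernel of a map into that group), and the latter is a subspace of $\mathrm{H}^i(\mathrm{R}\Gamma_L)$. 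So the Proposition follows once we know that $\mathrm{H}^j(\mathrm{R}\Gamma_L)$ and $\mathrm{H}^j(\mathrm{R}\Gamma_{\mathrm{dR},L})$ are finite-dimensional over $L$ for all $j$ and vanish for $j>2d$ (for the de Rham/$\mathrm{Gr}^r$ part) and $j>2d+1$ (for the analytic part, because of the extra shift coming from $K=K_0$ versus the $(\mathfrak{S},\mathfrak{M})$-direction). Here $\mathrm{R}\Gamma_L$ is $\mathrm{R}\Gamma_{\mathrm{an}}((\mathcal{X}_k,\alpha_k)/\mathcal{S}_0,\mathfrak{E}^{\mathrm{conv}})_L$ (or its twist by $-\mathfrak{D}$ in the compact-support case), and $\mathrm{R}\Gamma_{\mathrm{dR},L}$ is $\mathrm{R}\Gamma_{\mathrm{logdR}}(X,\mathcal{E})_L$ (or with $\mathcal{E}(-D)$).

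For the de Rham side, $X$ is proper and smooth over $K$, $\mathcal{E}$ is a coherent module with a log-connection, so $\mathrm{H}^j_{\mathrm{logdR}}(X,\mathcal{E})$ is a finite-dimensional $K$-vector space — this is classical coherent cohomology of a proper rigid (or algebraic, via the formal model) space, and it vanishes for $j>2d$ since $\Omega^{\log,\bullet}_{X/K}$ has length $\dim X=d$ and the relevant Čech/hypercohomology computation is supported in degrees $\le 2d$; tensoring with $L$ over $\mathbb{Q}_p$ preserves both properties and replaces $K$-dimension by $L$-dimension up to the factor $[K:\mathbb{Q}_p]$. The quotients $\mathrm{R}\Gamma_{\mathrm{dR},L}/\mathrm{Fil}^r$ inherit these bounds. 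For the analytic side, I would use Proposition \ref{prop:anabs}: since $\mathcal{X}$ is proper, $\mathrm{R}\Gamma_{\mathrm{an}}((\mathcal{X}_k,\alpha_k)/\mathcal{S}_0,\mathfrak{E}^{\mathrm{conv}})[1]$ is the cone $\mathrm{R}\Gamma_{\mathrm{abs}}$, which by parts (2) and (4) of that Proposition has finite-dimensional cohomology over $K_0$ built as an extension of $N$-kernel and $N$-coker pieces of $\mathrm{H}^j_{\mathrm{HK}}$, and by properness (\cite{BerthelotOgusBook}, as cited) each $\mathrm{H}^j_{\mathrm{HK}}$ is finite-dimensional over $K_0$ and vanishes for $j>2d$; hence $\mathrm{H}^j_{\mathrm{abs}}$ vanishes for $j>2d$ and therefore $\mathrm{H}^j_{\mathrm{an}}$ vanishes for $j>2d+1$, and is finite-dimensional over $K_0$ (so over $L$ after $\otimes_{\mathbb{Q}_p}L$) for all $j$. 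The same holds verbatim for the $-\mathfrak{D}$-twisted versions using Proposition \ref{prop:anabscompact}.

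Assembling: $F^0\mathrm{H}^i(K_{P,r})\subset\mathrm{H}^i(\mathrm{R}\Gamma_L)$ is finite-dimensional and zero for $i>2d+1$; $F^1\mathrm{H}^i(K_{P,r})$ is a quotient of $\mathrm{H}^{i-1}(\mathrm{R}\Gamma_L)\oplus\mathrm{H}^{i-1}(\mathrm{R}\Gamma_{\mathrm{dR},L}/\mathrm{Fil}^r)$, hence finite-dimensional and zero for $i-1>2d+1$, i.e. $i>2d+2$. Combining via the exact sequence $0\to F^1\to\mathrm{H}^i(K_{P,r})\to F^0\to 0$ gives that $\mathrm{H}^i(K_{P,r})$ is finite-dimensional over $L$ for all $i$ and vanishes for $i>2d+2$. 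I expect the one point needing a careful word is the degree bound on the analytic cohomology: one must check that the extra $[1]$-shift in $\mathrm{R}\Gamma_{\mathrm{abs}}\cong\mathrm{R}\Gamma_{\mathrm{an}}[1]$ is the only source of a degree-$2d+1$ contribution and that the $N$-coker term in Proposition \ref{prop:anabs}(4), which involves $\mathrm{H}^{i-1}_{\mathrm{HK}}$, does not push nonvanishing cohomology of $\mathrm{R}\Gamma_{\mathrm{abs}}$ beyond degree $2d$ — but this is exactly what part (4) of that Proposition encodes, so it is routine. A final remark: the vanishing bound $2d+2$ is sharp for the syntomic (not just analytic) complex precisely because of the cone in the definition of $K_{P,r}$, which adds one to the worst degree coming from $\mathrm{R}\Gamma_{\mathrm{an}}$, itself already shifted by one relative to $\mathrm{R}\Gamma_{\mathrm{HK}}$; I would state this explicitly so the reader sees where each $+1$ comes from.
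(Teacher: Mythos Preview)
Your overall strategy---use the $F^0/F^1$ exact sequence and reduce to finiteness and vanishing of the analytic and de Rham pieces---is exactly the paper's approach. However, there is a real gap in your treatment of the analytic side.

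You write that by Proposition~\ref{prop:anabs}(1), $\mathrm{R}\Gamma_{\mathrm{an}}[1]$ is ``the cone $\mathrm{R}\Gamma_{\mathrm{abs}}$'', and then invoke parts~(2) and~(4) to conclude that $\mathrm{H}^j_{\mathrm{an}}$ is finite-dimensional over $K_0$. But parts~(2) and~(4) concern $\mathrm{R}\Gamma_{\mathrm{abs}}\bigl((\mathcal{X}_k,\alpha_{\mathcal{X}_k})/\mathcal{S}_0,\mathfrak{E}_0\bigr)$, the cone of the monodromy $N$ on Hyodo--Kato cohomology, whereas part~(1) identifies $\mathrm{R}\Gamma_{\mathrm{an}}[1]$ with $\mathrm{R}\Gamma_{\mathrm{abs}}\bigl((\mathcal{X}_k,\alpha_{\mathcal{X}_k})/\mathcal{S}_0,\mathfrak{E}^{\mathrm{conv}}\bigr)$, the cone of $Z\tfrac{\partial}{\partial Z}$ on the relative analytic complex. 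These are two different objects, related only by the map $\delta$. In fact $\mathrm{H}^j_{\mathrm{an}}$ is \emph{not} expected to be finite-dimensional: the proof of~(3) shows precisely that $P(\Phi)$ is an isomorphism on the infinite-dimensional piece $\mathcal{I}\cdot\mathfrak{E}^{\mathrm{conv}}$, not that this piece vanishes in cohomology.

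The correct input is part~(3): for any $P(T)\in 1+TL[T]$, the map induced by $\delta$ identifies $\ker P(\Phi)$ and $\mathrm{coker}\,P(\Phi)$ on $\mathrm{H}^j_{\mathrm{an}}$ with those on $\mathrm{H}^{j-1}_{\mathrm{abs}}(\mathfrak{E}_0)$, which \emph{are} finite-dimensional by~(2) and vanish for $j-1>2d$ by~(4). This is exactly what you need: $F^0\mathrm{H}^i(K_{P,r})$ sits inside $\ker P(\Phi_L)$ on $\mathrm{H}^i(\mathrm{R}\Gamma_L)$, and $F^1\mathrm{H}^i(K_{P,r})$ is an extension of $\mathrm{coker}\,P(\Phi_L)$ on $\mathrm{H}^{i-1}(\mathrm{R}\Gamma_L)$ by a quotient of $\mathrm{H}^{i-1}(\mathrm{R}\Gamma_{\mathrm{dR},L}/\mathrm{Fil}^r)$. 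With this correction your degree-counting goes through unchanged and gives the stated bound $i>2d+2$.
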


\begin{proof}
The finite dimensionality follows from  the exact sequences in Proposition \ref{prop:spectralanal} and their analogue in the case of support, Proposition \ref{prop:anabs}, Proposition \ref{prop:anabscompact} and the finite dimensionality of de Rham cohomology and Hyodo-Kato cohomology. Since both these groups vanish in degree $> 2d$, also the vanishing follows. 
\end{proof}


\section{Trace maps}\label{sec:trace}

Assume $\mathcal{X}$ is proper and geometrically connected of relative dimension $d$.
Justified by this properness assumption, we will write $\mathbf{Syn}^{c}$ rather than $\mathbf{Syn}^{c\text{-}\infty }$ when considering the analytic syntomic cohomology. Let $f=[K_0:\mathbb{Q}_p]$ be the degree of the maximal unramified extension of $\mathbb{Q}_p$ in $K$. Assume that the $f$-roots of unity $\mu_f$ are in $L$. 

Let $\mathbb{I}$ be the unit object $\left( \mathcal{O},\mathfrak{O}\right)$. The aim of this section is to define a $K_{L}$-valued trace map on $\mathrm{H}^{2d+1}\left( \mathbf{Syn}^{c}\left( 
\mathbb{I},P,r\right) \right)$, assuming that the pair $(P,r)$ is admissible for $\mathcal{X}_{k}$ according to the  following  

\begin{definition}\label{def:CupProd DefAdm}
Suppose that $P\left( X\right) =\prod\nolimits_{i\in I}\left( 1-\alpha
_{i}^{-1}X\right) \in 1+XL\left[ X\right] $. We say that $\left( P,r\right) $
is admissible for $\mathcal{X}_{k}$ if $r\ges{} d+1$, and $\alpha$ is not equal to 
$p^{d}\zeta$ or to  $p^{d+1}\zeta$ for some root of unity $\zeta$ in $\mu_f$. 
\end{definition}

Denote by 
\[
\mathrm{H}_{\mathrm{logdR},c}^{i}:=\mathrm{H}_{\mathrm{logdR}}^{i}\left( X,%
\mathcal{O}_{X}\left( -D\right) \right) _{L}
\]%
and 
\[
\mathrm{H}_{\mathrm{HK},c}^{i}:=\mathrm{H}_{\mathrm{HK}}^{i}\left( \left( 
\mathcal{X}_{k},\alpha _{\mathcal{X}_{k}}\right) /\mathcal{S}_{0}^{0},%
\mathfrak{O}_{0}\left( -\mathfrak{D}_{0}\right) \right) _{L}
\]
the compactly supported logarithmic de Rham cohomology and  Hyodo--Kato cohomology of the structure sheaf respectively. 
Similarly set 
\[
    \mathrm{H}_{%
\mathrm{an},c}^{2d}:=\mathrm{H}_{\mathrm{an}}^{2d}\left( \left( 
\mathcal{X}_{k},\alpha _{\mathcal{X}_{k}}\right) /K_{0},\mathfrak{O}^{%
\mathrm{conv}}\left( -\mathfrak{D}\right) \right),
\]
and denote by $$\gamma^{\mathrm{HK}}_{c} : \mathrm{H}_{\mathrm{an},c}^{i}\lfre{}\mathrm{H}^{i}_{\mathrm{HK},c}$$
the Frobenius equivariant morphism defined by $Z=0$. According to Corollary $\ref{cor:DstHKDdR}$
the map  $\gamma_{c}^{\mathrm{an}} : \mathrm{H}^{i}_{\mathrm{an},c}\lfre{}\mathrm{H}^{i}_{\mathrm{logdR},c}$
factors as the composition $\mathrm{comp}_{c}\circ{}\gamma^{\mathrm{HK}}_{c}$, where 
\[
       \mathrm{comp}_{c} : \mathrm{H}^{i}_{\mathrm{HK},c}\hlfre{}\mathrm{H}_{\mathrm{logdR},c}^{i}
\] 
is the inclusion arising from the (compactly supported analogue of the) commutative diagram displayed in the statement of Corollary $\ref{cor:DstHKDdR}$. In particular the image of $\gamma_{c}^{\mathrm{an}}$ is contained in 
the image of  the comparison map $\mathrm{comp}_{c}$.

Assume that $$r\ges d+1\ \ \text{ and}\ \  P\left( p^{d}\zeta \right) \neq 0$$
for any $\zeta \in \mu _{f}$. The Hyodo-Kato trace map identifies $\mathrm{%
H}_{\mathrm{HK},c}^{2d}$ with $K_{0,L}\left( -d\right)$ (cf.\ Proposition 4.6 of \cite{EY}), hence the assumption 
$P\left( p^{d}\zeta \right) \neq 0$, for $\zeta\in{}\mu_{f}$, implies that $P\left( \Phi _{L}\right) $ is invertible on $\mathrm{H}_{\mathrm{HK}%
,c}^{2d}$. For each $x$ in $\mathrm{H}^{2d}_{\mathrm{an},c}$, we can then define  
\[
     P\left( \Phi_{L}\right) ^{-1}\left( \gamma _{c}^{\mathrm{an}}\left( x\right) \right)=\mathrm{comp}_{c}\big(P(\Phi_{L})^{-1}\big(\gamma^{\mathrm{HK}}_{c}(x)\big)\big)
\]
in $\mathrm{H}_{\mathrm{logR},c}^{2d}$.  Because $r\ges d+1$,
one has $$\mathrm{H}^{2d}\big( \mathrm{Gr}\big( \Omega _{X/K}^{\mathrm{%
log},\bullet }\left( -D\right) \big) \big) _{L}=\mathrm{H}_{\mathrm{logdR%
},c}^{2d},$$ so that 
\begin{equation}\label{eq:ridondante }
    \mathrm{F}^{1}\mathrm{H}^{2d+1}\left( \mathbf{Syn}^{c}\left( \mathbb{I},P,r\right) \right)=\mathrm{coker}\Big(\mathrm{H}^{2d}_{\mathrm{an},c}\lfre{P(\Phi_{L})\oplus{}\gamma^{\mathrm{an}}_{c}}
    \mathrm{H}^{2d}_{\mathrm{an,c}}\oplus{}\mathrm{H}^{2d}_{\mathrm{logdR},c}\Big). 
\end{equation}
Since $\gamma_{c}^{\mathrm{HK}}$ is Frobenius equivariant, we can then define 
\[
     \partial : \mathrm{F}^{1}\mathrm{H}^{2d+1}\left( \mathbf{Syn}^{c}\left( \mathbb{I},P,r\right) \right)\lfre{}\mathrm{H}^{2d}_{\mathrm{logdR},c}
\]
to be the map which on the class $[x,y]$ represented by the pair $(x,y)$ takes the value  
\[
       \partial\big([x,y]\big)=y-P(\Phi_{L})^{-1}(\gamma^{\mathrm{an}}_{c}(x)).
\]

\begin{remark}
Assume that $r\ges d+1$ and $P\left( p^{d}\zeta \right) \neq 0$ for any $%
\zeta \in \mu _{f}$. The above map $\partial $ is a section of the canonical
map from $\mathrm{H}_{\mathrm{logR},c}^{2d}$ to $\mathrm{F}^{1}\mathrm{H}%
^{2d+1}\left( \mathbf{Syn}^{c}\left( \mathbb{I},P,r\right) \right) $ sending 
$y$ to the class of $\left(0,y\right) $.
\end{remark}

The following lemma explains the relevance of the last assumption in Definition $\ref{def:CupProd DefAdm}$, namely $P\left( p^{d+1}\zeta \right) \neq 0$ for any $\zeta \in \mu _{f}$.

\begin{lemma}
\label{lemma:CupProd L1 Trace}If $P\left( p^{d+1}\zeta \right) \neq 0$ for
any $\zeta \in \mu _{f}$, then%
\[
\mathrm{H}^{2d+1}\left( \mathbf{Syn}^{c}\left( \mathbb{I},P,r\right) \right)
=\mathrm{F}^{1}\mathrm{H}^{2d+1}\left( \mathbf{Syn}^{c}\left( \mathbb{I}%
,P,r\right) \right) \text{.}
\]
\end{lemma}

\begin{proof}
We have that $\mathrm{F}^{0}\mathrm{H}^{2d+1}\left( \mathbf{Syn}^{c}\left( 
\mathbb{I},P,r\right) \right) \subset \mathrm{H}_{\mathrm{an}%
,c}^{2d+1,P\left( \Phi _{L}\right) =0}$, hence it suffices to show that 
$\mathrm{H}_{\mathrm{an},c}^{2d+1,P\left( \Phi _{L}\right) =0}=0$. It
follows from Proposition \ref{prop:anabscompact} $\left( 3\right) $ and $%
\left( 4\right) $ and the vanishing of $\mathrm{H}_{\mathrm{HK},c}^{2d+1}$
that we have $$\mathrm{H}_{\mathrm{an},c}^{2d+1,P\left( \Phi _{L}\right)
=0}\simeq \mathrm{H}_{\mathrm{HK},c}^{2d}\left( -1\right) ^{P\left( \Phi
_{L}\right) =0}\simeq K_{0,L}\left(
-d-1\right) ^{P\left( \Phi _{L}\right) =0},$$
which is zero since by assumption 
$P\left( \Phi _{L}\right) $ is an isomorphism on $K_{0,L}\left( -d-1\right) $.
\end{proof}

Let $\mathrm{Tr}%
_{U}\colon \mathrm{H}_{\mathrm{dR},c}^{2d}(U,\mathcal{O}_{U})\cong K(-d)$ be the
coherent trace map defined in Equation $(4.2.8)$, Theorem 4.2.7 of \cite{LLZ}, and define the \emph{de Rham trace map} $$\mathrm{Tr}_{\mathrm{dR%
}}\colon \mathrm{H}_{\mathrm{logdR},c}^{2d}\cong K_{L}\left( -d\right)$$
to be its composition with the isomorphism $\mathrm{H}_{\mathrm{logdR},c}^{2d}\cong{}\mathrm{H}_{\mathrm{dR},c}^{2d}(U,\mathcal{%
O}_{U})_{L}$.
We can finally define the sought for finite polynomial trace map. 

\begin{definition}
\label{def:CupProd F Trace} Assume that $\left( P,r\right) $ is admissible
for $\mathcal{X}_{k}$. Define 
\[
\mathrm{Tr}_{P,r}\colon \mathrm{H}^{2d+1}\big( \mathbf{Syn}^{c}\big( 
\mathbb{I},P,r\big) \big) \longrightarrow K_{L}
\]%
to be the composition of the natural isomorphism $$\mathrm{H}^{2d+1}\big( \mathbf{Syn}%
^{c}\big( \mathbb{I},P,r\big) \big) =\mathrm{F}^{1}\mathrm{H}%
^{2d+1}\big( \mathbf{Syn}^{c}\big( \mathbb{I},P,r\big) \big)$$ of
Lemma \ref{lemma:CupProd L1 Trace}, the map $\partial$, and the de Rham trace map $\mathrm{Tr}_{\mathrm{dR}}$. Explicitly, after identifying 
$\mathrm{H}^{2d+1}\big( \mathbf{Syn}^{c}\big( \mathbb{I},P,r\big)\big)$ with $\mathrm{F}^{1}\mathrm{H}^{2d+1}\big( \mathbf{Syn}^{c}\big( \mathbb{I},P,r\big) \big)$, one has 
\[
     \mathrm{Tr}_{P,r}([x,y])=\mathrm{Tr}_{\mathrm{dR}}\left( y-P\left( \Phi _{L}\right)
^{-1}\gamma _{\mathrm{an},c}\left( x\right) \right)
\]
for each $[x,y]$ in $F^{1}\mathrm{H}^{2d+1}\big(\mathbf{Syn}^{c}\left( \mathbb{I},P,r\right)\big)$ (cf.\ Equation $(\ref{eq:ridondante })$).
\end{definition}

\begin{remark}
If $P$ divides $Q$ in $1+XL[X]$, $r\ges s$, and $\left( Q,s\right) $ is admissible, then $(P,r)$ is admissible, and $\mathrm{Tr}_{P,r}$
equals the composition of $\mathrm{Tr}_{Q,s}$ with the morphism $\mathrm{H}^{2d+1}\left( 
 \mathbf{Syn
}^{c}\left( \mathbb{I},P,r\right) \right) \longrightarrow \mathrm{H}^{2d+1}\left(  \mathbf{Syn
}^{c}\left( \mathbb{I},Q,s\right)\right) $ of Remark \ref{rmk:Poly R2}.

\end{remark}

%

We conclude this section with the following compatibility of trace maps. 
Let $I^{\mathrm{an}}_{c}$ be the kernel of  $\gamma^{\mathrm{an}}_{c} : \mathrm{H}^{2d}_{\mathrm{an},c}\lfre{}\mathrm{H}^{2d}_{\mathrm{logdR},c}$. It is also the 
kernel of  $\gamma_{c}^{\mathrm{HK}} : \mathrm{H}^{2d}_{\mathrm{an},c}\lfre{}\mathrm{H}^{2d}_{\mathrm{HK},c}$
(defined by $Z=0$), as $\gamma^{\mathrm{an}}_{c}=\mathrm{comp}_{c}\circ{}\gamma^{\mathrm{HK}}_{c}$ and $\mathrm{comp}_{c}$ is injective.

\begin{lemma}\label{lemma:traceHKlogdRcomp} The following diagram commutes.
$$\xymatrix{\mathrm{H}_{\mathrm{an}}^{2d}\left( \left( \mathcal{X}_k,\alpha _{\mathcal{X}
_k}\right) /\left( \mathfrak{S},\mathfrak{M}\right) ,\mathfrak{O}^{\mathrm{
conv}}(-\mathfrak{D})\right)\big/I_{c}^{\mathrm{an}}  \ar[d]_{\gamma^{\mathrm{HK}}_{c}}^{\wr} \ar[r]^{\ \ \ \ \ \ \ \ \ \ \ \ \gamma^{\mathrm{an}}_{c}} & \mathrm{H}_{\mathrm{logdR
}}^{2d}\left( X,\mathcal{O}_X(-D) \right) \ar[r]^-{\sim} & K \\ \mathrm{H}_{\mathrm{HK}}^{2d}\left( \left( \mathcal{X}_k,\alpha _{\mathcal{X
}_k}\right) /\mathcal{S}_{0}^{0},\mathfrak{O}_{0}(-\mathfrak{D}_0)\right)  \ar[r]^-{\sim} & K_0(-d) \ar@/_{5pt}/[ur] & }
$$
Here the horizontal isomorphisms are the trace maps $\mathrm{Tr}_{\rm dR}$ introduced above and $\mathrm{Tr}_{\rm HK}$ of \cite[Prop.~4.6]{EY} respectively, 
and the map $K_0(-d)\longrightarrow{} K$ is the inclusion. 
\end{lemma}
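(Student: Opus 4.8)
The statement is a compatibility between the Hyodo--Kato trace map $\mathrm{Tr}_{\rm HK}$ of \cite[Prop.~4.6]{EY} and the log-de Rham trace map $\mathrm{Tr}_{\rm dR}$, through the specialization $Z \mapsto \pi$ (which realizes $\mathrm{H}^{2d}_{\mathrm{logdR}}(X,\mathcal O_X(-D))$ as the fiber at $Z=\pi$ of the relative analytic cohomology over $(\mathfrak S,\mathfrak M)$) versus the specialization $Z\mapsto 0$ (which realizes Hyodo--Kato cohomology). The overall strategy I would follow is to reduce both trace maps to the same source, namely the relative analytic cohomology $\mathrm{H}^{2d}_{\mathrm{an}}\bigl((\mathcal X_k,\alpha_{\mathcal X_k})/(\mathfrak S,\mathfrak M),\mathfrak O^{\rm conv}(-\mathfrak D)\bigr)$, and check they agree there; since that group, after restricting to the Frobenius eigenspace $\Phi^m = p^{dm}$, maps isomorphically to both fibers (by Proposition~\ref{prop:anabscompact}, exactly as in the proof of Lemma~\ref{lemma:CupProd L1 Trace}), the diagram commutes iff the two composites agree on that eigenspace.

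First I would set up the relative framework: over $(\mathfrak S,\mathfrak M)$ with generic fiber the open unit disk, the top cohomology $\mathrm{H}^{2d}_{\mathrm{an}}\bigl(\mathcal X_k/(\mathfrak S,\mathfrak M),\mathfrak O^{\rm conv}(-\mathfrak D)\bigr)$ is a module over $\mathcal O_{\mathfrak S}[p^{-1}]$-type rings, and by properness (and \cite[Prop.~4.6]{EY} in the relative/Hyodo--Kato form, together with Proposition~\ref{prop:anabscompact}(2)) it is finite-dimensional after the relevant specializations and carries a compatible family of trace isomorphisms landing in the base, functorial for the two base-changes $Z\mapsto \pi$ and $Z\mapsto 0$. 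The key point is that the coherent trace map is constructed (in \cite[Thm.~4.2.7]{LLZ} and \cite[Prop.~4.6]{EY}) by the same local recipe --- residues along the components of the special fiber, summed up via the combinatorial structure of the strict normal crossing divisor --- and this recipe is insensitive to whether the ambient uniformizing parameter $Z$ is sent to $\pi$ or to $0$. Concretely: both $\mathrm{Tr}_{\rm dR}$ and $\mathrm{Tr}_{\rm HK}$ are obtained from the relative trace over $\mathfrak S$ by composing with the residue/evaluation at $Z = \pi$, resp.\ $Z = 0$, and since the relative trace is defined before specializing, the square automatically commutes up to the base-change map $K_0 \hookrightarrow K$ --- note the relative trace naturally lands in $\mathcal O_{\mathfrak S}[p^{-1}](-d)$, whose specialization at $Z=0$ is $K_0(-d)$ and whose specialization at $Z=\pi$ is $K(-d)=K$ (the Tate twist being absorbed into the normalization of $\mathrm{Tr}_{\rm dR}$).

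The main obstacle I anticipate is precisely the bookkeeping needed to check that the \emph{relative} trace map over $(\mathfrak S,\mathfrak M)$ actually exists with the right functoriality --- i.e.\ that \cite[Prop.~4.6]{EY} and \cite[Thm.~4.2.7]{LLZ}, which are stated for a fixed base field, globalize over the unit disk base $\mathfrak S$ compatibly with arbitrary base change and with the Frobenius structure. For this I would argue as follows: restrict to the Frobenius eigenspace $\mathrm{H}^{2d}_{\mathrm{an}}\bigl(\cdots\bigr)^{\Phi^m = p^{dm}}$, on which, by the argument in the proof of Lemma~\ref{lemma:CupProd L1 Trace} (using Proposition~\ref{prop:anabscompact} and the vanishing of Hyodo--Kato cohomology above degree $2d$), both specialization maps are isomorphisms onto $\mathrm{H}^{2d}_{\mathrm{logdR}}(X,\mathcal O_X(-D))$ and $\mathrm{H}^{2d}_{\mathrm{HK}}(\cdots,\mathfrak O_0(-\mathfrak D_0)) \cong K_0(-d)$ respectively; then the equality of the two composites reduces to a statement on a $1$-dimensional (over $K_0$) eigenspace, where one only needs to match two normalizing constants. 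These constants are both pinned down by the same local residue computation on a top-degree form $\omega$ supported near a single component $\mathcal D_{j,k}$ (or, in the good-reduction smooth case, on $\mathcal X_k$ itself), and the computation is literally the same on both sides because the residue of $\omega\wedge dZ/Z$ along $Z=\pi$ and along $Z=0$ are computed by the identical formula; compatibility with $K_0 \subset K$ then follows. I would also record that the Frobenius-eigenspace hypothesis $\Phi^m = p^{dm}$ in the statement is exactly what makes the left vertical arrow an isomorphism, so no further restriction is needed.
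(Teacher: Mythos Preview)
Your plan rests on a relative trace map over the unit disk $(\mathfrak{S},\mathfrak{M})$ specializing to $\mathrm{Tr}_{\rm HK}$ at $Z=0$ and to $\mathrm{Tr}_{\rm dR}$ at $Z=\pi$. You correctly flag that such a relative trace with base-change compatibility is not supplied by the references---both \cite[Prop.~4.6]{EY} and \cite[Thm.~4.2.7]{LLZ} are stated over a field---and you then try to bypass this by restricting to the one-dimensional Frobenius eigenspace and matching normalizing constants via a local residue. But this fallback is where the argument stays incomplete: the claim that both traces are pinned down by ``literally the same residue formula for $\omega\wedge dZ/Z$'' at $Z=0$ versus $Z=\pi$ is asserted rather than verified, and checking that the normalization of $\mathrm{Tr}_{\rm HK}$ in \cite{EY} agrees with the one coming from your residue recipe is precisely the content of the lemma. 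As written, this step is not carried out.

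The paper takes a different and more direct route that avoids the relative-trace construction entirely. It first reduces to the case $\mathcal{D}=\emptyset$ with log-structure $\alpha'$ given only by the special fiber (the general diagram maps to this one). Then it replaces $(\mathcal{X}_k,\alpha'_k)$ by the mod-$p$ reduction $(\overline{\mathcal{X}},\alpha'_{\overline{\mathcal{X}}})$: iterated Frobenius identifies their log-crystalline cohomologies over $\mathcal{S}_0^0$ after inverting $p$, compatibly with traces. Base-changing along the finite flat map $\mathcal{O}_{K_0}\subset\mathcal{O}_K$ turns $\mathrm{H}^{2d}_{\mathrm{logcrys}}\bigl((\overline{\mathcal{X}},\alpha')/\mathcal{S}_0^0\bigr)$ into $\mathrm{H}^{2d}_{\mathrm{logcrys}}\bigl((\overline{\mathcal{X}},\alpha')/\mathcal{O}_K^{\rm log}\bigr)$; since $\mathcal{X}$ is a log-smooth lift of $\overline{\mathcal{X}}$ over $\mathcal{O}_K^{\rm log}$, the standard crystalline--de Rham comparison identifies this with $\mathrm{H}^{2d}_{\mathrm{logdR}}(\mathcal{X},\mathcal{O}_{\mathcal{X}})$ and matches the crystalline trace with the de Rham trace tautologically. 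Inverting $p$ finishes. The advantage is that compatibility of traces under the crystalline--de Rham comparison for a fixed lift is built into the construction of those traces, so no separate normalization check is needed.
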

\begin{proof} First of all we reduce to the case that $\mathfrak{D}=D=\emptyset$ and $\alpha _{\mathcal{X}}$ is the log-structure $\alpha'$ given by the special fiber $\mathcal{X}_k$. Indeed, if the commutativity is proven in this case it follows also in the general case as the corresponding diagram maps to the one for general $\mathfrak{D}$   and log-structure $\alpha _{\mathcal{X}
_k}$. Second, if $n$ is the ramification index of $K_0\subset K$, in the diagram above we can replace the log-crystalline cohomology and trace map for $(\mathcal{X}_k,\alpha' _k)$ with the log-crystalline cohomology and trace map for the reduction $(\overline{\mathcal{X}}, \alpha_{\overline{\mathcal{X}}}')$ modulo $p$ of $(\mathcal{X},\alpha')$ with the left vertical map in the diagram given by $z^n=0$. In fact, taking powers of Frobenius we get an isomorphism of the corresponding log-crystalline cohomologies relative to $\mathcal{S}_{0}^{0}$, inverting $p$. The base change of $\mathrm{H}_{\mathrm{logcrys}}^{2d}\bigl((\overline{\mathcal{X}}, \alpha_{\overline{\mathcal{X}}}') /\mathcal{S}_{0}^{0}\bigr)$ and the trace map, where we omit the structure sheaf for simplicity, via the finite and flat map $\mathcal{O}_{K_0}\subset \mathcal{O}_K$ computes $\mathrm{H}_{\mathrm{logcrys}}^{2d}\bigl((\overline{\mathcal{X}}, \alpha_{\overline{\mathcal{X}}}')/\mathcal{O}_K^{\rm log}\bigr) $ and the corresponding trace map. Here, $\mathcal{O}_K^{\rm log}$ stands for $\mathcal{O}_K$, with log-structure given by the divisor $p=0$. As $\mathcal{X}$ provides a log-smooth lift of $\overline{\mathcal{X}}$  to $\mathcal{O}_K^{\rm log}$, this cohomology group is identified with $\mathrm{H}_{\mathrm{logdR
}}^{2d}( \mathcal{X},\mathcal{O}_{\mathcal{X}}) $ and the log-crystalline trace map is identified with the trace map in log de Rham cohomology. If we invert $p$, these coincide with  $\mathrm{H}_{\mathrm{logdR
}}^{2d}\left( X,\mathcal{O}_X \right)$ and $\mathrm{Tr}_{\rm dR}$. The  claim follows.
\end{proof}

%

\section{Syntomic polynomial Kunneth morphisms and cup products}\label{SS PolyCup}

Consider $\mathbb{E}=\left( \mathcal{E},\mathrm{Fil}^{\bullet }
\mathcal{E},\nabla,\mathfrak{E},\Phi _{\mathfrak{E}}\right) 
$, a non-degenerate Frobenius log-crystal. Write $K_{P,r}(\mathbb{E})$ or simply $K_{P,r}$ for one of the complexes introduced in Definition \ref{def:synpol} and the following variants. We think about $K_{P,r}$ as a double complex $K_{P,r}^{p,q}$ as follows. We have:
\begin{itemize}
\item[i.] $K_{P,r}^{p,q}=0$ for $p\neq 0$, $1$ and $q\leq 0$;
\item[ii.] horizontal morphisms $d_1^{0,q}=(P(\Phi_L),\gamma)\colon K_{P,r}^{0,q}\rightarrow K_{P,r}^{1,q}$  and vertical morphisms $d_2^{p,q}\colon K_{P,r}^{p,q}\rightarrow K_{P,r}^{p,q+1}$ arising
from those of the Chech complexes $\mathrm{R}\Gamma _{L}$ and $\mathrm{R}\Gamma _{\mathrm{dR},L}$ such that $d_2^{1,q} \circ d_1^{0,q}  =d_1^{0,q+1}\circ d_2^{p,q}$
\end{itemize}
Then, the vertical filtration yields a spectral sequence
$$E_{P,r}^{p,q}\left( \mathbb{E}\right)_a \Longrightarrow
E_{P,r}^{p+q}\left( \mathbb{E}\right) \label{Poly F0}$$
whose first page $E_{P,r,1}^{p,q}$ is  $\mathrm{H}^{q}\left(\mathrm{R}\Gamma
_{L}\right) $ for $p=0$, $\mathrm{H}^{q}\left(\mathrm{R}\Gamma _{L}\right)
\oplus \mathrm{H}^{q}\left( \frac{\mathrm{R}\Gamma _{\mathrm{dR},L}}{\mathrm{
Fil}^{r}\left(\mathrm{R} \Gamma _{\mathrm{dR},L}\right) }\right) $ for $p=1$ and
zero otherwise, and the abutment $E_{P,r}^{p+q}$ is $\mathrm{H}^{p+q}(K_{P,r})$. It is easily checked that the filtration induced on $\mathrm{H}^{p+q}(K_{P,r})$  by the spectral sequence is described by $F^1 \mathrm{H}^{p+q}(K_{P,r})$ and $F^0 \mathrm{H}^{p+q}(K_{P,r})$  defined in the discussion after Definition \ref{def:synpol}.

\subsection{The setting}\label{sec:SS PolyCup Setting}

Assume that $\mathbb{E}_{i}=\left( \mathcal{E}_{i},\mathrm{Fil}^{\bullet }
\mathcal{E}_{i},\nabla _{i},\mathfrak{E}_{i},\Phi _{\mathfrak{E}_{i}}\right) 
$ are non-degenerate Frobenius log-crystal relative to $\left( X_{i},\alpha
_{X_{i}},\overline{\mathcal{X}_{i}},\alpha _{\overline{\mathcal{X}_{i}}
}\right) $, where $\left(X_i,D_i, \mathcal{X}_{i},\mathcal{D}_{i}\right) $ satisfy
Assumption \ref{ass:formalcase} for $i=1$, $2$, $12$, $3$. Set $U_{i}:=X_{i}-D_{i}$. We suppose that we have morphisms of formal
schemes
\begin{equation*}
\mathcal{X}_{3}\longrightarrow \mathcal{X}_{12}\rightrightarrows \mathcal{X}
_{1},\mathcal{X}_{2}
\end{equation*}
such that $\mathcal{D}_{3}$ contains the inverse image of $\mathcal{D}_{12}$
and $\mathcal{D}_{12}$ contains the inverse images of $\mathcal{D}_{i}$ and
the resulting morphism $\mathcal{X}_{12}\rightarrow \mathcal{X}_{1}\times 
\mathcal{X}_{2}$ induces an isomorphism $U_{12}\overset{\sim }{\rightarrow }
U_{1}\times U_{2}$. We can pull-back $\mathbb{E}_{i}$ to $\mathcal{X}_{3}$
and consider their tensor product $\mathbb{E}_{12}$. Finally, we assume that
there is a morphism $\mathbb{E}_{12}\rightarrow \mathbb{E}_{3}$. 

Write $K_i:=K_{P_i,r_i}(\mathbb{E}_i)$ for the syntomic double complexes, with or without  support.
Given the morphism $\mathbb{E}_{12}\rightarrow \mathbb{E}_{3}$ we wish to define a cup product of the cohomologies of $K_1$ and $K_2$ with values in $K_3$, respecting filtrations and compatible with cup products on analytic and de Rham cohomology. In order to make this precise we need some generalities.

\subsection{Cup products of double complexes}

Assume we have double complexes $K_1$, $K_2$ and $K_3$ (in our applications, mainly such that $K_i^{p,q}=0$ for $p\neq 0$, $1$ and $q\leq 0$).
We take the convention that the complexes are {\em commutative}, i.e., the horizontal $d_1^{p,q}:K_i^{p,q} \rightarrow K_i^{p+1,q}$ and vertical $d_2^{p,q}:K_i^{p,q} \rightarrow K_i^{p,q+1}$ differentials commute $d_1\circ d_2=d_2\circ d_1$. We give the following

\begin{definition}\label{def:CupProd SS F DblCmpx3}
A cup product $K_{1}\otimes K_{2}\rightarrow K_{3}$ of double complexes 
satisfying the Leibniz rule is a family of morphisms
$$
\cup ^{p_{1},q_{1};p_{2},q_{2}}:K_{1}^{p_{1},q_{1}}\otimes
K_{2}^{p_{2},q_{2}}\rightarrow K_{3}^{p_{1}+p_{2},q_{1}+q_{2}}
$$
such that, if $x_{i}\in K_{i}^{p_{i},q_{i}}$, we have
\begin{eqnarray}
d_{1}\left( x_{1}\cup x_{2}\right) &=&d_{1}\left( x_{1}\right) \cup
x_{2}+\left( -1\right) ^{p_{1}}x_{1}\cup d_{1}\left( x_{2}\right),
\notag \\
d_{2}\left( x_{1}\cup x_{2}\right) &=&d_{2}\left( x_{1}\right) \cup
x_{2}+\left( -1\right) ^{q_{1}}x_{1}\cup d_{2}\left( x_{2}\right).
\end{eqnarray}
We will think about complexes as being double complexes concentrated in the
column $p=0$: taking $p_{1}=p_{2}=0$ yields the notion of cup product of
complexes satisfying the Leibniz rule.
\end{definition}

Let us define $\widetilde{d}_{1}^{p,q}:=d_{1}^{p,q}$ and $\widetilde{d}%
_{2}^{p,q}:=\left( -1\right) ^{p}d_{2}^{p,q}$ for any one of the above
complexes, so that $\widetilde{d}:=\widetilde{d}_{1}+\widetilde{d}_{2}$ is
the differential of the associated total complex, and set $\widetilde{\cup }%
^{p_{1},q_{1};p_{2},q_{2}}:=\left( -1\right) ^{q_{1}p_{2}}\cup
^{p_{1},q_{1};p_{2},q_{2}}$. Then, if $x_{i}\in K_{i}^{p_{i},q_{i}}$, we have%
\begin{equation*}
\widetilde{d}\left( x_{1}\widetilde{\cup }x_{2}\right) =\widetilde{d}\left(
x_{1}\right) \widetilde{\cup }x_{2}+\left( -1\right) ^{n_{1}}x_{1}\widetilde{%
\cup }\widetilde{d}\left( x_{2}\right) 
\end{equation*}%
for the total degree $n_{1}:=p_{1}+q_{1}$ of $x_{1}$, as it follows from the
fact that the same formula holds for $\widetilde{d}_{1}$ and $\widetilde{d}%
_{2}$. In particular, $\widetilde{\cup }^{p_{1},q_{1};p_{2},q_{2}}$ yields a
cup product of complexes satisfying the Leibniz rule between the associated
total complexes and, in cohomology, for every $n_{1}$ and $n_{2}$, a cup
product (induced by $\widetilde{\cup }$)%
\begin{equation*}
\cup ^{n_{1},n_{2}}\colon \mathrm{H}^{n_{1}}\left( K_{1}\right) \otimes 
\mathrm{H}^{n_{2}}\left( K_{2}\right) \longrightarrow \mathrm{H}%
^{n_{1}+n_{2}}\left( K_{3}\right) \text{.}
\end{equation*}%
In the following proposition, whose verification we leave to the reader, we
consider the spectral sequences associated to the vertical filtration.
Writing $B_{i,\infty }^{p,q}$ (resp. $Z_{i,\infty }^{p,q}$) for the union
(resp. the intersection) of the coboundaries (resp. cocycles) induced by the
differentials of the pages and setting $E_{i,\infty }^{p,q}:=\frac{%
Z_{i,\infty }^{p,q}}{B_{i,\infty }^{p,q}}$, there is always a canonical
inclusion of $E_{i,\infty \infty }^{p,q}:=\mathrm{gr}^{p}\left(
E_{i}^{p+q}\right) $ into $E_{i,\infty }^{p,q}$ (which is an isomorphism
when $K_{i}^{p,q}=0$ for $p>0$ or $q<0$).

\begin{proposition}
\label{prop:cupproductspectral} The cup product $\cup ^{n_{1},n_{2}}$ on
cohomology preserves the filtrations, namely it induces cup products 
\begin{equation*}
\cup ^{n_{1},n_{2}}\colon \mathrm{Fil}^{p_{1}}\mathrm{H}^{n_{1}}\left(
K_{1}\right) \otimes \mathrm{Fil}^{p_{2}}\mathrm{H}^{n_{2}}\left(
K_{2}\right) \longrightarrow \mathrm{Fil}^{p_{1}+p_{2}}\mathrm{H}%
^{n_{1}+n_{2}}\left( K_{3}\right) 
\end{equation*}%
Moreover, the cup product of double complexes $\cup
^{p_{1},q_{1};p_{2},q_{2}}$ induces, via the associated morphisms $%
\widetilde{\cup }^{p_{1},q_{1};p_{2},q_{2}}$, a family of morphisms 
\begin{equation*}
\cup _{r}=\cup _{r}^{p_{1},q_{1};p_{2},q_{2}}\colon
E_{1,r}^{p_{1},q_{1}}\otimes E_{2,r}^{p_{2},q_{2}}\rightarrow
E_{3,r}^{p_{1}+p_{2},q_{1}+q_{2}},r\in \mathbb{N\cup }\left\{ \infty ,\infty
\infty \right\} 
\end{equation*}%
such that, for $r\neq \infty $ and $x_{i}\in E_{i,r}^{p_{i},q_{i}}$, we have 
\begin{equation*}
d_{r}\left( x_{1}\cup _{r}x_{2}\right) =d_{r}\left( x_{1}\right) \cup
_{r}x_{2}+(-1)^{p_{1}+q_{1}}x_{1}\cup _{r}d_{r}\left( x_{2}\right) \text{,}
\end{equation*}%
$\cup _{r+1}$ is induced by $\cup _{r}$, i.e., $\left[ x_{1}\right] \cup
_{r+1}\left[ x_{2}\right] =\left[ x_{1}\cup _{r}x_{2}\right] $ if $\left[
x_{i}\right] \in E_{i,r+1}^{p_{i},q_{i}}$ is the class of $x_{i}\in
E_{i,r}^{p_{i},q_{i}}$, $\cup _{\infty }$ is induced by any one of the
pairings $\cup _{r}$ with $r\in \mathbb{N}$, i.e. $\left[ x_{1}\right] \cup
_{\infty }\left[ x_{2}\right] =\left[ x_{1}\cup _{r}x_{2}\right] $ if $%
x_{i}\in Z_{i,\infty }^{p_{i},q_{i}}$, and $\cup _{\infty }$ restricts to $%
\cup _{\infty \infty }$, which is induced by $\cup ^{n_{1},n_{2}}$ on the
associated graded objects.
\end{proposition}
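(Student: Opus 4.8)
The plan is to verify everything at the level of the double complexes themselves, and then \emph{transport} the resulting identities through the spectral sequence machinery, since all the claimed statements are purely formal consequences of the Leibniz rule once it is established for $\widetilde{\cup}$. First I would observe that the sign-twisted cup product $\widetilde{\cup}^{p_1,q_1;p_2,q_2} := (-1)^{q_1 p_2}\cup^{p_1,q_1;p_2,q_2}$ satisfies the Leibniz rule with respect to the total differential $\widetilde{d} = \widetilde{d}_1 + \widetilde{d}_2$: this is a short sign-chase, using that the analogous rule holds separately for $\widetilde{d}_1$ and $\widetilde{d}_2$ (which in turn follows by unwinding the definitions $\widetilde{d}_1 = d_1$, $\widetilde{d}_2^{p,q} = (-1)^p d_2^{p,q}$ and the two Leibniz identities in Definition~\ref{def:CupProd SS F DblCmpx3}). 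Consequently $\widetilde{\cup}$ descends to a cup product $\cup^{n_1,n_2}$ on the cohomology of the total complexes.

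Next I would address the compatibility with the vertical filtration. Recall that $F^p\mathrm{H}^n(K_i)$ is the image in $\mathrm{H}^n(\mathrm{Tot}\,K_i)$ of the cocycles supported in horizontal degrees $\geq p$; concretely, since each $K_i$ is concentrated in columns $p=0,1$, the filtration has just the two steps $F^1 \subset F^0 = \mathrm{H}^n$ described after Definition~\ref{def:synpol}. A representative of a class in $\mathrm{Fil}^{p_1}$ can be chosen with vanishing components in horizontal degree $< p_1$, and likewise for $\mathrm{Fil}^{p_2}$; the cup product of two such representatives is again supported in horizontal degree $\geq p_1+p_2$ by the bidegree formula, which gives the claimed $\cup^{n_1,n_2}\colon \mathrm{Fil}^{p_1}\otimes \mathrm{Fil}^{p_2}\to \mathrm{Fil}^{p_1+p_2}$. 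The construction of the pairings $\cup_r$ on the pages is the standard inductive argument: one checks that $\widetilde{\cup}$ sends $Z_{r}\otimes Z_{r}$ into $Z_{r}$ and $(B_r \otimes Z_r) + (Z_r \otimes B_r)$ into $B_r$ (using the Leibniz rule for $d_r$, which follows from the Leibniz rule for $\widetilde{d}$ by induction on $r$), so that a well-defined pairing on $E_r$ is induced and the formula $d_r(x_1\cup_r x_2) = d_r(x_1)\cup_r x_2 + (-1)^{p_1+q_1} x_1 \cup_r d_r(x_2)$ propagates to the next page. The identifications of $\cup_{r+1}$ with the pairing induced by $\cup_r$, of $\cup_\infty$ with any $\cup_r$, and of the restriction of $\cup_\infty$ to $\cup_{\infty\infty}$ with the pairing induced by $\cup^{n_1,n_2}$ on the associated graded, are then immediate from the definitions of $Z_\infty$, $B_\infty$, and $E_{\infty\infty}^{p,q} = \mathrm{gr}^p(E^{p+q})$.

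The one point requiring genuine care — and the place where the sign twist $(-1)^{q_1 p_2}$ is indispensable — is keeping the bigrading bookkeeping consistent between the internal Leibniz signs of the double complex and the single-grading Leibniz signs after passing to total complexes and then to pages. I would isolate this as the main obstacle and handle it once, cleanly, at the level of $\widetilde{\cup}$ and $\widetilde{d}$, after which the filtration statement and the page-by-page statements are automatic. Since the proposition explicitly invites the verification to be left to the reader, in the paper I would simply record the statement of this sign-chase and the inductive structure of the page pairings, and refer to it as routine; the substantive content is entirely the identity for $\widetilde{d}(x_1\widetilde{\cup}x_2)$, which is itself a one-line computation modulo signs.
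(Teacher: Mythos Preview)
Your proposal is correct and matches the paper's treatment exactly: the paper explicitly states ``whose verification we leave to the reader'' and gives no proof at all, while you have supplied precisely the standard sign-chase and inductive argument that such a verification entails. You even anticipated this, noting that the proposition ``explicitly invites the verification to be left to the reader'' --- so there is nothing further to compare.
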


We finally remark that a cup product of double complexes can be interpreted
as a morphism of double complexes from $\mathrm{Tot}\left( K_{1}\otimes
K_{2}\right) $ to $K_{3}$ as follows. Here $\mathrm{Tot}\left( K_{1}\otimes
K_{2}\right) ^{p,q}$ is the sum of the $K_{1}^{p_{1},q_{1}}\otimes
K_{2}^{p_{2},q_{2}}$'s as long as $p_{1}+p_{2}=p$ and $q_{1}+q_{2}=q$ and
the differentials $d_{\mathrm{Tot},i}^{p,q}$ are obtained setting $d_{%
\mathrm{Tot},1}^{p,q}:=\widetilde{d}_{\mathrm{Tot},1}^{p,q}$ and $d_{\mathrm{%
Tot},2}^{p,q}:=\left( -1\right) ^{p}\widetilde{d}_{\mathrm{Tot},2}^{p,q}$,
where $\widetilde{d}_{\mathrm{Tot},i}^{p,q}$ is characterized by the
property that its restriction to $K_{1}^{p_{1},q_{1}}\otimes
K_{2}^{p_{2},q_{2}}$ is given by $\widetilde{d}_{i}^{p_{1},q_{1}}\otimes
1_{K_{2}^{p_{2},q_{2}}}+\left( -1\right)
^{n_{1}}1_{K_{1}^{p_{1},q_{1}}}\otimes \widetilde{d}_{i}^{p_{2},q_{2}}$.
Then the equalities appearing in Definition \ref{def:CupProd SS F DblCmpx3}
simply says that the family of the morphisms $\cup ^{p_{1},q_{1};p_{2},q_{2}}
$ defines a morphism of double complexes.

\subsection{Cup products in syntomic cohomology}
We go back to the notation of \S \ref{sec:SS PolyCup Setting}. 
Recall that, if $P_{1}\left( X\right) ,P_{2}\left( X\right) \in 1+XL\left[ X
\right] $ and write $P_{i}\left( X\right) =\prod\nolimits_{\alpha \in R_{i}}\left( 1-\alpha
X\right)$ where $\alpha \in \overline{L}$, then
$$
\left( P_{1}\ast P_{2}\right) \left( X\right) :=\prod\nolimits_{\left(
\alpha _{1},\alpha _{2}\right) \in R_{1}\times R_{2}}\left( 1-\alpha
_{1}\alpha _{2}X\right) \in 1+X L\left[ X\right] .
$$We choose $\lambda \in K$ and  $
p_{1}\left( X_{1},X_{2}\right)$, $p_{2}\left( X_{1},X_{2}\right) \in L\left[
X_{1},X_{2}\right] $ such that
\begin{equation}
\left( P_{1}\ast P_{2}\right) \left( X_{1}X_{2}\right) =p_{1}\left(
X_{1},X_{2}\right) P_{1}\left( X_{1}\right) +p_{2}\left( X_{1},X_{2}\right)
P_{2}\left( X_{2}\right).  \label{CupProd F1}
\end{equation}
Notice that (i) can be achieved thanks to \cite[Lemma 4.2]{Bes00}. We define a  cup product of double complexes
\begin{equation*}
\Cup _{P_{1},r_{1};P_{2},r_{2}}\colon K_{P_{1},r_{1}}^{p_{1},q_{1}}\left( 
\mathbb{E}_{1}\right) \otimes
_{K_{0,L}}K_{P_{2},r_{2}}^{c,p_{2},q_{2}}\left( \mathbb{E}_{2}\right)
\rightarrow K_{P_{1}\ast
P_{2},r_{1}+r_{2}}^{c,p_{1}+p_{2},q_{1}+q_{2}}\left( \mathbb{E}_{3}\right)
\end{equation*}
satisfying the condition of Definition \ref{def:CupProd SS F DblCmpx3}  as
follows. We first remark that we always have a commutative diagram
\begin{equation*}
\begin{array}{ccc}
\mathrm{R}\Gamma _{L}\left( \mathbb{E}_{1}\right) \otimes _{K_{0,L}}\mathrm{R}\Gamma
_{L}\left( \mathbb{E}_{2}\right) & \overset{\cup }{\longrightarrow } & 
\mathrm{R}\Gamma _{L}\left( \mathbb{E}_{3}\right) \\ 
\gamma _{1}\otimes _{K_{0,L}}\gamma _{2}\downarrow \text{ \ \ \ \ \ \ \ \ \
\ } &  & \text{ \ \ }\downarrow \gamma _{3} \\ 
\mathrm{R}\Gamma _{\mathrm{dR},L}\left( \mathbb{E}_{1}\right) \otimes
_{K_{L}}\mathrm{R}\Gamma_{\mathrm{dR},L}\left( \mathbb{E}_{2}\right) & \overset{
\cup }{\longrightarrow } & \mathrm{R}\Gamma_{\mathrm{dR},L}\left( \mathbb{E}_{3}\right)
\end{array}
\end{equation*}
with the property that vertical arrows are morphisms of complexes of $
K_{0,L} $-modules and the horizontal arrows are cup products of complexes
satisfying Leibniz rule; see the proof of Lemma \ref{lemma:CupProdCup}. We may assume that $\mathbb{E}_{12}=
\mathbb{E}_{3}$ in our definition of the pairings, so that $\mathrm{R}\Gamma
_{L}^c\left( \mathbb{E}_{3}\right) =\mathrm{R}\Gamma _{L}\left( \mathbb{E}_{1}\right)
\otimes _{K_{0,L}}\mathrm{R}\Gamma _{L}\left( \mathbb{E}_{2}\right) $ and $\mathrm{R}\Gamma _{
\mathrm{dR},L}\left( \mathbb{E}_{3}\right) =\mathrm{R}\Gamma _{\mathrm{dR},L}\left( 
\mathbb{E}_{1}\right) \otimes _{K_{L}}\mathrm{R}\Gamma _{\mathrm{dR},L}\left( \mathbb{E%
}_{2}\right) $. The product in the various degree
is described in the following table, where $u_i$, $x_i\in \mathrm{R}\Gamma
_{L}\left( \mathbb{E}_i\right) $ and $y_i\in \mathrm{R}\Gamma _{\mathrm{dR}
,L}\left( \mathbb{E}_{i}\right) $ for $i=1$, $2$.
\begin{equation*}
\resizebox{\textwidth}{!}{ \begin{tabular}{|l|l|l|} \hline $\Cup
_{P_{1},r_{1};P_{2},r_{2}}$ & $u_{2}$ & $\left( x_{2},y_{2}\right) $ \\
\hline $u_{1}$ & $u_{1}\otimes _{K_{0,L}}u_{2}$ & $\left( p_{2}\left(
\Phi _{1},\Phi _{2}\right) \left( u_{1}\otimes _{K_{0,L}}x_{2}\right)
,\left( 1-\lambda \right) \gamma \left( u_{1}\right) \otimes
_{K}y_{2}\right) $ \\ \hline $\left( x_{1},y_{1}\right) $ & $\left(
p_{1}\left( \Phi _{1},\Phi _{2}\right) \left( x_{1}\otimes
_{K_{0,L}}u_{2}\right) ,\lambda \cdot y_{1}\otimes _{K}\gamma \left(
u_{2}\right) \right) $ & $0$ \\ \hline \end{tabular}}
\end{equation*}

\begin{lemma}
\label{lemma:CupProdCup} We have that $\Cup _{P_{1},r_{1};P_{2},r_{2}}$ is a
cup product of double complexes satisfying Leibniz rule; see Definition \ref%
{def:CupProd SS F DblCmpx3}. Moreover, setting $P_{3}:=P_{1}\ast P_{2}$ and $%
r_{3}:=r_{1}+r_{2}$, we have the following commutative diagram of complexes
in which the horizontal arrows are the cup products of complexes satisfying
the Leibniz induced by the respective $\Cup _{P_{1},r_{1};P_{2},r_{2}}$'s
(via the associated $\widetilde{\Cup }%
_{P_{1},r_{1};P_{2},r_{2}}^{p_{1},q_{1};p_{2},q_{2}}:=\left( -1\right)
^{q_{1}p_{2}}\Cup _{P_{1},r_{1};P_{2},r_{2}}^{p_{1},q_{1};p_{2},q_{2}}$, see
the discusion before Proposition \ref{prop:cupproductspectral}) and where
the morphism%
\begin{equation*}
\mathbf{Syn}_{\mathrm{rig}}^{c}(\mathbb{E}_{3},P_{3},r_{3})\longrightarrow 
\mathbf{Syn}_{\mathrm{rig}}^{c}(\mathbb{E}_{3}^{\dagger },P_{3},r_{3})
\end{equation*}%
is a quasi-isomorphism:%
\begin{equation*}
\begin{array}{ccc}
\mathbf{Syn}(\mathbb{E}_{1},P_{1},r_{1})\otimes _{K_{0},L}\mathbf{Syn}%
^{c-\infty }(\mathbb{E}_{2},P_{2},r_{2}) & \longrightarrow  & \mathbf{Syn}%
^{c-\infty }(\mathbb{E}_{3},P_{3},r_{3}) \\ 
1\otimes \tau _{\mathrm{rig,c}}\uparrow \text{ \ \ \ \ \ \ \ \ \ \ \ \ \ \ \
\ \ \ \ \ \ \ \ } &  & \text{ \ \ \ \ \ \ }\uparrow \tau _{\mathrm{rig,c}}
\\ 
\mathbf{Syn}(\mathbb{E}_{1},P_{1},r_{1})\otimes _{K_{0},L}\mathbf{Syn}_{%
\mathrm{rig}}^{c}(\mathbb{E}_{2},P_{2},r_{2}) & \longrightarrow  & \mathbf{%
Syn}_{\mathrm{rig}}^{c}(\mathbb{E}_{3},P_{3},r_{3}) \\ 
\tau _{\mathrm{rig}}\otimes 1\downarrow \text{ \ \ \ \ \ \ \ \ \ \ \ \ \ \ \
\ \ \ \ \ \ } &  & \downarrow  \\ 
\mathbf{Syn}_{\mathrm{rig}}(\mathbb{E}_{1},P_{1},r_{1})\otimes _{K_{0},L}%
\mathbf{Syn}_{\mathrm{rig}}^{c}(\mathbb{E}_{2},P_{2},r_{2}) & 
\longrightarrow  & \mathbf{Syn}_{\mathrm{rig}}^{c}(\mathbb{E}_{3}^{\dagger
},P_{3},r_{3})\text{.}%
\end{array}%
\end{equation*}%
Here $\mathbf{Syn}_{\mathrm{rig}}^{c}(\mathbb{E}_{3}^{\dagger
},P_{3},r_{3}):=\mathbf{Syn}_{\mathrm{rig},U^{o}\subset Y,\alpha _{k}}^{c}(%
\mathbb{E}_{i}^{\dagger },P_{i},r_{i})$ is an overconvergent version of $%
\mathbf{Syn}_{\mathrm{rig}}^{c}(\mathbb{E}_{3},P_{3},r_{3})$ (obtained
considering the $\mathfrak{E}_{\underline{\beta }}^{\dagger }$'s of \S \ref%
{sec:rigsyn} and replacing the $\mathfrak{E}_{\underline{\beta }}^{\mathrm{%
conv}}$'s with them in Definition \ref{def:syntomiccompactonU}) and we have $%
\mathbf{Syn}_{\mathrm{rig}}:=\mathbf{Syn}_{\mathrm{rig},U\subset Y,\alpha
_{k}}$, $\mathbf{Syn}_{\mathrm{rig}}^{c}:=\mathbf{Syn}_{\mathrm{rig}%
,U^{o}\subset Y,\alpha _{k}}^{c}$ with $U=\mathcal{X}_{k}^{\mathrm{sm}}$, $%
U^{o}=U\backslash \mathcal{D}_{k}$ and $Y=\mathcal{X}_{k}$. Furthermore, $%
\tau _{\mathrm{rig}}=\tau _{\mathrm{rig},U\subset Y,\alpha _{k}}$ and $\tau
_{\mathrm{rig,c}}$ are the morphisms of double complexes defined in \S \ref%
{sec:rigsyn} and in \S \ref{sec:rigcmpct}. We can also replace everywhere in
the diagram $\mathbf{Syn}$ with $\mathbf{Syn}^{c\text{-}\infty }$.
\end{lemma}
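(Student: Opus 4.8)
The plan is to verify the two displayed identities of Definition \ref{def:CupProd SS F DblCmpx3} for $\Cup_{P_1,r_1;P_2,r_2}$ by a direct computation on each of the three non-trivial pieces of the table (the entries $u_1\cup u_2$, $u_1\cup(x_2,y_2)$ and $(x_1,y_1)\cup u_2$), using that $P_i(\Phi_i)\oplus\gamma_i$ is the horizontal differential $d_1$ of $K_{P_i,r_i}$, that the vertical differential $d_2$ is inherited from the \v Cech differentials on $\mathrm{R}\Gamma_L$ and $\mathrm{R}\Gamma_{\mathrm{dR},L}$, and that $\gamma_i$ is Frobenius-equivariant and commutes with the \v Cech differentials. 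The only subtlety is the interaction of the horizontal differentials with the polynomials $p_1(\Phi_1,\Phi_2)$ and $p_2(\Phi_1,\Phi_2)$ appearing in the table: here the defining relation \eqref{CupProd F1}, namely $(P_1\ast P_2)(X_1X_2)=p_1(X_1,X_2)P_1(X_1)+p_2(X_1,X_2)P_2(X_2)$, is exactly what makes the horizontal Leibniz rule close up to the target differential $P_3(\Phi_3)=P_3(\Phi_1\otimes\Phi_2)$, after noting $\Phi_3=\Phi_1\otimes\Phi_2$ on $\mathbb{E}_{12}=\mathbb{E}_3$. The scalars $\lambda$ and $1-\lambda$ in the $\gamma$-components are forced so that the two de Rham contributions to $d_1$ of $(x_1,y_1)\cup u_2$ and $u_1\cup(x_2,y_2)$ add up to $\gamma_3$ of the cup product of the \v Cech cocycles, i.e. $\lambda+(1-\lambda)=1$; I would simply record this bookkeeping. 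The vertical Leibniz rule is immediate since $d_2$ acts componentwise by \v Cech differentials which satisfy Leibniz for the underlying cup products $\mathrm{R}\Gamma_L\otimes\mathrm{R}\Gamma_L\to\mathrm{R}\Gamma_L$ and $\mathrm{R}\Gamma_{\mathrm{dR},L}\otimes\mathrm{R}\Gamma_{\mathrm{dR},L}\to\mathrm{R}\Gamma_{\mathrm{dR},L}$, and $p_i(\Phi_1,\Phi_2)$ commutes with $d_2$ because the \v Cech differentials are $\Phi$-equivariant.

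For the commutative diagram of the three total complexes, I would argue that each square is functorial in the coefficients and in the pair $(U,Y)$, so it suffices to know two things: first, that $\Cup_{P_1,r_1;P_2,r_2}$ is compatible with the defining cup products on the analytic (resp. rigid, resp. rigid with support) cohomology complexes $\mathrm{R}\Gamma_L$ and on $\mathrm{R}\Gamma_{\mathrm{dR},L}$; second, that the vertical comparison morphisms $\tau_{\mathrm{rig}}$ of \S\ref{sec:rigsyn} and $\tau_{\mathrm{rig,c}}$ of \S\ref{sec:rigcmpct} are themselves compatible with these cup products on the underlying cohomology complexes. The first point reduces, via the explicit \v Cech/de Rham representatives of \S\ref{sec:dR}, \S\ref{sec:analsyn} and \S\ref{sec:rigsyn}, to the standard exterior-product-of-logarithmic-de-Rham-complexes construction $(\omega_1,\omega_2)\mapsto \omega_1\wedge\omega_2$ under the identification $U_{12}\cong U_1\times U_2$ and the Kunneth morphism for \v Cech covers; this gives the commuting square
\begin{equation*}
\begin{array}{ccc}
\mathrm{R}\Gamma_L(\mathbb{E}_1)\otimes_{K_{0,L}}\mathrm{R}\Gamma_L(\mathbb{E}_2) & \overset{\cup}{\longrightarrow} & \mathrm{R}\Gamma_L(\mathbb{E}_3)\\
\gamma_1\otimes\gamma_2\big\downarrow & & \big\downarrow\gamma_3\\
\mathrm{R}\Gamma_{\mathrm{dR},L}(\mathbb{E}_1)\otimes_{K_L}\mathrm{R}\Gamma_{\mathrm{dR},L}(\mathbb{E}_2) & \overset{\cup}{\longrightarrow} & \mathrm{R}\Gamma_{\mathrm{dR},L}(\mathbb{E}_3)
\end{array}
\end{equation*}
referred to in the statement, since $\gamma_i$ is in all cases induced by $Z\mapsto\pi$ and hence commutes with wedge products. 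The second point is the analogous statement for the morphisms of double complexes $\tau_{\mathrm{rig}}$, $\tau_{\mathrm{rig,c}}$, which by construction are induced by the restriction maps from tubular neighborhoods to strict neighborhoods and from $\mathfrak{E}^{\mathrm{conv}}$ to $\mathfrak{E}^\dagger$; these commute with exterior products since exterior products are defined on each piece compatibly with restriction. The isomorphism on the lower-right vertical arrow, $\mathbf{Syn}^c_{\mathrm{rig}}(\mathbb{E}_3,P_3,r_3)\xrightarrow{\sim}\mathbf{Syn}^c_{\mathrm{rig}}(\mathbb{E}_3^\dagger,P_3,r_3)$, is Corollary \ref{cor:BerCoh1} combined with Lemma \ref{lemma:xiiso}.

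Putting this together, the two squares of the diagram commute, and the variant with $\mathbf{Syn}^{c\text{-}\infty}$ in place of $\mathbf{Syn}$ is obtained verbatim by tensoring with $\mathbf{1}(-D)$ in the appropriate slot, which is compatible with all the constructions above. The main obstacle I anticipate is purely organizational rather than conceptual: keeping track of the signs $(-1)^{p_i}$, $(-1)^{q_i}$ and $(-1)^{q_1p_2}$ from Definition \ref{def:CupProd SS F DblCmpx3} through the table entries so that the horizontal Leibniz identity for $(x_1,y_1)\cup u_2$ and $u_1\cup(x_2,y_2)$ matches exactly the $p_1/p_2$ split in \eqref{CupProd F1} on the crystalline component and the $\lambda/(1-\lambda)$ split on the de Rham component; I would carry this out by checking it on a single homogeneous pair in each of the three table cells and leave the remaining symmetric cases to the reader, as is standard for such cup-product verifications (and as the excerpt already does for Lemma \ref{lemma:Koszul} and Lemma \ref{lemma:Marco}).
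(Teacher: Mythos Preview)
Your proposal is correct and follows essentially the same approach as the paper: verify the horizontal Leibniz rule via the polynomial identity \eqref{CupProd F1} (the only non-trivial case being $p_1=p_2=0$), verify the vertical Leibniz rule via the \v Cech/de Rham cup products and $\Phi$-equivariance, and deduce commutativity of the diagram from the compatibility of the underlying de Rham cup products with the restriction-type morphisms $\tau_{\mathrm{rig}}$ and $\tau_{\mathrm{rig,c}}$, invoking Corollary \ref{cor:BerCoh1} for the quasi-isomorphism.

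The only place where the paper is more explicit than your sketch is the rigid-with-support case: there the complex $\mathrm{R}\Gamma_{\mathrm{rig},c}$ is by construction a cone (of the map from sections on $]Y[$ to sections on $]Z[$), so one must write down the cup product on the cone, namely $z\sqcup_{\underline{\beta}}(x,y):=(p_1^\ast(z)\wedge p_2^\ast(x),\,p_1^\ast(z)\wedge p_2^\ast(y))$, and observe that $\tau_{\mathrm{rig,c}}$ is projection onto the first cone component. Your phrase ``exterior products are defined on each piece compatibly with restriction'' gestures at this but does not make the cone-level formula explicit; you should add that line when writing up. Also, the paper records the explicit \v Cech cup product formula (with the alternating sign from the Stacks Project) rather than just invoking ``Kunneth morphism for \v Cech covers''; this is what actually pins down the signs you flagged as the main bookkeeping obstacle, so it would strengthen your write-up to include it.
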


\begin{proof}
We may assume $\mathbb{E}_{3}=\mathbb{E}_{12}$. All the syntomic
cohomologies are obtained by considering the total complex associated to a
double complex which is obtained from various log de Rham complexes $\Omega $%
. Thanks to the explicit description of \S \ref{sec:Chech} , \S \ref{sec:dR}%
, \S \ref{sec:analsyn}, \S \ref{sec:rigsyn} and \S \ref{sec:rigcmpct}, we
also know that these are double complexes \'{ }la Cech $\Omega =\left(
\Omega _{\underline{\beta }}\right) $ obtained from log de Rham complexes $%
\Omega _{\underline{\beta }}$ with differential $d_{\underline{\beta }}$.
Using the abbreviation $\underline{\beta }\left( j\right) :=\left( \beta
_{0},\ldots ,\widehat{\beta }_{j},\ldots ,\beta _{p}\right) $ , the total
differential is then given by 
\begin{equation*}
d\left( s_{\underline{\beta }}\right) =\left( \sum\nolimits_{j=0}^{p}s_{%
\underline{\beta }\left( j\right) }+\left( -1\right) ^{p}d_{\underline{\beta 
}}\left( s_{\underline{\beta }}\right) \right) \text{.}
\end{equation*}%
These log de Rham complexes $\Omega _{\underline{\beta }}=\Omega _{%
\underline{\beta }}\left( \mathcal{X}_{i}\right) $ are endowed with natural
cup products $\sqcup _{\underline{\beta }}$, arising from the morphisms 
\begin{equation*}
\sqcup _{\underline{\beta }}^{m,n}\colon \Omega _{\underline{\beta }%
}^{m}\left( \mathcal{X}_{1}\right) \otimes _{K_{0}}\Omega _{\underline{\beta 
}}^{n}\left( \mathcal{X}_{2}\right) \longrightarrow \Omega _{\underline{%
\beta }}^{m,n}\left( \mathcal{X}_{12}\right) 
\end{equation*}%
given by $\omega _{1}\sqcup _{\underline{\beta }}^{m,n}\omega
_{2}:=p_{1}^{\ast }\left( \omega _{1}\right) \wedge p_{2}^{\ast }\left(
\omega _{2}\right) $, where $p_{i}\colon \mathcal{X}_{12}\rightarrow 
\mathcal{X}_{i}$ for $i=1,2$ denote our given morphisms: a small correction,
explained below, is needed in order to handle the second and third
horizontal line of the diagram. The formula 
\begin{equation}
\left( s\sqcup t\right) _{\beta _{0},\ldots ,\beta
_{p_{1}+p_{2}}}:=\sum\nolimits_{r=0}^{p_{1}+p_{2}}\left( -1\right) ^{\left(
p_{1}+p_{2}+r\right) \left( p_{1}+q_{1}\right) +r\left( p_{1}+p_{2}\right)
+r}s_{\beta _{0},\ldots ,\beta _{r}}\sqcup _{\underline{\beta }}t_{\beta
_{r},\ldots ,\beta _{p_{1}+p_{2}}}  \label{eq:CupProd L1 FCechCup}
\end{equation}%
defines a cup product of double complexes satisfying Leibniz rule; see
[Stacks Project, (20.25.3.2)], where $p_{1}+p_{2}$ is denoted $p$, $%
p_{1}+q_{1}$ is denoted $n$ and $p_{2}+q_{2}$ is denoted $m$). In
particular, we get cup products $\cup $ of complexes satisfying the Leibniz
rule for these de Rham complexes $\Omega $. This means that, in order to
check Definition \ref{def:CupProd SS F DblCmpx3} holds for the double
complexes giving rise to $\mathbf{Syn}(\mathbb{E}_{1},P_{1},r_{1})$ and $%
\mathbf{Syn}^{c-\infty }(\mathbb{E}_{2},P_{2},r_{2})$, it suffices to check
the first relation, whose only non-trivial case is when $p_{1}=p_{2}=0$ in
the table above and the required relation follows thanks to (\ref{CupProd F1}%
).

For the second and the third line we argue as follows. We recall that, by
definition, for $?\in \left\{ \mathrm{conv},\dagger \right\} $, $R\Gamma _{%
\mathrm{rig},c}\left( (U^{o}\subset Y,\alpha _{k})/K_{0},\mathfrak{E}%
_{i}^{?}\right) $ is the Cech double complex $\Omega $\ obtained from the
complexes%
\begin{equation*}
\mathrm{Cone}_{\underline{\beta }}\left( \mathfrak{E}_{i}^{\dag }\right) :=%
\mathrm{Cone}\left( -1:\mathrm{dR}\left( \mathfrak{E}_{i}^{?}\left( -%
\mathfrak{D}\right) _{\underline{\beta }}\right) \longrightarrow \mathrm{dR}%
\left( j_{\underline{\beta }\ast }\circ j_{\underline{\beta }}^{\ast }\left( 
\mathfrak{E}_{i}^{?}\left( -\mathfrak{D}\right) \right) _{\underline{\beta }%
}\right) \right) \left[ -1\right] \text{.}
\end{equation*}%
For every $\underline{\beta }$, we get a cup product%
\begin{equation}
\sqcup _{\underline{\beta }}:\mathrm{dR}\left( \mathfrak{E}_{1}^{?}\right) _{%
\underline{\beta }}\otimes _{K_{0}}\mathrm{Cone}_{\underline{\beta }}\left( 
\mathfrak{E}_{2}^{\mathrm{conv}}\right) \longrightarrow \mathrm{Cone}_{%
\underline{\beta }}\left( \mathfrak{E}_{3}^{?}\right) 
\label{eq:CupProd L1 FCechCup2}
\end{equation}%
by means of the formula $z\sqcup _{\underline{\beta }}\left( x,y\right)
:=\left( p_{1}^{\ast }\left( z\right) \wedge p_{2}^{\ast }\left( x\right)
,p_{1}^{\ast }\left( z\right) \wedge p_{2}^{\ast }\left( x\right) \right) $.
For $R\Gamma _{\mathrm{rig}}\left( (U^{o}\subset Y,\alpha _{k})/K,\mathcal{E}%
_{i}\right) $ and $R\Gamma _{\mathrm{rig},c}\left( (U^{o}\subset Y,\alpha
_{k})/K,\mathcal{E}_{i}\right) $ one argues similarly. As above, we can use (%
\ref{eq:CupProd L1 FCechCup}) in order to get cup products of complexes
satisfying the Leibniz rule and then get the required cup products of double
complexes satisfying Leibniz rule from them. We remark that (\ref{eq:CupProd
L1 FCechCup2}) for $?=\mathrm{conv}$ and $\dagger $ are compatible with
respect to the restriction from the convergent to the overconvergent stuff.
As the map $\tau _{\mathrm{rig}}$ is restriction to an open and $\tau _{%
\mathrm{rig,c}}$ is the projection of the cone onto the first component, the
commutativity of the diagram follows. Finally, Corollary \ref{cor:BerCoh1}
proves that we have the claimed quasi-isomorphism.
\end{proof}

Applying Lemma \ref{lemma:CupProdCup}  we can now make the following:

\begin{definition}\label{def:CupProd Def1} 
When $\mathcal{X}_{3}\rightarrow \mathcal{X
}_{12}$ is the identity and $\mathbb{E}_3=\mathbb{E}_{12}$, we get a cup product called the {\em syntomic (polynomial) Kunneth morphism} that we denote them by $\sqcup _{P_{1},r_{1};P_{2},r_{2}}$ and satisfies the properties of Proposition \ref{prop:cupproductspectral}.

When $\mathcal{X}=\mathcal{X}_1=\mathcal{X}_2=\mathcal{X}_3$ and $\mathcal{X}_{3}\rightarrow \mathcal{X}_{1}\times \mathcal{X}_{2}$ is the
diagonal and $\mathbb{E}_3=\mathbb{E}_{12}$, we get   a cup product called the {\em syntomic
(polynomial) cup product} that we by $\cup
_{P_{1},r_{1};P_{2},r_{2}} $ and satisfies the properties of Proposition \ref{prop:cupproductspectral}.
\end{definition}

\begin{remark}\label{rmkPoly R4} (1) One can prove that the syntomic cup products of spectral sequence 
induced by the cup product of Lemma \ref{lemma:CupProdCup} do
not depend on the choice (\ref{CupProd F1}) starting
from the $2$-page on (and they are homotopic from the $1$-page).

(2) Suppose that $Q_{i}=P_{i}P_{i}^{\prime }$ and that $r_{i}\geq s_{i}$ for $i=1,2$ . Then $Q_{1}\ast Q_{2}=P_{1}\ast P_{2}\cdot P_{1}^{\prime }\ast P_{2}\cdot P_{1}\ast P_{2}^{\prime }\cdot P_{1}^{\prime }\ast P_{2}^{\prime }$, and $r_{1}+r_{2}\geq s_{1}+s_{2}$. If $p_{1}$ and $p_{2}$ are choosen as to satisfy (\ref{CupProd F1}) and we make analogous choices for the couples $\left( P_{1}^{\prime },P_{2}\right) $, $\left( P_{1},P_{2}^{\prime }\right) $ and $\left( P_{1}^{\prime },P_{2}^{\prime }\right) $, expanding the product we find an expression analogous to (\ref{CupProd F1})  for the couple $\left( Q_{1},Q_{2}\right) $\ involving two variables polynomials $q_{1}$ and $q_{2}$. With these choices, the morphisms arising from Remark \ref{rmk:Poly R2} give rise to a morphism of cup products of double complexes $\Cup _{P_{1},r_{1};P_{2},r_{2}}^{\prime }\rightarrow \Cup _{Q_{1},s_{1};Q_{2},s_{2}}^{\prime }$ and, hence, between the corresponding spectral sequences.

(3) As stated in Lemma \ref{lemma:CupProdCup}, the horizontal arrows appearing
in the diagram of loc.cit. come from cup products of double complexes
satisfying the Leibnitz rule. Indeed, the proof of the lemma shows that this
diagram can be promoted to a commutative diagram of double complexes and
that the morphism from the double complex giving rise to $\mathbf{Syn}_{%
\mathrm{rig}}^{c}(\mathbb{E}_{3},P_{3},r_{3})$ to its overconvergent version
gives rise to an isomorphism between the associated spectral sequences from
the $1$-page on.
\end{remark}

\subsection{Duality in syntomic cohomology}\label{sec:DualSyn}

Assume that $\mathcal{X}$ is proper of relative dimension $d$. As in \S \ref{sec:SS PolyCup Setting}, we will write $\mathbf{Syn}^{c}$ rather
than $\mathbf{Syn}^{c\text{-}\infty }$ when considering the analytic
syntomic cohomology. Take $\mathcal{X}_{3}\rightarrow 
\mathcal{X}_{1}\times \mathcal{X}_{2}$ to be the diagonal of $\mathcal{X}=
\mathcal{X}_{i}$ and  $\mathbb{E}_{3}=\left( \mathcal{O},\mathrm{Fil}
^{\bullet }\mathcal{O},d,\mathcal{O}_{\mathfrak{X}},\sigma \right) =:\mathbb{
I}$ the unit object. 

\begin{definition}
\label{DEF:CupProd Def2} If $\left( P_{1}\ast P_{2},r_{1}+r_{2}\right) $ is
admissible, then the (polynomial) syntomic cup products of Definition \ref{def:CupProd Def1} yields, by composition with the trace map $\mathrm{Tr}
_{P_{1}\ast P_{2},r_{1}+r_{2}}$ of Definition \ref{def:CupProd F Trace}, the (polynomial) syntomic pairing
$$
\left\langle -,-\right\rangle _{P_{1},r_{1};P_{2},r_{2}}\colon \mathrm{H}
^{i}\left( \mathbf{Syn}\left( \mathbb{E},P_1,r_1\right) \right)
\otimes _{K_{0,L}}\mathrm{H}^{2d+1-i}\left( \mathbf{Syn}^{c}
\left( \mathbb{E}^{\vee },P_2,r_2\right) \right) \longrightarrow K_{L}.
$$
\end{definition}

\begin{cor}
\label{cor:Poly R3bis} Assume that $\left( P_{1}\ast P_{2},r_{1}+r_{2}\right) $ is
admissible, then the following diagram is commutative. Here the pairings in the first row are the de Rham cup products and in the others are those provided by Lemma \ref{lemma:CupProdCup} and Proposition  \ref{prop:cupproductspectral}:
$$
\begin{array}{ccccc}
\frac{\mathrm{H}^{i}\left( \mathrm{R}\Gamma _{\mathrm{dR},L}\left( \mathbb{E}
\right) \right) }{\mathrm{Fir}^{r_{1}}\mathrm{H}^{i}\left(  \mathrm{R}\Gamma _{
\mathrm{dR},L}\left( \mathbb{E}\right) \right) } & \otimes _{K_{L}}
& \mathrm{Fir}^{r_{2}}\mathrm{H}^{2d-i}\left( \mathrm{R} \Gamma _{\mathrm{dR}
,L}^{c}\left( \mathbb{E}^\vee\right) \right) & \longrightarrow & K_{L} \\ 
\downarrow &  & \uparrow \gamma &  & \parallel \\ 
F^{1}\mathrm{H}^{i}\left( \mathbf{Syn}\left( \mathbb{E},P_1,r_1
\right) \right) & \otimes _{K_{0,L}} & F^{0}\mathrm{H}^{2d+1-i}\left( 
\mathbf{Syn}^{c}\left( \mathbb{E}^\vee,P_2,r_2\right) \right) & 
\longrightarrow & K_{L} \\ 
\downarrow &  & \uparrow &  & \parallel \\ 
\mathrm{H}^{i}\left( \mathbf{Syn}\left( \mathbb{E},P_1,r_1
\right) \right) & \otimes _{K_{0,L}} & \mathrm{H}^{2d+1-i}\left( \mathbf{
Syn}^{c}\left( \mathbb{E}^\vee,P_2,r_2\right) \right) & \longrightarrow
& K_{L}.
\end{array}
$$
\end{cor}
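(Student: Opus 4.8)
The plan is to check that each of the two squares in the diagram commutes, reducing both to statements already available: the syntomic polynomial cup product $\cup_{P_1,r_1;P_2,r_2}$ respects the two–step filtration $F^\bullet$ on the cohomology of a syntomic polynomial complex (Proposition \ref{prop:cupproductspectral}); on the associated graded it is computed by the de Rham cup product, because the double complex recipe for $\cup_{P_1,r_1;P_2,r_2}$ reduces, via the commutative square involving $\gamma_1\otimes\gamma_2$ inside Lemma \ref{lemma:CupProdCup}, to the de Rham cup product on the de Rham components; and the trace map $\mathrm{Tr}_{P_1\ast P_2,r_1+r_2}$ is, by Definition \ref{def:CupProd F Trace}, simply $\mathrm{Tr}_{\mathrm{dR}}$ applied to the de Rham part of a class in $F^1\mathrm{H}^{2d+1}\bigl(\mathbf{Syn}^c(\mathbb I,P_1\ast P_2,r_1+r_2)\bigr)$, which by the admissibility hypothesis equals all of $\mathrm{H}^{2d+1}\bigl(\mathbf{Syn}^c(\mathbb I,P_1\ast P_2,r_1+r_2)\bigr)$ (Lemma \ref{lemma:CupProd L1 Trace}, relying on Lemma \ref{lemma:Poly R1}); finite dimensionality of the groups involved is Proposition \ref{prop:Poly P finiteness}. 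Throughout write $P_3:=P_1\ast P_2$, $r_3:=r_1+r_2$.

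For the bottom square I would argue purely formally. Since the double complex defining $\mathbf{Syn}^c(\mathbb I,P_3,r_3)$ has only the two columns $p=0,1$, its vertical filtration satisfies $F^2\mathrm{H}^\bullet=0$; combined with the inclusion $F^{p_1}\cup F^{p_2}\subseteq F^{p_1+p_2}$ of Proposition \ref{prop:cupproductspectral} this forces the cup product $F^1\mathrm{H}^i\bigl(\mathbf{Syn}(\mathbb E,P_1,r_1)\bigr)\otimes F^1\mathrm{H}^{2d+1-i}\bigl(\mathbf{Syn}^c(\mathbb E^\vee,P_2,r_2)\bigr)\to\mathrm{H}^{2d+1}\bigl(\mathbf{Syn}^c(\mathbb I,P_3,r_3)\bigr)$ to vanish, so that the restriction of $\cup_{P_1,r_1;P_2,r_2}$ to $F^1\otimes\mathrm{H}^{2d+1-i}$ descends to $F^1\otimes\bigl(\mathrm{H}^{2d+1-i}/F^1\bigr)=F^1\otimes F^0$. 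Composing with $\mathrm{Tr}_{P_3,r_3}$, which is defined on all of $\mathrm{H}^{2d+1}$, is exactly the commutativity of the lower square; this step is routine.

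The upper square is the substantive one. Here I would pass to the associated graded using Proposition \ref{prop:cupproductspectral}, so the pairing is computed by the cup product $\cup^{n_1,n_2}$ of Lemma \ref{lemma:CupProdCup}; by Remark \ref{rmkPoly R4}(1) the resulting class on cohomology does not depend on the auxiliary choices $\lambda,p_1,p_2$ of \eqref{CupProd F1}, so I may take $\lambda=1$. Representing a class in the image of the natural map $\mathrm{H}_{\mathrm{dR},L}(\mathbb E)/\mathrm{Fil}^{r_1}\to F^1\mathrm{H}^\bullet\bigl(\mathbf{Syn}(\mathbb E,P_1,r_1)\bigr)$ by a pair $(0,y_1)$ in the second column of the double complex ($y_1$ a de Rham cocycle representing the de Rham class $\omega$), and a class $v\in F^0\mathrm{H}^\bullet\bigl(\mathbf{Syn}^c(\mathbb E^\vee,P_2,r_2)\bigr)$ by a cocycle $u_2$ in the first column (so that $\gamma_{\mathrm{an},c}(u_2)$, being a coboundary modulo $\mathrm{Fil}^{r_2}$, represents $\gamma(v)\in\mathrm{Fil}^{r_2}\mathrm{H}_{\mathrm{dR},c}(\mathbb E^\vee)$), the explicit table preceding Lemma \ref{lemma:CupProdCup}, evaluated at $\lambda=1$ and $x_1=0$, yields $(0,\,y_1\cup_{\mathrm{dR}}\gamma_{\mathrm{an},c}(u_2))$, the analytic component $p_1(\Phi_1,\Phi_2)(x_1\otimes u_2)$ vanishing. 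Applying $\mathrm{Tr}_{P_3,r_3}$ gives $\mathrm{Tr}_{\mathrm{dR}}\bigl(y_1\cup_{\mathrm{dR}}\gamma_{\mathrm{an},c}(u_2)\bigr)$; and since $r_1+r_2\ge d+1$ forces $\mathrm{Fil}^{r_3}\mathrm{H}^{2d}_{\mathrm{dR},c}(U,\mathcal O_U)=0$, the class of $y_1\cup_{\mathrm{dR}}\gamma_{\mathrm{an},c}(u_2)$ in $\mathrm{H}^{2d}_{\mathrm{dR},c}(\mathbb I)/\mathrm{Fil}^{r_3}=\mathrm{H}^{2d}_{\mathrm{dR},c}(U,\mathcal O_U)$ is precisely the de Rham Poincaré cup product of $\omega$ and $\gamma(v)$, whose image under $\mathrm{Tr}_{\mathrm{dR}}$ is the top pairing. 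This is the commutativity of the upper square. The compactly supported variant is handled identically, replacing $(\mathcal E,\mathfrak E,\mathbf{Syn})$ by $(\mathcal E(-\mathcal D),\mathfrak E(-\mathfrak D),\mathbf{Syn}^{c\text{-}\infty})$ throughout.

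The main obstacle I expect is bookkeeping rather than ideas: one must faithfully match the chain-level recipes — the entries of the cup-product table, the explicit formula for $\mathrm{Tr}_{P_3,r_3}$ in Definition \ref{def:CupProd F Trace}, and the signs and degree shifts of the cone construction defining $F^0$ and $F^1$ — against the abstract de Rham duality, and one must verify (i) that the scalar $\lambda$ genuinely drops out on cohomology (invoking Remark \ref{rmkPoly R4}(1)), and (ii) that the de Rham pairing descends to the stated filtered quotients, which rests on the multiplicativity of the Hodge filtration together with $\mathrm{Fil}^{r_1+r_2}\mathrm{H}^{2d}_{\mathrm{dR},c}(U,\mathcal O_U)=0$, itself built into the admissibility of $(P_1\ast P_2,r_1+r_2)$.
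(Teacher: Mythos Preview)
Your approach matches the paper's: the lower square is exactly the filtration compatibility of Proposition~\ref{prop:cupproductspectral} (the paper invokes it for the same square), and the upper square is the ``direct check'' the paper alludes to without details, which you carry out by unwinding the cup-product table and the explicit trace formula of Definition~\ref{def:CupProd F Trace}. One caveat: your appeal to Remark~\ref{rmkPoly R4}(1) to eliminate $\lambda$ is slightly loose, since that remark literally concerns only the choice of $p_{1},p_{2}$ in \eqref{CupProd F1}; the independence from $\lambda$ (on the $E_{2}$-page) is true for the same reason but deserves its own one-line verification rather than being folded into that citation.
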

\begin{proof} The map $\gamma$ is defined using that an element $x\in F^{0}\mathrm{H}^{2d+1-i}\left( 
\mathbf{Syn}^{c}\left( \mathbb{E}^\vee,P_2,r_2\right) \right)$ is sent to $0$ via $\gamma$ in  $\mathrm{H}^{2d-i}\left( \mathrm{R} \Gamma _{\mathrm{dR}
,L}^{c}\left( \mathbb{E}^\vee\right) \right)/\mathrm{Fir}^{r_{2}}\mathrm{H}^{2d-i}\left( \mathrm{R} \Gamma _{\mathrm{dR} ,L}^{c}\left( \mathbb{E}^\vee\right) \right)$. 
The commutativity of the first square is a direct check. The commutativity of the second square follows
from Proposition  \ref{prop:cupproductspectral}.
\end{proof}

Write $W_1,\ldots,W_s$ for the irreducible components of $\mathcal{X}_k^{\rm sm}$ and fix an open non-empty subscheme $U\subset W_\ell$ in one of these components.  Definition \ref{def:CupProd Def1} provides a pairing, also denoted
$\left\langle -,-\right\rangle _{P_{1},r_{1};P_{2},r_{2}}$  from $$\mathrm{H}^i\bigl(\mathbf{Syn}_{{\rm rig},U\subset W_\ell, \alpha_k}(\mathbb{E},P_1,r_1)\bigr)
\otimes\mathrm{H}^{2d+1-i}\left( \mathbf{Syn}_{\mathrm{rig}
,U^{o}\subset W_\ell, \alpha_k}^{c}\left( \mathbb{E}^\vee,P_2,r_2\right)
\right) $$to $\mathrm{H}^{2d+1}\left( \mathbf{Syn}_{\mathrm{rig}
,U^{o}\subset W_\ell, \alpha_k}^{c}\left( \mathbb{I},P_{1}\ast P_{2},r_1+r_2\right)
\right)$. As in (\ref{eq:tauRigc}) and (\ref{eq:tauRig}) in \S \ref{sec:rigcmpct}, for every $i\in  \mathbb{N}$,   we have morphisms
$$
\tau_{{\rm rig},U,\ell}^i\colon \mathrm{H}^i\bigl(\mathbf{Syn}(\mathbb{E},P_1,r_1)\bigr) \longrightarrow \mathrm{H}^i\bigl(\mathbf{Syn}_{{\rm rig},U\subset W_\ell, \alpha_k}(\mathbb{E},P_1,r_1)\bigr),$$ as well as its variant in which $\mathbf{Syn}$ is
replaced by $\mathbf{Syn}^{c\text{-}\infty }$, and
$$\tau_{\mathrm{rig},U, \ell,c}^i\colon\mathrm{H}^{i}\left( \mathbf{Syn}_{\mathrm{rig}
,U^{o}\subset W_\ell, \alpha_k}^{c}\left( \mathbb{E}^\vee,P_2,r_2\right)
\right) \longrightarrow \mathrm{H}^{i}\left( \mathbf{Syn}^{c-\infty}\left( \mathbb{
E}^\vee,P_2,r_2\right) \right).$$
\begin{cor}\label{cor:compdual} Assume $\left( P_{1}\ast P_{2},r_{1}+r_{2}\right) $ is admissible. Let $?\in \left\{ \emptyset ,c-\infty
\right\} $. Then, for  $f\in \mathrm{H}^{i}\left( \mathbf{Syn}^{?}\left( \mathbb{E}
,P_{1},r_{1}\right) \right) $ and  $g\in \mathrm{H}^{2d+1-i}\left( \mathbf{Syn}_{\mathrm{rig}
,U\subset W_\ell, \alpha_k}^c\left( \mathbb{E}^\vee,P_2,r_2\right)
\right)$, we have 
$$
\tau_{\mathrm{rig},U,\ell,c}^{2d+1}\left(\left\langle \tau _{\mathrm{rig},U,\ell}^i\left( f\right) ,g\right\rangle
_{P_{1},r_{1};P_{2},r_{2}}\right)=\left\langle f,\tau _{\mathrm{rig},U,\ell,c}^{2d+1-i}\left(
g\right) \right\rangle _{P_{1},r_{1};P_{2},r_{2}}\in K_L.
$$
\end{cor}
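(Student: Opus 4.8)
The plan is to reduce the statement to a compatibility of cup products at the level of the double complexes underlying the syntomic theories, which is then packaged through the trace map. Concretely, I would work with the explicit Čech--de Rham representatives of $\mathbf{Syn}^?(\mathbb{E},P_1,r_1)$, $\mathbf{Syn}_{\mathrm{rig},U\subset W_\ell,\alpha_k}(\mathbb{E}^\vee,P_2,r_2)$ and the various $\mathbf{Syn}_{\mathrm{rig}}^c(\mathbb{I},P_1\ast P_2,r_1+r_2)$ complexes, and invoke Lemma~\ref{lemma:CupProdCup}, whose proof shows that the diagram relating $\mathbf{Syn}$, $\mathbf{Syn}_{\mathrm{rig}}$, $\mathbf{Syn}_{\mathrm{rig}}^c$ and $\mathbf{Syn}^{c\text{-}\infty}$ can be promoted to a commutative diagram of \emph{double complexes} compatible with the $\Cup_{P_1,r_1;P_2,r_2}$-products (see also Remark~\ref{rmkPoly R4}(3)). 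The key observation is that $\tau_{\mathrm{rig},U,\ell}$ is restriction to an open (via Lemma~\ref{lemma:xiiso} to pass from $\mathcal{X}_k^{\rm sm}$ to the component $W_\ell$) and $\tau_{\mathrm{rig},U,\ell,c}$ is, up to the quasi-isomorphisms of Corollary~\ref{cor:BerCoh1} and Lemma~\ref{lemma:xiiso}, the projection of a cone onto a component; both are therefore induced by morphisms of double complexes that are compatible with the Čech cup products $\sqcup_{\underline{\beta}}$ defined in the proof of Lemma~\ref{lemma:CupProdCup}.

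First I would fix the representing double complexes and spell out the two composite morphisms appearing in the asserted identity. On one side, $\langle\tau_{\mathrm{rig},U,\ell}^i(f),g\rangle_{P_1,r_1;P_2,r_2}$ is obtained by first mapping $f$ into rigid syntomic cohomology on $W_\ell$, cup-producting with $g$ there to land in $\mathrm{H}^{2d+1}\bigl(\mathbf{Syn}_{\mathrm{rig},U^o\subset W_\ell,\alpha_k}^c(\mathbb{I},P_1\ast P_2,r_1+r_2)\bigr)$, then applying $\tau_{\mathrm{rig},U,\ell,c}^{2d+1}$ to reach $\mathrm{H}^{2d+1}\bigl(\mathbf{Syn}^{c\text{-}\infty}(\mathbb{I},P_1\ast P_2,r_1+r_2)\bigr)$ and finally the trace $\mathrm{Tr}_{P_1\ast P_2,r_1+r_2}$. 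On the other side, $\langle f,\tau_{\mathrm{rig},U,\ell,c}^{2d+1-i}(g)\rangle_{P_1,r_1;P_2,r_2}$ first moves $g$ into $\mathbf{Syn}^{c\text{-}\infty}$ and then cup-products with $f$ there, again followed by the trace. Thus the corollary amounts to the commutativity of a square of double complexes of the shape
$$
\begin{array}{ccc}
\mathbf{Syn}^?(\mathbb{E},P_1,r_1)\otimes\mathbf{Syn}_{\mathrm{rig}}^c(\mathbb{E}^\vee,P_2,r_2) & \longrightarrow & \mathbf{Syn}_{\mathrm{rig}}^c(\mathbb{I},P_1\ast P_2,r_1+r_2) \\
1\otimes\tau_{\mathrm{rig},c}\big\uparrow\ \ \ \ \ & & \ \ \ \ \big\downarrow\tau_{\mathrm{rig},c} \\
\mathbf{Syn}^?(\mathbb{E},P_1,r_1)\otimes\mathbf{Syn}^{c\text{-}\infty}(\mathbb{E}^\vee,P_2,r_2) & \longrightarrow & \mathbf{Syn}^{c\text{-}\infty}(\mathbb{I},P_1\ast P_2,r_1+r_2),
\end{array}
$$
together with the fact that the composite $\mathbf{Syn}^?(\mathbb{E},P_1,r_1)\to\mathbf{Syn}_{\mathrm{rig}}(\mathbb{E},P_1,r_1)\otimes(\cdots)$ on the left edge refines $\tau_{\mathrm{rig},U,\ell}$, and of the analogous $\mathbf{Syn}^{c\text{-}\infty}\to\mathbf{Syn}_{\mathrm{rig}}^c$ square. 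All of these are instances of the diagram in Lemma~\ref{lemma:CupProdCup}, read at the level of the cohomology groups occurring in degrees $i$, $2d+1-i$ and $2d+1$, and then composed with a single trace map.

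Second I would verify that $\tau_{\mathrm{rig},U,\ell}$ and $\tau_{\mathrm{rig},U,\ell,c}$ really are compatible with the cup products $\Cup_{P_1,r_1;P_2,r_2}$ and not merely with the underlying Čech--de Rham structure: since the polynomial corrections $p_1(\Phi_1,\Phi_2)$, $p_2(\Phi_1,\Phi_2)$ and the scalar $\lambda$ entering the table defining $\Cup_{P_1,r_1;P_2,r_2}$ are the \emph{same} in all three rows of the Lemma~\ref{lemma:CupProdCup} diagram (they depend only on $P_1$, $P_2$, $r_1$, $r_2$ and the chosen decomposition \eqref{CupProd F1}), and since $\tau_{\mathrm{rig}}$ and $\tau_{\mathrm{rig},c}$ commute with Frobenius $\Phi$ and with $\gamma$, this compatibility is automatic once one knows the Leibniz-rule cup products match, which is the content of that proof (using \eqref{eq:CupProd L1 FCechCup} and \eqref{eq:CupProd L1 FCechCup2}). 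Finally I would close the argument by observing that both sides of the claimed equality are, after the above identifications, the image under $\mathrm{Tr}_{P_1\ast P_2,r_1+r_2}$ of the \emph{same} class in $\mathrm{H}^{2d+1}\bigl(\mathbf{Syn}^{c\text{-}\infty}(\mathbb{I},P_1\ast P_2,r_1+r_2)\bigr)$, namely $\tau_{\mathrm{rig},U,\ell,c}^{2d+1}\bigl(\tau_{\mathrm{rig},U,\ell}^i(f)\cup_{\mathrm{rig}} g\bigr)$, so the equality holds in $K_L$.

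The main obstacle I anticipate is bookkeeping rather than conceptual: one must handle the sign conventions and the cone constructions precisely, because $\tau_{\mathrm{rig},c}$ is a \emph{projection of a cone} (so it is only a map of complexes after choosing the representatives carefully), and because $\tau_{\mathrm{rig},U,\ell}$ requires first replacing $\mathcal{X}_k^{\rm sm}$ by the single component $W_\ell$ via Lemma~\ref{lemma:xiiso} and then restricting to the open $U\subset W_\ell$ via the functoriality noted at the end of \S\ref{sec:rigsyn}; one must check these restriction and projection maps really do commute with the Čech cup products $\sqcup_{\underline{\beta}}$ in the appropriate degrees and are compatible with the passage to overconvergent coefficients of Corollary~\ref{cor:BerCoh1}. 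Once the diagram of Lemma~\ref{lemma:CupProdCup} is promoted, as in Remark~\ref{rmkPoly R4}(3), to a genuine commutative diagram of double complexes over the index set $B^n$, the corollary follows by taking cohomology and composing with $\mathrm{Tr}_{P_1\ast P_2,r_1+r_2}$, which itself is compatible with the two routes by Definition~\ref{def:CupProd F Trace} and the admissibility hypothesis on $(P_1\ast P_2,r_1+r_2)$.
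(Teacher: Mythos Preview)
Your approach is essentially the same as the paper's: the paper first treats the case $U=W_\ell\cap\mathcal{X}_k^{\rm sm}$ by invoking Lemma~\ref{lemma:CupProdCup} together with Lemma~\ref{lemma:xiiso}, and then reduces a general $U\subset W_\ell$ to this case by observing that the restriction maps $\mathbf{Syn}_{\mathrm{rig},W_\ell\subset W_\ell,\alpha_k}\to\mathbf{Syn}_{\mathrm{rig},U\subset W_\ell,\alpha_k}$ and $\mathbf{Syn}_{\mathrm{rig},U^o\subset W_\ell,\alpha_k}^c\to\mathbf{Syn}_{\mathrm{rig},W_\ell^o\subset W_\ell,\alpha_k}^c$ are morphisms of double complexes compatible with the cup products of Definition~\ref{def:CupProd Def1}. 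Your proposal unpacks exactly these two ingredients (the promotion of the diagram in Lemma~\ref{lemma:CupProdCup} to double complexes, and the functoriality in $(U,Y)$ from the end of \S\ref{sec:rigsyn}), so the argument is correct and matches the paper's, just stated more explicitly.
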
 
\begin{proof} 
For $U=W_\ell\cap \mathcal{X}_k^{\rm sm}$ the claim follows from Lemma \ref{lemma:CupProdCup} and Lemma \ref{lemma:xiiso}. For general $U$ we remark that the morphism of complexes 
$$\mathbf{Syn}_{{\rm rig},W_\ell\subset W_\ell, \alpha_k}(\mathbb{E},P_1,r_1) \longrightarrow\mathbf{Syn}_{{\rm rig},U\subset W_\ell, \alpha_k}(\mathbb{E},P_1,r_1)$$and $$\mathbf{Syn}_{{\rm rig},U^o\subset W_\ell, \alpha_k}^c(\mathbb{E}^\vee,P_2,r_2) {\longrightarrow} \mathbf{Syn}_{{\rm rig},W_\ell^o\subset W_\ell, \alpha_k}^c(\mathbb{E}^\vee,P_2,r_2),$$of \S \ref{sec:rigsyn} and \S \ref{sec:rigcmpct}, are morphisms of double complexes and by construction they are compatible with the pairings of Definition \ref{def:CupProd Def1} and Lemma \ref{lemma:CupProdCup}.
\end{proof}

\subsection{Dualities for coefficients with good reduction}\label{sec:DualSynGood} Assume that $W_\ell$ is smooth and let $U\subset W_\ell$ be an open dense subscheme. Assume that $\mathbb{E}=(\mathcal{E},\mathfrak{E})$ satisfies the Assumption of \S \ref{sec:Egoodred}. It follows from \cite[Prop. 3.15]{EY} that $\mathrm{H}^i_{\rm rig,c}\bigl((U^o\subset W_\ell,\alpha_k^{\rm hor})/K_0, \mathfrak{E}^{\rm conv}\bigr)$ and $\mathrm{H}^i_{\rm rig,c}\bigl((U^o\subset W_\ell,\alpha_k^{\rm hor})/K, \mathfrak{E}^{\rm conv}\bigr)$ coincides with Berthelot's rigid cohomology $\mathrm{H}^i_{{\rm rig},c}\bigl(U^o/K_0\bigr)$ and $\mathrm{H}^i_{{\rm rig},c}\bigl(U^o/K\bigr)$ respectively. In particular, these groups vanish for $i\geq 2d+1$ and we have trace maps $$\mathrm{Tr}\colon \mathrm{H}^{2d}_{{\rm rig},c}\bigl(U^o/K_0\bigr)\cong K_0(-d), \quad \mathrm{Tr}\colon \mathrm{H}^i_{{\rm rig},c}\bigl(U^o/K\bigr)\cong K. $$
For $\left( P,r\right) $ admissible, $P(\Phi_L)$ is an isomorphism on $\mathrm{H}^{2d}_{{\rm rig},c}\bigl(U^o/K_0) $ and we get from Lemma \ref{lemma:Poly R1} for $\mathbb{E}=\mathbb{I}=(\mathcal{O},\mathfrak{O})$  the unit object that 
$$\mathrm{H}^{2d}_{{\rm rig},c}\bigl(U^o/K\bigr)_L\cong F^1\mathrm{H}^{2d+1}\bigl(\mathbf{Syn}^c_{{\rm rig},U^o\subset W_\ell,\alpha_k^{\rm hor}}(\mathbb{I},P,r)\bigr)$$and $$F^1\mathrm{H}^{2d+1}\bigl(\mathbf{Syn}^c_{{\rm rig},U^o\subset W_\ell,\alpha_k^{\rm hor}}(\mathbb{I},P,r)\bigr)=\mathrm{H}^{2d+1}\bigl(\mathbf{Syn}^c_{{\rm rig},U^o\subset W_\ell,\alpha_k^{\rm hor}}(\mathbb{I},P,r)\bigr) .$$We use this to define a trace map $$\mathrm{Tr} \colon \mathrm{H}^{2d+1}\bigl(\mathbf{Syn}^c_{{\rm rig},U^o\subset W_\ell,\alpha_k^{\rm hor}}(\mathbb{I}^\vee,P,r)\bigr)\cong K_L$$and proceeding as in Lemma \ref{lemma:CupProdCup} and  in Definition \ref{DEF:CupProd Def2} pairings $\left\langle -,-\right\rangle_{P_{1},r_{1};P_{2},r_{2}}$

\begin{equation}\label{CupProd F Def2Rig}
\mathrm{H}^{i}\left(
\mathbf{Syn}_{{\rm rig},U\subset W_\ell,\alpha_k^{\rm hor}}\left(\mathbb{E},P_1,r_1\right)
\right) \otimes \mathrm{H}^{2d+1-i}\left(
\mathbf{Syn}^c_{{\rm rig},U^o\subset W_\ell,\alpha_k^{\rm hor}}\left(\mathbb{E}^{\vee},P_2,r_2
\right)\right) \rightarrow K_{L} . 
\end{equation}
We have morphisms 
$$
\nu_{{\rm rig},U,\ell,\ast}^i\colon \mathrm{H}^i\bigl(\mathbf{Syn}_{{\rm rig},U\subset W_\ell, \alpha_k^{\rm hor}}(\mathbb{E},P_1,r_1)\bigr) \longrightarrow \mathrm{H}^i\bigl(\mathbf{Syn}_{{\rm rig},U\subset W_\ell, \alpha_k}(\mathbb{E},P_1,r_1)\bigr),
$$with a left inverse $\nu_{{\rm rig},U,\ell}^{\ast,i}$, see  (\ref{eq:rhoRig}) in \S \ref{sec:Egoodred},
and
$$\nu_{\mathrm{rig},U,\ell,c,\ast}^i\colon \mathrm{H}^{i}\left( \mathbf{Syn}_{\mathrm{rig}
,U^{o}\subset W_\ell, \alpha_k^{\rm hor}}^{c}\left( \mathbb{E}^\vee,P_2,r_2\right)
\right) \longrightarrow \mathrm{H}^{i}\left( \mathbf{Syn}_{\mathrm{rig}
,U^{o}\subset W_\ell, \alpha_k}^{c}\left( \mathbb{E}^\vee,P_2,r_2\right)
\right),$$with a left inverse $\nu_{\mathrm{rig},U,\ell,c}^{\ast,i}$ see (\ref{eq:rhoRigc}) in \S \ref{sec:Egoodred}. Since all these morphisms arise from morphisms of double complexes, they are readily checked to be compatible with cup products we get the following:

\begin{lemma}\label{lemma:pairnu} For every element $f\in \mathrm{H}^{i}\left( \mathbf{Syn}_{\mathrm{rig},U\subset
W_{l},\alpha _{k}^{hor}}^{?}\left( \mathbb{E},P_{1},r_{1}\right) \right) $
with $?\in \left\{ \emptyset ,c-\infty \right\} $ and for every element $g\in \mathrm{H}^{2d+1-i}\left( \mathbf{Syn}_{\mathrm{rig}
,U^{o}\subset W_\ell, \alpha_k^{\rm hor}}^{c}\left( \mathbb{E}^\vee,P_2,r_2\right)
\right)$, we have 
$$
\nu _{\mathrm{rig},U, \ell,c}^{\ast,2d+1}  \left( \left\langle  f  ,\nu _{\mathrm{rig},U, \ell,c,\ast}^{2d+1-i}(g)\right\rangle
_{P_{1},r_{1};P_{2},r_{2}}\right)=\left\langle \nu _{\mathrm{rig},U,\ell}^{\ast,i}( f), g \right\rangle _{P_{1},r_{1};P_{2},r_{2}}\in  K_L.
$$
\end{lemma}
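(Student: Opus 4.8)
The plan is to deduce Lemma~\ref{lemma:pairnu} from the corresponding statement for the full log-structure, namely Corollary~\ref{cor:compdual}, by exploiting the direct sum decomposition of $\mathbf{Syn}_{{\rm rig},U\subset W_\ell,\alpha_k}$ coming from \S\ref{sec:Egoodred} together with the fact that all maps involved are induced by maps of double complexes. First I would recall the geometric input: since $W_\ell$ is smooth over $k$ and $\mathbb{E}$ satisfies the Assumption of \S\ref{sec:Egoodred}, the quasi-isomorphisms $\Xi$ and $\Xi_c$ identify $\mathrm{R}\Gamma_{\mathrm{rig}}$ and $\mathrm{R}\Gamma_{\mathrm{rig},c}$ for the log-structure $\alpha_k$ with the tensor product over $K_0$ of the corresponding horizontal-log complexes for $\alpha_k^{\rm hor}$ with $\mathrm{R}\Gamma_{\mathrm{rig}}\bigl((\mathcal{S}_k,\mathcal{M}_k)/K_0\bigr) \cong K_0\oplus K_0(-1)[-1]$. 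This produces the morphisms of double complexes $\nu_{{\rm rig},U,\ell,\ast}$ and $\nu_{{\rm rig},U,\ell,c,\ast}$ with their retracts $\nu_{{\rm rig},U,\ell}^{\ast}$ and $\nu_{{\rm rig},U,\ell,c}^{\ast}$, and the key point is that this splitting is realized at the level of double complexes, so it is automatically compatible with the cup product morphisms $\Cup_{P_1,r_1;P_2,r_2}$ of Lemma~\ref{lemma:CupProdCup}.

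Next I would set up the comparison of pairings explicitly. The pairing $\left\langle -,-\right\rangle_{P_1,r_1;P_2,r_2}$ on the $\alpha_k$-cohomology (the target of $\nu_{\ast}$) is, by Corollary~\ref{cor:compdual} and Definition~\ref{def:CupProd Def1}, induced by the cup product of double complexes landing in $\mathbf{Syn}^c_{{\rm rig},U^o\subset W_\ell,\alpha_k}(\mathbb{I},P_1\ast P_2,r_1+r_2)$ followed by the trace $\mathrm{Tr}_{P_1\ast P_2,r_1+r_2}$; the pairing (\ref{CupProd F Def2Rig}) on the $\alpha_k^{\rm hor}$-cohomology is induced by the analogous cup product landing in $\mathbf{Syn}^c_{{\rm rig},U^o\subset W_\ell,\alpha_k^{\rm hor}}(\mathbb{I},P_1\ast P_2,r_1+r_2)$ followed by the trace map $\mathrm{Tr}$ of \S\ref{sec:DualSynGood}. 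I would then verify two compatibilities: (a) the cup product of double complexes commutes with $\nu_{\ast}$ on the first factor and with $\nu_{c,\ast}$ on the codomain — this is a formal check on the $\wedge$-products defining $\sqcup_{\underline\beta}$ since $\nu_{\ast}$ is just the inclusion of the $K_0$-summand of $\mathrm{R}\Gamma_{\mathrm{rig}}(\mathcal{S}_k)$ and $\nu_{c,\ast}$ likewise; (b) the trace maps are compatible, i.e.\ $\mathrm{Tr} = \mathrm{Tr}_{P_1\ast P_2,r_1+r_2}\circ\nu_{{\rm rig},U,\ell,c,\ast}^{2d+1}$ on $\mathrm{H}^{2d+1}(\mathbf{Syn}^c_{{\rm rig}}(\mathbb{I}^\vee,\ldots))$. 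For (b) I would trace through the identifications: both traces factor through $F^1\mathrm{H}^{2d+1}$, and on the horizontal side this is $\mathrm{H}^{2d}_{{\rm rig},c}(U^o/K)_L$ while on the $\alpha_k$ side it is $\mathrm{H}^{2d}_{\mathrm{dR},c}(U)_L$; the comparison of trace maps in rigid vs.\ de Rham cohomology (via the trace of \cite[Prop.~3.15]{EY} and $\mathrm{Tr}_{\mathrm{dR}}$, compatibly with the natural inclusion $K_0\subset K$ on the $K_0(-1)[-1]$-summand) gives the required identity, much as in Lemma~\ref{lemma:traceHKlogdRcomp}.

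Having these two compatibilities, the lemma follows by a direct manipulation using the adjunction property of Corollary~\ref{cor:compdual}: apply $\nu_{{\rm rig},U,\ell,c}^{\ast,2d+1}$ (the retract) to both sides, use that $\nu_{{\rm rig},U,\ell}^{\ast,i}$ is a left inverse of $\nu_{{\rm rig},U,\ell,\ast}^i$ and similarly for $\nu_{c}$, and push the $\nu$'s through the cup product and trace. Concretely, $\left\langle \nu_{{\rm rig},U,\ell}^{\ast,i}(f),g\right\rangle$ computed with (\ref{CupProd F Def2Rig}) equals, by compatibility (a), $\nu_{{\rm rig},U,\ell,c}^{\ast,2d+1}$ applied to the cup product of $f$ (viewed in the $\alpha_k$-complex via $\nu_\ast$ on the first slot, but $f$ already lies there) with $\nu_{{\rm rig},U,\ell,c,\ast}(g)$, and compatibility (b) converts the trace; this is exactly the right-hand side rewritten. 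The main obstacle I anticipate is the bookkeeping in (b): making precise the identification of the two $F^1\mathrm{H}^{2d+1}$'s and checking that the explicit formula for the trace in Definition~\ref{def:CupProd F Trace} (which involves $P(\Phi)^{-1}$ applied to the analytic/convergent component) is compatible with the horizontal-to-full decomposition — one must check $P(\Phi)$ acts compatibly on the $K_0$- and $K_0(-1)[-1]$-summands and that the de Rham components match under $\gamma$; but since the decomposition is induced by a splitting of double complexes carrying the Frobenius, this reduces to the already-established Künneth-type compatibility and the trace comparison of the $0$-dimensional factor $\mathfrak{S}^{\rm rig}$.
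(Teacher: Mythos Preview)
Your observation (a) --- that the $\nu$ morphisms are induced by morphisms of double complexes (via the K\"unneth splitting of \S\ref{sec:Egoodred}) and therefore commute with the cup products of Lemma~\ref{lemma:CupProdCup} --- is the entire content of the paper's proof, which consists of the single sentence immediately preceding the lemma. Once the cup product is known to respect the direct-sum decomposition of $\mathbf{Syn}_{{\rm rig},\alpha_k}$ into $\mathbf{Syn}_{{\rm rig},\alpha_k^{\rm hor}}$ and its complement, the identity $\nu_c^{\ast}\bigl(f \cup \nu_{c,\ast}(g)\bigr) = \nu^{\ast}(f)\cup g$ is immediate from the retract relations.

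Two parts of your plan are superfluous. First, Corollary~\ref{cor:compdual} plays no role here: that result is an adjunction for the $\tau$ maps (analytic $\leftrightarrow$ log-rigid), whereas the present lemma is purely about the $\nu$ maps (horizontal $\leftrightarrow$ full log-structure) and never leaves the rigid side. Lemma~\ref{lemma:pairnu} is \emph{parallel} to Corollary~\ref{cor:compdual}, not a consequence of it; indeed the two are combined only afterwards, in Corollary~\ref{cor:adjointdualityrig}. Second, your point (b), the comparison of trace maps, is not needed for the lemma: after applying $\nu_c^{\ast,2d+1}$ both sides already live in $\mathrm{H}^{2d+1}\bigl(\mathbf{Syn}^c_{{\rm rig},U^o\subset W_\ell,\alpha_k^{\rm hor}}(\mathbb{I},P_1\ast P_2,r_1+r_2)\bigr)$, and one then applies the \emph{same} horizontal trace to each. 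The trace comparison you sketch is precisely what the paper records separately in the sentence before Corollary~\ref{cor:adjointdualityrig}, where it is used to glue this lemma to Corollary~\ref{cor:compdual}. So your outline would yield a correct argument, but one that proves more than is asked and is framed around a result it does not actually use.
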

Consider $\mathbb{E}=\mathbb{I}$. Berthelot's trace map is compatible via $\tau_{\mathrm{rig},\ell,c}^{2d+1} \circ \nu_{\mathrm{rig},U,\ell,c,\ast}^{2d+1}$ with the trace map of Definition \ref{def:CupProd F Trace} and $\tau_{\mathrm{rig},\ell,c}^{2d+1}=\nu _{\mathrm{rig},U, \ell,c}^{\ast,2d+1}$. Then, for a general $\mathbb{E}$ satisfying the Assumption of \S \ref{sec:Egoodred} we get:  

\begin{cor}\label{cor:adjointdualityrig} For $f\in \mathrm{H}^{i}\left( \mathbf{Syn}^{?}\left( \mathbb{E}
,P_{1},r_{1}\right) \right) $ with $?\in \left\{\emptyset ,c-\infty
\right\} $ and $g$ as in Lemma \ref{lemma:pairnu}, we have 
$$
\left\langle \nu_{{\rm rig},U,\ell}^{\ast,i}\left( \tau _{\mathrm{rig},U,\ell}^i(f)\right) ,g\right\rangle
_{P_{1},r_{1};P_{2},r_{2}}=\left\langle f, \tau _{\mathrm{rig},U,\ell,c}^{2d+1-i}\circ \nu_{\mathrm{rig},U,\ell,c,\ast}^{2d+1-i}  \left(
g\right) \right\rangle _{P_{1},r_{1};P_{2},r_{2}}\in K_L.
$$
\end{cor}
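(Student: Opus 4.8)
The statement to prove, Corollary \ref{cor:adjointdualityrig}, expresses an adjunction between the restriction maps $\tau_{\mathrm{rig},U,\ell}$ and $\tau_{\mathrm{rig},U,\ell,c}$ (which go from global analytic syntomic cohomology to rigid syntomic cohomology with respect to the full log-structure $\alpha_k$) with respect to the syntomic pairings, after inserting the good-reduction retracts $\nu_{\mathrm{rig},U,\ell}^{\ast}$ and $\nu_{\mathrm{rig},U,\ell,c,\ast}$ which mediate between the log-structure $\alpha_k^{\mathrm{hor}}$ and $\alpha_k$.

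The plan is to assemble the statement by chaining together the two adjunction lemmas already available. First I would recall the definitions: the pairing $\left\langle -,-\right\rangle_{P_1,r_1;P_2,r_2}$ on the $\alpha_k$-rigid syntomic cohomology (Definition \ref{def:CupProd Def1} together with the trace coming from Definition \ref{def:CupProd F Trace}) is compared to the corresponding global pairing of Definition \ref{DEF:CupProd Def2} via the morphisms $\tau_{\mathrm{rig},U,\ell}$ and $\tau_{\mathrm{rig},U,\ell,c}$; this compatibility is exactly Corollary \ref{cor:compdual}, which gives
$$
\tau_{\mathrm{rig},U,\ell,c}^{2d+1}\bigl(\langle \tau_{\mathrm{rig},U,\ell}^i(f'),g'\rangle_{P_1,r_1;P_2,r_2}\bigr)=\langle f',\tau_{\mathrm{rig},U,\ell,c}^{2d+1-i}(g')\rangle_{P_1,r_1;P_2,r_2}
$$
for $f'\in \mathrm{H}^i(\mathbf{Syn}^?(\mathbb{E},P_1,r_1))$ and $g'\in \mathrm{H}^{2d+1-i}(\mathbf{Syn}_{\mathrm{rig},U\subset W_\ell,\alpha_k}^c(\mathbb{E}^\vee,P_2,r_2))$. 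Second, Lemma \ref{lemma:pairnu} gives the analogous adjunction for $\nu_{\mathrm{rig},U,\ell}^{\ast}$, $\nu_{\mathrm{rig},U,\ell,c,\ast}$ between the $\alpha_k^{\mathrm{hor}}$- and $\alpha_k$-pairings:
$$
\nu_{\mathrm{rig},U,\ell,c}^{\ast,2d+1}\bigl(\langle f'',\nu_{\mathrm{rig},U,\ell,c,\ast}^{2d+1-i}(g'')\rangle_{P_1,r_1;P_2,r_2}\bigr)=\langle \nu_{\mathrm{rig},U,\ell}^{\ast,i}(f''),g''\rangle_{P_1,r_1;P_2,r_2}.
$$
The proof will take $f''=\tau_{\mathrm{rig},U,\ell}^i(f)$ and $g''=g$, so the left side of Lemma \ref{lemma:pairnu} becomes $\nu_{\mathrm{rig},U,\ell,c}^{\ast,2d+1}\bigl(\langle \tau_{\mathrm{rig},U,\ell}^i(f),\nu_{\mathrm{rig},U,\ell,c,\ast}^{2d+1-i}(g)\rangle\bigr)$ and its right side is the left side of the Corollary. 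It then remains to identify $\nu_{\mathrm{rig},U,\ell,c}^{\ast,2d+1}\bigl(\langle \tau_{\mathrm{rig},U,\ell}^i(f),\nu_{\mathrm{rig},U,\ell,c,\ast}^{2d+1-i}(g)\rangle\bigr)$ with the right side $\langle f,\tau_{\mathrm{rig},U,\ell,c}^{2d+1-i}\circ \nu_{\mathrm{rig},U,\ell,c,\ast}^{2d+1-i}(g)\rangle_{P_1,r_1;P_2,r_2}$ of the Corollary; for this I would apply Corollary \ref{cor:compdual} with $g'=\nu_{\mathrm{rig},U,\ell,c,\ast}^{2d+1-i}(g)$, whose left side is $\tau_{\mathrm{rig},U,\ell,c}^{2d+1}\bigl(\langle \tau_{\mathrm{rig},U,\ell}^i(f),\nu_{\mathrm{rig},U,\ell,c,\ast}^{2d+1-i}(g)\rangle\bigr)$ and whose right side is precisely the right side of the Corollary. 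The two chains meet once one knows $\tau_{\mathrm{rig},U,\ell,c}^{2d+1}=\nu_{\mathrm{rig},U,\ell,c}^{\ast,2d+1}$ on the relevant top-degree group, which is recorded in the excerpt (in the sentence preceding the Corollary, noting that in top degree $2d+1$ the maps $\tau_{\mathrm{rig},\ell,c}^{2d+1}$ and $\nu_{\mathrm{rig},U,\ell,c}^{\ast,2d+1}$ agree after the identification of the $\alpha_k$-rigid syntomic top cohomology with $K_L$ via Berthelot's trace).

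Concretely, the argument is the following short computation. Starting from the right side of the Corollary,
$$
\langle f,\tau_{\mathrm{rig},U,\ell,c}^{2d+1-i}\circ \nu_{\mathrm{rig},U,\ell,c,\ast}^{2d+1-i}(g)\rangle_{P_1,r_1;P_2,r_2}=\tau_{\mathrm{rig},U,\ell,c}^{2d+1}\Bigl(\langle \tau_{\mathrm{rig},U,\ell}^i(f),\nu_{\mathrm{rig},U,\ell,c,\ast}^{2d+1-i}(g)\rangle_{P_1,r_1;P_2,r_2}\Bigr),
$$
by Corollary \ref{cor:compdual}; using $\tau_{\mathrm{rig},U,\ell,c}^{2d+1}=\nu_{\mathrm{rig},U,\ell,c}^{\ast,2d+1}$ this equals $\nu_{\mathrm{rig},U,\ell,c}^{\ast,2d+1}\bigl(\langle \tau_{\mathrm{rig},U,\ell}^i(f),\nu_{\mathrm{rig},U,\ell,c,\ast}^{2d+1-i}(g)\rangle_{P_1,r_1;P_2,r_2}\bigr)$, which by Lemma \ref{lemma:pairnu} (applied to $f''=\tau_{\mathrm{rig},U,\ell}^i(f)$, $g''=g$) equals $\langle \nu_{\mathrm{rig},U,\ell}^{\ast,i}(\tau_{\mathrm{rig},U,\ell}^i(f)),g\rangle_{P_1,r_1;P_2,r_2}$, the left side of the Corollary. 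Throughout one must keep track of which syntomic complex ($\alpha_k$ vs.\ $\alpha_k^{\mathrm{hor}}$, with or without support, possibly overconvergent) each cohomology class lives in, and verify that the maps $\tau$ and $\nu$ being composed indeed have matching source and target; but since all of $\tau_{\mathrm{rig},U,\ell}$, $\tau_{\mathrm{rig},U,\ell,c}$, $\nu_{\mathrm{rig},U,\ell}^{\ast}$, $\nu_{\mathrm{rig},U,\ell,c,\ast}$ are induced by morphisms of double complexes compatible with the cup-product structures of Lemma \ref{lemma:CupProdCup}, the bookkeeping is routine.

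The only genuine subtlety — and hence the step I expect to be the main obstacle — is the compatibility of Berthelot's trace map on $\mathrm{H}^{2d}_{\mathrm{rig},c}(U^o/K)$ with the trace map $\mathrm{Tr}_{P_1\ast P_2,r_1+r_2}$ of Definition \ref{def:CupProd F Trace} and the resulting identity $\tau_{\mathrm{rig},U,\ell,c}^{2d+1}=\nu_{\mathrm{rig},U,\ell,c}^{\ast,2d+1}$ in top degree. This requires knowing that the identification $\mathrm{H}^{2d}_{\mathrm{rig},c}((U^o\subset W_\ell,\alpha_k^{\mathrm{hor}})/K,\mathfrak{E}^{\mathrm{conv}})\cong \mathrm{H}^{2d}_{\mathrm{rig},c}(U^o/K)$ of \cite[Prop.\ 3.15]{EY} intertwines the two trace maps, and that passing from $\alpha_k^{\mathrm{hor}}$ to $\alpha_k$ via $\nu$ and from local to global via $\tau$ does not alter the top cohomology or its trace — precisely the content of the sentence immediately before the Corollary in the excerpt, which I would invoke, together with Lemma \ref{lemma:traceHKlogdRcomp} and the fact that in degree $2d+1$ one has $F^0=0$ (Lemma \ref{lemma:CupProd L1 Trace} and its good-reduction analogue in \S \ref{sec:DualSynGood}) so that the top syntomic cohomology is canonically $\mathrm{H}^{2d}_{\mathrm{dR},c}$. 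Once this identification of traces is in hand, the Corollary is the formal diagram chase above.
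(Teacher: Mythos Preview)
Your proof is correct and follows exactly the paper's approach: the paper's proof simply reads ``It follows from Corollary~\ref{cor:compdual} and Lemma~\ref{lemma:pairnu},'' and you have correctly unpacked this by chaining the two adjunctions via the identity $\tau_{\mathrm{rig},U,\ell,c}^{2d+1}=\nu_{\mathrm{rig},U,\ell,c}^{\ast,2d+1}$ stated just before the Corollary. The trace compatibility you flag as the main subtlety is precisely what the paper records in that preceding sentence, so nothing further is required.
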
 
\begin{proof} It follows from  Corollary \ref{cor:compdual} and Lemma \ref{lemma:pairnu}.
\end{proof}

\subsection{Adjointness of Gysin morphisms and pull-back morphisms}\label{SS PolyAdj}

We suppose to be in the setting of \S \ref{sec:Gysin}, except we replace the complexes $\mathbf{Syn}\left( \mathcal{E},\mathfrak{E},r\right) $ with the  syntomic polynomial complexes $\mathbf{Syn}\left(\mathbb{E},P,r\right) $ of \S \ref{S PolyCmpl}, with and without support conditions. We use
the abbreviation $\mathbf{Syn}^c$ for $\mathbf{Syn}^{c-\infty} $. In particular, there is a closed
immersion $\iota $ of $\mathcal{F}$ in $\mathcal{X}$. 

We note that Lemma \ref{lemma:exactsyn} (and its compact version mentioned
at the end of \S \ref{sec:Gysin}) generalizes to give rise to exact sequences
\begin{equation}\label{CupProd F GysEx}
0\longrightarrow \mathbf{Syn}\left( \mathbb{E},P,r\right)
\longrightarrow \mathbf{Syn}\left( \mathbb{E},P,r\right)
^{\prime }\stackrel{\mathrm{Res}}{\longrightarrow }\mathbf{Syn}\left( \mathbb{E}_{\mid F}\left( -1\right),P,r \right) \left[
-1\right] \longrightarrow 0. 
\end{equation}
Taking cohomology we get morphsims for $i\geq 2$
$$
\mathrm{Gys}_{\mathrm{syn},F}^{P,r}\colon \mathrm{H}^{i-2}\left( 
\mathbf{Syn}\left( \mathbb{E}_{\mid F}\left( -1\right),P,r
\right) \right) \longrightarrow \mathrm{H}^{i}\left( \mathbf{Syn}
\left( \mathbb{E},P,r\right) \right) ,
$$and
$$
\mathrm{Gys}_{\mathrm{syn}^c,F}^{P,r}\colon \mathrm{H}^{i-2}\left( 
\mathbf{Syn}^c\left( \mathbb{E}_{\mid F}\left( -1\right),P,r
\right) \right) \longrightarrow \mathrm{H}^{i}\left( \mathbf{Syn}^c
\left( \mathbb{E},P,r\right) \right) ,
$$
In the opposite direction, there are pull-back morphisms
$$
\iota ^{\ast }\colon \mathrm{H}^{i}\left( \mathbf{Syn}\left( 
\mathbb{E},P,r\right) \right) \longrightarrow \mathrm{H}^{i}\left( 
\mathbf{Syn}\left( \mathbb{E}_{\mid F}^{\vee },P,r\right)\right)$$and
$$
\iota ^{\ast }\colon \mathrm{H}^{i}\left( \mathbf{Syn}^c\left( 
\mathbb{E},P,r\right) \right) \longrightarrow \mathrm{H}^{i}\left( 
\mathbf{Syn}^c\left( \mathbb{E}_{\mid F}^{\vee },P,r\right)\right).$$We write
$$
\mathrm{Gys}_{\mathrm{syn},F}^{\mathrm{top},P,r}:\mathrm{H}^{2d-1}\left( 
\mathbf{Syn}^c\left( \mathbb{I}_{F}\left( -1\right),P,r \right) \right)
\longrightarrow \mathrm{H}^{2d+1}\left( \mathbf{Syn}^c\left( \mathbb{
I},P,r\right) \right)
$$
for the map obtained in the case $i=2d+1$ and $\mathbb{E}=\mathbb{I}$ 
the unit object. Fix pairs $(P_1,r_1)$ and $P_2,r_2)$ so that $( P,r)=(P_1\ast P_2, r_1+r_2) $ 
is admissible for $\mathcal{X}_k$ and that $\left( P\left( pX\right) ,r-1\right) $ is
admissible for $\mathcal{F}_k$ in the sense of Definition \ref{def:CupProd DefAdm}. Then, we can consider the following syntomic pairings $\left\langle -,-\right\rangle_{P_{1},r_{1};P_{2},r_{2}}$ of
Definition \ref{DEF:CupProd Def2}:
$$ \mathrm{H}^{i}\left(
\mathbf{Syn}\left( \mathbb{E},P_1,r_1\right) \right) \otimes
_{K_{0,L}}\mathrm{H}^{2d-i+1}\left( \mathbf{Syn}^{c}\left(
\mathbb{E}^{\vee },P_2,r_2\right) \right) \longrightarrow K_{L}$$and pairings
$\left\langle -,-\right\rangle_{P_{1}\left( pX\right) ,r_{1}-1;P_{2},r_{2}}$
$$
\mathrm{H}^{i-2}\left( \mathbf{Syn}\left(
\mathbb{E}_{\mid F}\left( -1\right),P_1,r_1 \right) \right) \otimes
_{K_{0,L}}\mathrm{H}^{2d-i+1}\left( \mathbf{Syn}^{c}\left(
\mathbb{E}_{\mid F}^{\vee },P_2,r_2\right) \right) \longrightarrow
K_{L},
$$where we identify $
\mathbf{Syn}\left( \mathbb{E}_{\mid F}\left( -1\right),P_1,r_1
\right) =\mathbf{Syn}\left( \mathbb{E}
_{\mid F},P_{1}( pX) ,r_{1}-1\right) $ and we notice that $P_{1}\left( pX\right) \ast
P_{2}=P\left( pX\right) $. Then,

\begin{proposition}
\label{prop:GysDual}The map $\mathrm{Gys}_{\mathrm{syn},F}^{\mathrm{top}
,P,r}$ is compatible with the trace maps on $X$ and $F$ respectively and
$$
\left\langle \mathrm{Gys}_{\mathrm{syn},F}^{P_{1},r_{1}}\left( f\right)
,g\right\rangle _{P_{1},r_{1};P_{2},r_{2}}=\left\langle f,\iota ^{\ast
}\left( g\right) \right\rangle _{P_{1}\left( pX\right) ,r_{1}-1;P_{2},r_{2}}\in K_L
$$
for $f\in \mathrm{H}^{i-2}\left( \mathbf{Syn}\left( 
\mathbb{E}_{\mid F}\left( -1\right),P_1,r_1 \right) \right) $ and $g\in \mathrm{H}
^{2d+1-i}\left( \mathbf{Syn}^c\left( \mathbb{E}^{\vee
},P_2,r_2\right) \right) $. Analogously,
$$
\left\langle f, \mathrm{Gys}_{\mathrm{syn}^c,F}^{P_2,r_2}\left( g\right)
\right\rangle _{P_{1},r_{1};P_{2},r_{2}}=\left\langle \iota ^{\ast
}\left( f\right),g \right\rangle _{P_{1}\left( pX\right) ,r_{1}-1;P_{2},r_{2}}\in K_L
$$
for every $f\in \mathrm{H}
^i\left( \mathbf{Syn}\left( \mathbb{E}^{\vee
},P_1,r_1\right) \right) $ and $g\in \mathrm{H}^{2d+i-1}\left( \mathbf{Syn}^c\left( 
\mathbb{E}_{\mid F}\left( -1\right),P_2,r_2 \right) \right) $.

\end{proposition}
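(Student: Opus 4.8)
The plan is to reduce the statement, via the explicit \v{C}ech-de Rham description of all the complexes involved, to the corresponding adjunction identity on the level of double complexes, where it follows from a direct (sign-bookkeeping) computation combined with the residue formalism. First I would recall the exact sequence \eqref{CupProd F GysEx} and its compact-support analogue, together with Lemma \ref{lemma:exactsyn}; the two Gysin morphisms $\mathrm{Gys}_{\mathrm{syn},F}^{P_1,r_1}$ and $\mathrm{Gys}_{\mathrm{syn}^c,F}^{P_2,r_2}$ are the connecting maps in the long exact cohomology sequences of these triangles, while the pull-back morphisms $\iota^\ast$ are induced by the restriction morphisms of log-adic spaces $\iota\colon\mathcal{F}\to\mathcal{X}$. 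The compatibility of $\mathrm{Gys}_{\mathrm{syn},F}^{\mathrm{top},P,r}$ with trace maps on $X$ and $F$ is the purely de Rham statement: it follows from Lemma \ref{lemma:CupProd L1 Trace} and Definition \ref{def:CupProd F Trace} once one knows that $\mathrm{Gys}_{\mathrm{syn},F}^{\mathrm{top},P,r}$ is compatible with $\mathrm{Gys}^{i}_{\mathrm{dR},F}$ (by construction of the map via residues in Lemma \ref{lemma:exactsyn}), together with the standard fact that $\mathrm{Tr}_{\mathrm{dR}}\circ\mathrm{Gys}_{\mathrm{dR},F}^{2d-2}=\mathrm{Tr}_{\mathrm{dR},F}$ in coherent log de Rham cohomology; here one uses $\mathrm{Tr}_U$ of \cite[Thm. 4.2.7]{LLZ} and the functoriality of trace maps along closed immersions of divisors.

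Next I would unwind the two pairing identities. Using the explicit representatives of \S\ref{sec:dR} and \S\ref{sec:analsyn}, the cup product $\Cup_{P_1,r_1;P_2,r_2}$ of Lemma \ref{lemma:CupProdCup} is given \v{C}ech-locally by the wedge formula \eqref{eq:CupProd L1 FCechCup}, and for each $\underline{\beta}$ and each $j\in I$ the residue map along $\mathfrak{F}_{\underline{\beta}}$ (resp.\ $\mathcal{F}_{\underline{\beta}}$) is the reduction mod the local equation of $\mathfrak{F}_{\underline{\beta}}$ of $(1\otimes\delta)\circ\nabla_{\underline{\beta}}$, in the notation of \S\ref{sec:dR}. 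The key local computation is the projection-formula-type identity: for a log form $\omega$ on $\mathfrak{X}_{\underline{\beta}}'$ and a log form $\eta$ on $\mathfrak{X}_{\underline{\beta}}$, one has $\mathrm{Res}_{\mathfrak{F}_{\underline{\beta}}}(\omega\wedge \mathrm{pr}^\ast\eta)=\mathrm{Res}_{\mathfrak{F}_{\underline{\beta}}}(\omega)\wedge \eta|_{\mathfrak{F}_{\underline{\beta}}}$, up to the sign dictated by moving the $d\log$ of the equation of $\mathfrak{F}_{\underline{\beta}}$ past $\eta$. Combining this with the identification $\mathbf{Syn}(\mathbb{E}_{\mid F}(-1),P_1,r_1)=\mathbf{Syn}(\mathbb{E}_{\mid F},P_1(pX),r_1-1)$ (which encodes the Tate twist as the shift $P_1(X)\rightsquigarrow P_1(pX)$ on Frobenius and $r_1\rightsquigarrow r_1-1$ on the Hodge filtration, since the residue drops the filtration by $1$ and $\Phi$ scales by $p$ along $\tfrac{dZ}{Z}$ as in Definition \ref{def:abs}), and using $P_1(pX)\ast P_2=P(pX)$, one checks that the two sides of each displayed equation are represented by the \emph{same} cochain in $\mathbf{Syn}^c(\mathbb{I},P(pX),r-1)$ on $F$ and $\mathbf{Syn}^c(\mathbb{I},P,r)$ on $X$, matched under $\mathrm{Gys}_{\mathrm{syn}^c,F}^{\mathrm{top}}$. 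Applying the trace map and the first part of the Proposition then yields the two equalities in $K_L$.

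The main obstacle I expect is the sign bookkeeping: the cup product $\Cup_{P_1,r_1;P_2,r_2}$ already carries a nontrivial table of signs and the auxiliary scalars $\lambda$, $1-\lambda$ (Definition \ref{def:CupProd Def1}), the \v{C}ech sign conventions \eqref{eq:CupProd L1 FCechCup} and \eqref{eq:CupProd L1 FCechCup2} introduce further $(-1)$'s, and the connecting homomorphism in the long exact sequence of \eqref{CupProd F GysEx} contributes its own sign; one must verify that all of these combine so that $\mathrm{Gys}$ and $\iota^\ast$ are genuinely adjoint rather than adjoint up to a sign. I would handle this by working throughout with the modified differentials $\widetilde d$ and modified products $\widetilde\cup$ of \S on cup products of double complexes, where the Leibniz rule takes its cleanest form, and by reducing (as in the proof of Lemma \ref{lemma:CupProdCup}) to the unit object $\mathbb{E}=\mathbb{I}$ after shrinking the charts so that $\mathcal{E},\mathfrak{E}$ are free and $\mathbb{E}$ is crystalline associated; at that point the identity is the classical adjunction between Gysin and restriction in log de Rham and log-rigid cohomology, already essentially contained in \cite[Prop. 4.3.17]{LLZ} and \cite[Prop. 4.6]{EY}, now carried along the syntomic mapping fibre. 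The compatibility of the two residue sequences (analytic/convergent and de Rham) with the Frobenius operator $1-\tfrac{\Phi}{p^r}$, needed so that the computation descends from the double complexes to $\mathbf{Syn}$, is exactly of the type already verified in Proposition \ref{prop:samecohogroups}(2) and in Lemma \ref{lemma:exactsyn}, so I would cite those rather than repeat the argument.
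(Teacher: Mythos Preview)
Your plan is essentially correct, but it is considerably more laborious than what the paper actually does, and your worry about sign bookkeeping is largely self-inflicted. The paper's proof bypasses the explicit local residue computation entirely by working at the level of total complexes. Once the cup product $\Cup_{P_1,r_1;P_2,r_2}$ is known to satisfy the Leibniz rule (Lemma \ref{lemma:CupProdCup}) and the morphisms in the exact sequence \eqref{CupProd F GysEx} are compatible with cup products, the identity
\[
\mathrm{Gys}_{\mathrm{syn},F}^{P_1,r_1}(f)\cup g \;=\; \mathrm{Gys}^{\mathrm{top},P,r}_{\mathrm{syn}^c,F}\bigl(f\cup \iota^\ast(g)\bigr)
\]
follows from a short cocycle argument: lift $\tilde f$ to $f'$ in the primed complex, so $d(f')$ represents $\mathrm{Gys}(f)$; view the cocycle $\tilde g$ as $g'$ in the primed complex; set $z=f'\cup g'$. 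Then $z$ lifts $\tilde f\cup\overline g$ (so $d(z)$ represents the right-hand side), while Leibniz and $d(g')=0$ give $d(z)=d(f')\cup g'$ (representing the left-hand side). No further signs enter because they are already packaged into the Leibniz rule and the total-complex conventions you recalled yourself.

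The local projection formula $\mathrm{Res}(\omega\wedge\mathrm{pr}^\ast\eta)=\mathrm{Res}(\omega)\wedge\eta\vert_F$ that you propose to verify is precisely the content of ``the morphisms in \eqref{CupProd F GysEx} are compatible with cup products''; the paper treats this as evident from the construction (all maps are induced by morphisms of log de Rham complexes, hence respect wedge). Your reduction to the unit object and the citations to \cite[Prop.~4.3.17]{LLZ} and \cite[Prop.~4.6]{EY} are unnecessary for the adjunction itself; they would only re-prove what the abstract argument already gives. For the trace compatibility, your reduction to the de Rham statement is correct; the paper cites \cite[Prop.~4.3.7]{LLZ} for this.
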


\begin{proof}
The compatibility of $\mathrm{Gys}_{\mathrm{syn}^{c},F}^{\mathrm{top},P,r}$
with the trace maps reduces to an analogous statement on de Rham cohomology
(in view of Definition \ref{def:CupProd F Trace}) and this is
proven in \cite[Prop. 4.3.7]{LLZ}. We prove the first equality. We have to show the compatibility of
the pairings with values in $\mathrm{H}^{2d+1}\left( \mathbf{Syn}^c\left( \mathbb{I},P,r\right) \right) $ and $\mathrm{H}^{2d-1}\left( 
\mathbf{Syn}^c\left( \mathbb{I}_{F}\left( -1\right),P,r \right) \right) $
respectively via $\mathrm{Gys}_{\mathrm{syn}^{c},F}^{\mathrm{top},P,r}$,
namely that we have
\begin{equation}\label{eq:GysAdjointiota}
\mathrm{Gys}_{\mathrm{syn},F}^{P_{1},r_{1}}\left( f\right) \cup _{P_{1},r_{1};P_{2},r_{2}}g=
\mathrm{Gys}^{\mathrm{top},P,r}_{\mathrm{syn}^c,F}\left( f\cup _{P_{1}
,r_{1};P_{2},r_{2}}\iota ^{\ast }\left( g\right) \right).
\end{equation} Write $\tilde{g}$ for a cocycle in the total  complex $ \mathbf{Syn}^{c}\left(
\mathbb{E}^{\vee },P_2,r_2\right)$ representing $g$. Let $\overline{g}$ be the image of $g$ in $ \mathbf{Syn}^{c}\left(\mathbb{E}^{\vee }_{\mid F},P_2,r_2\right)$; it represents $\iota^\ast(g)$. Denote by $g'$ the cocycle $g$ viewd as a cocycle in the total complex $ \mathbf{Syn}^{c}\left(
\mathbb{E}^{\vee },P_2,r_2\right)^\prime$ via (\ref{CupProd F GysEx}). 

Similarly, let $\tilde{f}$ be a cocycle in the total complex $\mathbf{Syn}\left( 
\mathbb{E}_{\mid F}\left( -1\right),P_1,r_1 \right)[-1]$  representing $f$
Let $f'$ be a lift of $\tilde{f}$  in the total complex $\mathbf{Syn}\left( 
\mathbb{E},P_1,r_1 \right)^\prime$ again using the exactness of (\ref{CupProd F GysEx}). Since $d(f)=0$, then $d(f')$ lies in $\mathbf{Syn}\left( \mathbb{E},P_1,r_1 \right)$. It is a cocycle as $d \circ d=0$ and the class of $d(f')$ in cohomology is $\mathrm{Gys}_{\mathrm{syn},F}^{P_{1},r_{1}}\left( f\right)$.

Write $z:=f' \cup _{P_{1},r_{1};P_{2},r_{2}} g'$ for the element in $\mathbf{Syn}^{c}\left(
\mathbb{I},P,r\right)^\prime$ induced by the cup product of total complexes associated to the cup product of double complexes satisfying Leibniz rule of Lemma \ref{lemma:CupProdCup}. As the morphisms in the exact sequence of total complexes (\ref{CupProd F GysEx}) are compatible with cup products, we conclude that $z$ lifts the cocycle $\tilde{f} \cup _{P_{1},r_{1};P_{2},r_{2}} \overline{g}$ in  $\mathbf{Syn}\left( 
\mathbb{I}_{\mid F}\left( -1\right),P,r\right)[-1]$. Hence $d(z)$ represents the cohomology class
$\mathrm{Gys}^{\mathrm{top},P,r}_{\mathrm{syn}^c,F}\left( f\cup _{P_{1}
,r_{1};P_{2},r_{2}}\iota ^{\ast }\left( g\right) \right)$. Since $d(g')=0$, then $d(z)=d(f')\cup _{P_ {1},r_{1};P_{2},r_{2}} g'$ which represents $\mathrm{Gys}_{\mathrm{syn},F}^{P_{1},r_{1}}\left( f\right)\cup _{P_{1},r_{1};P_{2},r_{2}} g$. This proves (\ref{eq:GysAdjointiota}) as claimed.
\end{proof}

\newcommand{\noopsort}[1]{\relax}

\providecommand{\bysame}{\leavevmode\hbox to3em{\hrulefill}\thinspace}
\providecommand{\MR}[1]{\relax}
\renewcommand{\MR}[1]{%
 MR \href{http://www.ams.org/mathscinet-getitem?mr=#1}{#1}.
}
\providecommand{\href}[2]{#2}
\newcommand{\articlehref}[2]{\href{#1}{#2}}

\bibliography{RefFab}{}
\bibliographystyle{alpha}

\end{document}